\documentclass[12pt,oneside,a4paper,reqno]{amsart}

\usepackage{amssymb}
\usepackage{amsmath}
\usepackage{amsthm}
\usepackage{amscd}
\usepackage[all]{xy}
\usepackage{longtable}
\usepackage{mathrsfs}
\usepackage[dvipdfmx]{xcolor}
\usepackage[dvipdfmx]{pict2e}
\usepackage[dvipdfmx]{graphicx}

\usepackage{comment}
\usepackage{todonotes}

\usepackage[dvipdfmx]{hyperref}
\usepackage{hyperref}
\hypersetup{
	colorlinks=true,
	linkcolor=red,
	citecolor=blue}
	\usepackage[utf8]{inputenc}
\usepackage[T1]{fontenc}
\usepackage{color}

\usepackage{extarrows}
\usepackage{tikz}
\usetikzlibrary{cd}

\usepackage{geometry}
\theoremstyle{plain}
\newtheorem{theorem}{Theorem}[section]
\newtheorem{lemma}[theorem]{Lemma}
\newtheorem{corollary}[theorem]{Corollary}
\newtheorem{proposition}[theorem]{Proposition}
\newtheorem{remark}[theorem]{Remark}

\newtheorem{remark-theorem}[theorem]{Remark-Theorem}

\theoremstyle{definition}
\newtheorem{definition}[theorem]{Definition}

\newcommand{\kah}{K\"{a}hler }
\newcommand{\idd}{i\partial\overline{\partial}}
\newcommand{\dbar}{\overline{\partial}}

\newcommand{\cal}[1]{\mathcal{#1}}
\newcommand{\bb}[1]{\mathbb{#1}}
\newcommand{\scr}[1]{\mathscr{#1}}
\newcommand{\rom}[1]{\mathrm{#1}}

\newcommand{\xs}{X_{sing}}
\newcommand{\reg}{X_{reg}}
\newcommand{\ogr}{\omega_X^{GR}}
\newcommand{\tx}{\widetilde{X}}
\newcommand{\ts}{\widetilde{S}}
\newcommand{\tu}{\widetilde{U}}
\newcommand{\dvo}{dV_\omega}

\newcommand{\tl}[1]{\widetilde{#1}}

\newcommand{\exc}{\rom{Exc}}

\newcommand{\ureg}{U_{reg}}
\newcommand{\sreg}{S_{reg}}

\newcommand{\lara}[2]{\langle{#1},{#2}\rangle}

\newcommand{\iO}[1]{i\Theta_{#1}}

\newcommand{\oid}[1]{\otimes\mathrm{id}_{#1}}

\subjclass[2020]{32S20, 14F18, 32L10, 32L20, 32J25, 32C15}
\keywords{ 
$L^2$-Dolbeault complex, complex spaces, singular Hermitian metrics, Griffiths positivity, Nakano positivity, $L^2$-estimates, cohomology vanishing, weakly pseudoconvex.}

\begin{document}
\title
[singular positivity and vanishing theorems on complex spaces] 
{Griffiths and Nakano positivity \\ and Nakano-Nadel vanishing theorems \\ for singular Hermitian metrics on complex spaces}
\author{Yuta Watanabe}
\address{Department of Mathematics, Faculty of Science and Engineering, Chuo University.
1-13-27 Kasuga, Bunkyo-ku, Tokyo 112-8551, Japan}
\email{{\tt wyuta.math@gmail.com}, {\tt wyuta@math.chuo-u.ac.jp}}

\begin{abstract}
    In this paper, in order to develop a more general $L^2$-theory for the $\overline{\partial}$-operator on complex spaces, we introduce appropriate notions of singular Griffiths/Nakano positivity on complex spaces and establish various properties of these notions. 
    By applying these results, we provide $L^2$-Dolbeault fine resolutions and cohomological isomorphisms, and $L^2$-existence theorems. As an application, we obtain Nakano-Nadel vanishing theorems on weakly pseudoconvex complex spaces.
\end{abstract}


\maketitle

\vspace{-5mm}

\setcounter{tocdepth}{1}
\tableofcontents

\vspace{-7mm}

\section{Introduction}

The $L^2$-theory for the $\dbar$-operator has become a fundamental and indispensable area in complex analysis through H\"{o}rmander's pioneering work on $L^2$-estimates and existence theorems (see \cite{Hor65,Hor90}) and the related work of Andreotti-Vesentini (see \cite{AV65}). 
Subsequently, Ohsawa and Takegoshi introduced the $L^2$-extension theorem (see \cite{OT87}), and, more recently, complex geometry has undergone further development through applications combining this theory with the study of singular Hermitian metrics on holomorphic vector bundles and their positivity properties (see \cite{Dem90,BP08,Dem12,Rau15,PT18,HPS18,Ina22,DNWZ23,LXYZ24,Wat26a,IMW26}).
While this theory is well developed on complex manifolds, various difficulties remain in constructing an appropriate $L^2$-theory on complex spaces. 
In the case of line bundles, several results are available (see \cite{Rup14,SZ23b,SZ24,Wat26b}), whereas many problems remain open for holomorphic vector bundles. 
Among the central issues are the formulation of suitable notions of positivity, the construction of $L^2$-Dolbeault fine resolutions and cohomological isomorphisms, and the establishment of global $L^2$-existence theorems.

In this paper, appropriate notions of Griffiths/Nakano positivity are introduced for singular Hermitian metrics on holomorphic vector bundles over complex spaces, and various properties of these notions are established. 
By combining these results with the strong openness property and related techniques, solutions to the fundamental problems mentioned above are obtained. As an application, Nakano--Nadel vanishing theorems are established for weakly pseudoconvex complex spaces.

Throughout this paper, all complex spaces are assumed to be reduced, and $X$ denotes a complex space of pure dimension, while $E\longrightarrow X$ is a holomorphic vector bundle and $h$ is a singular Hermitian metric on $E$ (see Definition \ref{Definition: singular Hermitian metrics on E}).
Furthermore, for a resolution of singularities $\pi:\tx\longrightarrow X$, let $\exc:=\pi^{-1}(\xs)$ denote the $\pi$-exceptional divisor, which has simple normal crossing.
When $X$ is not necessarily compact, the resolusion $\pi:\tx\longrightarrow X$ is always taken to be the canonical desingularization in Theorem \ref{Theorem: canonical desingularization}. 

On complex manifolds, singular Griffiths/Nakano positivity has been extensively studied recently (see \cite{BP08,Rau15,PT18,Ina22,DNWZ23,LXYZ24,Wat24a,Wat25a,Wat26a,IMW26}).
Following the case of complex manifolds, we first introduce appropriate notions of Griffiths/Nakano positivity on complex spaces (see Definitions \ref{Definition: singular Griffiths positivity} and \ref{Definition: singular Nakano positive}), 
and establish various properties of these notions, including their behavior under pull-backs by resolutions of singularities, in order to treat them effectively.

Since Griffiths semi-positivity is locally characterized by plurisubharmonic functions, the equivalence with its pull-back requires an extension condition for plurisubharmonic functions (see Theorem \ref{Theorem: idd>0 and a.e. psh}). 
In the case of Griffiths positivity, additional assumptions are needed since it only provides information in the sense of currents. In any case, these notions are simply equivalent to their pull-backs when $X$ is normal.

\begin{theorem}[{=\,Theorem \ref{Theorem: Grif semi-positivity of h and pi^*h}}]
    Let $\pi:\tx\longrightarrow X$ be a canonical desingularization in Theorem \ref{Theorem: canonical desingularization}. 
    If $h$ is Griffiths semi-positive on $X$, then the pulled-back $\pi^*h$ on $\pi^*E$ is also Griffiths semi-positive on $\tx$. 
    Conversely, if $X$ is locally irreducible and $\pi^* h$ is Griffiths semi-positive on $\tx$, then $h$ is also Griffiths semi-positive on $X$.
\end{theorem}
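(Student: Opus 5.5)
The plan is to use the characterization of Griffiths semi-positivity on complex spaces through plurisubharmonic functions. I take Definition \ref{Definition: singular Griffiths positivity} to say the following: $h$ is Griffiths semi-positive on $X$ if, for every open $U\subset X$ and every holomorphic section $u$ of the dual $E^*$ on $U$, the function $\log|u|_{h^*}$ is plurisubharmonic on $U$ in the sense of complex spaces. This means it is upper semicontinuous and locally the restriction of a plurisubharmonic function under local embeddings, or equivalently plurisubharmonic along holomorphic maps from discs. The dual metric $h^*$ on $E^*$ is the one for which this makes sense. Both directions then reduce to statements about plurisubharmonic functions under $\pi$.

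For the forward direction, I fix an open $V\subset\tx$ and a section $\tl u\in H^0(V,\pi^*E^*)$. Plurisubharmonicity is local, so I shrink $V$ to $\pi^{-1}(U)\cap V$ with $U$ small enough that $E|_U$ is trivial. Sections of $\pi^*E^*$ are then vectors of holomorphic functions on $V$.

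Here I use a different characterization of Griffiths semi-positivity: it is equivalent to the condition that $\log|\sum_j f_j e_j^*|_{h^*}$ is plurisubharmonic for all holomorphic coefficient functions $f_j$, not only those pulled back from $X$. Equivalently, $|\xi|_{h^*}$ is plurisubharmonic in the total space of the trivialized $E^*$, jointly in $(x,\xi)$. This joint property is the standard reformulation: Griffiths semi-negativity of $h^*$ means that $\log|\xi|_{h^*}$ is plurisubharmonic on the total space. On a complex space I would derive it from the definition by a Raufi-type argument, using the fact that plurisubharmonicity on complex spaces can be tested on analytic discs (Fornaess--Narasimhan).

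With this in hand, $\log|\tl u|_{\pi^*h^*}$ is the composition of the map $V\to U\times\mathbb C^r$, $y\mapsto(\pi(y),\tl u(y))$, with a plurisubharmonic function on the total space. A plurisubharmonic function composed with a holomorphic map is plurisubharmonic, so $\log|\tl u|_{\pi^*h^*}$ is plurisubharmonic and $\pi^*h$ is Griffiths semi-positive.

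For the converse, let $X$ be locally irreducible and take $u\in H^0(U,E^*)$. Then $\varphi:=\log|u|_{h^*}$ satisfies that $\varphi\circ\pi=\log|\pi^*u|_{\pi^*h^*}$ is plurisubharmonic on $\pi^{-1}(U)$. So $\varphi$ is plurisubharmonic on $U_{reg}$, since $\pi$ is biholomorphic there, and it is locally bounded above near $\xs$, since $\pi$ is proper.

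By Theorem \ref{Theorem: idd>0 and a.e. psh}, or rather the extension result for plurisubharmonic functions across analytic sets in locally irreducible spaces, the function $\varphi|_{U_{reg}}$ extends uniquely to a plurisubharmonic function $\varphi^\star$ on $U$. It is given by
\[
\varphi^\star(x)=\limsup_{U_{reg}\ni z\to x}\varphi(z).
\]
It remains to check that $\varphi=\varphi^\star$ at singular points. Here local irreducibility gives connectedness of the fibres $\pi^{-1}(x)$ (Zariski main theorem for normalization). Since $\varphi\circ\pi$ is plurisubharmonic on a neighbourhood of the compact connected fibre, on which it is constant equal to $\varphi(x)$, upper semicontinuity gives $\varphi^\star(x)\le\varphi(x)$. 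The reverse inequality $\varphi(x)=\varphi\circ\pi(y)=\limsup\varphi\circ\pi$ comes from the fact that plurisubharmonic functions on the manifold $\tx$ equal their limsup off the exceptional divisor.

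The main obstacle is the joint-plurisubharmonicity step in the forward direction: handling sections of $\pi^*E^*$ that do not descend to $X$. If a direct argument fails, I would instead approximate $h$ locally on embedded charts by smooth Griffiths semi-positive metrics, and pull back the approximating sequence.
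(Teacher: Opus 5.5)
\textbf{The forward direction has a genuine gap.} The joint plurisubharmonicity of $(x,\xi)\mapsto\log|\xi|_{h^*(x)}$ on $U\times\bb{C}^r$, for singular $U$, is not a harmless reformulation. Composed with $y\mapsto(\pi(y),\tl{u}(y))$, it is exactly the assertion to be proved: that $\log|\tl{u}|_{\pi^*h^*}$ is plurisubharmonic for sections $\tl{u}$ of $\pi^*E^*$ that do not descend to $X$. You only assert it, and neither suggested justification works as stated.
\begin{itemize}
\item Raufi's regularization uses translates $u(\cdot+w)$ of holomorphic sections, i.e.\ the linear structure of $\bb{C}^n$, which a singular $U$ lacks.
\item Approximating $h$ on embedded charts by smooth Griffiths semi-positive metrics is unavailable on singular spaces for the same reason (cf.\ the discussion after Remark \ref{Remark: Grif>0 then a.e. Grif semi-posi & Remark in Wat24a}).
\item Testing on analytic discs via Forn\ae ss--Narasimhan settles only discs with constant base point (a fixed Hermitian form) and points where $t\mapsto x(t)$ is an immersion, since there $\xi$ extends to a local section of $E^*$. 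At the isolated non-immersive points, which can occur for $x=\pi\circ\gamma$ when a disc $\gamma$ in $\tx$ meets $\exc$, $\xi$ need not factor through $x$. You would still have to show that the value equals the $\limsup$, which is the original difficulty at the exceptional divisor in another guise.
\end{itemize}

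\textbf{The missing idea.} This drives the paper's proof, where it is combined with convolution on the manifold $\tx$ and a decreasing-limit argument: a \emph{constant} vector always gives a section defined on $X$. Here is how it closes the gap directly.
\begin{itemize}
\item Take a frame $e_1^*,\ldots,e_r^*$ of $E^*|_U$ and put $T:=\sum_j|e_j^*|^2_{h^*}$. It is plurisubharmonic, hence locally bounded above, and $|\xi|^2_{h^*}\le|\xi|^2\,T$ by Cauchy--Schwarz.
\item So $v:=\log|\tl{u}|_{\pi^*h^*}$, which is plurisubharmonic on $V\setminus\exc$, is locally bounded above. It therefore has a unique plurisubharmonic extension $v^\star$ with $v^\star(y)=\limsup_{z\to y,\,z\notin\exc}v(z)$.
\item For $y\in\exc$ let $c:=\tl{u}(y)\in\bb{C}^r$. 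Since $\sum_jc_j\pi^*e_j^*=\pi^*(\sum_jc_je_j^*)$, the function $\log|c|_{\pi^*h^*}$ is plurisubharmonic on all of $V$. Hence its value at $y$ equals its $\limsup$ from $V\setminus\exc$.
\item Moreover $\bigl||\tl{u}(z)|_{\pi^*h^*(z)}-|c|_{\pi^*h^*(z)}\bigr|\le|\tl{u}(z)-c|\,(\pi^*T)^{1/2}(z)\to0$ as $z\to y$. Hence $v(y)=v^\star(y)$, so $v=v^\star$ is plurisubharmonic.
\end{itemize}
The same comparison at non-immersive points would also complete your total-space route, which would then give stability under arbitrary holomorphic pull-backs.

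\textbf{The converse direction is correct} and essentially the paper's argument. The extension across $\xs$ uses local irreducibility. The identification at singular points holds because plurisubharmonic functions on $\pi^{-1}(U)$ are determined by their values off $\exc$; the paper invokes Lemma \ref{Lemma: two psh} for $\pi^*\varphi$ and $\pi^*\varphi^\star$. Connectedness of the fibres is superfluous there: properness of $\pi$ and upper semicontinuity of $\varphi\circ\pi$ already give $\varphi^\star\le\varphi$.
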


\begin{theorem}[{=\,Theorem \ref{Theorem: h Gri iff pi*h Gri}}]
    Let $\omega$ be a Hermitian metric on $X$ and $\varepsilon:X\longrightarrow\bb{R}_{\geq0}$ be a smooth semi-positive function.
    If $\pi^*h$ on $\pi^*E$ is $\pi^*(\varepsilon\omega)$-Griffiths positive on $\tx$, then $h$ on $E$ is $\varepsilon\omega$-Griffiths positive on $X$. 
    Conversely, if $X$ is normal, or if $X$ is locally irreducible and $h^*$ is locally bounded from above a neighborhood of $\xs$, and if $h$ is $\varepsilon\omega$-Griffiths positive on $X$, then $\pi^*h$ on $\pi^*E$ is $\pi^*(\varepsilon\omega)$-Griffiths positive on $\tx$.
\end{theorem}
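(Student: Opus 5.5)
I first fix what $\varepsilon\omega$-Griffiths positivity should mean on a complex space, following the manifold case. It means that for every local holomorphic section $u$ of $E^*$ on an open set $U\subset X$, the function $\varphi_u:=\log|u|^2_{h^*}$ satisfies $i\partial\dbar\varphi_u\geq\varepsilon\omega\,|u|^2_{h^*}$-type bounds in the sense of currents. Equivalently, $\varphi_u$ is plurisubharmonic and $i\partial\dbar|u|^2_{h^*}\geq\varepsilon\omega|u|^2_{h^*}$ on $U$. Here plurisubharmonic functions on $X$ are understood as those that are locally restrictions of psh functions under local embeddings, or, in Fornaess–Narasimhan's equivalent formulation, those that are psh along holomorphic discs. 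The key point is that $\pi$ is an isomorphism over $\reg$, and holomorphic sections of $\pi^*E^*$ over $\pi^{-1}(U)$ correspond to sections of $E^*$ over $U$, at least after pulling back local frames. So the whole statement reduces to comparing a functional inequality for $\varphi_u$ on $U$ with the same inequality for $\pi^*\varphi_u=\log|\pi^*u|^2_{\pi^*h^*}$ on $\pi^{-1}(U)$.

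For the first direction, assume $\pi^*h$ is $\pi^*(\varepsilon\omega)$-Griffiths positive. Given $u\in H^0(U,E^*)$, the section $\pi^*u$ is holomorphic on $\pi^{-1}(U)$, so $\psi:=\pi^*\varphi_u$ is psh with $i\partial\dbar\psi\geq\pi^*(\varepsilon\omega)$ (in the appropriate weighted sense). I then need to descend $\psi$ to $\varphi_u$ on $U$, which is automatic since $\psi$ is constant on fibres. To see that $\varphi_u$ is psh on $X$, I would test on holomorphic discs $f:\Delta\to U$. If the disc meets $\reg$, lift it through $\pi$ generically and use upper semicontinuity together with the curvature inequality on $\reg$. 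In general, use Fornaess–Narasimhan: a function on $X$ whose pull-back to a resolution is psh is psh, because $\pi$ is proper surjective. The positive lower bound $\varepsilon\omega$ is then checked on $\reg$, where $\pi$ is biholomorphic. Since $\varepsilon\omega$ is smooth, it extends across $\xs$ via local embeddings: the function $\varphi_u-$(local potential of $\varepsilon\omega$-correction) is psh on $\reg$, bounded above near $\xs$, and coincides with a pull-back psh function, hence psh everywhere. I expect this direction to be routine.

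For the converse, let $h$ be $\varepsilon\omega$-Griffiths positive on $X$. Sections of $\pi^*E^*$ over $\tl U=\pi^{-1}(U)$ are not all pull-backs, so I would work locally on $\tx$. Near a point $p\in\tx$, use the trivialization of $E$ near $\pi(p)$ to write any holomorphic section $v$ of $\pi^*E^*$ as $\sum g_j\pi^*e_j^*$ with $g_j$ holomorphic on a neighbourhood $V$ of $p$. The $\pi^*(\varepsilon\omega)$-positivity then amounts to showing that $\log|v|^2_{\pi^*h^*}$ is psh with the required curvature bound. On $V\setminus\exc$ this holds, since $\pi$ is biholomorphic there, and $\exc$ is a proper analytic subset. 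By the Riemann extension theorem for psh functions, it remains to show that $\log|v|^2_{\pi^*h^*}$ is locally bounded above near $\exc$ and that its upper semicontinuous regularization agrees with it. Local boundedness is exactly where the extra hypotheses enter. If $h^*$ is locally bounded above near $\xs$, then $|v|^2_{\pi^*h^*}\leq C\sum|g_j|^2$, which gives the bound. If $X$ is normal, I would instead use that $|e_j^*|^2_{h^*}$ are psh on $X$, hence locally bounded above; this is the psh extension property of Theorem \ref{Theorem: idd>0 and a.e. psh} on normal spaces.

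The main obstacle is the identification of $\log|v|^2_{\pi^*h^*}$ with its psh extension across $\exc$ pointwise, not just almost everywhere. For this I would invoke the Griffiths semi-positivity equivalence (Theorem \ref{Theorem: Grif semi-positivity of h and pi^*h}), which already uses local irreducibility to guarantee that $\pi^*h$ is Griffiths semi-positive. The strict $\varepsilon\omega$-bound then passes from $\tx\setminus\exc$ to all of $\tx$, since currents of the form $i\partial\dbar\psi-\pi^*(\varepsilon\omega)\psi$-type inequalities, tested against positive forms, do not charge the pluripolar set $\exc$ once $\psi$ is psh and bounded.
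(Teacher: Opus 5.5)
\textbf{There is a genuine gap.} It comes from the definition you work with. In Definition \ref{Definition: singular Griffiths positivity} and the $\varepsilon\omega$-variant introduced after Proposition \ref{Proposition: normal and Gri>0 then a.e. Grif>0}, $\varepsilon\omega$-Griffiths positivity says only that $|u|^2_{h^*}$ is locally integrable and $\idd|u|^2_{h^*}\geq\varepsilon|u|^2_{h^*}\omega$ in the sense of currents. Here $h$ is merely measurable, and plurisubharmonicity of $\log|u|^2_{h^*}$ is neither assumed nor implied; on singular $X$ not even the a.e.\ version follows. So your ``equivalently'' is false.

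Concretely, take $X=\{zw=0\}$, let $|1|^2_{h^*}$ equal $e^{|z|^2}$ and $e^{1+|w|^2}$ on the two branches, and let $\omega$ be induced by $\idd(|z|^2+|w|^2)$. Then $\pi^*h$ is $\pi^*\omega$-Griffiths positive on $\tx$ (two disjoint lines). Yet, by the example following Theorem \ref{Theorem: idd>0 and a.e. psh}, $\log|1|^2_{h^*}$ agrees a.e.\ with no psh function on $X$.

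Your first direction therefore has three problems. It starts from something you do not have: $\log|\pi^*u|^2_{\pi^*h^*}$ is only a.e.\ equal to a psh function, and that representative need not descend to $X$. It aims at a conclusion that can fail. And it uses a ``local potential of $\varepsilon\omega$'', which does not exist because $\varepsilon\omega$ need not be closed. The missing idea is that this direction is pure duality and needs no assumption on $X$. For a strongly positive test form $\phi$ on $U$, $\pi^*\phi$ is a compactly supported strongly positive test form on $\pi^{-1}(U)$ since $\pi$ is proper, and $\exc$ is Lebesgue-null, so
\begin{align*}
\lara{\idd|s|^2_{h^*}}{\phi}&=\int_{\pi^{-1}(U)}|\pi^*s|^2_{\pi^*h^*}\,\idd\pi^*\phi=\lara{\idd|\pi^*s|^2_{\pi^*h^*}}{\pi^*\phi}\\
&\geq\lara{|\pi^*s|^2_{\pi^*h^*}\,\pi^*(\varepsilon\omega)}{\pi^*\phi}=\lara{\varepsilon|s|^2_{h^*}\omega}{\phi}.
\end{align*}

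In the converse, the target you call the main obstacle is pointwise plurisubharmonicity of $\log|v|^2_{\pi^*h^*}$ across $\exc$. This is unattainable, because the values of $h$ on the null set $\xs$, hence of $\pi^*h$ on $\exc$, are arbitrary. It is also unnecessary. Moreover, Theorem \ref{Theorem: Grif semi-positivity of h and pi^*h} cannot be invoked: it needs $h$ to be Griffiths semi-positive, which is not a hypothesis here, and its forward direction uses no irreducibility at all.

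All that is needed is $\idd|v|^2_{\pi^*h^*}\geq0$ across $\exc$, i.e.\ a.e.\ Griffiths semi-positivity of $\pi^*h$. The paper gets this from Proposition \ref{Proposition: normal and Gri>0 then a.e. Grif>0} and Corollary \ref{Corollary: a.e. Grif semi-posi of h and pi*h}. Your boundedness step, run for $|v|^2_{\pi^*h^*}$ and read almost everywhere, supplies it directly on the manifold $\tx$:
\begin{itemize}
\item[$(1)$] $|v|^2_{\pi^*h^*}$ is a.e.\ psh on $V\setminus\exc$;
\item[$(2)$] it is essentially bounded above near $\exc$, either from the bound on $h^*$, or for normal $X$ from Theorem \ref{Theorem: idd>0 and a.e. psh}\,(a) applied to a frame $e_j^*$ together with Cauchy--Schwarz;
\item[$(3)$] so it agrees a.e.\ with a psh function on $V$.
\end{itemize}
In the bounded case this route does not even use local irreducibility.

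Then finish as the paper does. Split a strongly positive test form as $\phi=(1-\chi)\phi+\chi\phi$ with a cut-off $\chi$ near $\exc$. The inequality holds against $(1-\chi)\phi$, which is supported in $V\setminus\exc$. Also $\lara{\idd|v|^2_{\pi^*h^*}}{\chi\phi}\geq0$. Finally $\lara{|v|^2_{\pi^*h^*}\pi^*(\varepsilon\omega)}{\chi\phi}\to0$ as $\rom{supp}\,\chi$ shrinks, because this term has an $L^1$ density and $\exc$ is null. So what you need is positivity of $\idd|v|^2_{\pi^*h^*}$ on $\exc$, not that it puts no mass there.
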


Unlike Griffiths positivity, Nakano positivity is defined through $L^2$-estimates. 
Therefore, it is insensitive to codimension-one phenomena, including exceptional divisors, and hence admits the following equivalence with its pull-back.

\begin{theorem}[{=\,Theorem \ref{Theorem: h Nak iff pi*h Nak}}]\label{Theorem: h Nak iff pi*h Nak in Introduction}
    Let $\theta$ be a continuous $(1,1)$-form on $X$. 
    It follows that $h$ is $\theta$-Nakano positive on $X$ in the sense of $L^2$-estimates if and only if the pulled-back $\pi^*h$ on $\pi^*E$ is $\pi^*\theta$-Nakano positive on $\tx$ in the sense of $L^2$-estimates.
\end{theorem}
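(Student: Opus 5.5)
The plan is to unwind the definition of $\theta$-Nakano positivity in the sense of $L^2$-estimates on both sides and reduce everything to a comparison over the regular locus. On $X$, this notion should say the following: for every open set $U\subset X$ (or every Stein/weakly pseudoconvex open set), every Kähler metric $\omega$ on $U_{reg}$ that is locally comparable to the ambient data, and every $\dbar$-closed $E$-valued $(n,q)$-form $f$ on $U_{reg}$ with finite right-hand side, there is a solution $u$ of $\dbar u=f$ with
\begin{align*}
\int_{U_{reg}}|u|^2_{h,\omega}\,dV_\omega\leq\int_{U_{reg}}\langle B_{\theta,\omega}^{-1}f,f\rangle_{h}\,dV_\omega .
\end{align*}
The same condition is imposed on $\tx$ with $\pi^*h$ and $\pi^*\theta$. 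Since $\pi:\tx\setminus\exc\to\reg$ is a biholomorphism, identical local data correspond on the two sides. The difference lies only in the sets $\xs$ and $\exc$ and in which open sets and metrics are admissible.

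The key steps, in order, are as follows.

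\textbf{Step 1.} Use the canonical desingularization (Theorem \ref{Theorem: canonical desingularization}): for $U\subset X$ Stein, $\tu=\pi^{-1}(U)$ is holomorphically convex, hence weakly pseudoconvex. Then $\tu\setminus\exc$ carries a complete Kähler metric, obtained by adding $\idd$ of $-\log(-\log|s_{\exc}|^2)$-type functions. Since $\exc$ is an analytic (SNC) divisor, the same is true of $\tu\setminus\exc\cong U_{reg}$.

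\textbf{Step 2.} Transport the $L^2$-estimate. The $(n,q)$-form integrand $\langle B^{-1}f,f\rangle\,dV_\omega$ is independent of $\omega$ in the relevant monotonicity sense for $(n,\cdot)$-forms. The norms of $(n,q)$-forms are also invariant under biholomorphism, so $\pi^*$ identifies the $L^2$-spaces on $\tu\setminus\exc$ and $U_{reg}$ isometrically, together with the curvature operators $B_{\pi^*\theta}$ and $B_\theta$.

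\textbf{Step 3.} Use the standard extension lemma: an $L^2_{loc}$ form $u$ solving $\dbar u=f$ off a closed set of measure zero that is analytic (here $\exc$, or $\pi^{-1}(\xs)$ inside a manifold) solves it across the set. This uses the $L^2$ Riemann extension/Demailly's lemma for $\dbar$ across analytic subsets, applicable because $(n,q)$-forms with $L^2$ bounds relative to a complete metric which dominates a smooth one remain $L^2_{loc}$. Then solutions obtained on $\tu\setminus\exc$ give solutions on $\tu$, and conversely. This yields ``$h$ Nakano $\Rightarrow$ $\pi^*h$ Nakano'' on sets of the form $\pi^{-1}(U)$. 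Because $\pi$ is biholomorphic off $\exc$, a neighborhood basis of points of $\tx\setminus\exc$ is also fine. For points of $\exc$, the definition presumably only requires the estimate on some neighborhood basis of Stein-type sets, and the preimages $\pi^{-1}(U)$ are weakly pseudoconvex. I would invoke the $L^2$-existence on weakly pseudoconvex Kähler manifolds, exhausting by sublevel sets, to pass from Stein neighborhoods in $\tx$ to what is needed. The converse direction applies the hypothesis on $\tu$ and restricts to $\tu\setminus\exc\cong U_{reg}$.

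\textbf{Main obstacle.} The hardest part is the mismatch of admissible test domains and metrics. On $\tx$ the definition uses small Stein balls around points of $\exc$, which are not preimages of open sets of $X$; on $X$ it uses $U_{reg}$ with possibly incomplete metrics. I would first show that each definition is equivalent to its version with ``weakly pseudoconvex open sets'' and ``complete Kähler metrics on the complement of an analytic set''. This would go through a gluing/exhaustion argument of Hörmander type: solve with weights $e^{-\psi}$ for psh exhaustions $\psi$, then take weak limits. With this common reformulation, Steps 2 and 3 give the equivalence directly.
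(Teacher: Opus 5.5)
There is a genuine gap. The mismatch of test domains that you flag as the main obstacle is never resolved. The route you propose for it is neither carried out nor needed: a common reformulation in terms of weakly pseudoconvex sets and complete K\"ahler metrics, obtained by a gluing/exhaustion argument. Definition~\ref{Definition: singular Nakano positive} requires $h$ to be uniformly positive definite. It tests $h$ on \emph{every} Stein open set $S$ with $E|_S$ trivial, every K\"ahler metric $\omega_S$ on $S$, and every smooth weight $\psi$ with $\theta+\idd\psi>0$. The missing idea, which is the whole content of the paper's proof, is this: delete a hypersurface from a test domain on one side so that it lies where $\pi$ is biholomorphic, and then it is an admissible test domain on the other side.

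\textbf{Forward direction.} Take a Stein $\ts\subset\tx$ meeting $\exc$. The set $\ts\setminus\exc$ is Stein, being the complement of a hypersurface in a Stein manifold. Hence $\pi(\ts\setminus\exc)\subset\reg$ is a Stein open subset of $X$ on which $E$ is trivial, and the transported $\omega_{\ts}$, $\tl{\psi}$, $\tl{g}$ are admissible data there. Since $\exc$ has measure zero, the integrals are unchanged. The hypothesis on $X$ then gives $u$ with the right estimate and $\dbar\pi^*u=\tl{g}$ on $\ts\setminus\exc$. Lemma~\ref{Lemma: dbar-extension lemma} extends the equation across $\exc$.

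\textbf{Converse.} You cannot apply the hypothesis on $\tu=\pi^{-1}(U)$: it contains the compact fibres of $\pi$, so it is not Stein in general, and the definition on $\tx$ only concerns Stein sets. Instead, let $A$ be a hypersurface of the Stein space $S$ containing $S_{sing}$; note that $S_{reg}$ itself need not be Stein when $S_{sing}$ has codimension $\geq2$. Apply the hypothesis on $\pi^{-1}(S\setminus A)\cong S\setminus A$ with $\pi^*\omega_S$ and $\pi^*\psi$, then extend $\dbar u=g$ across $A\cap S_{reg}$. No exhaustion, weak limits or complete metrics enter. Any global existence theorem for singular $h$ on weakly pseudoconvex sets would itself have to be derived from the defining property on Stein sets.

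Your justification for applying the extension lemma is also incorrect. For $(n,q)$-forms with $q\geq1$ and $\hat\omega\geq\omega$, one has $|u|^2_{\hat\omega}dV_{\hat\omega}\leq|u|^2_{\omega}dV_{\omega}$ by Lemma~\ref{Lemma: inequality of (n,0)-forms}. So $L^2$ bounds with respect to a complete metric dominating a smooth one do \emph{not} give $L^2_{loc}$ bounds with respect to the smooth one. For the same reason, an estimate for $(n,q-1)$-forms with $q\geq2$ obtained in a larger metric does not transfer back directly.

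In the direct argument, the K\"ahler metric of the test domain is smooth across the deleted set. What makes Lemma~\ref{Lemma: dbar-extension lemma} applicable is the uniform positive definiteness of $h$ (resp. $\pi^*h$), which turns $h$-weighted $L^2$ bounds into ordinary $L^2_{loc}$ bounds. Since this condition is part of the definition, you must also check that it passes between $h$ and $\pi^*h$. This follows from the compactness of the fibres of $\pi$ and the fact that $\pi^*h_0$ is a smooth Hermitian metric on $\pi^*E$ for any smooth $h_0$ on $E$.
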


By applying these theorems and reducing the problem to the case of complex manifolds \cite{Ina22}, we obtain the following extension to complex spaces.

\begin{theorem}[{=\,Theorem\,\ref{Theorem: h>Grif then h det h>Nak}}]\label{Theorem: h>Grif then h det h>Nak in Introduction}
    Let $X$ be a complex space. 
    If $h$ is Griffiths positive and uniformly positive definite (resp. a.e.\! Griffiths semi-positive) on $X$, then $h\otimes\det h$ on $E\otimes\det E$ is Nakano positive (resp. Nakano semi-positive) on $X$.
\end{theorem}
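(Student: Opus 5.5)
The plan is to reduce to the manifold case on a resolution and then come back down using Theorem \ref{Theorem: h Nak iff pi*h Nak in Introduction}. Nakano positivity on $X$ is a local statement in the sense of $L^2$-estimates, so we may work on small open subsets of $X$ and assume they are relatively compact. Let $\pi:\tx\longrightarrow X$ be the canonical desingularization. Since $\pi^*(E\otimes\det E)=\pi^*E\otimes\det\pi^*E$ and $\pi^*(h\otimes\det h)=\pi^*h\otimes\det\pi^*h$, Theorem \ref{Theorem: h Nak iff pi*h Nak in Introduction} (with $\theta=\varepsilon\omega$, or $\theta=0$ in the semi-positive case) shows it suffices to prove that $\pi^*h\otimes\det\pi^*h$ is $\pi^*\theta$-Nakano positive (resp. semi-positive) on $\tx$ in the sense of $L^2$-estimates.

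The first step is to transfer the hypothesis upstairs. In the semi-positive case, a.e. Griffiths semi-positivity of $h$ should mean semi-positivity on $\reg$, or off a negligible set. Pulled back, $\pi^*h$ is then Griffiths semi-positive on $\tx\setminus\exc$. The point is that $\log|u|_{(\pi^*h)^*}$, for local holomorphic sections $u$ of $\pi^*E^*$, should extend plurisubharmonically across the divisor $\exc$. I would get this from the psh extension result Theorem \ref{Theorem: idd>0 and a.e. psh}, or from the forward direction of the Griffiths pull-back theorem when $h$ is semi-positive on all of $X$. In the strictly positive case, I would apply Theorem \ref{Theorem: h Gri iff pi*h Gri}: the direction ``$h$ positive $\Rightarrow$ $\pi^*h$ positive'' requires normality, or local irreducibility together with $h^*$ bounded near $\xs$. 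Uniform positive definiteness of $h$ gives exactly that $h^*$ is locally bounded from above, which is presumably why the hypothesis is there. We then obtain that $\pi^*h$ is $\pi^*(\varepsilon\omega)$-Griffiths positive on $\tx$.

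The second step applies the manifold result of \cite{Ina22} on $\tx$. It says that a Griffiths positive (resp. semi-positive) singular metric $g$ yields $g\otimes\det g$ that is Nakano positive (resp. semi-positive) in the sense of $L^2$-estimates. Its positivity form must be compatible with $\pi^*\theta$. Since $\pi^*\omega$ is only semi-positive and degenerates along $\exc$, I would check that Inayama's $L^2$-estimate argument only needs $\theta$ continuous, not a Hermitian metric. Alternatively, I would invoke a version with a continuous $(1,1)$-form, which is the formulation used in Theorem \ref{Theorem: h Nak iff pi*h Nak in Introduction}. Theorem \ref{Theorem: h Nak iff pi*h Nak in Introduction} then pushes the conclusion down to $X$.

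The main obstacle is the first step in the positive case when $X$ is not locally irreducible. There Theorem \ref{Theorem: h Gri iff pi*h Gri} does not apply directly. I would try to avoid it by working branch by branch: Nakano positivity in the $L^2$ sense is insensitive to codimension-one phenomena, so one can use the normalization $\nu:X^\nu\to X$, which is a finite map, and check the definition on $X^\nu$, where normality holds. If that fails, I would use the uniform positive definiteness directly on $\tx$. It gives boundedness of $(\pi^*h)^*$, so Griffiths positivity along $\exc$ follows from the Riemann-type extension of psh functions bounded above across analytic subsets.
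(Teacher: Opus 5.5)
Your overall architecture (pull back to $\tx$, apply Inayama's theorem there, descend with Theorem \ref{Theorem: h Nak iff pi*h Nak}) is the paper's. In the semi-positive case it is essentially the paper's proof, but you misread the hypothesis. A.e.\ Griffiths semi-positivity does not mean semi-positivity on $\reg$ or off a null set. It means that near \emph{every} point of $X$, singular points included, $\log|u|_{h^*}$ agrees a.e.\ with a plurisubharmonic function defined on a neighborhood in $X$. That is exactly what lets it pull back to $\tx$ with no normality or irreducibility (Corollary \ref{Corollary: a.e. Grif semi-posi of h and pi*h}). The tool you invoke instead, Theorem \ref{Theorem: idd>0 and a.e. psh}, needs $X$ normal, or locally irreducible together with an upper bound, and neither is assumed. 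Under your reading there is also no upper bound near $\xs$, so the extension across $\exc$ can genuinely fail. With the corollary, you take a Griffiths semi-positive $\cal{H}$ equal to $\pi^*h$ a.e.\ and apply \cite[Theorem 1.3]{Ina22}. The $L^2$ condition only sees $\pi^*h$ up to null sets, and uniform positive definiteness comes from Corollary \ref{Corollary: a.e. Grif semi-posi then uniformly positive definite}.

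In the positive case there is a real gap.
\begin{itemize}
\item Your main step uses the converse direction of Theorem \ref{Theorem: h Gri iff pi*h Gri}, which needs local irreducibility.
\item The normalization detour is not carried out.
\item Your last fallback (an upper bound from uniform positive definiteness plus Riemann-type extension across $\exc$) only gives a.e.\ Griffiths \emph{semi}-positivity of $\pi^*h$ on $\tx$. Via Remark \ref{Remark: condition of a.e. Grif in Thm of Grif posi}, that is enough for $\pi^*(\varepsilon\omega)$-Griffiths positivity on $\tx$.
\item You would then still need Inayama's theorem for the degenerate form $\pi^*(\varepsilon\omega)$, which you leave unverified.
\end{itemize}

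The missing idea is that Griffiths positivity across $\exc$ is never needed. The paper applies \cite[Theorem 3.6]{Ina22} only on the manifold $\tx\setminus\exc\cong\reg$. There $\pi^*(\varepsilon\omega)$ is a genuine Hermitian metric, and Griffiths positivity of $h$ on $\reg$ already supplies a Griffiths semi-positive representative. This gives $(r+1)\pi^*(\varepsilon\omega)$-Nakano positivity of $\pi^*h\otimes\det\pi^*h$ on $\tx\setminus\exc$.

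The paper then extends this $L^2$ condition across $\exc$ as in the proof of Theorem \ref{Theorem: h Nak iff pi*h Nak}. For a Stein $\ts\subset\tx$, the set $\ts\setminus\exc$ is Stein; one solves there and extends the solution by Lemma \ref{Lemma: dbar-extension lemma}. This is where uniform positive definiteness is really used: it makes the weighted $L^2$ solution $L^2_{loc}$ for Lebesgue measure, so the lemma applies. It also makes $h\otimes\det h$ uniformly positive definite, as the definition requires. It is not needed to bound $h^*$ for a plurisubharmonic extension. This route removes both obstacles you identified.
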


Here, Griffiths semi-positivity implies a.e.\! Griffiths semi-positivity, which is equivalent to singular semi-positivity when the bundle is a line bundle. 
We introduce the following theorem, which plays a key role in removing the \kah assumption from Nakano-Nadel vanishing in the (relatively) compact case (= Theorem \ref{Theorem: Nakano-Nadel vanishing on relatively cpt w.p.c. cpx sp}) and in proving the vanishing of higher direct image sheaves (= Theorem \ref{Theorem: higher direct image vanishing}). 
By combining the Negativity Lemma (see \cite{Hir64}, \cite[Lemma 2.2]{Wat25b}) with the strong openness property (= Theorem \ref{Theorem: strong openness property}, see \cite{GZ15,LXYZ24}), the $L^2$-subsheaf can be preserved on the resolution space $\tx$ while compensating for the positivity that degenerates along $\exc$.

\begin{theorem}[{=\,Theorem \ref{Theorem: h>Nak then pi*he-psi>Nak & E(pi*h)=E(pi*he-psi)}}]\label{Theorem: h>Nak then pi*he-psi>Nak & E(pi*h)=E(pi*he-psi) in Introduction}
    Let $V$ be a relatively compact open subset of $X$, and set $\tl{V}:=\pi^{-1}(V)$.
    If $h$ is Nakano positive (resp. Griffiths positive and a.e.\! Griffiths semi-positive) on an open neighborhood of $\overline{V}$, then there exist a quasi-plurisubharmonic function $\psi:\tl{V}\longrightarrow[-\infty,+\infty)$, which is smooth on $\tl{V}\setminus\exc$,
    and a small number $\varepsilon_V>0$ such that the singular Hermitian metric $\pi^*he^{-\varepsilon\psi}$ on $\pi^*E$ is also Nakano positive (resp. Griffiths positive) on $\tl{V}$ for any $0<\varepsilon<\varepsilon_V$. 
    
    Furthermore, by additionally assuming that $h$ is a.e.\! Griffiths semi-positive, there exists a small number $\varepsilon_0$ with $0<\varepsilon_0<\varepsilon_V$ such that we have $\scr{E}(\pi^*h)=\scr{E}(\pi^*he^{-\varepsilon\psi})\, ($resp. $\scr{E}(\pi^*h\otimes\det \pi^*h)=\scr{E}(\pi^*h\otimes\det \pi^*he^{-(1+r)\varepsilon\psi}))$ on $\tl{V}$ for any $0<\varepsilon\leq\varepsilon_0$.
\end{theorem}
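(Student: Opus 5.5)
The plan is to build $\psi$ from the Negativity Lemma and then adjust the metric by a small multiple of it, in three steps.

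\textbf{Step 1: construct $\psi$.} Since $\overline{V}$ is compact and $\pi$ is a composition of blow-ups with smooth centres over $\xs$, the Negativity Lemma (\cite{Hir64}, \cite[Lemma 2.2]{Wat25b}) gives an effective divisor $D$ supported on $\exc$ such that $-D$ is $\pi$-ample over a neighbourhood of $\overline{V}$. Concretely, there are:
\begin{itemize}
\item a smooth Hermitian metric $h_D$ on $\mathcal{O}(D)$, and
\item a Hermitian metric $\omega$ on $X$ (locally the restriction of an ambient one),
\end{itemize}
such that $-i\Theta_{h_D}(\mathcal{O}(D))+C\pi^*\omega\geq c\,\tilde\omega>0$ on $\tl{V}$. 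Here $\tilde\omega$ is a Hermitian metric on $\tx$ and $C,c>0$. Let $s_D$ be the canonical section and set $\psi:=\log|s_D|^2_{h_D}$. Then $\psi$ is quasi-psh, smooth off $\exc$, and by Poincar\'e–Lelong
\[
\idd\psi=[D]-i\Theta_{h_D}\geq c\tilde\omega-C\pi^*\omega .
\]

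\textbf{Step 2: positivity of $\pi^*he^{-\varepsilon\psi}$.} In the Nakano case, Theorem \ref{Theorem: h Nak iff pi*h Nak in Introduction} shows that $\pi^*h$ is $\pi^*\theta$-Nakano positive on $\tl{V}$, where $\theta\geq\delta\omega$ near $\overline{V}$ by compactness. Twisting by $e^{-\varepsilon\psi}$ adds $\varepsilon\idd\psi$ to the curvature in the $L^2$-estimate sense. One checks this by solving the $\dbar$-equation with weight $e^{-\varepsilon\psi}$ via the manifold definition, using that $\psi$ is quasi-psh so locally $\varepsilon\psi+A|z|^2$ is psh. This yields $(\pi^*\theta+\varepsilon\idd\psi)$-Nakano positivity. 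For $\varepsilon<\varepsilon_V:=\delta/C$ we get
\[
\pi^*\theta+\varepsilon\idd\psi\geq(\delta-\varepsilon C)\pi^*\omega+\varepsilon c\tilde\omega,
\]
which is a genuine positive form on all of $\tl{V}$, including along $\exc$. The Griffiths case is identical: we use Theorem \ref{Theorem: h Gri iff pi*h Gri}, applicable since a.e. Griffiths semi-positivity controls $h^*$. The defining condition "$\log|u|^2_{h^*}-\text{form}$ psh" is stable under adding $\varepsilon\psi$, by the same inequality.

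\textbf{Step 3: equality of multiplier sheaves.} The inclusion $\scr{E}(\pi^*he^{-\varepsilon\psi})\subset\scr{E}(\pi^*h)$ is trivial, since $\psi$ is bounded above on $\tl{V}$. For the reverse inclusion, a.e. Griffiths semi-positivity makes $\pi^*h$ (respectively $\pi^*h\otimes\det\pi^*h$) satisfy the strong openness property (Theorem \ref{Theorem: strong openness property}, \cite{GZ15,LXYZ24}). Thus any local germ $f\in\scr{E}(\pi^*h)$ satisfies $\int|f|^2_{\pi^*h}e^{-\varepsilon'\psi}<\infty$ for some $\varepsilon'>0$. Since $\scr{E}(\pi^*h)$ is coherent, this needs only finitely many generators over a compact neighbourhood. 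Strong openness, in its form giving a uniform $\varepsilon$ for a coherent ideal on a compact set, then produces one $\varepsilon_0$. Adjusting $\varepsilon_0<\varepsilon_V$ and using monotonicity in $\varepsilon$ gives equality for all $0<\varepsilon\leq\varepsilon_0$. For the $\det$ version, note that $\det(\pi^*he^{-\varepsilon\psi})=\det\pi^*h\,e^{-r\varepsilon\psi}$, which accounts for the exponent $(1+r)\varepsilon$; the same argument then applies.

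\textbf{Main obstacle.} The hardest step will be the uniformity in Step 3. Strong openness is pointwise and phrased for $e^{-\varphi}$ with $\varphi$ psh, not for twisting a vector bundle metric by an extra quasi-psh weight. I would first try to reduce to a Noetherian/coherence argument via the vector bundle version in \cite{LXYZ24}, treating $\psi$ as locally a difference of a psh function and a smooth function. If that fails, I would pass to an increasing union of the ideals $\scr{E}(\pi^*he^{-\varepsilon\psi})$ as $\varepsilon\downarrow0$ and use stabilisation on $\overline{V}$.
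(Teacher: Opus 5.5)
Your Step 1 is an explicit version of Lemma \ref{Lemma: key lemma}. Step 3 is sound, and it takes a slightly different route from the paper. You apply the stalkwise statement of Corollary \ref{Corollary: strong openness property} to finitely many local generators of the coherent sheaf $\scr{E}(\pi^*h)$ over the compact set $\pi^{-1}(\overline{V})$, take the smallest exponent, and use $e^{-\varepsilon'\psi}\le e^{(\varepsilon-\varepsilon')\sup\psi}e^{-\varepsilon\psi}$ for $\varepsilon'<\varepsilon$. The paper instead uses that each $\scr{E}(\pi^*he^{-\varepsilon\psi})$ is coherent, which is a consequence of the first part, and applies the strong Noetherian property to this increasing family. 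Your route does not need the first part and no ``uniform'' form of strong openness. The quasi-psh issue you worry about is harmless, since locally $\psi=\psi_{\rom{sm}}+\psi_{\rom{psh}}$ with $\psi_{\rom{sm}}$ bounded.

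So Step 3 is not the hard step; Step 2 is, and there your argument has a gap. In the Griffiths case the condition is an inequality of currents, so after passing to $\log|u|^2$, adding $\varepsilon\idd\psi$ is a legitimate operation. Nakano positivity in the sense of $L^2$-estimates (Definition \ref{Definition: singular Nakano positive}) is different: it is an existence statement tested only against \emph{smooth} weights on Stein sets, so the two cases are not ``identical''. You cannot invoke the $\pi^*\theta$-Nakano positivity of $\pi^*h$ on a Stein set $\tl{S}$ meeting $\exc$ with the weight $\varepsilon\psi+\varPsi$, which is $-\infty$ on $\exc$. Your remark that $\varepsilon\psi+A|z|^2$ is locally psh does not help. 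It gives no smooth weight on all of $\tl{S}$, and the cost $-A\idd|z|^2$ cannot be absorbed by $\pi^*\theta$, which degenerates along $\exc$.

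The missing idea is to remove $\exc$. Take test data $(\tl{S},\omega_{\tl{S}},\varPsi,g)$ for the target form $\tl{\gamma}_\varepsilon:=(\delta-\varepsilon C)\pi^*\omega+\varepsilon c\,\tilde\omega$, and pass to $\tl{S}\setminus\exc$, which is still Stein because $\exc$ is a hypersurface. There $\varepsilon\psi+\varPsi$ is smooth, and $\pi^*\theta+\idd(\varepsilon\psi+\varPsi)\ge\tl{\gamma}_\varepsilon+\idd\varPsi>0$ holds pointwise. Hence $B^{-1}_{\pi^*\theta,\varepsilon\psi+\varPsi,\omega_{\tl{S}}}\le B^{-1}_{\tl{\gamma}_\varepsilon,\varPsi,\omega_{\tl{S}}}$. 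The $\pi^*\theta$-Nakano positivity of $\pi^*h$, applied on $\tl{S}\setminus\exc$, then gives $u$ with $\dbar u=g$ there and the required estimate; the integrals over $\tl{S}$ and $\tl{S}\setminus\exc$ agree. Since $\psi$ is bounded above and $\pi^*h$ is uniformly positive definite, $u$ has $L^2_{loc}$ coefficients on $\tl{S}$. Lemma \ref{Lemma: dbar-extension lemma} then gives $\dbar u=g$ on all of $\tl{S}$. The same two facts give the uniform positive definiteness of $\pi^*he^{-\varepsilon\psi}$ that the definition demands. A global regularization of $\psi$ with small loss, followed by a weak-limit argument, might also work, but that is a substantial argument you have not supplied.

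A minor point in the Griffiths case: the converse of Theorem \ref{Theorem: h Gri iff pi*h Gri} requires normality or local irreducibility. What actually lets you use it here is Remark \ref{Remark: condition of a.e. Grif in Thm of Grif posi} together with Corollary \ref{Corollary: a.e. Grif semi-posi of h and pi*h}, not boundedness of $h^*$ alone.
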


In order to construct $L^2$-Dolbeault resolutions on complex spaces, we introduce several notions and concepts.
We say that $h$ is \textit{locally Nakano} (resp. \textit{Griffiths}) \textit{bounded from below on} $X$ if for every point $x\in X$, 
there exists a smooth strictly plurisubharmonic function $\psi_U$ on an open Stein neighborhood $U$ of $x$ such that the metric $he^{-C_U\psi_U}$ is Nakano (resp. a.e.\! Griffiths) semi-positive on $U$, for a sufficiently large constant $C_U>0$.
In particular, when both conditions are satisfied, it is said to be \textit{locally Griffiths-Nakano bounded from below on} $X$.
Clearly, a smooth Hermitian metric satisfies these condition, and in the case of a line bundle, the condition is equivalent to any local weight function of $h$ begin quasi-plurisubharmonic.

The \textit{Grauert}-\textit{Riemenschneider canonical sheaf} $\omega_X^{GR}$ on $X$ is defined by $\Gamma(U,\omega_X^{GR}):=\{s\in\Gamma(U\cap\reg,\omega_{\reg})\mid i^{n^2}s\wedge\overline{s}\in L^1_{loc}(U)\}$ for any open subset $U\subset X$. 
For a singular Hermitian metric $h$ on $E$, the \textit{Grauert}-\textit{Riemenschneider canonical} $L^2$-\textit{subsheaf} $\ogr(E,h)$, which is an $\cal{O}_X$-subsheaf of $\ogr\otimes\cal{O}_X(E)$, is defined by 
\begin{align*}
    \Gamma(U,\omega_X^{GR}(E,h)):=\biggl\{s\in\Gamma(U\cap\reg,\omega_{\reg}\otimes\cal{O}_X(E))\biggm| \int_{U_{reg}}|s|^2_{h,\omega}\,dV_\omega<+\infty\biggr\}
\end{align*}
for any relatively compact open subset $U\Subset X$ and a Hermitian metric $\omega$ on $X$ (see Definition \ref{Definition: GR canonical L2-subsheaf}).  
This definition is independent of the choice of $\omega$.
Arguing as in \cite[Proposition 5.8]{Dem12}, and using the $\dbar$-extension Lemma \ref{Lemma: dbar-extension lemma} together with Lemma \ref{Lemma: inequality of (n,0)-forms}, the $\cal{O}_X$-sheaf $\ogr(E,h)$ satisfies the following functoriality property.

\begin{proposition}\label{Proposition: functoriality property}
    For any resolution of singularities $\pi:\tx\longrightarrow X$, we have 
    \begin{center}
        $\ogr(E,h)=\pi_*(K_{\tx}\otimes\scr{E}(\pi^*h))$.
    \end{center}

    In particular, if $X$ is smooth, then $\ogr(E,h)=K_X\otimes\scr{E}(h)$ by definition and $\pi:\tx\longrightarrow X$ can be taken to be any proper modification. 
\end{proposition}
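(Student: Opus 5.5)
The plan is to compare local sections on both sides over a fixed relatively compact open set $U\Subset X$, writing $\tu:=\pi^{-1}(U)$. The map $\pi$ is a biholomorphism from $\tu\setminus\exc$ onto $U\cap\reg$ (after shrinking $\exc$ to $\pi^{-1}(\xs)$; since $\pi$ is proper and $\exc$ is nowhere dense, this costs nothing). Hence pull-back $s\mapsto\pi^*s$ identifies sections $s\in\Gamma(U\cap\reg,\omega_{\reg}\otimes\cal{O}_X(E))$ with sections $\tl{s}\in\Gamma(\tu\setminus\exc,K_{\tx}\otimes\cal{O}(\pi^*E))$. By definition,
\[
\Gamma(U,\pi_*(K_{\tx}\otimes\scr{E}(\pi^*h)))=\Gamma(\tu,K_{\tx}\otimes\scr{E}(\pi^*h)).
\]
So it suffices to prove two things: (a) for $(n,0)$-forms the $L^2$ norm is invariant, and (b) an $L^2$ holomorphic section on $\tu\setminus\exc$ extends holomorphically across $\exc$.

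For (a), I would use the fact that for an $E$-valued $(n,0)$-form the pointwise quantity $|s|^2_{h,\omega}\,dV_\omega$ equals $c_n\, \langle s\wedge\overline{s}\rangle_h$. This is independent of $\omega$, which is the content of Lemma \ref{Lemma: inequality of (n,0)-forms} in the form I would invoke. Consequently
\[
\int_{U_{reg}}|s|^2_{h,\omega}\,dV_\omega=\int_{\tu\setminus\exc}c_n\langle\pi^*s\wedge\overline{\pi^*s}\rangle_{\pi^*h}=\int_{\tu}|\pi^*s|^2_{\pi^*h,\tl{\omega}}\,dV_{\tl{\omega}},
\]
for any Hermitian metric $\tl{\omega}$ on $\tx$, because $\exc$ has measure zero. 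Here $\pi^*h$ is a singular Hermitian metric on $\pi^*E$, as in Definition \ref{Definition: singular Hermitian metrics on E}.

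Step (b) is the main point. Since $h$ is a singular Hermitian metric, $h^*$ is locally bounded above near any point (or at least $h$ is locally bounded below by a smooth metric in the relevant sense). Then the $L^2$ condition for $\pi^*h$ gives local $L^2$-integrability of $\tl{s}$ with respect to a smooth metric on $\pi^*E$, locally over $\tu$. An $L^2_{loc}$ holomorphic $(n,0)$-form on the complement of the analytic set $\exc$ extends holomorphically across it, by the Riemann-type extension. Alternatively, one can use the $\dbar$-extension Lemma \ref{Lemma: dbar-extension lemma}: $\dbar\tl{s}=0$ holds on all of $\tu$ in the sense of currents, so $\tl{s}$ is holomorphic. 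Its germs then lie in $\scr{E}(\pi^*h)$ by (a) and the local definition of $\scr{E}$. Conversely, any section of $K_{\tx}\otimes\scr{E}(\pi^*h)$ over $\tu$ restricts to $\tu\setminus\exc$, pushes down to $U\cap\reg$, and is $L^2$ by (a). Compactness of $\pi^{-1}(\overline{U})$ lets local $L^2$ and $L^2$ over $\tu$ agree. This gives $\ogr(E,h)=\pi_*(K_{\tx}\otimes\scr{E}(\pi^*h))$.

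For the smooth case, take $\pi=\mathrm{id}$, which gives $K_X\otimes\scr{E}(h)$ directly. For an arbitrary proper modification $\pi$, the argument above used only two facts: that $\pi$ is biholomorphic off a nowhere dense analytic set, and that $\tx$ is smooth. So it applies verbatim. The one delicate point I expect is the local lower bound on $h$ needed in (b). If Definition \ref{Definition: singular Hermitian metrics on E} does not provide it, I would instead apply the $\dbar$-extension lemma using only the $L^2$ bound for the smooth reference metric obtained on compact subsets away from the singular locus of $h$.
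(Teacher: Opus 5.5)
Your route is the one the paper intends: pull back, use the invariance of $|s|^2_{h,\omega}\,dV_\omega$ for $(n,0)$-forms (Lemma \ref{Lemma: inequality of (n,0)-forms}), then extend across $\mathrm{Exc}$ by Lemma \ref{Lemma: dbar-extension lemma}, as in Demailly's Proposition 5.8. However, your justification of step (b) is false.

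A singular Hermitian metric in the sense of Definition \ref{Definition: singular Hermitian metrics on E} is only measurable with $0<\det h<+\infty$ a.e. Nothing in that definition makes $h^*$ locally bounded above. The local bound $|\bullet|^2_h\geq C|\bullet|^2_{h_0}$ is exactly the additional hypothesis of Definition \ref{Definition: uniformly positive definite}. The paper stresses (Proposition \ref{Proposition: Ker dbar}, Remark-Theorem \ref{Remark-Theorem: uniformly positive definite and L2-Dolbeault resolusions}) that this bound is what licenses the $\overline{\partial}$-extension Lemma. Without it, finiteness of $\int|\pi^*s|^2_{\pi^*h}$ gives no $L^2_{loc}$ control of $\pi^*s$ at points of $\mathrm{Exc}$.

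Your fallback cannot repair this. The lemma needs $L^2_{loc}$ coefficients on a neighbourhood of each point of $\mathrm{Exc}$, and $h$ may degenerate precisely along $\pi^{-1}(X_{sing})$. Bounds on compact sets away from the degeneracy of $h$ therefore say nothing there.

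In fact the identity fails in that generality. Take the node $X=\{xy=0\}$ and $E$ trivial with $|1|^2_h=|x|^2+|y|^2$. Let $s=dx/x$ on the branch $\{y=0\}$ and $0$ on the other branch. Then $\int_{U_{reg}}|s|^2_{h,\omega}\,dV_\omega<+\infty$, so $s$ is a section of $\omega_X^{GR}(E,h)$ by Definition \ref{Definition: GR canonical L2-subsheaf}. Yet $\pi^*s$ has a pole on the normalization, so $s$ is not a section of $\pi_*(K_{\widetilde{X}}\otimes\mathscr{E}(\pi^*h))$.

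So an extra input is needed; the paper's own appeal to the $\overline{\partial}$-extension Lemma tacitly uses the same one. You have two options.

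\begin{itemize}
\item Assume $h$ is uniformly positive definite. This holds wherever the paper uses the proposition: Nakano positivity in the sense of $L^2$-estimates contains it by definition, and a.e.\ Griffiths semi-positivity implies it (Corollary \ref{Corollary: a.e. Grif semi-posi then uniformly positive definite}). With this assumption your (a) and (b) go through as written.
\item Read $\omega_X^{GR}(E,h)$, as the Introduction does, as a subsheaf of $\omega_X^{GR}\otimes\mathcal{O}_X(E)$. Then $\pi^*s$ is already holomorphic on $\pi^{-1}(U)$, because $\omega_X^{GR}=\pi_*\omega_{\widetilde{X}}$ and $E$ is locally free (projection formula). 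Only (a) is needed.
\end{itemize}

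A smaller point concerns your claim that the argument ``applies verbatim'' to an arbitrary proper modification of a smooth $X$, or to a resolution that is not an isomorphism over $X_{reg}$. In those cases the extension needed for $\pi_*(\cdots)\subset\omega_X^{GR}(E,h)$ is on the base, across $\pi(\mathrm{Exc}_\pi)$, not on $\widetilde{X}$. It follows from Hartogs, since that set has codimension at least $2$ by normality, but this is a different step from the one you wrote.
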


The coherence of this canonical $L^2$-subsheaf $\ogr(E,h)$ is obtained by Proposition \ref{Proposition: functoriality property} and Theorems \ref{Theorem: h Nak iff pi*h Nak in Introduction} and \ref{Theorem: h>Grif then h det h>Nak in Introduction}, together with the coherence of $\scr{E}(\pi^*h)$ on $\tx$ \cite{Ina22} and Grauert's direct image theorem.

\begin{theorem}[{=\,Theorem \ref{Theorem: coherence of ogr(E,h) if Nakano} and Corollary \ref{Corollary: coherence of ogr(E,h) if a.e. Griffiths}}]
    If $h$ is locally Nakano bounded from below on $X$, then $\ogr(E,h)$ is coherent on $X$. 
    If $h$ is locally Griffiths bounded from below on $X$, then $\ogr(E\otimes\det E,h\otimes\det h)$ is coherent on $X$. 
\end{theorem}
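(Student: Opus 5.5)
The argument is local on $X$, so fix $x\in X$ and choose, as in the definition of being locally Nakano bounded from below, a Stein neighborhood $U$ of $x$, a smooth strictly plurisubharmonic function $\psi_U$ on $U$ and a constant $C_U>0$ such that $h_U:=he^{-C_U\psi_U}$ is Nakano semi-positive on $U$. Since $\psi_U$ is smooth, it is bounded on any relatively compact $U'\Subset U$ containing $x$. Hence $|s|^2_{h,\omega}$ and $|s|^2_{h_U,\omega}$ are mutually bounded on $U'_{reg}$, and $\ogr(E,h)|_{U'}=\ogr(E,h_U)|_{U'}$ directly from the definition. It therefore suffices to prove that $\ogr(E,h_U)$ is coherent on $U'$, with $h_U$ Nakano semi-positive, i.e.\ $0$-Nakano positive in the sense of $L^2$-estimates.

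Let $\pi:\tx\longrightarrow X$ be the canonical desingularization and set $\tl{U}:=\pi^{-1}(U)$. Theorem \ref{Theorem: h Nak iff pi*h Nak in Introduction}, applied with $\theta=0$ (or $\theta=C_U\idd\psi_U$ before the twist), shows that $\pi^*h_U=\pi^*h\,e^{-C_U\pi^*\psi_U}$ is Nakano semi-positive in the sense of $L^2$-estimates on the complex manifold $\tl{U}$. We now invoke Inayama's coherence theorem \cite{Ina22}: on a complex manifold, the multiplier submodule sheaf $\scr{E}(g)$ of a Nakano semi-positive (in the $L^2$ sense) singular Hermitian metric $g$ is coherent. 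This gives coherence of $\scr{E}(\pi^*h_U)$ on $\tl{U}$, and hence of $K_{\tx}\otimes\scr{E}(\pi^*h_U)$.

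By Proposition \ref{Proposition: functoriality property}, $\ogr(E,h_U)=\pi_*(K_{\tx}\otimes\scr{E}(\pi^*h_U))$ on $U$. Since $\pi$ is proper, Grauert's direct image theorem implies that this sheaf is coherent on $U$, and in particular on $U'$. As $x$ was arbitrary, the first claim follows.

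For the second claim, suppose $h$ is locally Griffiths bounded from below, so that $h_U=he^{-C_U\psi_U}$ is a.e.\ Griffiths semi-positive on $U$. By Theorem \ref{Theorem: h>Grif then h det h>Nak in Introduction}, $h_U\otimes\det h_U$ is Nakano semi-positive on $U$. Moreover
\[
h_U\otimes\det h_U=(h\otimes\det h)e^{-(r+1)C_U\psi_U},
\]
where $r$ is the rank of $E$. The weight $(r+1)C_U\psi_U$ is again smooth, so $h\otimes\det h$ is locally Nakano bounded from below. The first part then gives coherence of $\ogr(E\otimes\det E,h\otimes\det h)$.

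The main obstacle is making sure that the hypotheses of Inayama's theorem are exactly those delivered by Theorem \ref{Theorem: h Nak iff pi*h Nak in Introduction}, in particular that the $L^2$-sense Nakano semi-positivity pulled back to $\tl{U}$ suffices on the non-compact manifold $\tl{U}$. If necessary, one handles this by restricting to the Stein $U'$ and using that $\pi^*\psi_U$ is plurisubharmonic and exhausting on $\tl{U}$ after shrinking.
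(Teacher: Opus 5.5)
Your proposal is correct and follows essentially the same route as the paper. You localize to the Nakano semi-positive metric $he^{-C_U\psi_U}$, transfer it to $\tx$ via Theorem \ref{Theorem: h Nak iff pi*h Nak}, and invoke Inayama's coherence of $\scr{E}(\pi^*h)$. You then conclude with Proposition \ref{Proposition: functoriality property} and Grauert's direct image theorem, and reduce the Griffiths case to the Nakano case through Theorem \ref{Theorem: h>Grif then h det h>Nak} and the identity $h_U\otimes\det h_U=(h\otimes\det h)e^{-(r+1)C_U\psi_U}$.

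Your closing worry about the non-compactness of $\tl{U}$ is unnecessary, because coherence is a local property and Inayama's argument works on small Stein coordinate neighborhoods. The suggested fallback should also be dropped, since $\pi^*\psi_U$ need not be an exhaustion.
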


We define the subsheaf $\scr{L}^{p,q}_{E,h}$ on a complex space $X$ to be the sheaf of germs of $(p,q)$-forms $u$ with values in $E$ and with measurable coefficients such that both $|u|^2_h$ and $|\dbar u|^2_h$ are locally integrable on the regular locus (see Section \ref{Section: dbar-operator and L2-Dolbeault complexes}).  
Here, $\dbar$-operator is considered in the sense of distribution on $\reg$. 
Using local $L^2$-estimates (= Theorem \ref{Theorem: global L2-estimates of Nakano}), we establish the following $L^2$-Dolbeault fine resolution of $\ogr(E,h)$.

\begin{theorem}\label{Theorem: fine Dolbeault resolution}
    Let $X$ be a complex space of pure dimension $n$ and $E\longrightarrow X$ be a holomorphic vector bundle with a singular Hermitian metric $h$.
    If $h$ is locally Nakano bounded from below on $X$, then the $L^2$-Dolbeault complex 
    \begin{align*}
        0\longrightarrow \ogr(E,h)\hookrightarrow\scr{L}^{n,0}_{E,h}\overset{\dbar}{\longrightarrow}\scr{L}^{n,1}_{E,h}\overset{\dbar}{\longrightarrow}\scr{L}^{n,2}_{E,h}\overset{\dbar}{\longrightarrow}\cdots
    \end{align*}
    is exact; that is, the complex $(\scr{L}^{n,\ast}_{E,h},\dbar)$ is $L^2$-Dolbeault fine resolution of $\ogr(E,h)$.
    Thus, we have the following $L^2$-Dolbeault isomorphism 
    \begin{align*}
        H^q(X,\ogr(E,h))\cong H^q(\Gamma(X,\scr{L}^{n,\ast}_{E,h}))
    \end{align*}
    for any $q>0$.
\end{theorem}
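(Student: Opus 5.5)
We verify exactness of the $L^2$-Dolbeault complex stalkwise at an arbitrary point $x\in X$. Fineness of each $\scr{L}^{n,q}_{E,h}$ is immediate: multiplying by a smooth cutoff $\chi$ on $X$ (smooth in the sense of local embeddings, so that $\dbar\chi$ is bounded with respect to $\omega$ on $\reg$) preserves both local integrability conditions, because $\dbar(\chi u)=\dbar\chi\wedge u+\chi\dbar u$. Once exactness is established, the isomorphism $H^q(X,\ogr(E,h))\cong H^q(\Gamma(X,\scr{L}^{n,\ast}_{E,h}))$ follows from the abstract de Rham--Weil theorem.

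\emph{Degree zero.} An element $u\in\scr{L}^{n,0}_{E,h}$ with $\dbar u=0$ in the sense of distributions on $\reg$ is holomorphic on $\reg$, and $|u|^2_h$ is locally integrable. By definition this means $u\in\ogr(E,h)$. Conversely, any section of $\ogr(E,h)$ lies in $\scr{L}^{n,0}_{E,h}$ with $\dbar u=0$. So the complex is exact at $\scr{L}^{n,0}_{E,h}$.

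\emph{Degree $q\ge1$.} Let $u$ be a germ of $\scr{L}^{n,q}_{E,h}$ at $x$ with $\dbar u=0$. We proceed in four steps.

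1. Using the hypothesis, choose a Stein neighborhood $U$ of $x$ and a smooth strictly plurisubharmonic $\psi_U$ with $he^{-C_U\psi_U}$ Nakano semi-positive on $U$. Shrinking $U$, we may take $U$ to be a small sublevel set of a strictly psh exhaustion, hence Stein, and assume $\psi_U$ is bounded on $U$.

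2. Replace $h$ by $h_1:=he^{-(C_U+1)\psi_U}$. This metric is $\idd\psi_U$-Nakano positive on $U$ in the sense of $L^2$-estimates, and $\idd\psi_U\ge c\,\omega$ for a Kähler form $\omega:=\idd\psi_U$ on $\ureg$. Since $\psi_U$ is bounded, $L^2$ norms with respect to $h$ and $h_1$ are comparable, and $u\in L^2_{n,q}(\ureg,E,h_1)$ after shrinking $U$.

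3. The key difficulty is that $\ureg$ is not weakly pseudoconvex in general, so the $L^2$-estimate cannot be applied directly on $\ureg$. Here we invoke the local/global $L^2$-estimates Theorem \ref{Theorem: global L2-estimates of Nakano} on $U$. The plan is to proceed as follows.
\begin{itemize}
    \item $\ureg$ carries a complete Kähler metric, since $U$ is Stein and $\xs\cap U$ is an analytic subset, so $\ureg$ is complete Kähler.
    \item Because $u$ is of bidegree $(n,q)$, the weighted $L^2$ norm $\int|u|^2_{h_1,\omega}dV_\omega$ is nonincreasing when $\omega$ is replaced by a larger (complete) metric.
    \item This gives $f$ with $\dbar f=u$ on $\ureg$ and the bound $\int_{\ureg}|f|^2_{h_1}dV_\omega\le C\int_{\ureg}|u|^2_{h_1}dV_\omega$.
\end{itemize}
Nakano positivity in the sense of $L^2$-estimates is exactly the hypothesis feeding this Hörmander-type estimate. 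Alternatively, Theorem \ref{Theorem: h Nak iff pi*h Nak in Introduction} lets us pull back to $\tl U=\pi^{-1}(U)$, which is weakly pseudoconvex, and solve there, using that $(n,q)$-forms on $\tx\setminus\exc\cong\ureg$ with $L^2$ bounds extend across $\exc$ by the $\dbar$-extension Lemma \ref{Lemma: dbar-extension lemma}. We expect this step — ensuring the estimate holds on the incomplete $\ureg$ with a $\dbar$ in the distributional sense on $\reg$ only — to be the main obstacle, and we would rely on the form of Theorem \ref{Theorem: global L2-estimates of Nakano}, whose statement presumably already accommodates this.

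4. Since $\psi_U$ is bounded, $f$ is locally $L^2$ with respect to $h$. Also $\dbar f=u$ is locally $L^2$ with respect to $h$. Hence $f\in\Gamma(U,\scr{L}^{n,q-1}_{E,h})$, which gives exactness at degree $q$.
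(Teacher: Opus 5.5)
Your proposal is correct and is essentially the paper's argument. In degree $0$ you read off $\rom{Ker}\,\dbar=\ogr(E,h)$ directly from the definitions, where the paper routes this through Proposition \ref{Proposition: Ker dbar}. In degrees $q\geq1$ you apply Theorem \ref{Theorem: global L2-estimates of Nakano} on a small Stein neighbourhood $U$ to $he^{-(C+1)\psi}$ with $\omega=\idd\psi$, so that $B=q$ and the bounded smooth weight leaves the local $L^2$ spaces unchanged.

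The complete-K\"ahler bullets and the pull-back alternative are not needed. Theorem \ref{Theorem: global L2-estimates of Nakano} already produces the solution of the $\dbar$-equation on $\ureg$, and so does Definition \ref{Definition: singular Nakano positive} applied directly with $S=U$. The pull-back route would in any case fail to control the solution in the $\pi^*\omega$-norm for $(n,q-1)$-forms when $q\geq2$.
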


The $L^2$-Dolbeault resolution is already known when the metric is smooth on $\reg$ or has line-bundle type singularities (see \cite{SZ23b,SZ24,Wat26b}, as recalled in Remark \ref{Remark: Generalization of L2-Dol resolusion}). Theorem \ref{Theorem: fine Dolbeault resolution} provides a generalization to more general singular Hermitian metrics.
The following vanishing theorem for higher direct image sheaves twisted by the $L^2$-subsheaf follows by reducing, using the crucial Theorem \ref{Theorem: h>Nak then pi*he-psi>Nak & E(pi*h)=E(pi*he-psi) in Introduction}, to the Nakano-Nadel vanishing theorem on weakly pseudoconvex manifolds proved in \cite{Wat25a}. 
As a consequence, the $L^2$-Dolbeault isomorphism for resolutions of singularities is obtained.

\begin{theorem}[{=\,Theorem \ref{Theorem: higher direct image vanishing in section 6}}]\label{Theorem: higher direct image vanishing}
    If $h$ is locally Griffiths-Nakano bounded from below on $X$, then we have the higher direct image vanishing 
    \begin{align*}
        R^q\pi_*(K_{\tx}\otimes\scr{E}(\pi^*h))=0 
    \end{align*}
    for any $q>0$, where $\pi:\tx\longrightarrow X$ is a resolusion of singularities.
\end{theorem}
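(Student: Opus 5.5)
The vanishing is local on $X$, so I will reduce to a relatively compact Stein piece and apply a global vanishing theorem on the resolution. Since $R^q\pi_*\mathcal{F}$ is the sheaf associated to $U\mapsto H^q(\pi^{-1}(U),\mathcal{F})$, it suffices to show the following: for every $x\in X$ there is a fundamental system of Stein neighborhoods $V\Subset U$ with
\begin{align*}
    H^q\bigl(\pi^{-1}(V),K_{\tx}\otimes\scr{E}(\pi^*h)\bigr)=0 \quad \text{for all } q>0.
\end{align*}

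Fix $x$ and take $U$, $\psi_U$, $C_U$ as in the definition of locally Griffiths-Nakano bounded from below. Thus $h_U:=he^{-C_U\psi_U}$ is both Nakano semi-positive and a.e.\ Griffiths semi-positive on $U$. Since $\psi_U$ is smooth, $\scr{E}(\pi^*h)=\scr{E}(\pi^*h_U)$ on $\pi^{-1}(U)$. Replacing $C_U$ by $C_U+1$ makes $h_U$ Nakano positive with respect to the Hermitian form $\idd\psi_U$. Take $V=\{\psi_U<c\}\Subset U$, which is Stein and holomorphically convex. Then $\tl V=\pi^{-1}(V)$ is weakly pseudoconvex, with smooth psh exhaustion $-\log(c-\pi^*\psi_U)$.

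By Theorem \ref{Theorem: h>Nak then pi*he-psi>Nak & E(pi*h)=E(pi*he-psi) in Introduction} (the Nakano case, with a.e.\ Griffiths semi-positivity available), there are a quasi-psh $\psi$ on $\tl V$, smooth off $\exc$, and $\varepsilon_0>0$ such that for $0<\varepsilon\le\varepsilon_0$:
\begin{itemize}
    \item $\pi^*h_Ue^{-\varepsilon\psi}$ is Nakano positive on $\tl V$; by the construction via the Negativity Lemma, this means $\theta$-Nakano positive for a continuous positive definite form $\theta$ on $\tl V$, i.e.\ strictly positive also along $\exc$.
    \item $\scr{E}(\pi^*h_U)=\scr{E}(\pi^*h_Ue^{-\varepsilon\psi})$.
\end{itemize}
One point to check is that positivity is preserved when shrinking $V$ and that $\tl V$ is still covered by the theorem; this holds since $\overline V\subset U$.

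I then apply the Nakano-Nadel vanishing theorem on weakly pseudoconvex manifolds from \cite{Wat25a} to $(\tl V,\pi^*E,\pi^*h_Ue^{-\varepsilon\psi})$. That theorem requires a Kähler metric, or at least a strictly positive curvature form, on $\tl V$. Here $\tl V$ is Kähler: $\pi$ is a projective morphism over the Stein space $V$, since the canonical desingularization is a sequence of blow-ups, so $\pi^*\idd\psi_U$ plus a small multiple of $-\idd\psi$-type correction gives a Kähler form. Equivalently, $\tl V$ is holomorphically convex and Kähler. The theorem gives $H^q(\tl V,K_{\tx}\otimes\scr{E}(\pi^*h_Ue^{-\varepsilon\psi}))=0$ for $q>0$, and by the equality of multiplier sheaves this is the desired vanishing.

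The main obstacle is making sure the hypotheses of the manifold vanishing theorem of \cite{Wat25a} match exactly. That theorem likely requires $L^2$-type Nakano positivity with respect to a Kähler or Hermitian form on a weakly pseudoconvex Kähler manifold, while Theorem \ref{Theorem: h>Nak then pi*he-psi>Nak & E(pi*h)=E(pi*he-psi) in Introduction} provides positivity compensated along $\exc$. I would first verify that the positivity obtained is with respect to a genuinely positive form on all of $\tl V$, which is exactly what the $-\varepsilon\psi$ twist is designed to achieve. If only relative compactness is needed, I would instead apply the paper's relatively compact version, Theorem \ref{Theorem: Nakano-Nadel vanishing on relatively cpt w.p.c. cpx sp} style arguments, to avoid the Kähler issue.
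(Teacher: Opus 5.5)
Your proposal follows the paper's proof essentially step for step. The paper localizes to a small Stein neighbourhood and twists $h$ by a smooth strictly plurisubharmonic weight, obtaining Nakano positivity together with a.e.\ Griffiths semi-positivity. It then applies Theorem~\ref{Theorem: h>Nak then pi*he-psi>Nak & E(pi*h)=E(pi*he-psi)} to get a Nakano positive metric $\pi^*he^{-C(\cdot)-\varepsilon\psi}$ on $\pi^{-1}(V)$ with unchanged $L^2$-subsheaf. Next, the exceptional divisor (Negativity Lemma, i.e.\ relative projectivity of $\pi$) gives a K\"ahler metric on the weakly pseudoconvex manifold $\pi^{-1}(V)$, and the argument concludes with the Nakano--Nadel vanishing of \cite{Wat25a}.

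The only slip is that $\{\psi_U<c\}$ need not be relatively compact in $U$ when $\psi_U$ is merely strictly plurisubharmonic. Instead, take $V=U\cap\iota_U^{-1}(B(\iota_U(x),r))$ for a local embedding $\iota_U$, and use $-\log(r^2-\pi^*\iota_U^*|z-\iota_U(x)|^2)$ as the exhaustion; this changes nothing else, and the paper likewise works with the weight $C\iota^*|z|^2$.
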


\begin{theorem}\label{Theorem: L2-Dolbeault isomorphism}
    If $h$ is locally Griffiths-Nakano bounded from below on $X$, then the $L^2$-Dolbeault complex $(\pi_*\scr{L}^{n,*}_{\pi^*E,\pi^*h},\pi_*\dbar)$ is a fine resolution of $\ogr(E,h)\cong\pi_*(\omega_{\tx}\otimes\scr{E}(\pi^*h))$. 
    Thus, we have the following $L^2$-Dolbeault isomorphism 
    \begin{align*}
        H^q(X,\ogr(E,h))\cong H^q(\tx,K_{\tx}\otimes\scr{E}(\pi^*h)) 
    \end{align*}
    for any $q\geq0$. 
\end{theorem}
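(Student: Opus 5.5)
The plan is to transport the fine $L^2$-Dolbeault resolution available on the manifold $\tx$ down to $X$ by taking direct images, and then to identify cohomology through the Leray spectral sequence.

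\emph{Step 1: a resolution on $\tx$.} Since $h$ is locally Griffiths-Nakano bounded from below, the same holds for $\pi^*h$ on $\tx$. For the Nakano part we use Theorem \ref{Theorem: h Nak iff pi*h Nak in Introduction}, applied to $he^{-C_U\psi_U}$ on $U$; its pull-back is $\pi^*he^{-C_U\pi^*\psi_U}$. The function $\pi^*\psi_U$ is plurisubharmonic but no longer strictly so along $\exc$. To recover strict positivity we add a small multiple of a local quasi-psh function that is smooth and strictly psh on $\pi^{-1}(U)$, for instance $|z|^2$ in local coordinates. Alternatively, we can simply apply Theorem \ref{Theorem: fine Dolbeault resolution} with $X$ replaced by the manifold $\tx$; this is the manifold case, and there only Nakano semi-positivity of a twist by a smooth metric is needed locally, which Theorem \ref{Theorem: h Nak iff pi*h Nak in Introduction} provides. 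Either way, the complex $(\scr{L}^{n,*}_{\pi^*E,\pi^*h},\dbar)$ is a fine resolution of $K_{\tx}\otimes\scr{E}(\pi^*h)$ on $\tx$.

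\emph{Step 2: pushing forward.} The sheaves $\pi_*\scr{L}^{n,q}_{\pi^*E,\pi^*h}$ are fine on $X$, since they are modules over continuous functions on $X$ (pull back partitions of unity). By Proposition \ref{Proposition: functoriality property} and left exactness of $\pi_*$, the kernel of the first map $\pi_*\dbar$ is $\pi_*(K_{\tx}\otimes\scr{E}(\pi^*h))=\ogr(E,h)$. For exactness in degree $q\geq1$, take a stalk at $x\in X$. Because $\pi$ is proper, $(\pi_*\mathcal{F})_x=\varinjlim\Gamma(\pi^{-1}(U),\mathcal{F})$ over Stein neighborhoods $U\ni x$. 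Hence the cohomology of the pushed-forward complex at $x$ equals
\[
\varinjlim_{U} H^q\bigl(\pi^{-1}(U),K_{\tx}\otimes\scr{E}(\pi^*h)\bigr)=\bigl(R^q\pi_*(K_{\tx}\otimes\scr{E}(\pi^*h))\bigr)_x,
\]
using that the complex on $\tx$ is a fine resolution. By Theorem \ref{Theorem: higher direct image vanishing} this vanishes for $q>0$. So $(\pi_*\scr{L}^{n,*}_{\pi^*E,\pi^*h},\pi_*\dbar)$ is a fine resolution of $\ogr(E,h)$.

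\emph{Step 3: cohomology.} A fine resolution computes sheaf cohomology, so
\[
H^q(X,\ogr(E,h))\cong H^q\bigl(\Gamma(X,\pi_*\scr{L}^{n,*}_{\pi^*E,\pi^*h})\bigr)=H^q\bigl(\Gamma(\tx,\scr{L}^{n,*}_{\pi^*E,\pi^*h})\bigr)\cong H^q(\tx,K_{\tx}\otimes\scr{E}(\pi^*h)),
\]
where the last isomorphism is Step 1. The case $q=0$ is Proposition \ref{Proposition: functoriality property} taken on global sections. Equivalently, one can argue with the Leray spectral sequence together with the vanishing of Theorem \ref{Theorem: higher direct image vanishing}.

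The main obstacle is Step 1: verifying that $\pi^*h$ still satisfies a local lower Nakano bound on $\tx$ near $\exc$, with a \emph{strictly} psh twist, so that the manifold $L^2$-resolution applies. I would first try the pull-back route via Theorem \ref{Theorem: h Nak iff pi*h Nak in Introduction} with the $|z|^2$ correction described above. The deep input, the vanishing of the higher direct images, is already supplied by Theorem \ref{Theorem: higher direct image vanishing}.
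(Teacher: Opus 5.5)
Your proposal is correct and follows the paper's proof. The paper also pulls back the local Nakano lower bound to $\tx$ via Theorem~\ref{Theorem: h Nak iff pi*h Nak in Introduction}, takes the manifold $L^2$-Dolbeault resolution of $K_{\tx}\otimes\scr{E}(\pi^*h)$, identifies the kernel through Proposition~\ref{Proposition: functoriality property}, and uses $R^q\pi_*(K_{\tx}\otimes\scr{E}(\pi^*h))=0$. Your stalkwise computation is simply an explicit version of the paper's appeal to the Leray spectral sequence. Your Step~1 worry is harmless: the manifold resolution only needs a smooth weight $\varphi$ with $\pi^*he^{-\varphi}$ Nakano semi-positive, and your $|z|^2$ correction works in any case.
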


Subsequently, to establish various vanishing theorems, we present the following global $L^2$-existence Theorem \ref{Theorem: global L2-estimates of Nakano} on the regular locus of a weakly pseudoconvex complex space.
A crucial ingredient in its proof is the following theorem, which provides a certain Stein exhaustion sequence obtained as an application of Takayama's embedding \cite{Tak98} and a refined Demailly approximation (cf. \cite[Theorems 3.2 and 3.5]{Wat24b}).
Here, weakly pseudoconvexity includes both compact and Stein cases and forms the broadest and highly significant class among the complex-geometric objects that can be treated.
In fact, every complex Lie group is always weakly pseudoconvex (see \cite{Kaz73}).

\begin{theorem}\label{Theorem: exhaustion a.e. Stein covering}
    Let $X$ be a weakly pseudoconvex complex space. Assume that there exist a holomorphic line bundle $L\longrightarrow X$ and a singular Hermitian metric $h$ on $L$ such that $\iO{\pi^*L,\pi^*h}\geq\pi^*(\varepsilon\omega)$ holds on $\tx$ in the sense of currents for some smooth positive function $\varepsilon:X\longrightarrow\bb{R}_{>0}$ and some Hermitian metric $\omega$ on $X$, where $\pi:\tx\longrightarrow X$ is canonical desingularization of $X$. 
    Then there exist an increasing sequence of real numbers $\{c_j\}_{j\in\bb{N}}$ with $c_1>\inf_X\varPsi$ and analytic subsets $A_j$ of each $X_{c_j}$ such that each open subset $(X_{c_j})_{reg}\setminus A_j=(X_{c_j}\setminus A_j)\cap\reg$ is Stein manifold. 

    In particular, the assumption holds whenever $X$ admits a singular positive line bundle.
\end{theorem}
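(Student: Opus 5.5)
We work on the desingularization and push the conclusions down to $X$. Since $X$ is weakly pseudoconvex, it carries a smooth plurisubharmonic exhaustion $\varPsi$, and $\tl{\varPsi}:=\varPsi\circ\pi$ is a plurisubharmonic exhaustion of $\tx$ because $\pi$ is proper. Choose $c_1<c_2<\cdots\nearrow+\infty$ with $c_1>\inf_X\varPsi$ and set $X_{c_j}:=\{\varPsi<c_j\}\Subset X$ and $\tx_{c_j}:=\pi^{-1}(X_{c_j})$. The aim for each $j$ is an analytic subset $A_j\subset X_{c_j}$ containing $(X_{c_j})_{sing}$ such that $X_{c_j}\setminus A_j$ is Stein. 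It is then automatically smooth, so it equals $(X_{c_j})_{reg}\setminus A_j$.

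\textbf{Step 1: rationality on the relatively compact piece.} Work on a neighborhood of $\overline{\tx_{c_j}}$, where everything is compact. The current $\iO{\pi^*L,\pi^*h}\geq\pi^*(\varepsilon\omega)$ is only semi-positive in the directions of $\exc$. To correct this:
\begin{itemize}
\item Use the Negativity Lemma to obtain an effective $\pi$-exceptional divisor $D$ with support in $\exc$ and a smooth metric on $\cal{O}(-D)$ whose curvature is positive along the fibres of $\pi$.
\item For small $\delta>0$, the metric $\pi^*h\otimes h_{-\delta D}$ on $\pi^*L-\delta D$ (taking multiples to make it integral) is then a Kähler current: its curvature is $\geq\eta\,\tl{\omega}$ on $\tx_{c_j}$ for a Hermitian metric $\tl{\omega}$ on $\tx$ and some $\eta>0$.
\end{itemize}

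\textbf{Step 2: approximation and embedding.}
\begin{itemize}
\item Apply the refined Demailly approximation \cite[Theorems 3.2 and 3.5]{Wat24b} to this Kähler current. This gives, for $m\gg0$, a metric with analytic singularities along an analytic set $Z_j\subset\tx_{c_j}$, with curvature still $\geq\frac{\eta}{2}\tl{\omega}$, smooth off $Z_j$, and defined by sections of $m(\pi^*L-\delta D)$.
\item Then apply Takayama's embedding argument \cite{Tak98}. Sections of a high multiple, restricted to the sublevel set (using $\tl{\varPsi}$ to add a convex increasing function and obtain global $L^2$ sections via Hörmander estimates on the weakly pseudoconvex manifold $\tx$), give a holomorphic map $\tx_{c_j}\setminus B_j\to\bb{P}^N$. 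This map is an embedding outside an analytic set $B_j\supset Z_j\cup\exc$.
\item Pick a hyperplane section $H_j$, i.e.\ the zero set of one such section $s$. Then $\tx_{c_j}\setminus(B_j\cup H_j)$ embeds into $\bb{C}^N$. Adding $-\log(c_j-\tl{\varPsi})$ to $-\log|s|^2$-type weights gives a strictly plurisubharmonic exhaustion, so the complement is Stein.
\end{itemize}

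\textbf{Step 3: descent to $X$.}
\begin{itemize}
\item Because $B_j\supset\exc$ and $\pi$ is biholomorphic off $\exc$, the set $\pi(B_j\cup H_j)$ is analytic in $X_{c_j}$ by Remmert's proper mapping theorem. Set $A_j:=\pi(B_j\cup H_j)\cup(X_{c_j})_{sing}$.
\item Then $X_{c_j}\setminus A_j\cong\tx_{c_j}\setminus\pi^{-1}(A_j)$.
\item This is an open subset of a Stein manifold, namely the complement of an analytic hypersurface-type set in it. So it is Stein, provided we arrange $\pi^{-1}(A_j)\setminus(B_j\cup H_j)$ to be a hypersurface. Achieve this by enlarging $H_j$ with a second section vanishing on $\pi^{-1}((X_{c_j})_{sing})$, which is contained in $\exc\subset B_j$ anyway.
\end{itemize}

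\textbf{Final remark.} The closing statement follows by taking $h$ singular positive, i.e.\ $\iO{L,h}\geq\varepsilon\omega$ locally via local embeddings. Its pull-back satisfies the hypothesis.

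\textbf{Main obstacle.} The hardest step is Step 1–2 near $\exc$: the positivity degenerates there, and the Kähler current must be produced uniformly on $\overline{\tx_{c_j}}$. I would first try the Negativity Lemma correction with $\delta$ chosen depending on $j$, which is allowed since only relative compactness is needed.
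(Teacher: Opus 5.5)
Step 1 is fine; it is essentially the paper's Lemma~\ref{Lemma: key lemma}. The genuine gap is at the end of Step 2, where you assert that $\tx_{c_j}\setminus(B_j\cup H_j)$ is Stein.

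Your function $-\log(c_j-\tl{\varPsi})$ plus a $-\log|s|^2$-type weight tends to $+\infty$ only at $\partial\tx_{c_j}$ and along $H_j$, and not even obviously there at base points of your linear system. It stays bounded near points of $B_j\setminus H_j$, so it is not an exhaustion. Being a non-closed submanifold of $\bb{C}^N$ gives no Steinness either.

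This cannot be fixed by a better formula. $B_j$ must contain the non-immersion locus of your finitely many sections and the singular set $Z_j$ of a Demailly approximant, which is typically a union of Lelong upper-level sets, e.g.\ isolated points. So $B_j$ can have components of codimension $\geq 2$ that lie in neither $H_j$ nor any divisor you remove. Near such a point the complement is a ball minus a codimension-$\geq2$ analytic set. Plurisubharmonic and holomorphic functions extend across such a set, so no psh exhaustion exists and the set is not Stein.

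By contrast, the issue you raise in Step 3 is vacuous. Since $\exc\subset B_j$ and $\pi$ is injective off $\exc$, one has $\pi^{-1}(A_j)=B_j\cup H_j$, so the descent is immediate once Step 2 is correct. The real obstacle is therefore not the correction near $\exc$ but the bad set of the embedding.

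The missing idea is to ensure the embedding has no bad set, so that everything removed afterwards is a hypersurface. The paper does this as follows.
\begin{enumerate}
\item Blow up the (algebraic) singular locus $Z$ of one refined Demailly approximant, giving $\mu:\widehat{X}_c\to\tx_c$ with $E_Z=\mu^{-1}(Z)$ simple normal crossing.
\item Use the Negativity Lemma to obtain an honestly positive line bundle on $\widehat{X}_c$ \cite[Theorem 3.5]{Wat24b}.
\item Apply Takayama's theorem to embed all of $\widehat{X}_c$ into $\bb{P}^{2n+1}$. Then $\widehat{X}_c\setminus\widehat{H}$ is Stein; here essentially your exhaustion does work, because nothing is excluded.
\item Remove only the hypersurfaces $E_Z$ and $\mu^{-1}(\exc)$, which preserves Steinness \cite[Theorem 3.12]{Die96}, and push down by $\pi\circ\mu$ to get $A_j=\pi(Z)\cup(\pi\circ\mu)(\widehat{H})$.
\end{enumerate}

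Within your own setup, a possible repair is to take $H_j=\{s=0\}\supset B_j$ for a single section $s$ vanishing to high enough order along $Z_j$. You would then need $-\log|s|^2_{h_m}$, with $h_m$ the approximating metric, to be strictly psh off $H_j$ and to tend to $+\infty$ on all of $H_j$. That would require a separate existence argument for such $s$, which your proposal does not supply.
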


\begin{theorem}\label{Theorem: global L2-estimates of Nakano}
    Let $X$ be a weakly pseudoconvex \kah complex space of pure dimension $n$ with a \kah metric $\omega$ and $E\longrightarrow X$ be a holomorphic vector bundle with a singular Hermitian metric $h$. 
    Assume that $X$ admits a singular positive line bundle. 
    If $h$ is $\theta$-Nakano positive on $X$ in the sense of $L^2$-estimates for a continuous $(1,1)$-form $\theta$ on $X$,
    then for any $q>0$, for any smooth quasi-plurisubharmonic function $\psi$ on $X$ such that $\theta+\idd\psi>0$ in $X$ and for any $f\in L^{2,loc}_{n,q}(X,E,he^{-\psi})$ satisfying $\dbar f=0$ on $\reg$ and $\int_X\lara{B^{-1}_{\theta,\psi,\omega}f}{f}_{h,\omega}\,e^{-\psi}\dvo<+\infty$, 
    there exists $u\in L^2_{n,q-1}(X,E;\omega,he^{-\psi})$ satisfying $\dbar u=f$ on $\reg$ and the following global $L^2$-estimate
    \begin{align*}
        \int_X|u|^2_{h,\omega}e^{-\psi}\,\dvo\leq\int_X\lara{B^{-1}_{\theta,\psi,\omega}f}{f}_{h,\omega}\,e^{-\psi}\dvo,
    \end{align*}
    where $B_{\theta,\psi,\omega}=[(\theta+\idd\psi)\oid{E},\Lambda_\omega]$.
    
    Furthermore, if $X$ is not necessarily \kah and $\omega$ is taken simply as a Hermitian metric, then the same global $L^2$-estimate as above holds only in the case $q=1$.
\end{theorem}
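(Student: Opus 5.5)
The strategy is to exhaust $\reg$ by Stein manifolds, solve $\dbar$ on each piece with the manifold estimate, and pass to a weak limit. By Theorem \ref{Theorem: exhaustion a.e. Stein covering}, applied to the singular positive line bundle on $X$, there are an increasing sequence $c_j\to+\infty$ and analytic subsets $A_j\subset X_{c_j}$ such that each $Y_j:=(X_{c_j}\setminus A_j)\cap\reg$ is a Stein manifold. Here $X_c=\{\varPsi<c\}$ denotes the sublevel sets of the exhaustion $\varPsi$. The $Y_j$ increase up to $\reg$ outside a set of measure zero, since $\bigcup_j A_j$ is a countable union of proper analytic subsets.

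On each $Y_j$ the manifold theory applies. The restriction $\omega|_{Y_j}$ is K\"ahler. The hypothesis that $h$ is $\theta$-Nakano positive in the sense of $L^2$-estimates is defined precisely through solvability of $\dbar$ with the estimate governed by $B_{\theta,\psi,\omega}$ on Stein open pieces; by Theorem \ref{Theorem: h Nak iff pi*h Nak in Introduction} this can equivalently be checked on $\tx$. Since $\theta+\idd\psi>0$ and $he^{-\psi}$ is again $(\theta+\idd\psi)$-Nakano positive, we get $u_j\in L^2_{n,q-1}(Y_j,E;\omega,he^{-\psi})$ with $\dbar u_j=f$ on $Y_j$ and
\begin{align*}
    \int_{Y_j}|u_j|^2_{h,\omega}e^{-\psi}\,\dvo\leq\int_{Y_j}\lara{B^{-1}_{\theta,\psi,\omega}f}{f}_{h,\omega}e^{-\psi}\,\dvo\leq C:=\int_X\lara{B^{-1}_{\theta,\psi,\omega}f}{f}_{h,\omega}e^{-\psi}\,\dvo.
\end{align*}
If the definition only gives this on Stein domains admitting a complete K\"ahler metric, note that Stein manifolds do, so this is fine.

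Next we pass to the limit. Extending $u_j$ by zero gives a bounded sequence in $L^2_{n,q-1}(\reg,E;\omega,he^{-\psi})$, so a subsequence converges weakly to some $u$. For each fixed $k$, the restrictions $u_j|_{Y_k}$ with $j\geq k$ satisfy $\dbar u_j=f$ on $Y_k$. Weak limits preserve this in the distribution sense on $Y_k$, because testing against compactly supported smooth forms is continuous for weak convergence. The norm bound passes to the limit by lower semicontinuity. It remains to upgrade $\dbar u=f$ from $\bigcup_k Y_k$ to all of $\reg$, i.e.\ across $A:=\bigcup_j A_j\cap\reg$. Locally only finitely many $A_j$ meet a compact set, so $A$ is locally a proper analytic subset. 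The $\dbar$-extension Lemma \ref{Lemma: dbar-extension lemma} shows that an $L^2$ form satisfying $\dbar u=f$ off an analytic set, with $u,f$ locally $L^2$, satisfies it across that set. Here $f$ is $L^2_{loc}$ with respect to $he^{-\psi}$, and a local lower bound on $h$ (or the finiteness of the weighted norms) gives local $L^2$ integrability needed for the cutoff argument.

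The main obstacle is the non-K\"ahler case. For a merely Hermitian $\omega$ the Bochner--Kodaira--Nakano identity picks up torsion terms. These vanish only on $(n,1)$-forms in the relevant pairing, which is the standard reason the estimate persists exactly for $q=1$ (the $(n,0)$-level argument behind Lemma \ref{Lemma: inequality of (n,0)-forms}). So I would check that the definition of $\theta$-Nakano positivity in the sense of $L^2$-estimates is formulated for $(n,1)$-forms with arbitrary Hermitian $\omega$. In that case the same exhaustion and limit argument goes through verbatim for $q=1$, while for $q\geq2$ the K\"ahler hypothesis is used on each $Y_j$.
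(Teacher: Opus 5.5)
Your route in the K\"ahler case is the paper's: Stein pieces from Theorem~\ref{Theorem: exhaustion a.e. Stein covering}, solve on each piece via the definition, extend with Lemma~\ref{Lemma: dbar-extension lemma}, take a weak limit. However, two steps fail as written.

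First, Definition~\ref{Definition: singular Nakano positive} only applies to Stein sets over which $E$ is trivial. You must therefore also delete a hypersurface $H_j$ from $Y_j$ to trivialize $E$. The paper does this, and $S_j:=Y_j\setminus H_j$ is still Stein.

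Second, and more seriously, your limit step assumes that the $Y_j$ are nested and that only finitely many $A_j$ meet a given compact set. Neither holds. The exhaustion theorem gives no compatibility between different $A_j$; in its proof, each $A_j$ contains the image of a hyperplane section chosen separately for each $c_j$. So infinitely many $A_j$ may meet a fixed compact set, and $\bigcup_j A_j$ may even be dense in $\reg$. Consequently:
\begin{itemize}
\item the claim ``$\dbar u_j=f$ on $Y_k$ for $j\geq k$'' is unjustified;
\item a fixed test form need not be supported in $Y_j$ for infinitely many $j$;
\item Lemma~\ref{Lemma: dbar-extension lemma} cannot be applied to the countable union after passing to the limit.
\end{itemize}
The fix is to reverse the order. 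For each fixed $j$, extend $\dbar u_j=f$ first across $H_j$ inside $Y_j$, and then across the closed analytic subset $A_j\cap(X_{c_j})_{reg}$ of the manifold $(X_{c_j})_{reg}$. This is legitimate because $h$ is uniformly positive definite (part of the definition) and $\psi$ is smooth, so $u_j$ and $f$ are locally $L^2$ in the unweighted sense. The sets $(X_{c_j})_{reg}$ are nested and exhaust $\reg$, and only then does the weak limit give $\dbar u=f$ on $\reg$ together with the estimate.

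The non-K\"ahler $q=1$ statement is where an idea is genuinely missing. Definition~\ref{Definition: singular Nakano positive} gives estimates only for K\"ahler metrics $\omega_S$, so the check you propose fails. An explanation via torsion terms in the Bochner--Kodaira--Nakano identity is also unavailable here, since $h$ is singular and its positivity is given only through the $L^2$-definition, not through a curvature form.

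The paper's argument is a comparison of metrics. The Stein manifold $S_j$ carries a complete K\"ahler metric $\widehat{\omega}_j\geq\omega$, and Demailly's monotonicity lemma (his book, Ch.~VIII, Lemma~6.3) gives
\[
\lara{B^{-1}_{\theta,\psi,\widehat{\omega}_j}f}{f}_{h,\widehat{\omega}_j}\,dV_{\widehat{\omega}_j}\leq\lara{B^{-1}_{\theta,\psi,\omega}f}{f}_{h,\omega}\,\dvo .
\]
Applying the definition with $\widehat{\omega}_j$ then yields $u_j$ whose $\widehat{\omega}_j$-norm is bounded by the $\omega$-right-hand side.
\begin{itemize}
\item When $q=1$, $u_j$ is an $(n,0)$-form. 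By Lemma~\ref{Lemma: inequality of (n,0)-forms}, $|u_j|^2_{h,\widehat{\omega}_j}dV_{\widehat{\omega}_j}=|u_j|^2_{h,\omega}\dvo$, so the estimate transfers back to $\omega$.
\item For $q\geq 2$, the solution has bidegree $(n,q-1)$ with $q-1\geq 1$. There the same lemma only gives $|u|^2_{\widehat{\omega}_j}dV_{\widehat{\omega}_j}\leq|u|^2_{\omega}\dvo$, which is the wrong direction.
\end{itemize}
This, not torsion, is why only $q=1$ survives. You were close in citing the $(n,0)$-form lemma, but its role is this comparison. After that, the extension and limit steps are as above.
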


Finally, combining the $L^2$-existence Theorem \ref{Theorem: global L2-estimates of Nakano} with the $L^2$-Dolbeault resolution Theorem \ref{Theorem: fine Dolbeault resolution}, we obtain a global Nakano-Nadel vanishing theorem on weakly pseudoconvex complex spaces. 
In the (relatively) compact case, the Kähler assumption can be removed (see Theorem \ref{Theorem: Nakano-Nadel vanishing on relatively cpt w.p.c. cpx sp}).

\begin{theorem}\label{Theorem: Nakano-Nadel vanishing on w.p.c. cpx sp}
    Let $X$ be a weakly pseudoconvex \kah complex space of pure dimension. 
    Assume that $X$ admits a singular positive line bundle. 
    If $h$ is Nakano positive on $X$ in the sense of $L^2$-estimates, 
    then we have the following vanishing 
    \begin{align*}
        H^q(X,\ogr(E,h))=0 
    \end{align*}
    for any $q>0$. 
    Furthermore, if $X$ is not necessarily K\"{a}hler, then we have only the first cohomology vanishing $H^1(X,\ogr(E,h))=0$.
\end{theorem}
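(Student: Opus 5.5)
The plan is to combine the $L^2$-Dolbeault isomorphism of Theorem \ref{Theorem: fine Dolbeault resolution} with the global existence result Theorem \ref{Theorem: global L2-estimates of Nakano}, the point being to find a weight that makes the curvature term finite.

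\textbf{Step 1: regularity of $h$ and the Dolbeault isomorphism.} Nakano positivity of $h$ in the sense of $L^2$-estimates means $\theta$-Nakano positivity for some continuous $(1,1)$-form $\theta$ which is locally bounded below by $\varepsilon\omega$. Locally one can write $\theta\geq -C_U\idd\psi_U$ with $\psi_U$ smooth strictly plurisubharmonic on a Stein neighbourhood $U$. Then $he^{-C_U\psi_U}$ is Nakano semi-positive, so $h$ is locally Nakano bounded from below. Theorem \ref{Theorem: fine Dolbeault resolution} now gives
\begin{align*}
H^q(X,\ogr(E,h))\cong H^q(\Gamma(X,\scr{L}^{n,\ast}_{E,h})).
\end{align*}
So it suffices to take a global $f\in\Gamma(X,\scr{L}^{n,q}_{E,h})$ with $\dbar f=0$ on $\reg$ and find $u\in\Gamma(X,\scr{L}^{n,q-1}_{E,h})$ with $\dbar u=f$.

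\textbf{Step 2: choice of weight.} Since $f$ is only locally $L^2$, I would follow the classical weakly pseudoconvex argument (as in \cite{Dem12} and \cite{Wat25a}). Let $\varPsi$ be a smooth plurisubharmonic exhaustion of $X$, and choose a convex increasing function $\chi$ growing fast enough that $\int_X|f|^2_{h,\omega}e^{-\chi\circ\varPsi}\dvo<+\infty$. Set $\psi=\chi\circ\varPsi$. Then $\theta+\idd\psi\geq\theta>0$, so $B_{\theta,\psi,\omega}\geq B_{\theta,0,\omega}$ is positive definite on $(n,q)$-forms. Because $\theta\geq\varepsilon\omega$ locally, we get $\lara{B^{-1}f}{f}\leq (q\varepsilon)^{-1}|f|^2$ pointwise. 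Replacing $\varepsilon$ by a positive continuous function and enlarging $\chi$ to absorb $\varepsilon^{-1}$ on each sublevel set makes the right-hand side of the estimate finite.

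\textbf{Step 3: solving and checking local integrability.} In the Kähler case, Theorem \ref{Theorem: global L2-estimates of Nakano} yields $u\in L^2_{n,q-1}(X,E;\omega,he^{-\psi})$ with $\dbar u=f$ on $\reg$. Since $\psi$ is continuous, hence locally bounded, $u$ is locally $L^2$ with respect to $h$. Also $\dbar u=f$ is locally $L^2$, so $u\in\Gamma(X,\scr{L}^{n,q-1}_{E,h})$, and the class of $f$ vanishes. In the non-Kähler case the same argument works verbatim using the $q=1$ part of Theorem \ref{Theorem: global L2-estimates of Nakano}, giving $H^1=0$.

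\textbf{Main obstacle.} The delicate point is Step 2: ensuring the weight $\psi$ is smooth quasi-plurisubharmonic with $\theta+\idd\psi>0$ while simultaneously making $\int\lara{B^{-1}f}{f}e^{-\psi}$ finite. This needs the lower bound of $\theta$ to be uniform on each relatively compact sublevel set $X_c$, which holds by continuity and compactness of $\overline{X_c}$. If the definition of Nakano positivity only yields local $\theta$'s, I would first patch them using a partition of unity and the exhaustion $\varPsi$.
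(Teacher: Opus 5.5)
Your proposal is correct and follows the paper's proof step for step. The paper first applies the $L^2$-Dolbeault isomorphism of Theorem \ref{Theorem: fine Dolbeault resolution}. It then takes the weight $\chi\circ\varPsi$ with $\chi$ convex, increasing and fast-growing, so that $\int_X\lara{B^{-1}f}{f}e^{-\chi\circ\varPsi}\,dV_\omega<+\infty$, and solves $\dbar u=f$ via Theorem \ref{Theorem: global L2-estimates of Nakano}. Smoothness of the weight then gives $u\in\Gamma(X,\scr{L}^{n,q-1}_{E,h})$, and the $q=1$ case of that theorem handles the non-K\"ahler setting.

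Your additional details fill in points the paper leaves implicit: the pointwise bound $\lara{B^{-1}f}{f}\leq (q\varepsilon)^{-1}|f|^2$ and the check that $h$ is locally Nakano bounded from below. Your fallback of patching local $\theta$'s is unnecessary, since the definition of Nakano positivity already supplies a single global continuous $\theta>0$ on $X$.
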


The assumption on the existence of a singular positive line bundle may be replaced by a condition concerning the Griffiths positivity of $h$; see Remark \ref{Remark: condition of L2-estimates for Grif>0 and singular positive}.
A similar result for Griffiths positivity follows from Theorems \ref{Theorem: h>Grif then h det h>Nak in Introduction}, \ref{Theorem: global L2-estimates of Griffiths}, and \ref{Theorem: fine Dolbeault resolution, isomorphism and higher direct image vanishing of Griffiths}.

\begin{theorem}\label{Theorem: Griffiths vanishing on on w.p.c. cpx sp}
    Let $X$ be a weakly pseudoconvex \kah complex space of pure dimension. 
    If $h$ is Griffiths positive and a.e.\! Griffiths semi-positive on $X$, then we have 
    \begin{align*}
        H^q(X,\ogr(E\otimes\det E,h\otimes\det h))=0 
    \end{align*}
    for any $q>0$. 
    Furthermore, if $X$ is not necessarily K\"{a}hler, then we have only the first cohomology vanishing $H^1(X,\ogr(E\otimes\det E,h\otimes\det h))=0$.
\end{theorem}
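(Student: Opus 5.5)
The plan is to reduce the statement to the Nakano case already treated in Theorem \ref{Theorem: Nakano-Nadel vanishing on w.p.c. cpx sp}, applied to the bundle $E\otimes\det E$ with the metric $h\otimes\det h$.

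\emph{Step 1 (positivity).} By Theorem \ref{Theorem: h>Grif then h det h>Nak in Introduction}, $h\otimes\det h$ is Nakano semi-positive. I also need strict positivity, but uniform positive definiteness of $h$ is not assumed. So I pass to the resolution and use Theorem \ref{Theorem: h>Nak then pi*he-psi>Nak & E(pi*h)=E(pi*he-psi) in Introduction} in the Griffiths case. On a relatively compact $\tl V$, the metric $\pi^*h e^{-\varepsilon\psi}$ is Griffiths positive and a.e.\ Griffiths semi-positive. Moreover
\begin{center}
$\scr{E}(\pi^*h\otimes\det\pi^*h)=\scr{E}(\pi^*h\otimes\det\pi^*h\,e^{-(1+r)\varepsilon\psi})$.
\end{center}
On the manifold $\tl V$, Inayama's result then gives that the twisted metric $\pi^*(h\otimes\det h)e^{-(1+r)\varepsilon\psi}$ is Nakano positive in the sense of $L^2$-estimates, with a strictly positive lower form. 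Here I use that Griffiths positivity with respect to $\pi^*(\varepsilon\omega)$ plus the compensating $\idd\psi$ yields a form that is positive on $\tl V$.

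\emph{Step 2 (global existence).} The Griffiths positivity of $h$ gives the hypothesis of Theorem \ref{Theorem: exhaustion a.e. Stein covering}: $\det h$, or $\det\pi^*h$ twisted by $\psi$, is a singular positive line bundle (cf.\ Remark \ref{Remark: condition of L2-estimates for Grif>0 and singular positive}). I then invoke the Griffiths version of the global $L^2$-existence theorem, Theorem \ref{Theorem: global L2-estimates of Griffiths}. Following the proof of Theorem \ref{Theorem: global L2-estimates of Nakano}, it solves $\dbar u=f$ on $\reg$ for $\dbar$-closed $(n,q)$-forms with finite weighted $L^2$ norm, with respect to $h\otimes\det h$ twisted by a psh exhaustion weight. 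In the non-Kähler case only $q=1$ is available.

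\emph{Step 3 (cohomology).} By Theorem \ref{Theorem: fine Dolbeault resolution, isomorphism and higher direct image vanishing of Griffiths}, $(\scr{L}^{n,*}_{E\otimes\det E,h\otimes\det h},\dbar)$ is a fine resolution of $\ogr(E\otimes\det E,h\otimes\det h)$. Hence $H^q$ equals the cohomology of global sections of this complex. Take a global $\dbar$-closed $f$, which is only locally $L^2$. Using a convex increasing function $\chi$ of the exhaustion $\varPsi$, choose the weight $e^{-\chi(\varPsi)}$ so that $f$ has finite $\int\lara{B^{-1}f}{f}e^{-\chi(\varPsi)}$. Note that $B\geq$ a positive multiple of $\idd\chi(\varPsi)$ where $\varPsi$ is strictly psh, and the Griffiths positivity supplies strictness elsewhere. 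Step 2 then gives $u$ with $u\in L^2_{loc}$ and $\dbar u=f$, so $u$ is a section of $\scr{L}^{n,q-1}$. This proves $H^q=0$, and gives only $H^1=0$ without the Kähler hypothesis.

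\emph{Main obstacle.} The hard step is Step 1/2: obtaining a genuinely strictly Nakano positive metric on all of $X$, not just on each $\tl V$, without uniform positive definiteness. Gluing the local $\psi$'s is the issue. My first attempt is to avoid global gluing: perform the $L^2$ solution on each sublevel set $X_{c_j}$, where $\overline{X_{c_j}}$ is compact so Theorem \ref{Theorem: h>Nak then pi*he-psi>Nak & E(pi*h)=E(pi*he-psi) in Introduction} applies, with the multiplier ideal unchanged. I would then pass to the limit $j\to\infty$ by weak compactness, which is legitimate since the estimates are uniform in $j$ once $\chi$ is fixed.
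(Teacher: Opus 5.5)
Your Steps 2 and 3 are the paper's proof. Theorem~\ref{Theorem: global L2-estimates of Griffiths}, which already contains the strict positivity from Theorem~\ref{Theorem: h>Grif then h det h>Nak}, is combined with the fine resolution of Theorem~\ref{Theorem: fine Dolbeault resolution, isomorphism and higher direct image vanishing of Griffiths} and a fast-growing convex weight $\chi\circ\varPsi$, as in the proof of Theorem~\ref{Theorem: Nakano-Nadel vanishing on w.p.c. cpx sp in subsection}. For the Stein exhaustion, what is used is the current inequality $\iO{\det\pi^*E,\det\pi^*h}\geq\pi^*(\varepsilon\omega)$ on $\tx$ from Corollary~\ref{Corollary: h>Grif and a.e Grif then iO_det pi*h>0 as currents}. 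Singular positivity of $\det h$ on $X$ is not used; it is only known for normal $X$.

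Your Step~1 and the ``main obstacle'' rest on an oversight and should be dropped. You do not need to assume uniform positive definiteness. Corollary~\ref{Corollary: a.e. Grif semi-posi then uniformly positive definite} shows that a.e.\ Griffiths semi-positivity already implies it. So Theorem~\ref{Theorem: h>Grif then h det h>Nak} gives directly that $h\otimes\det h$ is $(r+1)\varepsilon\omega$-Nakano positive on all of $X$ for a smooth $\varepsilon>0$. No local twisting on $\tl{V}$ and no gluing is needed.

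The fallback you sketch would not work as stated:
\begin{itemize}
\item The identity $\scr{E}(\pi^*h\otimes\det\pi^*h)=\scr{E}(\pi^*h\otimes\det\pi^*h\,e^{-(1+r)\varepsilon\psi})$ concerns holomorphic sections only. A $\dbar$-closed $f$ that is $L^2_{loc}$ for $h\otimes\det h$ need not have finite norm for the weight $e^{-(1+r)\varepsilon\psi}$, which is in general unbounded near $\exc$.
\item Since $\psi$ and $\varepsilon$ depend on $j$, there is no fixed weighted space in which the solutions are uniformly bounded.
\end{itemize}
In the paper this twist is only used at the level of sheaf cohomology, in the compact and relatively compact results.

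One minor point on Step~3: finiteness of the weighted integral needs only $\theta>0$. Indeed $B_{\theta,\chi\circ\varPsi,\omega}\geq B_{\theta,0,\omega}$, and the latter is locally bounded below by a positive constant. Strict plurisubharmonicity of $\varPsi$ plays no role.
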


\section{Preliminaries}

\subsection{Canonical desingularization of complex spaces}

Even in the non-compact case, a global resolution of singularities can be obtained, which is locally given by a finite sequence of blow-ups with smooth centers. 
This is achieved by patching together the resolutions of singularities constructed on relatively compact subsets.

\begin{theorem}[{\cite[Theorem\,13.3 and 13.4]{BM97}, cf.\,\cite{Hir64}}]\label{Theorem: canonical desingularization}
    Let $X$ be a complex space which is not necessarily compact or reduced. 
    There exists a desingularization $\pi:\tx\longrightarrow X$, which is a composite of a locally finite sequence of blow-ups, such that 
    \begin{itemize}
        \item the map $\pi:\tx\longrightarrow X$ is proper holomorphic.
        \item the set $\tx$ is smooth and the $\pi$-exceptional set $E:=\pi^{-1}(\xs)$ is simple normal crossing, where $E$ denotes the collection of all exceptional divisors.
        \item for any relatively compact open subset $V$ of $X$, the restriction $\pi|_V:\tx|_{\pi^{-1}(V)}\longrightarrow X|_V$ is a composite of a finite sequence of blow-ups with smooth centres. 
        \item the restriction $\pi|_{\tx\setminus E}:\tx\setminus E\longrightarrow X\setminus\xs=\reg$ is biholomorphic.
        \item $\pi$ is canonical in the sense that for any isomorphic $\varphi:X|_U\overset{\cong}{\longrightarrow} X|_V$, where $U$ and $V$ are open subsets of $X$, lifts to an isomorphism $\widetilde{\varphi}:\tx|_{\pi^{-1}(U)}\longrightarrow\tx|_{\pi^{-1}(V)}$.
    \end{itemize}
\end{theorem}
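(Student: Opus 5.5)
The plan is to deduce the statement from the canonical, invariant-driven resolution algorithm of Bierstone--Milman \cite{BM97}, applied to an exhaustion of $X$, and then to glue. First fix an exhaustion $V_1\Subset V_2\Subset\cdots$ of $X$ by relatively compact open subsets. On each $\overline{V_k}$ the Bierstone--Milman desingularization invariant $\mathrm{inv}_X$ is upper semicontinuous and takes only finitely many values. It is also local: its value at a point depends only on the germ of $X$ there, and it is preserved by local isomorphisms. The algorithm blows up the (smooth) maximal stratum of the invariant at each stage. Consequently, over $V_k$ it terminates after finitely many blow-ups with smooth centres. By \cite[Theorem 13.3]{BM97} this produces $\pi_k:\tx_k\longrightarrow V_k$ with $\tx_k$ smooth, exceptional set $\pi_k^{-1}((V_k)_{sing})$ of simple normal crossing, and $\pi_k$ biholomorphic over $(V_k)_{reg}$. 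The last point holds because all centres lie over the singular (or non-reduced) locus, where the invariant is not minimal.

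The second step is compatibility. Every centre is defined intrinsically from the invariant, and the invariant commutes with open embeddings. Hence the restriction of the resolution sequence of $V_{k+1}$ to $\pi^{-1}(V_k)$ coincides with the sequence for $V_k$, except that some stages may have empty centre over $V_k$; these are identity maps. Removing such trivial stages identifies $\tx_{k+1}|_{\pi_{k+1}^{-1}(V_k)}$ with $\tx_k$ canonically, and the identifications are compatible in $k$. We therefore glue the $\tx_k$ to a complex manifold $\tx$ with a holomorphic map $\pi:\tx\longrightarrow X$. Over each relatively compact $V$ (contained in some $V_k$), $\pi$ is a finite composite of blow-ups with smooth centres. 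Thus the global sequence is locally finite, which gives the third item. Properness is a local property over $X$, and a finite composite of blow-ups is proper, so $\pi$ is proper. The snc property of $\pi^{-1}(\xs)$ and biholomorphicity over $\reg$ are likewise local, so the first, second and fourth items follow from the $\pi_k$.

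For the canonicity item, take an isomorphism $\varphi:X|_U\cong X|_V$. Since $\mathrm{inv}_X(\varphi(x))=\mathrm{inv}_X(x)$, $\varphi$ maps each centre over $U$ onto the corresponding centre over $V$. By induction on the stages it lifts to isomorphisms of the successive blow-ups, and hence to $\widetilde\varphi:\tx|_{\pi^{-1}(U)}\longrightarrow\tx|_{\pi^{-1}(V)}$; this is \cite[Theorem 13.4]{BM97}.

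I expect the main obstacle to be the gluing step. The number of blow-ups needed over $V_k$ grows with $k$, so the stages of different $\pi_k$ must be indexed compatibly. I would handle this by indexing stages by the value of the invariant, ordered lexicographically, rather than by step count. Then the stage \textquotedblleft blow up the locus where $\mathrm{inv}=\iota$\textquotedblright\ has a meaning independent of $k$, and it has empty centre over $V_k$ whenever $\iota$ is not attained there. With this indexing, local finiteness of the global sequence is exactly the statement that $\mathrm{inv}_X$ takes finitely many values on each compact set.
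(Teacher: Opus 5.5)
Your proposal is correct and follows essentially the paper's route. The paper gives no argument beyond citing \cite[Theorems 13.3 and 13.4]{BM97} and remarking that the non-compact case is obtained by patching together the canonical resolutions over relatively compact subsets; your exhaustion, the locality of the invariant, and the removal of empty-centre stages spell out exactly that, with one small precision: the stage labels are values of the invariants of the transformed spaces $X_j$, not of $\mathrm{inv}_X$ itself, so local finiteness should be attributed to termination of the algorithm over each $V_k$ (which you already established) rather than merely to $\mathrm{inv}_X$ taking finitely many values on compact sets.
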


The desingularization in this theorem is referred to as the \textit{canonical} \textit{desingularization}.
We introduce the following key lemma, which compensates for the positivity obtained from the Negativity Lemma (see \cite[Lemma 2.2]{Wat25b}).

\begin{lemma}[{\cite[Lemma 3.2]{Wat25b}}]\label{Lemma: key lemma}
    Let $X$ be a complex space, $\pi:\widetilde{X}\longrightarrow X$ be a canonical desingularization and $\exc\!:=\!\pi^{-1}(\xs)$ be the exceptional set which is simple normal crossing. 
    Let $V$ be a relatively compact open subsets of $X$, and set $\widetilde{V}:=\pi^{-1}(V)$.
    Then there exists a quasi-plurisubharmonic function $\psi:\widetilde{V}\longrightarrow[-\infty,+\infty)$ which is smooth on $\widetilde{V}\setminus\exc$, and whose decomposition $\psi=\psi_{\rom{sm}}+\psi_{\rom{psh}}$ in a neighborhood of $\exc$ satisfies that the smooth part $\psi_{\rom{sm}}$ has a Levi form $\idd\psi_{\rom{sm}}$ compensating for the loss of positivity of the pullback $\pi^*\omega$ on $\tl{V}$, where $\pi^*\omega$ degenerates along $\exc$ for any Hermitian metric $\omega$ on $X$.
    Hence, by the relative compactness of $V$, there exists $\varepsilon_V>0$ such that $\pi^*\omega+\varepsilon\idd\psi$ is strictly positive on $\tl{V}$ in the sense of currents for any $0<\varepsilon<\varepsilon_V$; in other words, there exists a Hermitian metric $\gamma_\varepsilon$ on $\tx$ such that 
    \begin{align*}
        \pi^*\omega+\varepsilon\idd\psi\geq\gamma_\varepsilon
    \end{align*}
    on $\tl{V}$ in the sense of currents.
    In particular, we have $\psi\in L^1_{loc}(\widetilde{V})$.
\end{lemma}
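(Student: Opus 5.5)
The statement just given is Lemma \ref{Lemma: key lemma}, quoted from \cite[Lemma 3.2]{Wat25b}. I would prove it by building $\psi$ from the blow-up structure of $\pi$ over $\overline{V}$. By Theorem \ref{Theorem: canonical desingularization}, over a neighborhood of $\overline{V}$ the map $\pi$ is a finite composite of blow-ups $\pi_k:X_k\to X_{k-1}$ with smooth centres $Z_{k-1}\subset X_{k-1}$, with $X_0=X$ and $X_N=\tx$. Each blow-up carries its exceptional divisor $D_k=\pi_k^{-1}(Z_{k-1})$. The line bundle $\cal{O}(-D_k)$ is $\pi_k$-ample and admits a smooth Hermitian metric $g_k$ whose curvature is positive on the fibres of $\pi_k$ over a neighborhood of the relevant compact set. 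Concretely, cover $Z_{k-1}$ by finitely many local charts with generators $f_{k,1},\dots,f_{k,m}$ of the ideal of $Z_{k-1}$, and glue $\log\sum_j|f_{k,j}\circ\pi_k|^2$ by a partition of unity.

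Set $\psi_k:=\log\sum_j|f_{k,j}\circ\pi_k|^2$ locally, which globalises as $\log|s_{D_k}|^2_{g_k}$ up to smooth terms. Then $\idd\psi_k=[D_k]-\Theta_{g_k}$ in the sense of currents, and $-\Theta_{g_k}$ is positive along the fibres of $\pi_k$ on $D_k$. Define on $\tl V$
\begin{align*}
    \psi:=\sum_{k=1}^N\delta_k\,(\pi_{k+1}\circ\cdots\circ\pi_N)^*\psi_k,
\end{align*}
with small constants $\delta_1\gg\delta_2\gg\dots\gg\delta_N>0$ to be chosen. Each summand is smooth off $\exc$ and is locally the sum of a smooth function and a psh function $\log|\cdot|^2$. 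This gives the decomposition $\psi=\psi_{\rm sm}+\psi_{\rm psh}$, and hence quasi-psh-ness and $\psi\in L^1_{loc}(\tl V)$.

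The positivity statement I would prove by induction on $k$, which I expect to be the main obstacle. The claim is that on $\pi^{-1}(V)$ inside $X_k$,
\begin{align*}
    \Omega_k:=(\pi_1\circ\cdots\circ\pi_k)^*\omega+\sum_{l\le k}\delta_l(\cdots)^*\idd\psi_{l,\rm sm}
\end{align*}
dominates a Hermitian metric. The step uses the standard fact that if $\alpha$ is a Hermitian metric on $X_{k-1}$ near a compact set, then $\pi_k^*\alpha-\delta\Theta_{g_k}$ is positive definite near the compact preimage for small $\delta>0$. Indeed, $\pi_k^*\alpha$ is positive in horizontal directions and degenerate only in fibre directions of $D_k$, where $-\Theta_{g_k}$ is strictly positive. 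The mixed terms are absorbed by taking $\delta$ small, using Cauchy--Schwarz together with compactness. The degenerate form $(\cdots)^*\omega$ is handled by induction: at stage $k-1$ the form $\Omega_{k-1}$ is already bounded below by a Hermitian metric $\alpha_{k-1}$, so we apply the fact to $\alpha_{k-1}$.

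Relative compactness of $V$ is what gives finitely many stages and uniform constants. Dropping the positive current parts $\delta_l[D_l]\ge0$ then yields $\pi^*\omega+\idd\psi\ge\gamma$ in the sense of currents. Here the factor $\varepsilon$ has been absorbed: replacing the $\delta_l$ by $\varepsilon\delta_l/\varepsilon_V$, the same argument runs for all $0<\varepsilon<\varepsilon_V$, since shrinking all $\delta_l$ proportionally preserves each step provided the largest stays below its threshold. This produces the metrics $\gamma_\varepsilon$.
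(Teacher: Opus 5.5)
Your proposal is correct and is essentially the argument behind the cited \cite[Lemma 3.2]{Wat25b}. There, $\psi$ is $\log|s_F|^2$ for an effective exceptional divisor $F=\sum_k\delta_kD_k$ whose negative carries a metric positive along the fibres (the Negativity Lemma \cite[Lemma 2.2]{Wat25b}, which you in effect reprove by induction over the blow-up tower with $\delta_1\gg\cdots\gg\delta_N$), and compactness of $\pi^{-1}(\overline{V})$ then yields $\pi^*\omega+\varepsilon\idd\psi\geq\gamma_\varepsilon$; two points are worth making explicit: since the intermediate $X_k$ are singular, ``fibre directions'' should be read through the local embedding $X_k\subset X_{k-1}\times\bb{P}^{m-1}$ (the kernel of $d\pi_k$ on Zariski tangent spaces is vertical, where the smooth part of $\idd\psi_k$ is of Fubini--Study type and hence positive), and the full range of $\varepsilon$ follows directly from $\pi^*\omega+\varepsilon\idd\psi\geq(\varepsilon/\varepsilon_1)(\pi^*\omega+\varepsilon_1\idd\psi)$ for $0<\varepsilon\leq\varepsilon_1$, using $\pi^*\omega\geq0$.
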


\subsection{Plurisubharmonicity and singular Hermitian metrics of line bundles on complex spaces}

Let $X$ be a complex space. 

\begin{definition}[{\cite[Chapter\,V, Definition\,1.4]{GPR94}, \cite{FN80}}]
    A function $\varphi:X\longrightarrow[-\infty,+\infty)$ is called (resp. \textit{strictly}) \textit{plurisubharmonic} if for any $x\in X$ 
    there exist an open neighborhood $U$ admitting a closed holomorphic embedding $\iota_U:U\hookrightarrow V\subset\bb{C}^N$, here $V$ is an open subset of $\bb{C}^N$, 
    and a (resp. strictly) plurisubharmonic function $\widetilde{\varphi}:V\longrightarrow[-\infty,+\infty)$ such that $\varphi|_U=\widetilde{\varphi}\circ\iota_U$.
\end{definition}

The same requirement is imposed on smooth functions, measurable functions, differential forms, test forms, Hermitian metrics, and \kah metrics on $X$: 
locally, they are required to be expressible, via a closed holomorphic embedding $\iota_U:U\hookrightarrow V\subset\bb{C}^N$, as the restrictions of the corresponding smooth functions, 
and related objects on defined on $V$.
Moreover, a function is said to be \textit{quasi}-\textit{plurisubharmonic} if it can be written locally as the sum of a smooth function and a plurisubharmonic function.

Let $L\longrightarrow X$ be a holomorphic line bundle. 
For any trivialization $\tau:L|_U\overset{\cong}{\longrightarrow}U\times\bb{C}$, a (smooth) Hermitian metric $h$ on $L$ can be expressed as 
\begin{align*}
    ||\xi||_h=|\tau(\xi)|e^{-\varphi(x)}, \quad x\in U, \,\xi\in L_x
\end{align*}
using a function $\varphi$ on $U$. 
The function $\varphi:U\longrightarrow\bb{R}$ is called the \textit{weight function of} $h$ \textit{with respect to the trivialization} $\tau$.

\begin{definition}[{\cite[Definition 2.6]{Wat26b}}]\label{Definition: singular Hermitian metrics on cpx sp}
    Let $X$ be a complex space and $L\longrightarrow X$ be a holomorphic line bundle. 
    We say that $h$ is a singular Hermitian metric on $L$ if for any smooth Hermitian metric $h_0$ on $L$, there exists a locally integrable function $\varphi$ on $X$, i.e., $\varphi\in L^1_{loc}(X)$, such that $h=h_0e^{-2\varphi}$ on $X$.
\end{definition}

For any open subset $U\subset X$, we denote 
\begin{align*}
    L^1_{loc}(U):=\biggl\{u:U\longrightarrow\bb{C} \text{ measurable}\,\bigg|\,\int_{K_{reg}}|u|\,dV<+\infty, \forall\,K\Subset U\biggr\}
\end{align*}
and note that, in general, $L^1_{loc}(U)\subsetneq L^1_{loc}(\ureg)$.

\begin{definition}[{\cite[Definition 2.7]{Wat26b}}]\label{Definition: singular positivity on cpx sp}
    Let $X$ be a complex space. 
    We say that a singular Hermitian metric $h$ on $L$ is \textit{singular} \textit{positive} (resp. \textit{singular} \textit{semi}-\textit{positive}) if the weight function of $h$ with respect to any trivialization coincides with a strictly plurisubharmonic (resp. plurisubharmonic) function almost everywhere. 
\end{definition}

When $X$ is smooth, Definitions \ref{Definition: singular Hermitian metrics on cpx sp} and \ref{Definition: singular positivity on cpx sp} coincide with the existing definitions (see \cite[Chapter 3]{Dem12}, \cite{Wat25a,Wat25b}).

\subsection{Definition of singular Hermitian metrics of vector bundle on complex spaces}

Let $X$ be a complex space and $E\longrightarrow X$ be a holomorphic vector bundle. Then there exist an open covering $\{U_\alpha\}_{\alpha\in\Lambda}$ of $X$ and isomorphisms $\iota_\alpha:E|_{U_\alpha}\overset{\cong}{\longrightarrow}U_\alpha\times\bb{C}^r$. 
Conversely, the set of transition functions $\{g_{\alpha\beta}\}_{\alpha,\beta\in\Lambda}$, where $g_{\alpha\beta}:=\iota_\alpha\circ\iota_\beta^{-1}|_{U_\alpha\cap U_\beta}\in\cal{O}_X(U_\alpha\cap U_\beta,GL(r,\bb{C}))$, defines $E$. 
Such a $(\{g_{\alpha\beta}\},\{U_\alpha\})_{\Lambda}$ is called a \textit{system of transition functions of} $E$.

\begin{definition}\label{Definition: smooth Hermitian metrics on E}
    We say that $h$ is \textit{smooth Hermitian metric} on $E$ if for a system of transition functions $(\{g_{\alpha\beta}\},\{U_\alpha\})_{\Lambda}$ of $E$, there is a collection of smooth positive definite Hermitian matrics $h=\{h_\alpha:U_\alpha\longrightarrow M_r(\bb{C})\}_{\alpha\in\Lambda}$ such that $h_\beta={}^t\overline{g_{\alpha\beta}}h_\alpha g_{\alpha\beta}$ on $U_\alpha\cap U_\beta$.
\end{definition}

Here, the metric $h_\alpha$ is said to be smooth on $U_\alpha$ if for some trivialization $E|_{U_\alpha}\simeq U_\alpha\times\bb{C}^r$, equivalently, for some holomorphic frame $e_\alpha=(e_1^\alpha,\dots,e_r^\alpha)$ on $U_\alpha$, 
the metric $h_\alpha$ is represented as $h_\alpha=(h_{jk}^\alpha)_{1\le j,k\le r}, h_{jk}^\alpha:=\lara{e_j^\alpha}{e_k^\alpha}_{h_\alpha}$, and every component $h_{jk}^\alpha$ is smooth on $U_\alpha$.

\begin{definition}\label{Definition: singular Hermitian metrics on E}
    We say that $h$ is \textit{singular Hermitian metric} on $E$ if for a system of transition functions $(\{g_{\alpha\beta}\},\{U_\alpha\})_{\Lambda}$ of $E$, there is a collection of measurable semi-positive Hermitian forms $h=\{h_\alpha:U_\alpha\longrightarrow M_r(\bb{C})\}_{\alpha\in\Lambda}$ such that the compatibility condition $h_\beta={}^t\overline{g_{\alpha\beta}}h_\alpha g_{\alpha\beta}$ holds on $U_\alpha\cap U_\beta$, and $\det h_\alpha$ satisfies $0<\det h_\alpha<+\infty$ almost everywhere on $U_\alpha$. 
    In other words, $h$ is a measurable map from the base space $X$ to the space of semi-positive Hermitian forms on the fibers of $E$ satisfying $0<\det h<+\infty$ almost everywhere. 
\end{definition}

As above, $h_\alpha$ is said to be measurable if every component $h^\alpha_{jk}$ is measurable on $U_\alpha$ for some trivialization. 

\subsection{Currents and its positivity on complex space}

Let $X$ be a complex space of pure dimension $n$. 
As in the smooth case, the sheaf $\scr{D}'^{\,p,q}_X$ of currents of bidegree $(p,q)$ on $X$ is by definition the dual of $\scr{D}^{\,p,q}_X$ which is the space of smooth differential forms of bidegree $(p,q)$ with compact support. 
Given a local embedding $\iota:U\hookrightarrow \bb{C}^N$, thus, the currents $T\in\scr{D}'^{\,p,q}_X$ precisely correspond, via $T\longmapsto\iota_*T$, to the currents of bidegree $(N-n+p,N-n+q)$ in the ambient space that vanish on all test forms $\phi$ such that $\iota^*\phi=0$ on $\reg$ (see \cite{Dem85,AS12}).
Here, the current $\iota_*T$ is defined by 
\begin{align*}
    \lara{\iota_*T}{\tl{\phi}}:=\lara{T}{\iota^*\tl{\phi}}=\int_{U_{reg}} T\wedge\iota^*\tl{\phi},
\end{align*}
for any test forms $\tl{\phi}$ on $\bb{C}^N$. 
Furthermore, the positivity of the current $T\in\scr{D}'^{\,p,p}_X$ is also defined as in the smooth case, and this is equivalent to the positivity of $\iota_*T$. 
In other words, a current $T\in\scr{D}'^{\,p,p}_X$ is said to be \textit{positive}, i.e., $T\geq0$ \textit{on} $X$ \textit{in the sense of currents}, if $\lara{T}{\phi}\geq0$ for any strongly positive test form $\phi\in\scr{D}^{n-p,n-p}_X(X)$ on $X$, which can be written as $\phi=\rho\cdot i^{(n-p)^2}\eta\wedge\overline{\eta}$ suitable forms $\rho\in\scr{D}(X,\bb{R}_{\geq0})$ and $\eta\in\cal{E}_X^{n-p,0}(X)$.
Here, $\cal{E}_X^{p,q}$ denotes the sheaf of smooth differential forms. 
Note that, if $\varphi\in L^1_{loc}(U)$ then $[\varphi]\in\scr{D}_X'(U)$ and $\iota_*[\varphi]\in\scr{D}'(\bb{C}^N)$; however, in general $\iota_*\varphi$ is not locally integrable. 
Moreover, the form $\idd\varphi$ is defined on $U$ as a current by 
\begin{align*}
    \lara{\idd\varphi}{\phi}:=\int_{U_{reg}}\varphi\,\idd\phi,
\end{align*}
for any test form $\phi\in\scr{D}^{n-1,n-1}(U)$, i.e., $\idd\varphi\in\scr{D}_X'^{\,1,1}(U)$.

When $X$ is smooth, the following relationship between the positivity of currents and plurisubharmonicity is well known.

\begin{proposition}\label{Proposition: idd>0 iff a.e. psh}
    Let $\Omega$ be an open subset of $\bb{C}^n$ and $\varphi$ be a local integrable function on $\Omega$, i.e., $\varphi\in L^1_{loc}(\Omega)$.
    The inequality $\idd\varphi\geq0$ holds on $\Omega$ in the sense of currents if and only if there exists a plurisubharmonic function defined on $\Omega$ which coincides with $\varphi$ almost everywhere on $\Omega$. 
\end{proposition}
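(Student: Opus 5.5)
The plan is to prove the two implications separately, using mollification for the harder one. The statement is the classical characterization on a domain $\Omega\subset\bb{C}^n$, so everything reduces to properties of plurisubharmonic functions and convolution with a radial mollifier.

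\emph{($\Leftarrow$)} This direction is direct. Suppose $u$ is plurisubharmonic on $\Omega$ and $u=\varphi$ almost everywhere. Then $[\varphi]=[u]$ as currents, since integration against test forms does not see null sets. The positivity $\idd u\geq0$ follows by regularization. Take $u_\delta:=u*\rho_\delta$ with $\rho_\delta$ a radial mollifier. The $u_\delta$ are smooth and plurisubharmonic on $\Omega_\delta:=\{z\in\Omega\mid d(z,\partial\Omega)>\delta\}$, so $\idd u_\delta\geq0$ pointwise. Moreover $u_\delta\downarrow u$, and by monotone convergence (using $u\in L^1_{loc}$) we get $u_\delta\to u$ in $L^1_{loc}$. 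Hence $\lara{\idd u_\delta}{\phi}\to\lara{\idd u}{\phi}$ for every strongly positive test form $\phi$, and the limit is $\geq0$.

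\emph{($\Rightarrow$)} I would mollify $\varphi$ directly and set $\varphi_\delta:=\varphi*\rho_\delta$ on $\Omega_\delta$. Convolution commutes with $\idd$, and the convolution of a positive current with a nonnegative kernel remains positive. So $\idd\varphi_\delta\geq0$ pointwise, and $\varphi_\delta$ is a smooth plurisubharmonic function. The key step is to show that $\varphi_\delta$ is nonincreasing as $\delta\downarrow0$. For this, write $\varphi_{\delta}*\rho_\eta=\varphi_\eta*\rho_\delta$. Since $\varphi_\eta$ is smooth and plurisubharmonic, hence subharmonic, it satisfies the sub-mean value inequality on spheres. Combined with radiality of $\rho_\delta$, this gives $\varphi_\eta*\rho_\delta\geq\varphi_\eta$. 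Letting $\eta\to0$ with $\varphi_\delta$ smooth yields $\varphi_\delta\geq\varphi_\eta$ whenever $\delta$ dominates appropriately. More cleanly, since $\varphi_\delta*\rho_\eta\geq\varphi_\delta$, the map $r\mapsto$ spherical mean of $\varphi_\eta$ of radius $r$ is nondecreasing, and one transfers this monotonicity to $\varphi_\delta$ in $\delta$.

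Given monotonicity, define $u:=\lim_{\delta\to0}\varphi_\delta$, a decreasing limit of plurisubharmonic functions. This limit is plurisubharmonic or identically $-\infty$ on a component. It cannot be $\equiv-\infty$, because $\varphi_\delta\to\varphi$ in $L^1_{loc}$, so $u=\varphi$ almost everywhere, using an a.e.-convergent subsequence and the Lebesgue differentiation theorem. Since $\varphi\in L^1_{loc}$, $u$ is not $-\infty$ on any component. Finally, the local definitions on the exhaustion $\Omega_\delta$ are consistent and so patch to all of $\Omega$.

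The main obstacle is the monotonicity of $\varphi_\delta$ in $\delta$. I would handle it via the identity $\varphi_\delta*\rho_\eta=\varphi_\eta*\rho_\delta$ together with the radial sub-mean value property of smooth subharmonic functions, as above. This is the standard argument in Hörmander's and Demailly's texts, and one can cite \cite[Chapter I]{Dem12} for it.
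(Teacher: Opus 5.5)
The paper gives no proof of this proposition; it quotes it as a well-known fact. Your mollification and decreasing-limit argument is exactly the standard proof behind it, so the route is right. The one place that needs repair is the key monotonicity step.

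Your first derivation of that step is invalid. From $\varphi_\delta*\rho_\eta=\varphi_\eta*\rho_\delta\geq\varphi_\eta$, letting $\eta\to0$ sends the left side to $\varphi_\delta$. The right side, however, only tends to $\varphi$ (a.e., at Lebesgue points), so all you obtain is $\varphi_\delta\geq\varphi$ a.e. That compares each $\varphi_\delta$ with $\varphi$, not $\varphi_{\delta_1}$ with $\varphi_{\delta_2}$. It does not give a monotone family, nor, by itself, a plurisubharmonic representative.

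Your \emph{more cleanly} sentence names the right ingredient, but two things are off. First, the monotonicity of spherical means in the radius does not follow from the one-radius inequality $\varphi_\delta*\rho_\eta\geq\varphi_\delta$; it is a separate fact. Second, the "transfer" to $\varphi_\delta$ has to be written out.

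Concretely, take a smooth plurisubharmonic $v$ on $\Omega_\eta$, and let $M_v(z,r)$ be the mean of $v$ over the sphere $\partial B(z,r)$. The divergence theorem gives $\partial_r M_v(z,r)=\sigma_{2n-1}^{-1}r^{1-2n}\int_{B(z,r)}\Delta v\,dV\geq0$, where $\sigma_{2n-1}$ is the area of the unit sphere of $\bb{R}^{2n}$. Write $\rho(w)=\tilde\rho(|w|)$ with support in the unit ball. Polar coordinates then give $v*\rho_\delta(z)=\sigma_{2n-1}\int_0^1\tilde\rho(t)\,t^{2n-1}M_v(z,\delta t)\,dt$, so $\delta\mapsto v*\rho_\delta(z)$ is nondecreasing.

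Apply this to $v=\varphi_\eta$. For $\delta_1<\delta_2$ you get $\varphi_{\delta_1}*\rho_\eta=\varphi_\eta*\rho_{\delta_1}\leq\varphi_\eta*\rho_{\delta_2}=\varphi_{\delta_2}*\rho_\eta$ on $\Omega_{\delta_2+\eta}$. Letting $\eta\to0$, and using only the continuity of $\varphi_{\delta_1}$ and $\varphi_{\delta_2}$, gives $\varphi_{\delta_1}\leq\varphi_{\delta_2}$ on $\Omega_{\delta_2}$. With this lemma in place, the rest of your ($\Rightarrow$) direction goes through as written: the decreasing limit, its identification with $\varphi$ a.e., and the patching over the exhaustion.

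In the ($\Leftarrow$) direction you do not need $u_\delta\downarrow u$, which would itself rest on the same lemma. The convergence $u*\rho_\delta\to u$ in $L^1_{loc}$ holds for any $u\in L^1_{loc}$. Together with $\idd(u*\rho_\delta)\geq0$, this already suffices.
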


However, it should be noted that the same statement does not necessarily hold on complex spaces.

\begin{theorem}[{cf.\,\cite{GR56,FN80}, \cite[Theorem 1.10]{Dem85}}]\label{Theorem: idd>0 and a.e. psh}
    Let $U$ be an open subset of a complex space $X$ and $\varphi$ be a local integrable function on $U$, i.e., $\varphi\in L^1_{loc}(U)$.  
    \begin{itemize}
        \item if there exists a plurisubharmonic function defined on $U$ which coincides with $\varphi$ almost everywhere on $U$, then we have $\idd\varphi\geq0$ on $U$ in the sense of currents. 
        \item if the inequality $\idd\varphi\geq0$ holds on $U$ in the sense of currents and one of the following conditions is satisfied: 
        \begin{itemize}
            \item [$(a)$] $X$ is normal; 
            \item [$(b)$] $X$ is locally irreducible and $\varphi$ is locally bounded from above on $U$, 
        \end{itemize} 
        then there exists a plurisubharmonic function on $U$ that coincides with $\varphi$ almost everywhere on $U$.
    \end{itemize}
    Furthermore, if $\varphi$ is simply plurisubharmonic on $\ureg$, then $\varphi$ extends uniquely to a plurisubharmonic function on $U$ whenever either $(a)$ or $(b)$ is satisfied.
\end{theorem}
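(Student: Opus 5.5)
The first bullet is proved by working locally through an embedding, and the second by reducing to the classical extension theorems of Grauert--Remmert and Fornæss--Narasimhan.

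\emph{First bullet.} Let $u$ be plurisubharmonic on $U$ with $u=\varphi$ almost everywhere. Since $\varphi$ and $u$ agree a.e.\ on $\ureg$, the currents $[\varphi]$ and $[u]$ coincide, so it suffices to prove $\idd u\geq0$. The question is local, so fix a closed embedding $\iota:U\hookrightarrow V\subset\bb{C}^N$ and a plurisubharmonic $\tl{u}$ on $V$ with $u=\tl{u}\circ\iota$. Regularize $\tl{u}_\nu:=\tl{u}*\rho_\nu\downarrow\tl{u}$ on a slightly smaller set. Each $\tl{u}_\nu$ is smooth plurisubharmonic, so $\iota^*\idd\tl{u}_\nu\geq0$ pointwise on $\ureg$. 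For a strongly positive test form $\phi$ this gives
\begin{align*}
    \int_{\ureg}\tl{u}_\nu\circ\iota\,\idd\phi=\int_{\ureg}\idd(\tl{u}_\nu\circ\iota)\wedge\phi\geq0 .
\end{align*}
Stokes' theorem is legitimate here, since $\phi$ has compact support in $U$ and $\ureg$ has finite local volume; equivalently, use that $[\xs]$-type boundary terms vanish by Lelong's theorem, i.e.\ $\iota_*[\ureg]$ is a closed current. Monotone convergence, with $\tl{u}_1$ bounded above on $\mathrm{supp}\,\phi$ and $u\in L^1_{loc}(U)$, then passes the limit to $\lara{\idd u}{\phi}\geq0$.

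\emph{Second bullet.} Restricting to $\ureg$, which is a manifold, Proposition \ref{Proposition: idd>0 iff a.e. psh} gives a plurisubharmonic $u$ on $\ureg$ with $u=\varphi$ a.e. It therefore suffices to prove the final assertion: a plurisubharmonic function on $\ureg$ extends uniquely across $\xs$ under $(a)$ or $(b)$. The extension we want is
\begin{align*}
    u^*(x):=\limsup_{\ureg\ni y\to x}u(y) ,
\end{align*}
and by \cite{FN80} it suffices to show that $u^*$ is weakly plurisubharmonic, i.e.\ upper semicontinuous, and plurisubharmonic on $\ureg$ in the sense that it is subharmonic along holomorphic discs. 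The argument then splits according to the hypothesis.

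Under $(b)$, $u$ is locally bounded above near $\xs$. Since $\xs$ is nowhere dense and local irreducibility makes $\ureg$ locally connected near each singular point, the Grauert--Remmert Riemann extension theorem for plurisubharmonic functions \cite{GR56} applies to bounded-above plurisubharmonic functions off a thin analytic set in a locally irreducible space. Hence $u^*$ is plurisubharmonic and $u^*<+\infty$. Uniqueness follows because any plurisubharmonic extension $v$ satisfies $v(x)=\limsup v$ over a dense set in a locally irreducible space, which forces $v=u^*$.

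Under $(a)$, the key step, and the main obstacle, is local boundedness from above, which is not assumed. I would first try the following. Since $X$ is normal, $\mathrm{codim}\,\xs\geq2$. Take a local normalization-free parametrization, i.e.\ a finite branched covering $p:U\to\Delta^n$ whose branch locus contains $p(\xs)$. The function
\begin{align*}
    w(z):=\max_{p(x)=z,\,x\in\ureg}u(x)
\end{align*}
is plurisubharmonic off an analytic set $p(\xs)$ of codimension $\geq2$ in $\Delta^n$, so it extends plurisubharmonically to $\Delta^n$ by the classical codimension-two extension (Hartogs type). This gives local boundedness above of $u$, and we are reduced to $(b)$, since normal spaces are locally irreducible. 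The point needing care is checking that $w$ is plurisubharmonic across the branch locus away from $p(\xs)$; this is the standard symmetric-function argument and uses $u$ plurisubharmonic there. This is where I expect the details to concentrate, and it is the content cited from \cite[Theorem 1.10]{Dem85}.
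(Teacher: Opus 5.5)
Your proposal is correct, and it is more self-contained than the paper. The paper gives no proof of this theorem. It quotes it from \cite{GR56}, \cite{FN80} (plurisubharmonic $=$ weakly plurisubharmonic) and \cite[Theorem 1.10]{Dem85}, and adds only the example on $\{zw=0\}$ showing that local irreducibility cannot be dropped in $(b)$. You instead:
\begin{itemize}
\item prove the first bullet directly: ambient regularization $\tl{u}_\nu\downarrow\tl{u}$, pointwise positivity of $\iota^*\idd\tl{u}_\nu$ on $\ureg$, integration by parts, then a limit;
\item reduce the second bullet to the extension statement via Proposition \ref{Proposition: idd>0 iff a.e. psh} on $\ureg$;
\item reduce $(a)$ to $(b)$ by producing local upper bounds. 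The fibrewise maximum $w$ of $u$ over a local branched covering $p:U\to\Delta^n$ is plurisubharmonic off $p(\xs)$, which has codimension $\geq2$ by normality. So $w$ extends across $p(\xs)$ and bounds $u$ from above.
\end{itemize}
Only $(b)$ is left to \cite{GR56} and \cite{Dem85}, as in the paper. The gain is that your route shows normality enters only through $\mathrm{codim}\,\xs\geq2$; after that, the locally irreducible Riemann-type extension does all the work. The paper's citation is shorter but leaves this structure implicit.

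Several details need correcting.
\begin{itemize}
\item \emph{The criterion you attribute to \cite{FN80} is misstated.} Weak plurisubharmonicity requires subharmonicity along \emph{every} holomorphic disc in $U$, including discs through $\xs$. Upper semicontinuity plus subharmonicity along discs in $\ureg$ is not enough. In the paper's example on $\{zw=0\}$, the function $u^*$ (with $u^*(0)=1$) satisfies both conditions, yet fails along a disc in $L_1$ through the origin. This does not break your argument, since you then invoke \cite{GR56} directly, but it is exactly where local irreducibility enters.
\item \emph{Integration by parts in the first bullet.} It is justified by Lelong's theorem that the current of integration over $\ureg$ is $d$-closed; finite local volume alone does not suffice.
\item \emph{The limit in the first bullet.} It is by dominated rather than monotone convergence, since $\idd\phi$ has no sign. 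The two facts you list give the domination $|\tl{u}_\nu\circ\iota|\leq|u|+|\tl{u}_1\circ\iota|$.
\item \emph{Case $(a)$ needs no symmetric-function argument.} Let $z_0\in B\setminus p(\xs)$. The function $w$ is upper semicontinuous at $z_0$ because $p$ is proper. Moreover $w(z_0)=\limsup_{z\to z_0,\,z\notin B}w(z)$, because each $u(x)$ with $x\in p^{-1}(z_0)\subset\ureg$ is the $\limsup$ of $u$ off the nowhere dense set $p^{-1}(B)$. The Riemann extension across the hypersurface $B$ then gives plurisubharmonicity of $w$ on $\Delta^n\setminus p(\xs)$.
\end{itemize}
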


Normal complex spaces are locally irreducible. 
A weakly plurisubharmonic function is defined as an upper semi-continuous function whose pull-back by every holomorphic map from the unit disc $\Delta\subset\bb{C}$ is subharmonic. 
In \cite{GR56}, this notion was simply called plurisubharmonic, and \cite{FN80} showed that the two notions coincide.

\begin{remark}\label{Remark: locally irreducible in [GR56, Satz 3]}
    In \cite[Satz 3]{GR56}, which corresponds to case $(b)$ of Theorem \ref{Theorem: idd>0 and a.e. psh} on the extension of plurisubharmonic functions, no assumption of local irreducibility of $X$ is imposed. 
    However, this assumption is in fact necessary, as in \cite{Dem85}.
\end{remark}

Indeed, consider the non-locally irreducible space $X=\{zw=0\}\subset\bb{C}^2$. 
Define a locally integrable function $\varphi$ on $X$ by $\varphi=0$ on $L^{\times}_1:=\{z=0, w\ne 0\}$, and $\varphi=1$ on $L^{\times}_2:=\{w=0, z\ne 0\}$, here $\idd\varphi=0$ on $X$ in the sense of currents.  
If a plurisubharmonic extension $\tl{\varphi}$ of $\varphi$ exists on $X$, then $\tl{\varphi}|_{\reg}=\varphi$, and for the normalization $\nu:\tx=X_1\sqcup X_2\to X$, the pull-back $\nu^*\tl{\varphi}$ is also plurisubharmonic, where $X_1=\{z=0\}$ and $X_2=\{w=0\}$. 
However, we obtain $\nu^*\tl{\varphi}|_{X_j}=j-1$ and $\nu^*\tl{\varphi}(0_{X_j})=\tl{\varphi}(0_X)\,(j=1,2)$ which contradicts plurisubharmonicity on at least one of the branches.
The failure of extension is due to the fact that the $\limsup$ of $\varphi$ along each branch takes different values, i.e., $\limsup_{z\to 0,\,z\in L^{\times}_1}\varphi(z)\ne\limsup_{w\to 0,\,w\in L^{\times}_2}\varphi(w)$ (cf. \cite[Theorem 1.10]{Dem85}).

It can also be checked directly that (weakly) plurisubharmonicity fails, without appealing to the normalization. 
Since upper semicontinuity is required, we extend $\varphi$ by setting $\varphi(0):=\limsup_{x\to0,\,x\in X}\varphi(x)=1$. 
Consider a holomorphic curve $\gamma:\Delta=\{t\in\bb{C}\mid|t|<1\}\to X$ given by $\gamma(t)=(\gamma_1(t),\gamma_2(t))$. 
Since $\gamma_1(t)\gamma_2(t)=0$, holomorphicity implies that either $\gamma_1\equiv 0$ or $\gamma_2\equiv 0$. 
Hence $\gamma$ is entirely contained in one of the branches, and no holomorphic curve crosses the two branches.
Thus, for curves $\kappa_j$ contained in $L_j\,(j=1,2)$, we have $\kappa_2^*\varphi\equiv1$, while $\kappa_1^*\varphi$ is equal to $0$ away from the origin and takes the value $1$ only at the origin. 
Consequently, $\kappa_1^*\varphi$ is not subharmonic on $\Delta$.

\begin{remark}\label{Remark: unclear of idd pi*phi>0 current}
    Let $U$ be an open subset of a complex space $X$ with $U\cap\xs\ne\emptyset$, and let $\pi:\tx\longrightarrow X$ be a resolution of singularities. 
    Even if $\varphi\in L^1_{loc}(U)$, it is not necessarily true that $\pi^*\varphi\in L^1_{loc}(\pi^{-1}(U))$. 
    Hence, even if $\idd\varphi\geq0$ holds on $U$ in the sense of currents, it does not seem to be known in general whether $\idd\pi^*\varphi\geq0$ holds on $\pi^{-1}(U)$, unless conditions $(a)$ or $(b)$ of Theorem \ref{Theorem: idd>0 and a.e. psh} are satisfied.
\end{remark}

\subsection{$L^2$-subsheaves and the strong openness property}

In this subsection, we consider the smooth case, and let $X$ be a complex manifold and $E\longrightarrow X$ be a holomorphic vector bundle.
Let $\varphi$ be a quasi-plurisubharmonic function on $X$ and let $\scr{I}(\varphi)$ be the sheaf of germs of holomorphic functions $f$ such that $|f|^2e^{-\varphi}$ is locally integrable which is called the \textit{multiplier ideal sheaf}. 
For a singular Hermitian metric $h$ on a line bundle $L$ with a global weight $\varphi\in L^1_{loc}(X)$, i.e., $h=h_0e^{-2\varphi}$, we define the multiplier ideal sheaf of $h$ by $\scr{I}(h):=\scr{I}(\varphi)$. 
Its extension to vector bundles is given below.

\begin{definition}[{\cite[\!Definition\,2.3.1]{deC98}}]
    Let $h$ be a singular Hermitian metric on $E$. 
    We define the $L^2$-\textit{subsheaf} $\!\scr{E}(h)$ of germs of local holomorphic sections on $E$ as follows:
    \begin{align*}
        \scr{E}(h)_x:=\{s\in\cal{O}(E)_x\mid |s|^2_h \text{ is locally integrable around } x\}.
    \end{align*}
\end{definition}

In fact, $\scr{E}(h)=\cal{O}(E)\otimes\scr{I}(h)$ if $E$ is a holomorphic line bundle. 
In \cite{Nad90}, the coherence of $\scr{I}(\varphi)$ was proved using H\"{o}rmander's $L^2$-estimates. 
After that, \cite{Ina22} showed that $\scr{E}(h)$ is coherent if $h$ is Nakano semi-positive, and that $\scr{E}(h\otimes\det h)$ is coherent if $h$ is Griffiths semi-positive.

We introduce the strongly openness property for holomorphic vector bundles. This property was first conjectured in \cite{Dem01} for multiplier ideal sheaves, resolved in dimension two by Jonsson-Musta\c{t}\u{a} \cite{JM12}, and in arbitrary dimension by Guan-Zhou \cite{GZ15}; more recently, it has also been extended to holomorphic vector bundles.

\begin{theorem}[{Strong openness property, cf.\,\cite[Definition 1.2 and Theorem 1.5]{LXYZ24}}]\label{Theorem: strong openness property}
    Let $h$ be a singular Hermitian metric on $E$. 
    Let $\{h_j\}_{j\in\bb{N}}$ be a sequence of Griffiths semi-positive singular Hermitian metrics on $E$.
    If $h$ is Nakano semi-positive and Griffiths semi-positive, and $h_j\geq h$ for any $j$, and $-\log\det h_j$ converges to $-\log\det h$ almost everywhere, then $\scr{E}(h)=\bigcup_{j\in\bb{N}}\scr{E}(h_j)$.
\end{theorem}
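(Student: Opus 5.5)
The inclusion $\bigcup_{j}\scr{E}(h_j)\subset\scr{E}(h)$ is immediate. Since $h_j\geq h$, we have $|s|^2_{h}\leq|s|^2_{h_j}$ pointwise for every local holomorphic section $s$, so local integrability of $|s|^2_{h_j}$ implies that of $|s|^2_h$. The whole content of the statement is therefore the reverse inclusion. Fix $x$, a germ $s\in\scr{E}(h)_x$ and a small Stein neighbourhood $U$ of $x$ on which $E$ is trivial. The plan is to reduce the vector bundle statement to a weighted version of the Guan--Zhou strong openness for plurisubharmonic weights.

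\emph{Step 1 (eigenvalue comparison).} Put $\varphi:=-\log\det h$ and $\varphi_j:=-\log\det h_j$. Since $h$ and $h_j$ are Griffiths semi-positive, $\det h$ and $\det h_j$ are singular semi-positive metrics on $\det E$. Hence $\varphi$ and $\varphi_j$ are (a.e.\ equal to) plurisubharmonic functions on $U$. Moreover $h_j\geq h$ gives $\varphi_j\leq\varphi$, and by hypothesis $\varphi_j\to\varphi$ a.e.
Next, the Hermitian matrix $h^{-1/2}h_jh^{-1/2}$ has all eigenvalues $\geq1$. Its largest eigenvalue is therefore at most the product of all eigenvalues, namely $\det h_j/\det h=e^{\varphi-\varphi_j}$. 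This yields the pointwise bound
\begin{align*}
    |s|^2_{h_j}\leq|s|^2_h\,e^{\varphi-\varphi_j}=F\,e^{-\varphi_j},\qquad F:=|s|^2_h\,e^{\varphi}.
\end{align*}
Since $\int_U F e^{-\varphi}=\int_U|s|^2_h<+\infty$, it suffices to show that $\int_{U'}Fe^{-\varphi_j}<+\infty$ on a smaller neighbourhood $U'$ of $x$ for $j\gg1$.

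\emph{Step 2 (the weight $F$).} We need a structural property of $F$ allowing a Guan--Zhou type convergence argument. The natural approach is to interpret $F=|s|^2_h\,(\det h)^{-1}$ as the pointwise norm of $s$ under the isomorphism $E\cong\Lambda^{r-1}E^*\otimes\det E$, equipped with the induced metric. We would then show, using Griffiths semi-positivity of $h$, that $\log F$ is plurisubharmonic, or at least that $F$ is locally a supremum or limit of $|f|^2e^{-\chi}$ with $f$ holomorphic and $\chi$ plurisubharmonic. To make this precise we would approximate $h$ locally by smooth Griffiths semi-positive metrics (Berndtsson--P\u{a}un regularization on a Stein neighbourhood) and pass to the limit.

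\emph{Step 3 (weighted strong openness).} With $F$ of this type, we would apply the effective strong openness/convergence theorem of Guan--Zhou. For plurisubharmonic $\varphi_j\leq\varphi$ with $\varphi_j\to\varphi$ in $L^1_{loc}$, and $\int Fe^{-\varphi}<\infty$, it gives $\int_{U'}Fe^{-\varphi_j}<\infty$ for $j$ large. The proof of that theorem runs through the Ohsawa--Takegoshi extension with optimal constants on sublevel sets of $\varphi$. Here the Nakano semi-positivity of $h$ enters: it is exactly what is needed to run these $L^2$-estimates with values in $(E,h)$ rather than in the trivial bundle. This lets us treat $s$ directly as an $(E,h)$-valued section instead of splitting off the weight $F$.

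\emph{Main obstacle.} I expect Step 2 to be the hardest point, i.e.\ controlling the mixed weight $|s|^2_h e^{\varphi}$ so that Step 3 applies. If plurisubharmonicity of $\log F$ fails, the fallback is to bypass Step 2 entirely. One would then redo the Guan--Zhou argument with $E$-valued Ohsawa--Takegoshi extension for the Nakano semi-positive metric $h\,e^{-\varphi_j+\varphi}$ restricted to the sublevel sets $\{\varphi_j<-t\}$, as in \cite{LXYZ24}.
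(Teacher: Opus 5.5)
Your trivial inclusion and Step~1 are correct. Step~2 is actually the easy part. If $H$ is the matrix of $h$ in a frame, then $F=|s|^2_h/\det h=s^{*}\,\mathrm{adj}(H^{-1})\,s$. So $F$ is the norm of $s$ for the metric on $E\otimes(\det E)^{-1}\cong\Lambda^{r-1}E^*$ induced by $h^*$. This metric is Griffiths semi-negative: regularize $h^*$ from above by smooth Griffiths semi-negative metrics and use that $\Lambda^{r-1}$ is monotone. Hence $\log F$ is plurisubharmonic.

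The genuine gap is Step~3. The Guan--Zhou theorem concerns $|f|^2e^{-\varphi}$ with $f$ \emph{holomorphic}. Its analogue for $F=e^{u}$ with $u$ merely plurisubharmonic, which is all Steps~1--2 give you, is false. On the unit disc $\Delta$ take $u=-\sqrt{\log(1/|z|)}$, $\varphi=\log|z|^2$ and $\varphi_j=(1+\frac1j)\varphi\le\varphi$. Here $u$ is subharmonic, since $t\mapsto-\sqrt t$ is convex, $\log(1/|z|)$ is harmonic off $0$, and $u\le 0$. Substituting $t=\log(1/|z|)$,
\[
\int_{\Delta}e^{u-\varphi}\,d\lambda=2\pi\int_0^{\infty}e^{-\sqrt t}\,dt=4\pi,
\qquad
\int_{\{|z|<\rho\}}e^{u-\varphi_j}\,d\lambda=2\pi\int_{\log(1/\rho)}^{\infty}e^{2t/j-\sqrt t}\,dt=+\infty
\]
for all $j$ and all $\rho$. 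So once you pass to the scalar bound $|s|^2_{h_j}\le Fe^{-\varphi_j}$ and keep only plurisubharmonicity of $\log F$, the inequality you need no longer follows. A supremum or limit representation of $F$ by terms $|f|^2e^{-\chi}$ does not rescue it either. Guan--Zhou applies to one holomorphic $f$ at a time and gives no uniform control over a family.

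The fallback does not work as written either. The metric $he^{\varphi-\varphi_j}$ has curvature $i\Theta_{E,h}+\idd(\varphi_j-\varphi)$, and $\varphi_j-\varphi$ is only a difference of plurisubharmonic functions. For instance, $h=\mathrm{diag}(1,e^{-|z|^2})$ and $h_j=\mathrm{diag}(1,e^{-(1-1/j)|z|^2+1/j})$ satisfy all the hypotheses of the statement. Yet
\[
he^{\varphi-\varphi_j}=\mathrm{diag}\bigl(e^{(|z|^2+1)/j},\,e^{-|z|^2+(|z|^2+1)/j}\bigr)
\]
has curvature $-\frac1j\idd|z|^2$ in the first direction. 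In general $he^{\varphi}$ is even Griffiths semi-\emph{negative}, so this metric cannot carry the $L^2$-estimates you want.

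What is really needed is an $E$-valued version of the Guan--Zhou machinery (optimal $L^2$ extension, or minimal $L^2$ integrals over sublevel sets). It has to be run with the Nakano semi-positive metric $h$ itself, using only its $L^2$-estimate characterization. That is exactly the content of \cite[Theorem 1.5]{LXYZ24}, which the paper quotes without proof. Your proposal points toward this route but does not carry it out, so as it stands it reduces the statement to the cited result rather than proving it.
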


\begin{corollary}[{cf.\,\cite[Definition 1.2 and Corollary 3.16]{LXYZ24}}]\label{Corollary: strong openness property}
    Let $h$ be a singular Hermitian metric on $E$ and $\psi:X\longrightarrow[-\infty,+\infty)$ be a plurisubharmonic function on $X$. 
    If $h$ is Nakano semi-positive and Griffiths semi-positive, then $\scr{E}(h)=\bigcup_{\beta>0}\scr{E}(he^{-\beta\psi})$. 
\end{corollary}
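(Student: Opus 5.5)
The statement to prove is Corollary \ref{Corollary: strong openness property}: for $h$ Nakano and Griffiths semi-positive and $\psi$ plurisubharmonic, $\scr{E}(h)=\bigcup_{\beta>0}\scr{E}(he^{-\beta\psi})$. The plan is to reduce it to Theorem \ref{Theorem: strong openness property}, applied to the sequence $h_j:=he^{-\psi/j}$ with $\beta=1/j$.

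The question is local, so I first work on a small ball $U$ around a point where $\psi$ is plurisubharmonic and bounded above. Subtracting a constant, I assume $\psi\le 0$ on $U$. Adding a constant to $\psi$ only changes $he^{-\beta\psi}$ by a positive constant factor, so the sheaves $\scr{E}(he^{-\beta\psi})$ are unchanged.

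The inclusion $\bigcup_\beta\scr{E}(he^{-\beta\psi})\subset\scr{E}(h)$ is immediate. Since $e^{-\beta\psi}\ge1$, we have $|s|^2_h\le|s|^2_{he^{-\beta\psi}}$. The sheaves are also monotone in $\beta$: if $\beta\le\beta'$ then $\scr{E}(he^{-\beta'\psi})\subset\scr{E}(he^{-\beta\psi})$. Hence the union equals $\bigcup_{j}\scr{E}(h_j)$.

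For the reverse inclusion I check the hypotheses of Theorem \ref{Theorem: strong openness property}.
\begin{itemize}
\item Each $h_j$ is Griffiths semi-positive. Locally this means that $\log|u|_{h_j^*}$ is plurisubharmonic for holomorphic sections $u$ of $E^*$, and $h_j^*=h^*e^{\psi/j}$. Therefore $\log|u|_{h_j^*}=\log|u|_{h^*}+\psi/(2j)$, a sum of plurisubharmonic functions, which is plurisubharmonic.
\item We have $h_j\ge h$, because $\psi\le0$.
\item Let $r$ be the rank of $E$. Then $-\log\det h_j=-\log\det h+r\psi/j$. Since $\psi$ is plurisubharmonic and not identically $-\infty$ on any component, $\psi>-\infty$ almost everywhere. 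So $r\psi/j\to0$ almost everywhere, and $-\log\det h_j\to-\log\det h$ almost everywhere.
\end{itemize}
With $h$ Nakano and Griffiths semi-positive by assumption, Theorem \ref{Theorem: strong openness property} gives $\scr{E}(h)=\bigcup_j\scr{E}(h_j)$, which proves the corollary.

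The only delicate point is the almost-everywhere convergence, which needs $\psi\not\equiv-\infty$. If $\psi\equiv-\infty$ on a component, then $he^{-\beta\psi}$ is not a singular Hermitian metric in the sense of Definition \ref{Definition: singular Hermitian metrics on E}, so this case should be excluded as degenerate. The upper boundedness used in the normalization is automatic locally, since $\psi$ is upper semicontinuous.
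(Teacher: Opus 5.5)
Your argument is correct and follows the paper's approach: the paper gives no separate proof but obtains the corollary from Theorem \ref{Theorem: strong openness property} via \cite[Corollary 3.16]{LXYZ24}, and your reduction (normalize $\psi\le 0$ locally, set $h_j:=he^{-\psi/j}$, and check Griffiths semi-positivity, $h_j\ge h$, and $-\log\det h_j\to-\log\det h$ a.e.) is exactly that derivation. You are also right that $\psi\not\equiv-\infty$ on components must be assumed implicitly, since otherwise $he^{-\beta\psi}$ is not a singular Hermitian metric and the identity fails.
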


\subsection{$L^2$-spaces and Grauert-Riemenschneider-type canonical sheaves}

Let $X$ be a complex space of pure dimension $n$ and $E\longrightarrow X$ be a holomorphic line bundle with a singular Hermitian metric $h$. 
For any open subset $U\subset X$, we denote by 
\begin{align*}
    L^{2,loc}_{p,q}(U,E,h):=\Bigl\{u\in L^{2,loc}_{p,q}(\ureg,E,h)\,\Big|\, u|_{K_{reg}}\in L^2_{p,q}(K_{reg},E,h),\forall\,K\Subset U\Bigr\}
\end{align*} 
the space of $E$-valued $(p,q)$-forms on $U$ which are locally square integrable with respect to $h$. 
This space does not depend on the choice of a Hermitian metric on $U$; more precisely, for any Hermitian metric $\omega_U$ on $U$, we have $L^2_{p,q}(K_{reg},E,h)=L^2_{p,q}(K_{reg},E;\omega_U,h)$.
We further denote by 
\begin{align*}
    L^2_{p,q}(X,E;\omega,h)=&\,L^2_{p,q}(\reg,E;\omega,h)\\
    :=&\,\biggl\{u\in L^{2,loc}_{p,q}(X,E,h)\,\bigg|\,||u||^2_{h,\omega}:=\int_{\reg}|u|^2_{h,\omega}dV_\omega<+\infty\biggr\}
\end{align*}
the space of $E$-valued $(p,q)$-forms $u$ with measurable coefficients that are square integrable with respect to $h$ and a fixed smooth Hermitian metric $\omega$.
Unless otherwise stated, integrals over $\reg$ will be denoted simply by integrals over $X$.
Note that in general $L^2_{p,q}(X,L;\omega,h)\subsetneq L^{2,loc}_{p,q}(X,L,h)$, and these spaces coincide when $X$ is compact.

In general, for a locally integrable function $f\in L^1_{loc}$, its pull-back $\pi^*f$ need not be locally integrable, where $\pi:\tx\longrightarrow X$ is a resolusion of singularities. 
However, for $(n,n)$-forms, local integrability is equivalent to that of their pull-backs. Using this fact, the \textit{Grauert}-\textit{Riemenschneider canonical sheaf} $\ogr$ on $X$ is defined as the sheaf obtained by pushing forward holomorphic $n$-forms on $\tx$ to $X$, as follows.
\begin{align*}
    \Gamma(U,\ogr):=\bigl\{s\in\Gamma(U\cap\reg,\omega_{\reg})\bigm|\,i^{n^2}s\wedge\overline{s}\in L^1_{loc}(U)\bigr\}
\end{align*}
for any open subset $U\subset X$. 
Furthermore, it follows that $\ogr=\pi_*\omega_{\tx}$ for any resolusion of singularities $\pi:\tx\longrightarrow X$. 
Indeed, for any $s\in \ogr(U)$, the pull-back $\pi^*s$ belongs to $\omega_{\tx}\cap L^1_{loc}(\pi^{-1}(U)\setminus\exc)$, and the $\dbar$-extension Lemma \ref{Lemma: dbar-extension lemma} below implies that $\pi^*s\in\omega_{\tx}(\pi^{-1}(U))$. The converse is clear.

\begin{lemma}[{\cite[Lemma 6.9]{Dem82}}]\label{Lemma: dbar-extension lemma}
    Let $\Omega$ be an open subset of $\bb{C}^n$ and $Z$ be a closed analytic subset of $X$ with $Z\subsetneq X$. 
    Assume that $u$ is a $(p,q-1)$-form with $L^2_{loc}$ coefficients and $g$ is a $(p,q)$-form with $L^1_{loc}$ coefficients such that $\dbar u=g$ on $\Omega\setminus Z$ in the sense of distributions. Then we obtain the $\dbar$-extension $\dbar u=g$ on $\Omega$ in the sense of distributions.
\end{lemma}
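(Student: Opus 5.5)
The plan is the standard cut-off argument near $Z$. Only $L^2_{loc}$ of $u$ is assumed, so the cut-off must have $\dbar$ small in $L^2$. I would use the logarithmic cut-offs from Demailly's proof. Here $Z$ is a proper closed analytic subset of $\Omega$ (read the lemma with $Z\subsetneq\Omega$), so it contains no connected component of $\Omega$. The statement is local and $\dbar u=g$ is already known on $\Omega\setminus Z$. It therefore suffices to fix $x_0\in Z$ and a small ball $B\ni x_0$, and to find a holomorphic $f\not\equiv0$ on $B$ with $Z\cap B\subset\{f=0\}$. Such an $f$ exists because the ideal of $Z$ at $x_0$ is nonzero. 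Replacing $Z$ by the hypersurface $H=\{f=0\}$ loses nothing, since $\dbar u=g$ also holds on $B\setminus H\subset\Omega\setminus Z$. Hence we may assume $Z=H$ is a hypersurface, which has Lebesgue measure zero.

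Fix $\rho\in C^\infty(\bb{R})$ with $\rho=1$ on $(-\infty,1]$ and $\rho=0$ on $[2,\infty)$. Shrinking $B$ so that $|f|<e^{-1}$ there, set
\begin{align*}
    \chi_k:=1-\rho\bigl(k^{-1}\log\log(1/|f|^2)\bigr),
\end{align*}
extended by $\chi_k=0$ on $H$. Then $\chi_k$ is smooth on $B$ and vanishes on a neighborhood of $H$. Moreover $\chi_k\to1$ pointwise on $B\setminus H$, $0\le\chi_k\le1$, and
\begin{align*}
    |\dbar\chi_k|\le\frac{C}{k}\,\frac{|df|}{|f|\,\log(1/|f|^2)}\,\mathbf{1}_{\{e^{e^{k}}\le 1/|f|^2\le e^{e^{2k}}\}}.
\end{align*}

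The key estimate, and the main obstacle, is that $\int_{B'}|df|^2/(|f|^2\log^2|f|^2)\,dV<+\infty$ on a slightly smaller ball $B'$. Given this, $\|\dbar\chi_k\|_{L^2(B')}=O(1/k)\to0$. To prove the estimate, first treat the smooth points of $H$ where $df\ne0$. There one can use $f$ as a coordinate, and the integral reduces to $\int_0^{\delta}r^{-2}(\log r)^{-2}\,r\,dr<\infty$. At singular points of $H$ I would not estimate directly. The cleanest route is an induction on the structure of $Z$. First prove the lemma off $Z_{sing}$ (respectively off $H_{sing}$, or off $\{df=0\}\cap H$). That set is analytic of codimension $\ge2$ in $\Omega$. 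For it, ordinary cut-offs suffice, since sets of codimension $\ge2$ have vanishing $L^2$-capacity. Concretely, choose $\chi_k$ with $\|\dbar\chi_k\|_{L^2}\to0$ near a codimension-$2$ analytic set, for instance with $\log\log$ cut-offs in two defining functions, or by an induction down the dimension of the singular stratum. Alternatively, compute in Weierstrass coordinates after a generic linear change so that $f$ is a Weierstrass polynomial in $z_n$; I would try the stratification first.

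With the cut-offs in hand, conclude as follows. For any test form $\phi$ of complementary bidegree supported in $B'$, the form $\chi_k\phi$ is supported in $B'\setminus H$, so
\begin{align*}
    \int u\wedge\dbar(\chi_k\phi)=\pm\int g\wedge\chi_k\phi .
\end{align*}
Expand $\dbar(\chi_k\phi)=\chi_k\dbar\phi+\dbar\chi_k\wedge\phi$. The cross term is bounded by $\|u\|_{L^2(\operatorname{supp}\phi)}\,\|\phi\|_\infty\,\|\dbar\chi_k\|_{L^2}\to0$. Apply dominated convergence to the remaining terms, using $u\in L^1_{loc}$, $g\in L^1_{loc}$ and that $H$ has measure zero. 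This gives $\int u\wedge\dbar\phi=\pm\int g\wedge\phi$, that is, $\dbar u=g$ on $B'$, and hence on $\Omega$.
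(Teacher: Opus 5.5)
Your proof can be completed along the lines you indicate, once you fix the slips noted below. It is organized differently from the standard argument of \cite{Dem82}, which the paper cites without reproducing.

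There one inducts on $\dim Z$. Since $\dim Z_{sing}<\dim Z$, it suffices to work near a regular point of $Z$, where after a change of coordinates $Z\subset\{z_1=0\}$. One then uses the plain cut-off $\lambda_\varepsilon:=\lambda(|z_1|/\varepsilon)$, with $\lambda=0$ on $[0,1/2]$ and $\lambda=1$ on $[1,+\infty)$. Here $|\dbar\lambda_\varepsilon|\leq C/\varepsilon$ on a subset of $\rom{supp}\,\phi$ of volume $O(\varepsilon^2)$, so $\|\dbar\lambda_\varepsilon\|_{L^2}$ is merely bounded. The cross term is nevertheless at most $C\|\phi\|_\infty\|u\|_{L^2(\{|z_1|<\varepsilon\}\cap\rom{supp}\,\phi)}\to0$.

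So your premise that the cut-off must have $\dbar$ small in $L^2$ is not needed: bounded $\dbar$ on a shrinking support suffices. That is why the real-codimension-two borderline causes no difficulty and no $\log\log$ is required.

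The ``main obstacle'' at $H_{sing}$ is created by your passage to a hypersurface. Of the options you list, the induction down the dimension of the singular stratum, applied directly to $Z$ (with $f=z_1$ near regular points), is exactly Demailly's induction and closes the argument.

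If you want to keep $\log\log$ cut-offs without any induction, set $v:=-\log\log(1/|f|^2)$. It is bounded above on $B\setminus H$ and satisfies $\idd v=i\partial v\wedge\dbar v\geq0$ there, because $\log(1/|f|^2)$ is pluriharmonic off $H$. Hence $v$ extends to a plurisubharmonic function on $B$. Since $\idd v$ has locally finite mass and $|\partial v|^2=|df|^2/(|f|^2\log^2|f|^2)$, your key estimate holds on $B'$ near every point of $H$, singular or not. Using $\log(1/\sum_j|f_j|^2)$ for local generators $f_j$ of the ideal of $Z$ gives $\idd v\geq i\partial v\wedge\dbar v$ in the same way, so even the hypersurface is unnecessary.

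In short, Demailly's route is fully elementary with explicit cut-offs but needs the induction. The plurisubharmonic $\log\log$ route is a one-shot capacity argument valid near arbitrary analytic singularities.

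Two slips to fix. First, as written, $\chi_k=1-\rho\bigl(k^{-1}\log\log(1/|f|^2)\bigr)$ equals $1$ near $H$, where the argument of $\rho$ exceeds $2$; this is the opposite of what you use, so take $\chi_k:=\rho\bigl(k^{-1}\log\log(1/|f|^2)\bigr)$. Second, ``smooth points where $df\neq0$'' should read regular points of the reduced hypersurface, or you should choose $f$ reduced. For $f=z_1^2$ one has $\{df=0\}\cap H=H$, which is not of codimension $\geq2$.
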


Here, for any Hermitian metric $\omega$ on $X$, we can write $i^{n^2}s\wedge\overline{s}=|s|^2_\omega dV_\omega$, and Lemma \ref{Lemma: inequality of (n,0)-forms} below shows that this is independent of the choice of $\omega$.
In particular, $\pi^*\omega$ is not a Hermitian metric on $\tx$, since it degenerates along $\exc$. Nevertheless, for any Hermitian metric $\gamma$ on $\tx$, the equality $i^{n^2}\pi^*s\wedge\overline{\pi^*s}=|\pi^*s|^2_{\pi^*\omega}dV_{\pi^*\omega}=|\pi^*s|^2_\gamma dV_{\gamma}$ holds.

\begin{lemma}\label{Lemma: inequality of (n,0)-forms}
    Let $\gamma_1$ and $\gamma_2$ be Hermitian metrics on $X$ with $\gamma_1\geq\gamma_2$. 
    Then we have 
    \begin{itemize}
        \item the equality $|u|^2_{\gamma_1} dV_{\gamma_1}=|u|^2_{\gamma_2} dV_{\gamma_2}$ holds for any $(n,0)$-form $u$, 
        \item the inequality $|u|^2_{\gamma_1} dV_{\gamma_1}\leq|u|^2_{\gamma_2} dV_{\gamma_2}$ holds for any $(n,q)$-form $u$ and any $q\geq1$.
    \end{itemize}
\end{lemma}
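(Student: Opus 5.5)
The plan is a pointwise linear-algebra computation, done at a fixed point $x\in X$ (in practice a point of $\reg$, where the forms live) and in a well-chosen basis. Since $\gamma_1\geq\gamma_2$ are positive definite Hermitian forms on $T_xX$, we can diagonalize them simultaneously. That is, we choose coordinates $dz_1,\dots,dz_n$ at $x$ with
\begin{align*}
    \gamma_2=i\sum_j dz_j\wedge d\overline{z}_j,\qquad \gamma_1=i\sum_j\lambda_j\,dz_j\wedge d\overline{z}_j,\qquad \lambda_j\geq1.
\end{align*}
Then $dV_{\gamma_1}=\bigl(\prod_j\lambda_j\bigr)dV_{\gamma_2}$. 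In these coordinates the covectors $dz_J\wedge d\overline{z}_K$ are orthogonal for both metrics, and
\begin{align*}
    |dz_J\wedge d\overline{z}_K|^2_{\gamma_1}=\prod_{j\in J}\lambda_j^{-1}\prod_{k\in K}\lambda_k^{-1}.
\end{align*}

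For an $(n,0)$-form $u=f\,dz_1\wedge\cdots\wedge dz_n$ we get $|u|^2_{\gamma_1}=|f|^2\prod_j\lambda_j^{-1}$. Multiplying by $dV_{\gamma_1}$ cancels the factor exactly, so $|u|^2_{\gamma_1}dV_{\gamma_1}=|f|^2dV_{\gamma_2}=|u|^2_{\gamma_2}dV_{\gamma_2}$, which is the first claim. As a cross-check, this also follows because $|u|^2_\gamma dV_\gamma=c_n\,i^{n^2}u\wedge\overline{u}$ does not depend on $\gamma$ at all.

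For an $(n,q)$-form $u=\sum_{|K|=q}f_K\,dz_1\wedge\cdots\wedge dz_n\wedge d\overline{z}_K$, orthogonality gives
\begin{align*}
    |u|^2_{\gamma_1}dV_{\gamma_1}=\sum_K|f_K|^2\prod_{k\in K}\lambda_k^{-1}\,dV_{\gamma_2}\leq\sum_K|f_K|^2\,dV_{\gamma_2}=|u|^2_{\gamma_2}dV_{\gamma_2},
\end{align*}
since each $\lambda_k\geq1$. This is the second claim. For bundle-valued forms we take a pointwise $h$-orthonormal frame, when $h$ is finite and nondegenerate, which holds almost everywhere, and apply the same argument componentwise.

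The only step that needs care is the simultaneous diagonalization together with the orthogonality of the wedge basis. This is standard: choose a basis that is $\gamma_2$-orthonormal and diagonalizes $\gamma_1$, which is possible because $\gamma_1$ is Hermitian with respect to $\gamma_2$, and then use the induced metrics on $\Lambda^{p,q}T^*_x$. No real obstacle is expected, since the statement is pointwise and holds at every point of $\reg$.
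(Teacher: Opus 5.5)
Your proof is correct. The simultaneous diagonalization with $\lambda_j\geq1$, the identity $dV_{\gamma_1}=(\prod_j\lambda_j)\,dV_{\gamma_2}$, and the orthogonality of the basis $dz_J\wedge d\overline{z}_K$ give equality for $q=0$ and the factor $\prod_{k\in K}\lambda_k^{-1}\leq1$ for $q\geq1$. The paper states this lemma without proof and relies on exactly this standard pointwise computation on $X_{reg}$ (the argument behind Demailly's book, Chapter VIII, Lemma 6.3, which the paper cites later), so your approach is essentially the intended one.
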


For a singular Hermitian metric on a vector bundle, we introduce an $\cal{O}_X$-subsheaf of $\ogr\otimes\cal{O}_X(E)$, analogous to $\ogr$ twisted by an $L^2$-subsheaf.

\begin{definition}\label{Definition: GR canonical L2-subsheaf}
    Let $h$ be a singular Hermitian metric on $E$. We define the \textit{Grauert}-\textit{Riemenschneider canonical} $L^2$-\textit{subsheaf} $\ogr(E,h)$ on $X$ by
    \begin{align*}
        \Gamma(U,\omega_X^{GR}(E,h))=\biggl\{s\in\Gamma(U\cap\reg,\omega_{\reg}\otimes\cal{O}_X(E))\biggm| \int_{U_{reg}}|s|^2_{h,\omega}\,dV_\omega<+\infty\biggr\},
    \end{align*}
    for any relatively compact open subset $U\Subset X$ and a Hermitian metric $\omega$ on $X$. 
\end{definition}

By Lemma \ref{Lemma: inequality of (n,0)-forms} and the above discussion, this definition is independent of the choice of $\omega$. 
Furthermore, it is clear that $\ogr(E,h)=K_X\otimes\scr{E}(h)$ when $X$ is smooth.

\section{Definitions of Griffiths/Nakano positivity \\ for singular Hermitian metrics on complex spaces}

In this section, Griffiths and Nakano positivity are defined for singular Hermitian metrics on holomorphic vector bundles over complex spaces.
First, following \cite{BP08}, \cite[Definition\,1.2]{Rau15}, \cite[Definition\,2.2.2]{PT18} and \cite[Definition\,5.4]{Wat24a} in the complex manifold setting, Griffiths positivity on complex spaces is defined as follows.

\begin{definition}\label{Definition: singular Griffiths positivity}
    Let $X$ be a complex space and $E\longrightarrow X$ be a holomorphic vector bundle. 
    We say that a singular Hermitian metric $h$ on $E$ is 
    \begin{itemize}
        \item \textit{Griffiths semi-negative at point} $x\in X$ if there exists an open neighborhood $U$ of $x$ such that $\log|u|_h$ is plurisubharmonic on $U$ for any local holomorphic section $u\in H^0(U,E)$.
        \item \textit{a.e.\! Griffiths semi-negative at point} $x\in X$ if there exists an open neighborhood $U$ of $x$ such that $\log|u|_h$ coincides almost everywhere on $U$ with a plurisubharmonic function defined on $U$ for any local holomorphic section $u\in H^0(U,E)$.
        \item \textit{Griffiths negative at point} $x\in X$ if there exist an open neighborhood $U$ of $x$ and $\delta>0$ such that the norm $|\bullet|^2_h$ is locally integrable on $U$, and for any local holomorphic section $u\in H^0(U,E)$, the inequality
        \begin{align*}
            \idd|u|^2_h\geq\delta|u|^2_h\idd\iota_U^*|z|^2
        \end{align*}
        holds on $U$ in the sense of currents, where $U$ admits a closed holomorphic embedding $\iota_U:U\hookrightarrow\bb{C}^N$, $(z_1,\ldots,z_N)$ are local coordinates of $\bb{C}^N$, and $\delta$ depends on the choice of the coordinates $(z_1,\ldots,z_N)$.
        \item (resp. \textit{a.e.}) \textit{Griffiths semi-negative} (resp. \textit{Griffiths negative}) on $X$ if $h$ is (resp. a.e.) Griffiths semi-negative (resp. Griffiths negative) at any point $x\in X$, 
    \end{itemize}
    Furthermore, the \textit{Griffiths positivity} of $h$ is defined by the Griffiths negativity of the dual metric $h^*$ on $E^*$.
\end{definition}

In the case of line bundles, it is clear from the definition that a.e.\! Griffiths semi-positivity is equivalent to singular semi-positivity. 
However, Griffiths positivity carries only information in the sense of currents, and it is not clear whether it is equivalent to singular positivity without additional assumptions such as the normality of $X$.

\begin{remark}\label{Remark: Grif>0 then a.e. Grif semi-posi & Remark in Wat24a}
    Consider the case where $X$ is smooth, that is, a complex manifold. 
    If $h$ on $E$ is Griffiths positive on $X$, then $h$ is clearly a.e.\! Griffiths semi-positive on $X$ by Proposition \ref{Proposition: idd>0 iff a.e. psh}. 
    If $h$ on $E$ is a.e.\! Griffiths semi-positive on $X$, then there exists a singular Hermitian metric $\hbar$ on $E$ such that $\hbar$ is Griffiths semi-positive on $X$ and coincides with $h$ a.e. on $X$; see \cite[Remark 5.12]{Wat24a}.
\end{remark}

This metric $\hbar$ is obtained in the proof of \cite[Theorem 5.10]{Wat24a}, locally as the limit $\hbar^*=\lim_{\nu\to+\infty} h_\nu^*$, where $h_\nu^*:=h^*\ast\rho_\nu$ is constructed by convolution with an approximate identity $\{\rho_\nu\}_{\nu\in\bb{N}}$. 
However, it is unclear whether this approximation procedure remains valid when $X$ has singularities. More precisely, when $X$ is a complex space, it is unknown whether a Griffiths semi-positive singular Hermitian metric can be constructed from an a.e.\! Griffiths semi-positive singular Hermitian metric $h$.
Of course, after pulling back by a resolution of singularities $\pi:\tx\longrightarrow X$, such a construction is possible. However, it is unclear whether the Griffiths semi-positive singular Hermitian metric $\mathcal{H}:=\bigl(\lim_{\nu\to+\infty}\pi^*h_\nu^*\bigr)^*$, which coincides almost everywhere with $\pi^*h$, can be pushed forward to a Griffiths semi-positive singular Hermitian metric on $E$.
Indeed, $\mathcal{H}$ need not take the same value along the fibers of the $\pi$-exceptional divisor.

Second, following \cite{Ina22,DNWZ23} and \cite[Definition\,2.1]{IMW26} in the complex manifold setting, Nakano positivity on complex spaces will be defined.
Before proceeding, as a difference from these definitions, the following notion is introduced, which plays an important role in the $L^2$-Dolbeault complex and related contexts.

\begin{definition}\label{Definition: uniformly positive definite}
    Let $X$ be a complex space and $E\longrightarrow X$ be a holomorphic vector bundle. 
    A singular Hermitian metric $h$ on $E$ is said to be \textit{uniformly positive definite on} $X$ if, for every point of $X$, there exist a neighborhood $U$ of the point, a positive constant $C_U>0$, and a smooth Hermitian metric $h_0$ on $E|_U$ such that 
    \begin{align*}
        |\bullet|^2_h\geq C_U|\bullet|^2_{h_0}>0 \quad\text{almost everywhere on } U.
    \end{align*}
    In other words, the norm $|\bullet|^2_{h^*}$ associated with $h^*$ is uniformly bounded from above almost everywhere.
\end{definition}

\begin{definition}\label{Definition: singular Nakano positive}
    Let $X$ be a complex space and $\theta$ be a continuous $(1,1)$-form on $X$. 
    Let $E\longrightarrow X$ be a holomorphic vector bundle. 
    We define a singular Hermitian metric $h$ on $E$ to be \textit{$\theta$-Nakano positive on $X$ in the sense of $L^2$-estimates} 
    if $h$ is uniformly positive definite on $X$ and, 
    for any data consisting of
    \begin{itemize}
    \item a Stein space coordinate $S \subset X$ admitting a trivialization $E|_S\cong S\times \bb{C}^r$, 
    \item a \kah metric $\omega_S$ on $S$,
    \item a smooth function $\psi$ on $S$ such that $\theta + \idd \psi >0$ on $S$, 
    \item a positive integer $q$ with $1\leq q\leq n$,
    \item a $\dbar$-closed $g\in L^{2,loc}_{n,q}(S,E,he^{-\psi})$ 
    satisfying 
    \begin{align*}
        \int_S \langle B^{-1}_{\theta,\psi,\omega_S}g, g\rangle_{h,\omega_S}\,e^{-\psi}dV_{\omega_S}<+\infty,
    \end{align*} 
    \end{itemize}
    there exists $u\in L^2_{n,q-1}(S,E;\omega_S,he^{-\psi})$ such that 
    \begin{align*}
        \dbar u=g \,\,\, \text{on}\,\,\, S_{reg} \quad \text{and} \quad 
        \int_S |u|^2_{h,\omega_S}e^{-\psi} \,dV_{\omega_S}\leq \int_S \langle B^{-1}_{\theta,\psi,\omega_S}g, g\rangle_{h,\omega_S}\,e^{-\psi}dV_{\omega_S},
    \end{align*}
    where $B_{\theta,\psi,\omega_S}=[(\theta+\idd\psi)\oid{E},\Lambda_{\omega_S}]$. 

    In addition, when $\theta \geq 0$ on $X$, it is simply said to be \textit{Nakano semi-positive on $X$ (in the sense of $L^2$-estimates)}, and when $\theta > 0$ on $X$, it is simply said to be \textit{Nakano positive on $X$ (in the sense of $L^2$-estimates)}.
\end{definition}

In some works, upper semi-continuity of $h^*$ is also assumed; however, this assumption alone does not imply the boundedness of $h^*$. 
Since $h^*$ may tend to $+\infty$, the metric $h$ may degenerate to $0$. In this situation, it is unclear whether the $\dbar$-extension Lemma \ref{Lemma: dbar-extension lemma} can be applied even under local integrability with respect to the norm induced by $h$, and hence an $L^2$-Dolbeault resolution is not guaranteed (see Remark-Theorem \ref{Remark-Theorem: uniformly positive definite and L2-Dolbeault resolusions}), which is a crucial issue.
On complex manifolds, Griffiths semi-positivity of $h$ implies uniform positive definiteness, and the same statement holds on complex spaces (see Corollary \ref{Corollary: a.e. Grif semi-posi then uniformly positive definite}).
In the case of line bundles, if the weight of a singular Hermitian metric is quasi-plurisubharmonic, then uniform positive definiteness is automatically ensured.

Singular positivity in the sense of Definitions \ref{Definition: singular Griffiths positivity} and \ref{Definition: singular Nakano positive} is abundant by Proposition \ref{Proposition: smooth Grif/Nak>0 on X_reg then also >0 on X}, and provides a natural extension of the case of smooth Hermitian metrics. 

\begin{proposition}\label{Proposition: smooth Grif/Nak>0 on X_reg then also >0 on X}
    Let $h$ be a smooth Hermitian metric on $E$. If $h$ is Griffiths (resp. Nakano) semi-positive on $\reg$ in the usual sense, then $h$ is also Griffiths (resp. Nakano) semi-positive on $X$. 
    If $h$ is Griffiths (resp. Nakano) positive on $\reg$ in the usual sense, and in particular uniformly Griffiths (resp. Nakano) positive in the usual sense near $\xs$, then $h$ is also Griffiths (resp. Nakano) positive on $X$. 
\end{proposition}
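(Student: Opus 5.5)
We treat the Griffiths and the Nakano statements separately. Both rest on one observation: near $\xs$, a smooth Hermitian metric $h$ on $E$ is locally the restriction of a smooth metric $\tl{h}$ on a trivial bundle over an open set $V\subset\bb{C}^N$ containing the embedded image $\iota_U(U)$. All the analytic data therefore live on $\reg$, and are controlled by smooth objects near $\xs$.

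\textbf{Griffiths case.} Take a local holomorphic section $u^*\in H^0(U,E^*)$ and put $\varphi:=\log|u^*|_{h^*}$.
\begin{itemize}
\item On $\ureg$, Griffiths semi-positivity of $h$ in the usual sense means $h^*$ is Griffiths semi-negative. Hence $\varphi$ is plurisubharmonic on $\ureg$.
\item Since $h^*$ is smooth on $U$, $\varphi$ is locally bounded from above on $U$.
\item I would not invoke Theorem \ref{Theorem: idd>0 and a.e. psh}, since it needs normality or local irreducibility. Instead I would use the weak-plurisubharmonicity characterization of \cite{FN80}, via Grauert--Remmert's extension across thin sets for functions bounded above. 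The price is that this route seems to need local irreducibility.
\item The alternative is to read the statement (``in the usual sense'' on $\reg$) as meaning that the curvature form of $h$ extends continuously near $\xs$. This follows from smoothness of $h$: the curvature of $\tl{h}$ restricted to $\iota_U(U)$ is a continuous form. Griffiths semi-positivity on $\reg$ then extends by continuity to the ambient tangent cone directions that matter. With that, $|u^*|^2_{h^*}$ is the restriction of a function on $V$ with $\idd\geq0$ after a suitable modification. I regard this extension as the delicate point.
\end{itemize}
For Griffiths positivity I would use the uniform positivity near $\xs$. On $\ureg$, the usual curvature identity gives
\begin{align*}
\idd|u^*|^2_{h^*}\geq\delta|u^*|^2_{h^*}\,\idd\iota_U^*|z|^2 .
\end{align*}
The uniformity hypothesis near $\xs$, together with continuity, gives a $\delta>0$ independent of the point. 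To upgrade this to an inequality of currents on $U$, I would test against strongly positive $\phi$. The function $|u^*|^2_{h^*}$ is smooth on $U$, and the current $\idd|u^*|^2_{h^*}$ on $X$ is defined by integration over $\ureg$. By Stokes on $\ureg$ with cutoffs $\chi_k$ approaching $\xs$, using the finiteness of the volume of $X$ near $\xs$ (Lelong), the boundary terms vanish.

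\textbf{Nakano case.} Uniform positive definiteness is trivial for smooth $h$. Fix the data $S,\omega_S,\psi,q,g$ of Definition \ref{Definition: singular Nakano positive}. I would use Theorem \ref{Theorem: exhaustion a.e. Stein covering}, or simply the fact that $S$ is Stein, to find an analytic subset $A\supset S_{sing}$ with $S\setminus A$ a Stein manifold. Since $S$ is Stein, the Kähler metric $\omega_S$ restricted to $S\setminus A$ can be made complete on $S\setminus A$ by adding $\idd$ of a bounded-gradient term, $\epsilon$-dependent, tending to $0$.

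On $S\setminus A$ we apply Demailly's $L^2$-existence theorem for the smooth metric $he^{-\psi}$. Its curvature satisfies $\iO{E,h}+(\theta+\idd\psi)\oid{E}\geq_{Nak}(\theta+\idd\psi)\oid{E}$, since $h$ is Nakano semi-positive (or positive) on $\reg$, where we take $\theta\geq0$ or $\theta>0$ accordingly. Lemma \ref{Lemma: inequality of (n,0)-forms} controls the change of metric in the $(n,q)$-norms. This produces $u_\epsilon$ with $\dbar u_\epsilon=g$ on $S\setminus A$ and the required bound.

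We then take a weak limit $\epsilon\to0$, and use Lemma \ref{Lemma: dbar-extension lemma} locally on $S_{reg}$ to extend $\dbar u=g$ across $A\cap S_{reg}$.

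The main obstacle is the Griffiths semi-positive extension across $\xs$ without irreducibility assumptions. The Nakano part is a standard completeness-plus-extension argument.
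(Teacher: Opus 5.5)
\textbf{There is a genuine gap in the Griffiths semi-positive assertion.} For $\varphi=\log|u^*|_{h^*}$ you only use two facts: $\varphi$ is psh on $\ureg$, and $\varphi$ is locally bounded above.
\begin{itemize}
\item Your first route (Grauert--Remmert, i.e.\ Theorem~\ref{Theorem: idd>0 and a.e. psh}\,$(b)$) needs local irreducibility, which is not assumed, as you concede. The paper's example $X=\{zw=0\}$, with $\varphi=0$ on one branch and $\varphi=1$ on the other, shows that these two facts alone genuinely do not give a psh extension.
\item Your second route is not an argument. Griffiths semi-positivity on $\reg$ constrains the curvature only along $T\reg$. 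It says nothing about the normal directions of an ambient extension $\tl{h}$. Near $\xs$ there is no holomorphic retraction onto $X$. Adding $C\sum_j|f_j|^2$ for local defining functions $f_j$ does not help, since its Levi form drops rank along $\xs$. So ``extending by continuity to tangent-cone directions'' produces no ambient psh function.
\end{itemize}

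\textbf{The missing idea is continuity, not boundedness.} Because $h$ is smooth, $\varphi$ is continuous on $U$ as a $[-\infty,+\infty)$-valued function. If you want real values, work with $\max(\varphi,-k)$ and let $k\to+\infty$.
\begin{itemize}
\item On non-locally-irreducible spaces, the obstruction is that the $\limsup$ of $\varphi$ at a singular point can differ from branch to branch. Continuity forces all of these to equal $\varphi(x)$.
\item A continuous function that is psh on $\ureg$ is psh on $U$ with no irreducibility hypothesis, by \cite[Corollary 1.12]{Dem85}.
\item The paper's part $(i)$ rests on exactly this. It first obtains the statement on $\tx$ (bounded-above psh extension across $\exc$ together with Lemma~\ref{Lemma: two psh}). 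It then descends to $X$ using \cite[Corollary 1.12]{Dem85} in place of Theorem~\ref{Theorem: idd>0 and a.e. psh}.
\end{itemize}

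\textbf{The rest of your proposal is sound, and more direct than the paper.} The paper routes everything through $\pi^*h$ and Theorems~\ref{Theorem: h Gri iff pi*h Gri} and~\ref{Theorem: h Nak iff pi*h Nak}; you do not.
\begin{itemize}
\item Strict Griffiths case: Definition~\ref{Definition: singular Griffiths positivity} only asks for a current inequality. That inequality follows from the pointwise one on $\ureg$ by integrating by parts on $\ureg$, with no boundary terms since $d[X]=0$. This part does not depend on the semi-positive extension.
\item Nakano case: you work on $S\setminus A$, where $A=\{f=0\}\supset S_{sing}$ for a holomorphic $f$ on the Stein space $S$. You apply the classical $L^2$-theorem on the Stein manifold $S\setminus A$ and then Lemma~\ref{Lemma: dbar-extension lemma}. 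This avoids descending from $\tx$.
\end{itemize}

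Two small fixes in the Nakano part:
\begin{itemize}
\item In the strict case, you must build a continuous $\theta>0$ on all of $X$ from the local uniform constants, for example with a partition of unity.
\item The monotonicity you need for $\lara{B^{-1}g}{g}\,dV$ under $\omega_S\le\omega_\epsilon$ is \cite[Chapter VIII, Lemma 6.3]{Dem-book}, not Lemma~\ref{Lemma: inequality of (n,0)-forms}.
\end{itemize}
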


A proof of Proposition \ref{Proposition: smooth Grif/Nak>0 on X_reg then also >0 on X} is given at the end of the next section.
Here, \textit{uniformly Griffiths/Nakano positive near} $\xs$ means that the positivity does not tend to $0$ as one approaches $\xs$. 
Equivalently, for every $x \in \xs$, there exist a neighborhood $U$ of $x$ admitting a closed holomorphic embedding $\iota_U : U \hookrightarrow \mathbb{C}^N$ and a positive constant $\delta>0$ such that $he^{\delta\iota_U^*|z|^2}$ is Griffiths/Nakano semi-positive in the usual sense on $\ureg$, where $(z_1,\ldots,z_N)$ denotes local coordinates on $\mathbb{C}^N$.
Furthermore, Proposition \ref{Proposition: smooth Grif/Nak>0 on X_reg then also >0 on X} also holds under the weaker assumption that $h$ is continuous by \cite[Corollary 1.12]{Dem85}.

Since the converse of Proposition \ref{Proposition: smooth Grif/Nak>0 on X_reg then also >0 on X} is clear from definitions, a smooth Hermitian metric $h$ is simply said to be \textit{Griffiths} (resp. \textit{Nakano}) \textit{positive} (resp. \textit{semi}-\textit{positive}) \textit{on} $X$ if it satisfies the assumptions of Proposition \ref{Proposition: smooth Grif/Nak>0 on X_reg then also >0 on X}, respectively.

\section{Properties of Griffiths/Nakano positivity on complex spaces}

In this section, we provide properties of singular Griffiths/Nakano positivity.
Throughout this section, $X$ is a complex space of pure dimension $n$, $\pi:\tx\longrightarrow X$ is a resolution of singularities, $E\longrightarrow X$ is a holomorphic vector bundle with a singular Hermitian metric $h$, and $L\longrightarrow X$ be a holomorphic line bundle with a singular Hermitian metric $h_L$. 
First, various properties of Griffiths positivity are presented. 

\begin{theorem}\label{Theorem: Grif semi-positivity of h and pi^*h}
    If $h$ is Griffiths semi-positive on $X$, then the pulled-back $\pi^*h$ on $\pi^*E$ is also Griffiths semi-positive on $\tx$. 
    Conversely, if $X$ is locally irreducible and $\pi^* h$ is Griffiths semi-positive on $\tx$, then $h$ is also Griffiths semi-positive on $X$.
\end{theorem}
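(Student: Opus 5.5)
We work throughout with the dual metric. By Definition \ref{Definition: singular Griffiths positivity}, Griffiths semi-positivity of $h$ means Griffiths semi-negativity of $h^*$ on $E^*$: locally, $\log|u|_{h^*}$ is plurisubharmonic for every local holomorphic section $u$ of $E^*$. Moreover $(\pi^*h)^*=\pi^*h^*$ on $\pi^*E^*$. Both directions therefore reduce to comparing plurisubharmonicity of $\log|u|_{h^*}$ on $X$ with plurisubharmonicity of $\log|\tl{u}|_{\pi^*h^*}$ on $\tx$.

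For the forward direction, fix $\tl{x}\in\tx$ and put $x=\pi(\tl{x})$. Choose a neighborhood $U$ of $x$ on which $\log|u|_{h^*}$ is plurisubharmonic for all $u\in H^0(U,E^*)$. After shrinking, we may take $U$ Stein with $E^*|_U$ trivial and with a frame $e_1,\dots,e_r$. Set $\tl{U}:=\pi^{-1}(U)$. A local holomorphic section $\tl{u}$ of $\pi^*E^*$ on a neighborhood $W\subset\tl{U}$ of $\tl{x}$ is then $\tl{u}=\sum_j f_j\,\pi^*e_j$ with $f_j\in\cal{O}(W)$. The key point is that these $f_j$ are \emph{not} pull-backs, so we cannot simply pull back $\log|u|_{h^*}$. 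We use instead the standard characterization that $\log|\cdot|_{h^*}$ is psh for all holomorphic sections if and only if $\log|\sum_j g_je_j|_{h^*}$ is psh for all holomorphic $g_j$. Here it suffices to show that, on the manifold $W\times\bb{C}^r$, the function $(y,a)\mapsto\log|\sum_j a_j e_j(\pi(y))|_{h^*}$ is psh. This holds because it is the pull-back, under the holomorphic map $(y,a)\mapsto(\pi(y),a)$ into the complex space $U\times\bb{C}^r$, of the function $\Phi(x,a):=\log|\sum_j a_je_j(x)|_{h^*}$.

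So the first step is to prove that $\Phi$ is psh on $U\times\bb{C}^r$. It is upper semicontinuous, being a decreasing limit (or supremum-type construction) of psh pieces, and it is psh in $x$ for fixed $a$ since $\sum_j a_je_j$ is a holomorphic section. In the fibre direction $\Phi$ is the log of a seminorm, hence psh in $a$. Joint plurisubharmonicity then follows from the classical fact (Berndtsson–Păun / Raufi) that the $\log$-norm of a metric whose $\log|u|$ is psh for all holomorphic $u$ is psh on the total space of the dual bundle. To handle singular $X$, we test it via the Fornæss–Narasimhan characterization recalled after Theorem \ref{Theorem: idd>0 and a.e. psh}: it suffices to check subharmonicity on holomorphic discs $t\mapsto(\gamma(t),a(t))$. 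Along such a disc, $\sum_j a_j(t)e_j(\gamma(t))$ is a holomorphic section of $\gamma^*E^*$ over $\Delta$, and $\gamma^*h^*$ is Griffiths semi-negative there because $\log|u|_{h^*}\circ\gamma$ is subharmonic for sections $u$ defined on $U$. Since $U$ is Stein and $E^*|_U$ is trivial, we can extend germs of sections along $\gamma$ approximately, and the one-dimensional result on $\Delta$ then gives subharmonicity. Pulling back by the holomorphic map $(y,a)\mapsto(\pi(y),a)$ preserves plurisubharmonicity on complex spaces, because psh functions are stable under holomorphic pull-back in the Fornæss–Narasimhan sense. Restricting to the graph $a=f(y)$ then gives the psh property of $\log|\tl{u}|_{\pi^*h^*}$. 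We expect this step, passing from sections coming from $X$ to arbitrary sections of $\pi^*E^*$, to be the main technical point.

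For the converse, assume $X$ is locally irreducible and $\pi^*h$ is Griffiths semi-positive. Take $u\in H^0(U,E^*)$. Then $\pi^*u$ is a holomorphic section of $\pi^*E^*$, so $\log|\pi^*u|_{\pi^*h^*}=(\log|u|_{h^*})\circ\pi$ is psh near every point of $\pi^{-1}(U)$. Because $\pi$ is biholomorphic over $\reg$, the function $\varphi:=\log|u|_{h^*}$ is psh on $\ureg$. It is also locally bounded above near $\xs$: $\pi$ is proper, and a psh function is bounded above on the compact sets $\pi^{-1}(K)$. By the final clause of Theorem \ref{Theorem: idd>0 and a.e. psh}, condition (b), $\varphi|_{\ureg}$ extends uniquely to a psh function $\tl{\varphi}$ on $U$. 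It remains to check that $\tl{\varphi}=\varphi$ at points of $\xs$. The extension is given by $\tl{\varphi}(x)=\limsup_{y\to x,\,y\in\ureg}\varphi(y)$. Since $(\log|u|_{h^*})\circ\pi$ is psh on $\pi^{-1}(U)$, its value at each point of the connected fibre $\pi^{-1}(x)$ equals the limsup from outside $\exc$; connectedness of fibres follows from local irreducibility together with Zariski's main theorem on the normalization. Hence $\varphi(x)$ agrees with this limsup, so $\varphi=\tl{\varphi}$ is psh on $U$, and $h$ is Griffiths semi-positive.
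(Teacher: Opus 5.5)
\textbf{Gap in the forward direction.} The gap is at the step you yourself flagged. To get plurisubharmonicity of $\Phi$ on $U\times\bb{C}^r$ from the Forn\ae ss--Narasimhan criterion, you must show that $t\mapsto\log|\sum_j a_j(t)e_j(\gamma(t))|_{h^*}$ is subharmonic for every holomorphic disc $\gamma:\Delta\to U$ and every holomorphic $a:\Delta\to\bb{C}^r$. Subharmonicity for $a=g\circ\gamma$ with $g$ holomorphic on $U$ does not by itself make $\gamma^*h^*$ Griffiths semi-negative. Moreover, ``extending germs along $\gamma$ approximately'' fails wherever $\gamma$ is not an immersion. Take the cusp $U=\{z_1^3=z_2^2\}$ and $\gamma(t)=(t^2,t^3)$: every $g\circ\gamma$ with $g\in\cal{O}_U$ has vanishing $t$-coefficient, so $a(t)=(1,t)$ is neither of this form nor a locally uniform limit of such. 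This is not a marginal case: here $\gamma$ is the resolution itself, and $\pi^*e_1+t\,\pi^*e_2$ is exactly the kind of non-pulled-back section of $\pi^*E^*$ your reduction was meant to handle.

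\textbf{What is missing.} Constant $\gamma$ is your seminorm case. For nonconstant $\gamma$, split the points of $\Delta$ as follows.
\begin{itemize}
\item At points with $\gamma_k'(t_0)\neq0$, write $a=\alpha\circ\gamma_k$ near $t_0$ and use the genuine section $\sum_j\alpha_j(z_k)e_j$ of $E^*$ near $\gamma(t_0)$.
\item The remaining points form a discrete set $D$. Across $D$ you need two facts about $v(t):=|a(t)|^2_{h^*(\gamma(t))}$. First, $v$ is locally bounded above, since $|\sum_j a_je_j|^2_{h^*}\le|a|^2\sum_j|e_j|^2_{h^*}$. Second, $v(t_0)=\limsup_{t\to t_0,\,t\neq t_0}v(t)$. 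This holds because $|c|^2_{h^*}\circ\gamma$ is subharmonic for the constant vector $c=a(t_0)$, and $\bigl|v(t)-|c|^2_{h^*(\gamma(t))}\bigr|\le\bigl(\sum_j|e_j|^2_{h^*}\bigr)(\gamma(t))\,|a(t)-c|\,(|a(t)|+|c|)\to0$. Hence $\log v$ coincides with its subharmonic extension across $D$.
\end{itemize}
The same estimate is what gives upper semicontinuity of $\Phi$. A ``decreasing limit of psh pieces'' is not available, since no regularization exists on singular $U$.

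\textbf{Comparison with the paper.} With this supplement your route is correct. It even shows that $F^*h$ is Griffiths semi-positive for any holomorphic map $F$ from a manifold into $X$. The paper instead never leaves the manifold $\tx$:
\begin{itemize}
\item Griffiths semi-positivity transfers to $\tx\setminus\exc$ by biholomorphicity.
\item Constant sections $\tau$ in the pulled-back frame $\pi^*e$ are pull-backs, so $|\tau|^2_{\pi^*h^*}=\pi^*|\pi_*\tau|^2_{h^*}$ is plurisubharmonic on all of $\tx$.
\item Mollification $\pi^*h^*\ast\rho_\nu$ then gives smooth Griffiths semi-negative metrics decreasing to $\pi^*h^*$.
\end{itemize}
Your repaired step rests on exactly these ingredients: plurisubharmonicity of constant-vector norms, plus local boundedness forcing ``value $=\limsup$'' at bad points. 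So for the theorem at hand, the total space and the disc criterion are an extra layer.

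\textbf{The converse.} This part is fine and is essentially the paper's argument. You use Theorem \ref{Theorem: idd>0 and a.e. psh}$(b)$ for the extension, and your $\limsup$ identification plays the role of Lemma \ref{Lemma: two psh} on $\pi^{-1}(U)$. The connectedness of fibres you invoke is not needed. Since $(\log|u|_{h^*})\circ\pi$ is constant on each fibre by construction, properness alone gives $\limsup_{y\to x,\,y\in\ureg}\log|u|_{h^*}(y)=\log|u|_{h^*}(x)$.
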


\begin{proof}
    Griffiths semi-positivity is a local notion, and the equivalence on $\tx\setminus\exc$ and on $\reg$ is from the biholomorphicity of the restriction $\pi|_{\tx\setminus\exc}:\tx\setminus\exc\overset{\simeq}{\longrightarrow}\reg$. 
    
    First, we prove the Griffiths semi-positivity of $\pi^*h$ on $\exc$ by the Griffiths semi-positivity of $h$. 
    That is, it is sufficient to show that for any $y\in\exc$, there exists a Stein neighborhood $\ts$ of $y$ such that $|u|^2_{\pi^*h^*}$ is a plurisubharmonic function for any local section $u\in H^0(\ts,\pi^*E^*)$. 
    Here, by the Griffiths semi-positivity on $\tx\setminus\exc$, $|u|^2_{\pi^*h^*}$ is already plurisubharmonic on $\ts\setminus\exc$, and it remains to show that it is in fact plurisubharmonic on $\ts$.
    We take $\ts$ sufficiently small such that there exists a Stein subset $\tl{V}$ with $\ts\Subset\tl{V}$ and $\pi^*E$ trivial on $\tl{V}$. 
    Let $\{\rho_\nu\}_{\nu\in\bb{N}}$ be an approximate identity on $\tl{V}$.

    We show that each smooth Hermitian metric $\pi^*h^*_\nu:=\pi^*h^*\ast\rho_\nu$, which is a regularization of $\pi^*h^*$, is Griffiths semi-positive on $\ts$. 
    For any $x\in\xs$ and any local section $s\in\cal{O}(E^*)_{X,x}$, the plurisubharmonicity of $\pi^*\log|s|^2_{h^*}=\log |\pi^*s|^2_{\pi^*h^*}$ implies that $|\pi^*s|^2_{\pi^*h^*}$ is also plurisubharmonic on a neighborhood of $\pi^{-1}(x)$. 
    Hence, viewing $|\bullet|^2_{\pi^*h^*}$ as an inner product, $|\bullet|^2_{\pi^*h^*}$ is locally bounded from above on $\ts$.
    Therefore, there exists a unique plurisubharmonic function $\varPsi_{h,u}$ on $\ts$ such that $\varPsi_{h,u} = |u|^2_{\pi^*h^*}$ on $\ts\setminus\exc$. Consider the integral 
    \begin{align*}
        |u|^2_{\pi^*h^*_\nu}(z)=\int_{\rom{supp}\,\rho_\nu}|u|^2_{\pi^*h^*_{(w)}}(z)\rho_\nu(w)dV_w,
    \end{align*} 
    where $\pi^*h^*_{(w)}(z)=\pi^*h^*(z-w)$ denotes the translation of $\pi^*h^*$.  
    For each fixed $w$, as above, there exists a unique plurisubharmonic extension $\varPsi_{h,u,w}$ such that $\varPsi_{h,u,w}(z)=|u|^2_{\pi^*h^*_{(w)}}(z)$ on $\ts\setminus\exc$.
    Hence, the integral above coincides with the integral of $|u|^2_{\pi^*h^*_{(w)}}(z)$, viewed as a plurisubharmonic function on $\ts$. Therefore, $\pi^*h^*_\nu$ is Griffiths semi-negative (see the proof of \cite[Proposition 6.2]{Rau15}).

    For completeness, we also give a proof using another construction. 
    The unique extension $\varPsi_{h,u}$ of the plurisubharmonic function $|u|^2_{\pi^*h^*}$ on $\ts\setminus\exc$ is defined for $y\in\exc$ by $\varPsi_{h,u}(y):=\limsup_{z\to y,\,z\in S\setminus\exc}|u|^2_{\pi^*h^*}(z)=\limsup_{z\to y,\,z\in S\setminus\exc}|u(y)|^2_{\pi^*h^*(z)}$. 
    Each component of $\pi^*h^*=(H_{jk})_{1\leq j,k\leq r}$ can be reconstructed from constant vectors (see the proof of \cite[Theorem 5.10]{Wat24a}). Indeed, the constant vectors $v_1={}^t(1,0,\ldots,0),\ldots,v_r={}^t(0,\ldots,0,1)$ give the diagonal entries by $H_{jj}=|v_j|^2_{\pi^*h^*}$.
    Furthermore, off-diagonal entries such as $H_{12}$ can be reconstructed from the constant vectors $v={}^t(1,1,\ldots,0), v'={}^t(1,i,\ldots,0)$, through the identities $|v|^2_{\pi^*h^*}=H_{11}+H_{22}+2\rom{Re}\,H_{12}$ and $|v'|^2_{\pi^*h^*}=H_{11}+H_{22}+2\rom{Im}\,H_{12}$.
    Hence, a new singular Hermitian metric $\widetilde{\pi^*h^*}:=(\widetilde H_{jk})$ is defined by setting $\tl{\pi^*h^*}=\pi^*h^*$ on $\ts\setminus\exc$, and for each $y\in\exc$, by setting the diagonal entries as $\widetilde H_{jj}(y):=\limsup_{z\to y, z\in \ts\setminus\exc}|v_j|^2_{\pi^*h^*}$ and defining the remaining entries similarly; 
    for example, $\tl{H}_{12}$ is defined by $2\rom{Re}\,\tl{H}_{12}(y):=\tl{H}_{11}(y)+\tl{H}_{22}(y)-\limsup_{z\to y, z\in \ts\setminus\exc}|v|^2_{\pi^*h^*}$ and $2\rom{Im}\,\tl{H}_{12}(y):=\tl{H}_{11}(y)+\tl{H}_{22}(y)-\limsup_{z\to y, z\in \ts\setminus\exc}|v'|^2_{\pi^*h^*}$.
    Then, for any constant section $c$, we obtain $|c|^2_{\tl{\pi^*h^*}}(y)=\limsup_{z\to y, z\in \ts\setminus\exc}|c|^2_{\pi^*h^*(z)}=\varPsi_{h,c}(y)$, which implies that $|c|^2_{\tl{\pi^*h^*}}$ is plurisubharmonic on $\ts$.
    Thus, for any holomorphic section $u\in H^0(S,\pi^*E^*)$, the function $|u|^2_{\widetilde{\pi^*h^*}}$ is plurisubharmonic on $\ts$, and $|u|^2_{\widetilde{\pi^*h^*}}=\varPsi_{h,u}=|u|^2_{\pi^*h^*}$ on $\ts\setminus\exc$.
    In particular, $\tl{\pi^*h^*}$ is Griffiths semi-negative.
    Under this construction, we obtain
    \begin{align*}
        |\sigma|^2_{\pi^*h^*_\nu}(z)=\int_{\rom{supp}\,\rho_\nu}\!\!\!\!|\sigma|^2_{\pi^*h^*_{(w)}}(z)\rho_\nu(w)dV_w=\int_{\rom{supp}\,\rho_\nu}\!\!\!\!|\sigma|^2_{\tl{\pi^*h^*}_{(w)}}(z)\rho_\nu(w)dV_w=|\sigma|^2_{(\tl{\pi^*h^*})\ast\rho_\nu}(z),
    \end{align*}
    which implies that $(\tl{\pi^*h^*})\ast\rho_\nu=\pi^*h^*_\nu$ is Griffiths semi-negative (see the proof of \cite[Proposition 6.2]{Rau15}).

    It remains to show that $\pi^*h^*_\nu$ is decreasing with respect to $\nu$. 
    To verify the decreasing property, it is necessary to show that $|\tau|^2_{\pi^*h^*}$ is plurisubharmonic for any constant section $\tau\in H^{0}(\cal{U},\pi^{*}E^*)$, where $\cal{U}\subset\ts$. 
    Let a local holomorphic frame of $\pi^*E^*|_{\cal{U}}$ be given by the pullback $\pi^*e=(\pi^{*}e_{1},\ldots,\pi^{*}e_{r})$ of a local holomorphic frame $e=(e_{1},\ldots,e_{r})$ of $E^*|_{\pi(\cal{U})}$. 
    Then $\tau=\sum c_{j}\pi^{*}e_{j}=\pi^{*}\bigl(\sum c_{j}e_{j}\bigr)$, $c_j\in\bb{C}$, and $\pi_{*}\tau=\sum c_{j}e_{j}\in H^{0}(\pi(\cal{U}),E^*)$. 
    Hence, $|\tau|^2_{\pi^*h^*}=|\pi^*\pi_*\tau|^2_{\pi^*h^*}=\pi^*(|\pi_*\tau|^2_{h^*})$ and by assumption, $|\pi_*\tau|^2_{h^*}$ is also plurisubharmonic. 
    Therefore, $|\tau|^2_{\pi^*h^*}$ is also plurisubharmonic, which implies that $\pi^*h^*_\nu$ is decreasing and converges pointwise to $\pi^*h^*$.
    Indeed, by the plurisubharmonicity, $|\tau|^2_{\pi^*h^*}=\lim_{\nu\to+\infty}|\tau|^2_{\pi^*h^*}\ast\rho_\nu=\lim_{\nu\to+\infty}|\tau|^2_{\pi^*h^*_\nu}$ (see \cite[Chapter I, Theorem 5.5]{Dem-book}), and therefore $\pi^*h^*=\lim_{\nu\to+\infty}\pi^*h^*_\nu$. 
    From the plurisubharmonicity of $|\sigma|^2_{\pi^*h^*_\nu}$ for any $\sigma\in H^0(\ts,\pi^*E^*)$ and any $\nu$, its decreasing limit $|\sigma|^2_{\pi^*h^*}=\lim_{\nu\to+\infty}|\sigma|^2_{\pi^*h^*_\nu}$ is also plurisubharmonic, which shows that $\pi^*h^*$ is Griffiths semi-negative on $\exc$ as well.

    Finally, we show the Griffiths semi-positivity of $h$ on $X$ from the Griffiths semi-positivity of $\pi^{*}h$ on $\tx$. It suffices to prove that, for any $x\in\xs$ and any $s\in\cal{O}(E^*)_{X,x}$, the function $\log |s|^2_{h^*}$ is plurisubharmonic on an open neighborhood $U$ of $x$, where $\log |s|^2_{h^*}$ is already plurisubharmonic on $\ureg$. 
    By the Griffiths semi-positivity of $\pi^*h$, the pulled-back $\pi^*\log |s|^2_{h^*}=\log |\pi^*s|^2_{\pi^*h^*}$ is plurisubharmonic on $\pi^{-1}(U)$, and the function $\log|\pi^*c|^2_{\pi^*h^*}=\pi^*\log|c|^2_{h^*}$ is also plurisubharmonic on $\pi^{-1}(U)$ for any constant section $c\in H^0(U,E^*)$. Thus, $\log|\bullet|^2_{h^*}$ is locally bounded from above on $U$.
    Together with Theorem \ref{Theorem: idd>0 and a.e. psh}, this implies that the plurisubharmonic function $\log |s|^2_{h^*}$ on $\ureg$ extends uniquely to a plurisubharmonic function $\varPsi_{log,s}$ on $U$. 
    Hence, the two plurisubharmonic functions $\pi^*\log |s|^2_{h^*}$ and $\pi^*\varPsi_{log,s}$ on $\pi^{-1}(U)$ satisfy $\pi^*\log |s|^2_{h^*}=\pi^*\varPsi_{log,s}$ on $\pi^{-1}(U)\setminus\exc$, and by the Lemma \ref{Lemma: two psh} below, it follows that $\pi^*\log |s|^2_{h^*}=\pi^*\varPsi_{log,s}$ on $\pi^{-1}(U)$.
    This implies that $\log |s|^2_{h^*}=\varPsi_{log,s}$ on $U$, 
    which shows that $\pi^*h$ is Griffiths semi-positive on $X$.
\end{proof}

\begin{lemma}\label{Lemma: two psh}
    Let $\Omega$ be an open subset $\bb{C}^n$ and $A$ be a proper analytic subset of $\Omega$, and let $\varphi$ and $\psi$ be plurisubharmonic functions on $\Omega$. 
    If $\varphi=\psi$ on $\Omega\setminus A$, then $\varphi=\psi$ on $\Omega$.
\end{lemma}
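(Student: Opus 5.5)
The plan is to show that the plurisubharmonic function $\psi$ is recovered from its values off $A$ by a mean-value/upper-semicontinuity argument, and likewise for $\varphi$. Concretely, I would prove that for every $z\in A$,
\begin{align*}
    \psi(z)=\limsup_{w\to z,\,w\in\Omega\setminus A}\psi(w).
\end{align*}
Then, since $\varphi=\psi$ on $\Omega\setminus A$, the right-hand sides coincide, and so $\varphi=\psi$ on $A$ as well.

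The inequality $\limsup_{w\to z,\,w\notin A}\psi(w)\leq\psi(z)$ is immediate from upper semicontinuity. For the reverse inequality, fix $z\in A$. Since $A$ is a proper analytic subset, it is nowhere dense and has Lebesgue measure zero; I would only need the measure-zero property. Plurisubharmonic functions satisfy the sub-mean value inequality over balls:
\begin{align*}
    \psi(z)\leq\frac{1}{|B(z,r)|}\int_{B(z,r)}\psi\,dV=\frac{1}{|B(z,r)|}\int_{B(z,r)\setminus A}\psi\,dV\leq\sup_{B(z,r)\setminus A}\psi.
\end{align*}
The middle equality holds because $A$ is Lebesgue-null, and the integral makes sense because $\psi\in L^1_{loc}$. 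If $\psi\equiv-\infty$ on the component containing $z$, then $\varphi=\psi=-\infty$ off $A$ there, so the same argument applied to $\varphi$ gives $\varphi(z)=-\infty$ as well. Letting $r\to0$ yields $\psi(z)\leq\limsup_{w\to z,\,w\notin A}\psi(w)$, which completes the identity.

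An alternative route is simply to note that $\varphi=\psi$ almost everywhere, and that two plurisubharmonic functions equal almost everywhere coincide everywhere, because $\psi(z)=\lim_{r\to0}$ of its ball averages (see \cite[Chapter I, Theorem 5.5]{Dem-book}); ball averages only see almost-everywhere values. I would present the argument in this form, as it is the shortest.

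There is no real obstacle here. The only point needing care is the justification that $A$ has measure zero, which follows since $A$ is locally contained in the zero set of a nontrivial holomorphic function on each connected component, because $A$ is proper. If some component of $\Omega$ is entirely contained in $A$, then "proper" should be interpreted componentwise, i.e., $A$ is nowhere dense; I would state this convention explicitly.
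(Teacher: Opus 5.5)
Your proof is correct, and the version you choose to present ($\varphi=\psi$ almost everywhere because $A$ is Lebesgue-null, and a plurisubharmonic function is the limit of its ball averages) is essentially the paper's argument. The paper uses mollifications $\varphi\ast\rho_\nu=\psi\ast\rho_\nu$ and \cite[Chapter I, Theorem 5.5]{Dem-book} where you use ball averages. Your remark that ``proper'' must be read componentwise, i.e.\ $A$ nowhere dense in $\Omega$, is a correct and worthwhile caveat: the statement fails if $A$ contains a whole connected component of $\Omega$.
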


\begin{proof}
    We obtain $\varphi\ast\rho_\nu=\psi\ast\rho_\nu$ on $\Omega$ for every $\nu\in\bb{N}$, and hence, it follows that $\displaystyle\varphi=\lim_{\nu\to+\infty}\varphi\ast\rho_\nu=\lim_{\nu\to+\infty}\psi\ast\rho_\nu=\psi$ on $\Omega$ by \cite[Chapter I, Theorem 5.5]{Dem-book}.
\end{proof}

\begin{corollary}\label{Corollary: a.e. Grif semi-posi of h and pi*h}
    If $h$ is a.e.\! Griffiths semi-positive on $X$, then $\pi^*h$ on $\pi^*E$ is also a.e.\! Griffiths semi-positive on $\tx$. 
    Conversely, if $X$ is locally irreducible and $\pi^* h$ is a.e.\! Griffiths semi-positive on $\tx$, then $h$ is also a.e.\! Griffiths semi-positive on $X$.
\end{corollary}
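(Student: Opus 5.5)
The plan is to mirror the proof of Theorem \ref{Theorem: Grif semi-positivity of h and pi^*h}, replacing genuine plurisubharmonicity by plurisubharmonicity up to a null set. Since the statement is local and $\pi|_{\tx\setminus\exc}:\tx\setminus\exc\to\reg$ is biholomorphic, only points of $\exc$ (resp.\ of $\xs$) need attention. Because $\exc$ and $\xs$ have measure zero, the "a.e." conditions can be transported freely between $\tx\setminus\exc$ and $\reg$. This costs nothing on null sets: $\pi$ maps null sets of $\tx\setminus\exc$ to null sets of $\reg$ and conversely.

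\emph{Forward direction.} Fix $x\in\xs$ and a neighborhood $U$ on which, for every $s\in H^0(U,E^*)$, $\log|s|_{h^*}$ equals a plurisubharmonic function $\varphi_s$ almost everywhere. Take $y\in\pi^{-1}(x)$ and a small Stein neighborhood $\ts\subset\pi^{-1}(U)$ on which $\pi^*E^*$ is trivialized by the pulled-back frame $\pi^*e$. Two cases arise for $u\in H^0(\ts,\pi^*E^*)$.
\begin{itemize}
\item If $u=\pi^*s$, then $\log|u|_{\pi^*h^*}=\pi^*\log|s|_{h^*}$ agrees almost everywhere with $\pi^*\varphi_s$, which is plurisubharmonic since pull-backs of plurisubharmonic functions under holomorphic maps are plurisubharmonic.
\item For a general $u$, which need not be a pull-back, I would follow the proof of Theorem \ref{Theorem: Grif semi-positivity of h and pi^*h}. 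For constant sections $c$, the function $|c|^2_{\pi^*h^*}$ agrees almost everywhere with the plurisubharmonic function $\pi^*e^{2\varphi_{\pi_*c}}$. Rebuilding the matrix entries from these limsup-regularized functions, as done there, yields a measurable metric $\tl{\pi^*h^*}$ that equals $\pi^*h^*$ almost everywhere and whose norms of constant sections are plurisubharmonic. The regularizations $\tl{\pi^*h^*}\ast\rho_\nu=\pi^*h^*\ast\rho_\nu$ are smooth, Griffiths semi-negative and decreasing. Hence $\tl{\pi^*h^*}$ is Griffiths semi-negative, so $\log|u|_{\tl{\pi^*h^*}}$ is plurisubharmonic and equals $\log|u|_{\pi^*h^*}$ almost everywhere.
\end{itemize}
Alternatively, one can invoke Remark \ref{Remark: Grif>0 then a.e. Grif semi-posi & Remark in Wat24a} directly on the manifold $\tx$: it suffices that $\log|u|_{\pi^*h^*}$ be almost everywhere psh for local frames, and then \cite[Theorem 5.10]{Wat24a} applies.

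\emph{Converse.} Assume $X$ is locally irreducible and $\pi^*h$ is a.e.\ Griffiths semi-positive. By Remark \ref{Remark: Grif>0 then a.e. Grif semi-posi & Remark in Wat24a} on $\tx$, there is a Griffiths semi-positive $\hbar$ on $\pi^*E$ with $\hbar=\pi^*h$ almost everywhere. For $s\in H^0(U,E^*)$, the function $\phi_s:=\log|\pi^*s|_{\hbar^*}$ is plurisubharmonic on $\pi^{-1}(U)$. Its restriction to $\tx\setminus\exc$ transports to a plurisubharmonic function $\phi_s'$ on $\ureg$, equal almost everywhere to $\log|s|_{h^*}$. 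Moreover $\phi_s'$ is locally bounded above near $\xs$: $\phi_s$ is upper semicontinuous on a neighborhood of the compact set $\pi^{-1}(\overline{U'})$ for $U'\Subset U$, hence bounded above there. By the extension part of Theorem \ref{Theorem: idd>0 and a.e. psh}, case (b), $\phi_s'$ extends to a plurisubharmonic function on $U$ that agrees almost everywhere with $\log|s|_{h^*}$, which is exactly a.e.\ Griffiths semi-positivity.

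The main obstacle is the forward direction for sections $u$ not coming from $X$. Almost-everywhere plurisubharmonicity for the pulled-back frame does not immediately give it for arbitrary holomorphic combinations, since the exceptional sets depend on $u$. The regularization of constant-section norms in Theorem \ref{Theorem: Grif semi-positivity of h and pi^*h}, or the \cite{Wat24a} construction of $\hbar$ on the manifold $\tx$, is what handles this.
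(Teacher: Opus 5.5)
Your proposal is correct and is essentially the paper's argument. For the forward direction, the paper likewise reruns the proof of Theorem~\ref{Theorem: Grif semi-positivity of h and pi^*h} and takes $\mathcal{H}:=(\lim_{\nu\to+\infty}\pi^*h^*_\nu)^*$ as a Griffiths semi-positive metric equal to $\pi^*h$ a.e. For the converse, as in that theorem, it uses the extension statement of Theorem~\ref{Theorem: idd>0 and a.e. psh} in case $(b)$; your properness argument for the local upper bound is a clean substitute for the paper's bound via pulled-back constant sections.

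The only thing to discard is your ``alternative''. A.e.\ plurisubharmonicity of $\log|u|_{\pi^*h^*}$ for frame or constant sections alone does not imply a.e.\ Griffiths semi-negativity in rank $\geq 2$. For example, the metric $|u|^2=|u_1|^2+|\bar z u_1+u_2|^2$ on $\mathbb{C}\times\mathbb{C}^2$ has $\log|c|^2$ psh for every constant $c$, yet $\log|(1,-z)|^2=\log(1+4(\operatorname{Im}z)^2)$ is not subharmonic. Moreover, \cite[Theorem 5.10]{Wat24a} needs a.e.\ Griffiths semi-positivity of $\pi^*h$ as input, which is the very conclusion. What actually does the work is the a.e.\ Griffiths semi-negativity of $\pi^*h^*$ on $\tx\setminus\exc$, combined with the frame upper bound, which lets the psh representatives extend across $\exc$.
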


\begin{proof}
    As in the proof of Theorem \ref{Theorem: Grif semi-positivity of h and pi^*h}, by setting $\cal{H}:=(\lim_{\nu\to+\infty}\pi^*h^*_\nu)^*$, we obtain a Griffiths semi-positive singular metric on $\pi^{*}E$, which coincides with $\pi^{*}h$ a.e.
\end{proof}

\begin{corollary}\label{Corollary: a.e. Grif semi-posi then uniformly positive definite}
    If $h$ is a.e.\! Griffiths semi-positive, then $h$ is uniformly positive definite. 
\end{corollary}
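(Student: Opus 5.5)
The claim is local, so fix a point $x\in X$. Uniform positive definiteness of $h$ at $x$ is the same as local uniform boundedness from above of $|\bullet|^2_{h^*}$ almost everywhere near $x$, in any fixed local frame. The plan is to read this boundedness off the plurisubharmonic representatives supplied by a.e.\ Griffiths semi-positivity, working on the resolution where needed.

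Choose a neighborhood $U$ of $x$ on which $E$ is trivial with frame $e^*=(e^*_1,\dots,e^*_r)$ of $E^*$, and on which the defining property of a.e.\ Griffiths semi-negativity of $h^*$ holds. Take the finitely many constant sections $v_j=e^*_j$, $v=e^*_j+e^*_k$ and $v'=e^*_j+ie^*_k$. These are the same auxiliary vectors used in the proof of Theorem \ref{Theorem: Grif semi-positivity of h and pi^*h}, and they determine all entries of the matrix $h^*=(H_{jk})$. For each such section $c$, the function $\log|c|^2_{h^*}$ agrees a.e.\ on $U$ with a plurisubharmonic function $\varphi_c$ on $U$.

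By the definition of plurisubharmonicity on complex spaces, $\varphi_c$ is locally the restriction of a plurisubharmonic function on an ambient open set of $\bb{C}^N$. That ambient function is upper semicontinuous, so it is bounded above on compact sets. After shrinking $U$ to a relatively compact neighborhood $U'$ of $x$, we get $\varphi_c\le C$ on $U'$, and hence $|c|^2_{h^*}\le e^C$ a.e.\ on $U'$. From $H_{jj}=|v_j|^2_{h^*}$ and the polarization identities for $\rom{Re}\,H_{jk}$ and $\rom{Im}\,H_{jk}$, every entry of $h^*$ is bounded a.e.\ on $U'$. Therefore $h^*\le C'\,h_0^*$ a.e.\ for the flat metric $h_0$ in this frame. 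Dualizing gives $|\bullet|^2_h\ge C'^{-1}|\bullet|^2_{h_0}$ a.e.; here we use that $0<\det h<+\infty$ a.e., so that $h$ and $h^*$ are mutually inverse a.e. This is exactly Definition \ref{Definition: uniformly positive definite}.

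The only delicate point is the dualization: a singular metric may be degenerate on a null set, and that set must be discarded. As an alternative route, we can pull back and apply Corollary \ref{Corollary: a.e. Grif semi-posi of h and pi*h}. This yields a Griffiths semi-positive metric $\cal{H}$ on $\pi^*E$ equal to $\pi^*h$ a.e. Upper semicontinuity of $|\pi^*c|^2_{\cal{H}^*}$ on the compact set $\pi^{-1}(\overline{U'})$, which is compact by properness of $\pi$, then gives the same bound. Since $\pi$ is biholomorphic off $\exc$, a null set, this bound transfers to $U'_{reg}$, which is all that the almost-everywhere statement requires.
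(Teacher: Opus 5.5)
Your argument is correct, but your main route differs from the paper's.

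\textbf{The paper's route.} The paper works entirely on the resolution $\tx$. It takes the Griffiths semi-positive representative $\cal{H}=(\lim_{\nu}\pi^*h^*_\nu)^*$ of $\pi^*h$ from Corollary \ref{Corollary: a.e. Grif semi-posi of h and pi*h}. It then uses that the smooth convolutions $\pi^*h^*_\nu$ decrease to $\cal{H}^*$, so that for one fixed $\nu$ one has $|\bullet|^2_{\pi^*h}\geq|\bullet|^2_{(\pi^*h^*_\nu)^*}$ a.e. In other words, $\pi^*h$ is bounded below by a smooth metric. Finally it says this descends to $X$. That last step is left implicit, and it is exactly the properness and compactness transfer you spell out in your alternative route.

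\textbf{Your main route.} You stay on $X$ and use only the definition of a.e.\ Griffiths semi-positivity:
\begin{itemize}
\item each of finitely many functions $\log|c|^2_{h^*}$ has a plurisubharmonic representative;
\item such functions are locally restrictions of ambient plurisubharmonic functions, hence upper semicontinuous and bounded above on compact sets;
\item so the entries of $h^*$ are essentially bounded near $x$;
\item inverting a.e., which is legitimate since $0<\det h<+\infty$ a.e., gives the required lower bound for $h$.
\end{itemize}

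\textbf{What each buys.} Your route avoids the resolution, the convolution on $\tx$, and the monotonicity of $\pi^*h^*_\nu$ in the a.e.\ setting. This matters somewhat, because the paper itself stresses that the approximation procedure is delicate on singular spaces and only becomes available after pulling back. The paper's route additionally produces an explicit smooth minorant $(\pi^*h^*_\nu)^*$ on $\tx$, which fits the regularization machinery used elsewhere in the paper. The corollary itself does not need that.

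\textbf{Two small remarks.} First, since $h^*$ is positive semi-definite a.e., $|H_{jk}|^2\leq H_{jj}H_{kk}$. So bounding the diagonal entries $|e^*_j|^2_{h^*}$ already suffices, and the polarization vectors $e^*_j+e^*_k$, $e^*_j+ie^*_k$ are unnecessary. Second, your alternative route is essentially the paper's argument: upper semicontinuity of $|\pi^*c|^2_{\cal{H}^*}$ on the compact set $\pi^{-1}(\overline{U'})$ replaces the decreasing smooth approximants.
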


\begin{proof}
    From the above discussion, a singular Hermitian metric $\cal{H}:=(\lim_{\nu\to+\infty}\pi^*h^*_\nu)^*$ on $\pi^*E$, which coincides with $\pi^*h$ a.e., is Griffiths semi-positive. 
    Each $\pi^*h^*_\nu$ is a smooth Griffiths semi-negative metric. The sequence of smooth metrics $\{\pi^*h^*_\nu\}_{\nu\in\bb{N}}$ is decreasing and converges pointwise a.e. to $\cal{H}^*$. 
    Hence, $|\bullet|^2_{\pi^*h}=|\bullet|^2_{\cal{H}}\geq|\bullet|^2_{(\pi^*h^*_\nu)^*}$ a.e. Since $\pi^*h$ is uniformly positive definite, $h$ is also uniformly positive definite.
\end{proof}

In the case of line bundles, the equivalence between a.e.\! Griffiths semi-positivity and singular semi-positivity follows immediately from the definitions. 
Hence, Corollary \ref{Corollary: a.e. Grif semi-posi of h and pi*h} yields the following.

\begin{corollary}\label{Corollary: singular semi-positivity of h_L and pi*h_L}
    If $h_L$ on $L$ is singular semi-positive on $X$, then $\pi^*h_L$ on $\pi^*L$ is also singular semi-positive on $\tx$. 
    Conversely, if $X$ is locally irreducible and $\pi^*h_L$ on $\pi^*L$ is singular semi-positive on $\tx$, then $h_L$ on $L$ is also singular semi-positive on $X$.
\end{corollary}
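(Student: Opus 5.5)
The plan is to reduce the statement to Corollary \ref{Corollary: a.e. Grif semi-posi of h and pi*h}, using the fact already noted that for a line bundle, a.e.\! Griffiths semi-positivity and singular semi-positivity coincide. The key step is therefore to prove this identification carefully, in both directions, and on both $X$ and $\tx$.

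First, fix a point $x\in X$, a neighborhood $U$ on which $L|_U$ is trivialized by a frame $e$, and the dual frame $e^*$ of $L^*|_U$. Write $h_L=h_0e^{-2\varphi}$, so the weight with respect to $e$ is $\varphi$ up to a smooth function; shrinking $U$, we may take $h_0$ constant in the trivialization. Every local section of $L^*$ over $U$ has the form $u=fe^*$ with $f\in\cal{O}(U)$. This gives
\begin{align*}
    \log|u|_{h_L^*}=\log|f|+\varphi .
\end{align*}

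Next, suppose $\varphi$ agrees a.e.\ with a plurisubharmonic function $\tl\varphi$. Then $\log|u|_{h_L^*}$ agrees a.e.\ with $\log|f|+\tl\varphi$, which is plurisubharmonic. The latter holds locally in an ambient embedding: $\log|f|$ is the restriction of $\log|F|$ for a holomorphic extension $F$ of $f$, and $\tl\varphi$ is the restriction of an ambient plurisubharmonic function. Hence $h_L^*$ is a.e.\! Griffiths semi-negative, that is, $h_L$ is a.e.\! Griffiths semi-positive. Conversely, taking $u=e^*$ (so $f=1$) shows that $\varphi$ agrees a.e.\ with a plurisubharmonic function. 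The condition is independent of the trivialization, since changing frames adds $\log|g|$ with $g$ a nowhere vanishing holomorphic function, which is pluriharmonic. The same argument applies verbatim on the manifold $\tx$, where the weight of $\pi^*h_L$ with respect to $\pi^*e$ is $\pi^*\varphi$.

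With this equivalence in hand, the statement follows from Corollary \ref{Corollary: a.e. Grif semi-posi of h and pi*h} applied with $E=L$ and $h=h_L$, including the converse direction under local irreducibility. We also need $\pi^*h_L$ to be a singular Hermitian metric on $\tx$, i.e.\ its weight should be $L^1_{loc}$ there, which is not automatic by Remark \ref{Remark: unclear of idd pi*phi>0 current}. In the forward direction this holds because $\pi^*\tl\varphi$ is plurisubharmonic on $\tx$ and coincides a.e.\ with $\pi^*\varphi$, the exceptional set having measure zero. The only mild obstacle is this bookkeeping on exceptional sets and the $L^1_{loc}$ issue, and it is handled by the plurisubharmonic representative.
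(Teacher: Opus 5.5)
Your proposal is correct and follows the paper's route. The paper likewise notes that for line bundles a.e.\ Griffiths semi-positivity and singular semi-positivity coincide by definition, which is your identity $\log|fe^*|_{h_L^*}=\log|f|+\varphi$. It then deduces both directions directly from Corollary \ref{Corollary: a.e. Grif semi-posi of h and pi*h}. Your explicit check of that equivalence, and of $\pi^*\varphi\in L^1_{loc}(\tx)$ via the plurisubharmonic representative, only fills in details the paper leaves implicit.
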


Since $\tx$ is smooth, it follows from \cite[Proposition 5.3]{Wat24a} and Proposition \ref{Proposition: idd>0 iff a.e. psh} that $\pi^*h$ is a.e.\! Griffiths semi-positive on $\tx$ if and only if 
for any point $x\in\tx$, there exists an open neighborhood $\tu$ of $x$ such that $\idd|\tl{s}|^2_{\pi^*h^*}\geq0$ on $\tu$ in the sense of currents, for any local holomorphic section $\tl{s}\in H^0(\tu,\pi^*E^*)$.
Furthermore, in the case of a line bundle $L=E$, this is equivalent to $\iO{\pi^*L,\pi^*h}\geq0$ on $\tl{X}$ in the sense of currents.
However, when $X$ is a complex space, these notions do not necessarily coincide by Theorem \ref{Theorem: idd>0 and a.e. psh}, whereas they do coincide when $X$ is normal.

As in Remark \ref{Remark: Grif>0 then a.e. Grif semi-posi & Remark in Wat24a}, it is expected that Griffiths positivity implies a.e.\! Griffiths semi-positivity near $\xs$. 
However, this is unclear because of difficulties with the approximation argument. The following result nevertheless holds.

\begin{proposition}\label{Proposition: normal and Gri>0 then a.e. Grif>0}
    If $X$ is normal, or if $X$ is locally irreducible and $h^*$ is locally bounded from above a neighborhood of $\xs$, and if $h$ is Griffiths positive on $X$, then $h$ is a.e.\! Griffiths semi-positive on $X$.
\end{proposition}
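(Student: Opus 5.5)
The plan is to reduce a.e.\ Griffiths semi-positivity to the extension Theorem \ref{Theorem: idd>0 and a.e. psh} applied to the functions $\log|u|^2_{h^*}$.

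Fix $x\in X$ and take the neighborhood $U$, the embedding $\iota_U$ and the constant $\delta>0$ from the definition of Griffiths negativity of $h^*$. For every $u\in H^0(U,E^*)$ we then have $|u|^2_{h^*}\in L^1_{loc}(U)$ and
\begin{align*}
    \idd|u|^2_{h^*}\geq\delta|u|^2_{h^*}\idd\iota_U^*|z|^2\geq0
\end{align*}
on $U$ in the sense of currents.

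\textbf{Step 1: work on the regular locus.} On $\ureg$ we are on a manifold, so Proposition \ref{Proposition: idd>0 iff a.e. psh} gives, locally on $\ureg$, a plurisubharmonic function equal to $|u|^2_{h^*}$ a.e. By the manifold theory (Remark \ref{Remark: Grif>0 then a.e. Grif semi-posi & Remark in Wat24a} and \cite[Theorem 5.10]{Wat24a}), $h$ is a.e.\ Griffiths semi-positive on $\ureg$. More precisely, there is a Griffiths semi-positive metric $\hbar$ on $E|_{\ureg}$ with $\hbar=h$ a.e. Hence for every $u$ the function $\log|u|^2_{\hbar^*}$ is plurisubharmonic on $\ureg$ and agrees a.e.\ with $\log|u|^2_{h^*}$.

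\textbf{Step 2: extend across $\xs\cap U$.} We apply the final clause of Theorem \ref{Theorem: idd>0 and a.e. psh}: a function that is plurisubharmonic on $\ureg$ extends uniquely to a plurisubharmonic function on $U$ under $(a)$ or $(b)$.
\begin{itemize}
    \item In the normal case, $(a)$ holds and there is nothing more to check.
    \item In the locally irreducible case we need $(b)$, namely that $\log|u|^2_{\hbar^*}$ is locally bounded from above near $\xs$. This follows from the assumption that $h^*$ is locally bounded above near $\xs$. The bound passes to $\hbar^*$ because $\hbar^*$ is the decreasing limit of convolutions of $h^*$, or simply because plurisubharmonic functions are bounded by their essential sup on small balls.
\end{itemize}
The extension $\varPsi_u$ is plurisubharmonic on $U$ and coincides with $\log|u|^2_{h^*}$ a.e.\ on $U$, since $\xs$ has measure zero. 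This is exactly a.e.\ Griffiths semi-negativity of $h^*$ at $x$, i.e.\ a.e.\ Griffiths semi-positivity of $h$.

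\textbf{Main obstacle.} The delicate step is the normal case, where no upper bound on $h^*$ is assumed. Here I would instead apply Theorem \ref{Theorem: idd>0 and a.e. psh}$(a)$ directly to $\varphi=|u|^2_{h^*}\in L^1_{loc}(U)$ with $\idd\varphi\geq0$ on $U$. This gives a plurisubharmonic $\Phi_u$ on $U$ with $\Phi_u=|u|^2_{h^*}$ a.e. It then remains to pass from $|u|^2$ to $\log|u|^2$, and I would try two routes.
\begin{itemize}
    \item Apply the same argument to $|u|^2_{h^*}|e^{g}|^2$ for all holomorphic $g$. Locally, and for $X$ normal via the embedding, plurisubharmonicity of $\Phi_u|e^g|^2$ for all such $g$ characterizes plurisubharmonicity of $\log\Phi_u$, exactly as in the manifold proof of \cite[Proposition 5.3]{Wat24a}.
    \item Alternatively, use the regular-locus statement $\log\Phi_u$ plurisubharmonic on $\ureg$, which follows from Step 1 by uniqueness of plurisubharmonic representatives (Lemma \ref{Lemma: two psh}). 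Then extend $\log\Phi_u$ by normality via $(a)$, noting that $\Phi_u$ is locally bounded above, being plurisubharmonic.
\end{itemize}
The second route is cleaner and would make the normal case parallel to the irreducible one.
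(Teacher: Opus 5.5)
Your proposal is correct and follows essentially the paper's argument: Griffiths positivity gives $\idd|u|^2_{h^*}\geq0$, the manifold theory makes $\log|u|^2_{h^*}$ agree a.e.\ on $\ureg$ with a plurisubharmonic function, and the extension clause of Theorem \ref{Theorem: idd>0 and a.e. psh} under $(a)$ or $(b)$ finishes (the paper phrases the first step on $\pi^{-1}(U)\setminus\exc$, which is the same thing via the biholomorphism $\pi$). Your ``main obstacle'' paragraph is unnecessary, because clause $(a)$ needs no upper bound (Grauert--Remmert, since $\mathrm{codim}\,\xs\geq2$ for normal $X$), exactly as your own Step 2 already says.
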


\begin{proof}
    By the assumption, for any open subset $U\subset X$ and any section $u\in H^0(U,E^*)$, $\idd|u|^2_{h^*}\geq0$ holds on $U$ in the sense of currents. 
    Then, for any strongly positive test form $\tl{\phi}\in\scr{D}_{\tl{X}}^{n-1,n-1}(\pi^{-1}(U)\setminus\exc)$, the pullback $(\pi^{-1})^*\tl{\phi}\in\scr{D}_X^{n-1,n-1}(\ureg)$ is also strongly positive test form on $\ureg$, and hence the following inequality holds 
    \begin{align*}
        0\leq\lara{\idd|u|^2_{h^*}}{(\pi^{-1})^*\tl{\phi}}=\int_{\pi^{-1}(U)\setminus\exc}\idd|\pi^*u|^2_{\pi^*h^*}\wedge\tl{\phi}=\lara{\idd|\pi^*u|^2_{\pi^*h^*}}{\tl{\phi}}.
    \end{align*}
    Therefore, we have $\idd|\pi^*u|^2_{\pi^*h^*}\geq0$ on $\pi^{-1}(U)\setminus\exc$ in the sense of currents. 
    Since the restriction $\pi|_{\reg}$ is biholomorphic, this shows that $\pi^{*}h^{*}$ is a.e.\! Griffiths semi-positive on $\tx\setminus\exc$. 
    Hence, $\pi^*\log|u|^2_{h^*}=\log|\pi^*u|^2_{\pi^*h^*}$ coincides a.e. on $\pi^{-1}(U)\setminus\exc$ with a plurisubharmonic function defined on $\pi^{-1}(U)\setminus\exc$. 
    That is, $\log|u|^2_{h^*}$ coincides a.e. on $\ureg$ with a plurisubharmonic function on $\varPsi_{log,u}$ defined on $\ureg$. 
    By the assumption together with 
    Theorem \ref{Theorem: idd>0 and a.e. psh}, the function $\varPsi_{log,u}$ extends uniquely to a plurisubharmonic function $\tl{\varPsi}_{log,u}$ on $U$, and $\log |u|^2_{h^*}$ coincides a.e. on $U$ with $\tl{\varPsi}_{log,u}$, which shows the a.e.\! Griffiths semi-positivity of $h$.
\end{proof}

The following is immediate from the same argument as in \cite[Proposition 3.1]{Wat26b}.

\begin{proposition}\label{Proposition: Grif>0 then εω-Grif>0}
    If $h$ is Griffiths positive, then for any Hermitian metric $\omega$ on $X$, there exists a smooth positive function $\varepsilon:X\longrightarrow\bb{R}_{>0}$ satisfying the following: for every point $x\in X$, 
    there exists a neighborhood $U$ of $x$ such that $\idd|u|^2_{h^*}\geq\varepsilon|u|^2_{h^*}\omega$ on $U$ in the sense of currents, for any local holomorphic section $u\in H^0(U,E^*)$.
\end{proposition}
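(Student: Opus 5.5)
The plan is to localize the constant in the definition of Griffiths positivity and then patch the local constants together with a smooth positive function. By Definition \ref{Definition: singular Griffiths positivity}, for every $x\in X$ there are a neighborhood $U_x$ with a closed embedding $\iota_x:U_x\hookrightarrow\bb{C}^{N_x}$ and a constant $\delta_x>0$ such that
\begin{align*}
    \idd|u|^2_{h^*}\geq\delta_x|u|^2_{h^*}\,\idd\iota_x^*|z|^2 \quad\text{on } U_x
\end{align*}
in the sense of currents, for every $u\in H^0(U_x,E^*)$. Since a Hermitian metric $\omega$ on $X$ is locally the restriction of a smooth positive $(1,1)$-form on the ambient space, after shrinking $U_x$ to a relatively compact neighborhood $W_x\Subset U_x$ I will find a constant $C_x>0$ with $\omega\leq C_x\,\idd\iota_x^*|z|^2$ on $W_x$. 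Restricting to $\reg$ and using that $\idd\iota_x^*|z|^2$ is a smooth form on $X$, this gives $\idd|u|^2_{h^*}\geq(\delta_x/C_x)|u|^2_{h^*}\omega$ on $W_x$. This step needs the following observation: if $T\geq\alpha$ and $\alpha\geq\beta$ with $\alpha,\beta$ smooth $(1,1)$-forms multiplied by a nonnegative $L^1_{loc}$ function, then $T\geq\beta$ as currents. It holds because pairing with strongly positive test forms $\rho\, i^{(n-1)^2}\eta\wedge\overline{\eta}$ reduces to pointwise inequalities on $\ureg$, multiplied by $|u|^2_{h^*}\geq0$, which is locally integrable by assumption.

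Next I globalize. I will take a locally finite cover $\{W_j\}$ of $X$ with the local constants $c_j:=\delta_j/C_j>0$. Since $X$ is paracompact, the standard construction yields a smooth (in the sense of local ambient extensions) function $\varepsilon:X\to\bb{R}_{>0}$ with $\varepsilon\leq c_j$ on $W_j$ for every $j$. Concretely, I will set $\varepsilon:=\sum_j\chi_j\,m_j$ for a smooth partition of unity $\{\chi_j\}$ subordinate to a shrinking, with $m_j:=\min\{c_k: W_k\cap\rom{supp}\chi_j\ne\emptyset\}$. This minimum is positive by local finiteness, and $\varepsilon$ is smooth because partitions of unity exist on complex spaces via restriction from local embeddings.

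Finally, for $x\in X$ I will choose $U:=W_j\ni x$. Then $\varepsilon|u|^2_{h^*}\omega\leq c_j|u|^2_{h^*}\omega\leq\idd|u|^2_{h^*}$ on $U$ in the sense of currents, by the same monotonicity observation, now applied to the smooth nonnegative factor $c_j-\varepsilon$. The main obstacle is only to check carefully that these comparisons survive as current inequalities on a singular space, where currents are tested only against forms on $\reg$ (via $\iota_*$). Since all the comparison forms are smooth and the coefficient $|u|^2_{h^*}$ is a nonnegative $L^1_{loc}$ function, the inequalities follow pointwise, exactly as in \cite[Proposition 3.1]{Wat26b}.
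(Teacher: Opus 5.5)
Your argument is correct and follows the route the paper intends: the paper gives no separate proof but defers to the line-bundle argument of \cite[Proposition 3.1]{Wat26b}, which is exactly a local comparison $\omega\leq C\,\idd\iota^*|z|^2$ on small relatively compact charts, followed by patching the local constants into a smooth positive $\varepsilon$ with a partition of unity. One point to make explicit: shrinking $U_x$ to $W_x$ (and later taking $U=W_j$) uses the standard reading that the defining inequality holds for holomorphic sections over open subsets of $U_x$, not only for sections defined on all of $U_x$; the paper also assumes this tacitly.
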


In this case, $h$ is said to be $\varepsilon\omega$-\textit{Griffiths positive}, and one writes $i\Theta_{E,h} \geq_{Grif} \varepsilon\omega \otimes \rom{id}_E$.

\begin{theorem}\label{Theorem: h Gri iff pi*h Gri}
    Let $\omega$ be a Hermitian metric on $X$ and $\varepsilon:X\longrightarrow\bb{R}_{\geq0}$ be a smooth semi-positive function.
    If the pulled-back $\pi^*h$ on $\pi^*E$ is $\pi^*(\varepsilon\omega)$-Griffiths positive on $\tx$, then $h$ on $E$ is $\varepsilon\omega$-Griffiths positive on $X$. 
    Conversely, if $X$ is normal, or if $X$ is locally irreducible and $h^*$ is locally bounded from above a neighborhood of $\xs$, and if $h$ is $\varepsilon\omega$-Griffiths positive on $X$, then $\pi^*h$ on $\pi^*E$ is $\pi^*(\varepsilon\omega)$-Griffiths positive on $\tx$.
\end{theorem}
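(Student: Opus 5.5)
The plan is to work with the currential inequality $\idd|u|^2_{h^*}\geq\varepsilon|u|^2_{h^*}\omega$ from Proposition \ref{Proposition: Grif>0 then εω-Grif>0}, tested against strongly positive test forms, and to pass it between $X$ and $\tx$. Off the singular set nothing needs proof, since $\pi|_{\tx\setminus\exc}:\tx\setminus\exc\to\reg$ is biholomorphic.

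\textbf{From $\tx$ to $X$.} Fix $x\in X$, a small neighborhood $U$ with an embedding, and $u\in H^0(U,E^*)$.
\begin{itemize}
\item[(i)] Local integrability of $|u|^2_{h^*}$ on $U$ follows from that of $|\pi^*u|^2_{\pi^*h^*}$ on $\pi^{-1}(U)$. Indeed, integrating a function against the volume form on $\ureg$ amounts to integrating its pull-back against $\pi^*dV_\omega$, and $\pi^*dV_\omega\leq C\,dV_\gamma$ on compacts for a Hermitian metric $\gamma$ on $\tx$.
\item[(ii)] Take a strongly positive test form $\phi\in\scr{D}^{n-1,n-1}_X(U)$. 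I intend to write
\begin{align*}
\lara{\idd|u|^2_{h^*}-\varepsilon|u|^2_{h^*}\omega}{\phi}=\int_{\pi^{-1}(U)}|\pi^*u|^2_{\pi^*h^*}\,\idd\pi^*\phi-\int_{\pi^{-1}(U)}\varepsilon|\pi^*u|^2_{\pi^*h^*}\,\pi^*\omega\wedge\pi^*\phi .
\end{align*}
This holds because $\exc$ has measure zero and $\pi^*\phi$ is a smooth, compactly supported (by properness of $\pi$), strongly positive form on $\tx$; pull-backs of strongly positive forms stay strongly positive.
\item[(iii)] The right-hand side is $\geq0$ by hypothesis on $\tx$.
\end{itemize}
Hence $h$ is $\varepsilon\omega$-Griffiths positive. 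This direction is routine.

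\textbf{From $X$ to $\tx$.} Here lies the main obstacle, the one flagged in Remark \ref{Remark: unclear of idd pi*phi>0 current}: a test form on $\tx$ near $\exc$ does not descend to $X$, and $|\pi^*u|^2_{\pi^*h^*}$ need not a priori be $L^1_{loc}$ across $\exc$.
\begin{itemize}
\item[(i)] \emph{Local sections upstairs.} Near a point of $\exc$, pick a neighborhood $\tu$ on which $\pi^*E^*$ is trivialized by the pulled-back frame $\pi^*e$. Any $\tl u\in H^0(\tu,\pi^*E^*)$ is $\sum f_j\pi^*e_j$ with $f_j$ holomorphic on $\tu$, not necessarily pulled back from $X$.
\item[(ii)] \emph{A plurisubharmonic model.} By Proposition \ref{Proposition: normal and Gri>0 then a.e. Grif>0} (using normality or local irreducibility with $h^*$ bounded above), $h$ is a.e. Griffiths semi-positive. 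By Corollary \ref{Corollary: a.e. Grif semi-posi of h and pi*h}, there is a Griffiths semi-positive metric $\cal{H}$ on $\pi^*E$ equal to $\pi^*h$ a.e. In particular $|\tl u|^2_{\pi^*h^*}$ agrees a.e. with the plurisubharmonic function $|\tl u|^2_{\cal{H}^*}$, so it is $L^1_{loc}$ on $\tu$.
\item[(iii)] \emph{The inequality off $\exc$.} On $\tu\setminus\exc$, the inequality $\idd|\tl u|^2_{\cal H^*}\geq\varepsilon|\tl u|^2_{\cal H^*}\pi^*\omega$ holds. This is just the downstairs inequality transported by the biholomorphism, after noting that $\varepsilon\omega$-positivity on $\ureg$ for all local sections gives the same for sections on small open sets of $\ureg$.
\item[(iv)] \emph{Extension across $\exc$.} Set $T:=\idd|\tl u|^2_{\cal H^*}-\varepsilon|\tl u|^2_{\cal H^*}\pi^*\omega$. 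This is a current of order zero on $\tu$: the first term is a positive current, and the second is an $L^1_{loc}$ function times a continuous form. Also $T\geq0$ on $\tu\setminus\exc$.
\item[(v)] It remains to show that $T$ puts no negative mass on $\exc$. The $L^1_{loc}$ part cannot charge the measure-zero set $\exc$. The positive current $\idd|\tl u|^2_{\cal H^*}$ can only add positive mass on $\exc$. So for a strongly positive test form $\tl\phi$, a cutoff argument gives $\lara{T}{\tl\phi}\geq\lim\lara{T}{\chi_k\tl\phi}\geq0$, where $\chi_k$ are cutoffs vanishing near $\exc$ and increasing to $1$. Concretely, I would split $T$ into its positive part and its absolutely continuous part and apply dominated and monotone convergence separately.
\end{itemize}
Finally, since $\pi^*\omega$ is smooth semi-positive, the resulting inequality is exactly $\pi^*(\varepsilon\omega)$-Griffiths positivity of $\pi^*h$ on $\tx$.
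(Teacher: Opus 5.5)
Your proposal is correct and follows essentially the paper's route. The first direction is the same pull-back computation against $\pi^*\phi$. For the converse, the paper likewise uses Proposition~\ref{Proposition: normal and Gri>0 then a.e. Grif>0} and Corollary~\ref{Corollary: a.e. Grif semi-posi of h and pi*h} to make $|\widetilde{u}|^2_{\pi^*h^*}$ a.e.\ plurisubharmonic on $\tx$. It then uses a cutoff near $\exc$ to show that the positive current $\idd|\widetilde{u}|^2_{\pi^*h^*}$ can only add mass there, while the $L^1_{loc}$ term $\varepsilon|\widetilde{u}|^2_{\pi^*h^*}\pi^*\omega$ charges nothing.

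The only differences are presentational. The paper runs this step as a contradiction argument with test forms $\widetilde{\phi}_\tau$ concentrating near $\exc$, invoking local upper boundedness where you use dominated convergence. It also leaves implicit the local integrability and the passage to sections over smaller open sets, which you make explicit.
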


\begin{proof}
    The biholomorphicity of $\pi|_{\tx\setminus\exc}:\tx\setminus\exc\overset{\simeq}{\longrightarrow}\reg$ 
    immediately yields the equivalence between the positivity on $\tx\setminus\exc$ and the positivity on $\reg$.

    First, assume that $\pi^*h$ is $\pi^*(\varepsilon\omega)$-Griffiths positive on $\tx$. 
    It suffices to show that for any point $x\in \xs$, there exists an open neighborhood $U$ of $x$ such that $\idd|s|^2_{h^*}\geq|s|^2_{h^*}\varepsilon\omega$ on $U$ in the sense of currents, for any section $s\in H^0(U,E^*)$. 
    This reduces to showing $\lara{\idd|s|^2_{h^*}}{\phi}\geq\lara{|s|^2_{h^*}\varepsilon\omega}{\phi}$ for any strongly positive test form $\phi\in\scr{D}_X^{n-1,n-1}(U)$, which can be written as $\phi=\rho\cdot i^{(n-1)^2}\eta\wedge\overline{\eta}$ for suitable forms $\rho\in\scr{D}(X,\bb{R}_{\geq0})$ and $\eta\in\cal{E}_X^{n-1,0}(U)$. 
    By $\pi^*(\varepsilon\omega)$-Griffiths positivity of $\pi^*h$ on $\tx$, we obtain 
    \begin{align*}
        \lara{\idd|s|^2_{h^*}}{\phi}&=\int_{\ureg}|s|^2_{h^*}\,\idd\phi=\int_{\pi^{-1}(U)\setminus\exc}\pi^*|s|^2_{h^*}\,\idd\pi^*\phi\\
        &=\int_{\pi^{-1}(U)}|\pi^*s|^2_{\pi^*h^*}\,\idd\pi^*\phi=\lara{\idd|\pi^*s|^2_{\pi^*h^*}}{\pi^*\phi}\\ 
        &\geq\lara{\pi^*(\varepsilon\omega)|\pi^*s|^2_{\pi^*h^*}}{\pi^*\phi}=\lara{\varepsilon|s|^2_{h^*}\omega}{\phi},
    \end{align*}
    where $\pi^*s\in H^0(\pi^{-1}(U),\pi^*E^*)$.

    Second, the converse is shown. Assume that $h$ is $\varepsilon\omega$-Griffiths positive on $X$.
    By the $\pi^*(\varepsilon\omega)$-Griffiths positivity of $\pi^*h$ on $\tx\setminus\exc$, for any point $x\in\exc$, there exists an open neighborhood $\tu$ of $x$ such that $\idd|\tl{s}|^2_{\pi^*h^*}\geq|\tl{s}|^2_{\pi^*h^*}\pi^*(\varepsilon\omega)$ on $\tu\setminus\exc$ in the sense of currents, for any local section $\tl{s}\in H^0(\tu,\pi^*E^*)$.
    By Proposition \ref{Proposition: normal and Gri>0 then a.e. Grif>0} and Corollary \ref{Corollary: a.e. Grif semi-posi of h and pi*h}, $\pi^*h$ is a.e.\! Griffiths semi-positive on $\tx$, and $|\tl{s}|^2_{\pi^*h^*}$ coincides a.e. on $\tu$ with a plurisubharmonic function defined on $\tu$. Therefore, by Skoda-El Mir extension theorem (see \cite[Chapter III, Theorem 2.3]{Dem-book}), $\idd|\tl{s}|^2_{\pi^*h^*}$ and $|\tl{s}|^2_{\pi^*h^*}\pi^*(\varepsilon\omega)$ extend uniquely as currents on $\tu$.

    We prove that $\idd|\tl{s}|^2_{\pi^*h}\geq|\tl{s}|^2_{\pi^*h}\pi^*(\varepsilon\omega)$ on $\tu$ in the sense of currents.
    Assume that there exists a strong positive test form $\phi\in\scr{D}^{n-1,n-1}_{\tx}(\tu)$ such that $\lara{\idd|\tl{s}|^2_{\pi^*h^*}}{\phi}<\lara{|\tl{s}|^2_{\pi^*h^*}\pi^*(\varepsilon\omega)}{\phi}$, i.e., $C:=\lara{|\tl{s}|^2_{\pi^*h^*}\pi^*(\varepsilon\omega)}{\phi}-\lara{\idd|\tl{s}|^2_{\pi^*h^*}}{\phi}>0$, and derive a contradiction.
    For any $\tau > 0$, let $\exc_{\tau}$ denote the $\tau$-neighborhood of $\exc$. Then there exists $\tl{\phi}_{\tau}\in\scr{D}^{n-1,n-1}_{\tx}(\exc_{\tau})$ such that $\tl{\phi}_{\tau}=\phi$ on $\exc_{\tau/2}$. 
    Setting $\psi := \phi-\tl{\phi}_{\tau}$, we have $\psi\in\scr{D}^{n-1,n-1}_{\tx}(\tu\setminus\exc)$, and the $\pi^*(\varepsilon\omega)$-Griffiths positivity of $\pi^*h$ on $\tx\setminus\exc$ implies 
    \begin{align*}
        \lara{|\tl{s}|^2_{\pi^*h^*}\pi^*(\varepsilon\omega)}{\psi+\tl{\phi}_{\tau}}&=\lara{|\tl{s}|^2_{\pi^*h^*}\pi^*(\varepsilon\omega)}{\phi}=C+\lara{\idd|\tl{s}|^2_{\pi^*h^*}}{\phi}\\
        &=C+\lara{\idd|\tl{s}|^2_{\pi^*h^*}}{\psi}+\lara{\idd|\tl{s}|^2_{\pi^*h^*}}{\tl{\phi}_{\tau}}\geq C+\lara{|\tl{s}|^2_{\pi^*h^*}\pi^*(\varepsilon\omega)}{\psi},
    \end{align*}
    where the a.e.\! Griffiths semi-positivity of $\pi^*h$ on $\tx$ obtained from Proposition \ref{Proposition: normal and Gri>0 then a.e. Grif>0} and Corollary \ref{Corollary: a.e. Grif semi-posi of h and pi*h} implies that $\lara{\idd|\tl{s}|^2_{\pi^*h^*}}{\tl{\phi}_{\tau}}\geq0$.
    Thus, we obtain $\lara{|\tl{s}|^2_{\pi^*h^*}\pi^*(\varepsilon\omega)}{\tl{\phi}_{\tau}}\geq C$.
    Here, 
    $|\tl{s}|^2_{\pi^*h^*}$ is locally bounded from above, and since $\pi^*\omega$ degenerates along the tangent directions of $\exc$ on $\exc$, and the support of $\tl{\phi}_{\tau}$ can be taken arbitrarily small under the condition $\exc\cap\rom{supp}\,\phi\subset\rom{supp}\,\tl{\phi}_{\tau}$, there exists $\tau>0$ such that $\lara{|\tl{s}|^2_{\pi^*h^*}\pi^*(\varepsilon\omega)}{\tl{\phi}_{\tau}}<C$, which yields a contradiction.
    Indeed, $\pi^*(\varepsilon\omega)$ is smooth and does not admit any singular exploding measure such as a delta distribution.
    Hence, the $\pi^*(\varepsilon\omega)$-Griffiths positivity of $\pi^*h$ on $\tx$ has been proved.
\end{proof}

\begin{remark}\label{Remark: condition of a.e. Grif in Thm of Grif posi}
    In Theorem \ref{Theorem: h Gri iff pi*h Gri}, the assumptions that $X$ is normal, or that $X$ is locally irreducible and $h^*$ is local boundedness from above, are only used to ensure the a.e.\! Griffiths semi-positivity of $\pi^*h$. 
    Therefore, the conclusion remains valid if these assumptions are replaced by the assumption that $\pi^*h$ is a.e.\! Griffiths semi-positive.
\end{remark}

From the proof of Theorem \ref{Theorem: h Gri iff pi*h Gri} and Remark \ref{Remark: condition of a.e. Grif in Thm of Grif posi}, the following corollary holds.

\begin{corollary}\label{Corollary: iO_h>0 iff iO_pi*h>0}
    Let $\omega$ be a Hermitian metric on $X$ and $\varepsilon:X\longrightarrow\bb{R}_{\geq0}$ be a smooth semi-positive function.
    If $\iO{\pi^*L,\pi^*h_L}\geq\pi^*(\varepsilon\omega)$ holds on $\tx$ in the sense of currents, then $\iO{L,h_L}\geq\varepsilon\omega$ holds on $X$ in the sense of currents. 
    If $\pi^*h_L$ on $\pi^*L$ is singular semi-positive on $\tx$ and $\iO{L,h_L}\geq\varepsilon\omega$ holds on $X$ in the sense of currents, then $\iO{\pi^*L,\pi^*h_L}\geq\pi^*(\varepsilon\omega)$ holds on $\tx$ in the sense of currents.
\end{corollary}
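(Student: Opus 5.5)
The plan is to deduce Corollary \ref{Corollary: iO_h>0 iff iO_pi*h>0} from Theorem \ref{Theorem: h Gri iff pi*h Gri} and Remark \ref{Remark: condition of a.e. Grif in Thm of Grif posi}, specialized to rank one with $E=L$, $h=h_L$. For a line bundle and a local frame $e$ of $L^*$, the dual weight satisfies $|e|^2_{h_L^*}=e^{2\varphi}$, where $h_L=h_0e^{-2\varphi}$ locally. Every local section of $L^*$ is $u=fe$ with $f$ holomorphic, so $\log|u|^2_{h_L^*}=\log|f|^2+2\varphi$. Hence, once the metric is a.e.\ Griffiths semi-positive, the Griffiths-type inequalities
\[
\idd|u|^2_{h^*}\geq\varepsilon|u|^2_{h^*}\omega
\]
amount, after the standard computation for the exponential of a quasi-psh weight, to the curvature current inequality $\iO{L,h_L}\geq\varepsilon\omega$. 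The two statements of the corollary are therefore the two directions of Theorem \ref{Theorem: h Gri iff pi*h Gri}, rewritten for currents.

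\textbf{First assertion.} Assume $\iO{\pi^*L,\pi^*h_L}\geq\pi^*(\varepsilon\omega)$ on $\tx$. I would not pass through the Griffiths formalism. Instead I would repeat the pairing computation from the first half of the proof of Theorem \ref{Theorem: h Gri iff pi*h Gri}, applied directly to the local weight $\varphi$ (an $L^1_{loc}$ function, with $\idd\varphi$ defined as a current on $X$). For a strongly positive test form $\phi$ on $U$:
\begin{itemize}
\item Since $\pi$ is biholomorphic off $\exc$ and $\exc$ has measure zero, we get $\lara{\idd\varphi}{\phi}=\int_{\pi^{-1}(U)}\pi^*\varphi\,\idd\pi^*\phi$.
\item This equals $\lara{\idd\pi^*\varphi}{\pi^*\phi}$, and the hypothesis gives $\lara{\idd\pi^*\varphi}{\pi^*\phi}\geq\lara{\pi^*(\varepsilon\omega)}{\pi^*\phi}=\lara{\varepsilon\omega}{\phi}$.
\end{itemize}
The only point to check is that $\pi^*\varphi\in L^1_{loc}(\pi^{-1}(U))$, which Remark \ref{Remark: unclear of idd pi*phi>0 current} warns is not automatic. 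Here it holds because the hypothesis makes $\pi^*\varphi$ quasi-psh up to a smooth term, hence locally integrable. Also, $\pi^*\phi$ is a smooth compactly supported form on $\pi^{-1}(U)$, since $\pi$ is proper, and it is still strongly positive.

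\textbf{Second assertion.} Assume $\pi^*h_L$ is singular semi-positive on $\tx$, so $\pi^*\varphi$ is a.e.\ psh, and $\iO{L,h_L}\geq\varepsilon\omega$ on $X$. Off $\exc$ the inequality $\idd\pi^*\varphi\geq\pi^*(\varepsilon\omega)$ holds by biholomorphicity. To extend it across $\exc$, I would run the contradiction argument of Theorem \ref{Theorem: h Gri iff pi*h Gri} with cutoffs $\tl{\phi}_\tau$ supported in a $\tau$-neighbourhood of $\exc$:
\begin{itemize}
\item The psh property gives $\lara{\idd\pi^*\varphi}{\tl{\phi}_\tau}\geq0$.
\item The smooth form $\pi^*(\varepsilon\omega)$ carries no mass on $\exc$, so $\lara{\pi^*(\varepsilon\omega)}{\tl{\phi}_\tau}\to0$ as $\tau\to0$.
\end{itemize}
Together these force the defect constant $C$ to vanish. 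As Remark \ref{Remark: condition of a.e. Grif in Thm of Grif posi} states, the normality or local irreducibility hypotheses were used only to secure a.e.\ semi-positivity of $\pi^*h$, which is now assumed directly. This rank-one version is actually simpler, since the factor $|\tl s|^2$ disappears.

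\textbf{Main obstacle.} The delicate step is the limit $\lara{\pi^*(\varepsilon\omega)}{\tl{\phi}_\tau}\to0$. The forms $\tl{\phi}_\tau$ must remain uniformly bounded while their supports shrink to $\exc\cap\rom{supp}\,\phi$. I would construct them as $\chi_\tau\phi$, with $\chi_\tau$ a cutoff equal to $1$ near $\exc$. The troublesome derivatives of $\chi_\tau$ appear only in the $\idd$-term, and that term is controlled by sign, not by size. The pairing with the smooth form is then bounded by $\|\phi\|\cdot\mathrm{vol}(\exc_\tau)\to0$.
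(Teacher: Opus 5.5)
Your proposal is correct and follows the paper's route: the paper also reruns the two halves of the proof of Theorem \ref{Theorem: h Gri iff pi*h Gri}, namely the pull-back pairing computation for the first assertion and the cutoff argument across $\exc$ for the second, with Remark \ref{Remark: condition of a.e. Grif in Thm of Grif posi} supplying the a.e.\ semi-positivity of $\pi^*h_L$ directly; working with the local weight $\varphi$ instead of $|s|^2_{h^*}$ is a harmless and slightly cleaner adaptation, since the factor $|\tl{s}|^2$ disappears. One small point: $\pi^*\varphi\in L^1_{loc}$ is not a consequence of the inequality but is already presupposed by the first hypothesis, since otherwise $\iO{\pi^*L,\pi^*h_L}$ would not be a current at all.
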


\begin{theorem}\label{Theorem: h>Grif then iO_det h>0 as currents}
    If $h$ is a.e.\! Griffiths semi-positive on $X$, then $\iO{\det E,\det h}\geq0$ holds on $X$ in the sense of currents. 
    If $h$ is Griffiths positive and a.e.\! Griffiths semi-positive on $X$, then for any Hermitian metric $\omega$ on $X$, there exists a smooth positive function $\varepsilon:X\longrightarrow\bb{R}_{>0}$ such that $\iO{\det E,\det h}\geq\varepsilon\omega$ on $X$ in the sense of currents. 
    
    Furthermore, if $X$ is normal and $h$ is Griffiths positive (resp. a.e.\! Griffiths semi-positive) on $X$, then the singular Hermitian metric $\det h$ on $\det E$ is singular positive (resp. singular semi-positive) on $X$. 
\end{theorem}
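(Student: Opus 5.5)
We reduce everything to the resolution $\pi:\tx\longrightarrow X$, where the manifold theory is available, and then push the resulting current inequality back down to $X$.

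\emph{Step 1: the semi-positive case.} Assume $h$ is a.e.\ Griffiths semi-positive on $X$. By Corollary \ref{Corollary: a.e. Grif semi-posi of h and pi*h}, $\pi^*h$ is a.e.\ Griffiths semi-positive on $\tx$. By Remark \ref{Remark: Grif>0 then a.e. Grif semi-posi & Remark in Wat24a}, there is a Griffiths semi-positive metric $\cal{H}$ on $\pi^*E$ that agrees with $\pi^*h$ almost everywhere.

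On a manifold, $\det\cal{H}^*$ is Griffiths semi-negative as a line bundle metric: $\log\det\cal{H}^*$ is plurisubharmonic. One way to see this is to use the decreasing smooth Griffiths semi-negative regularizations $\pi^*h^*_\nu$ from the proof of Theorem \ref{Theorem: Grif semi-positivity of h and pi^*h}. Each $\det\pi^*h^*_\nu$ is a smooth semi-negative line bundle metric, and these decrease to $\det\cal{H}^*$, so the limit keeps the property. Since $\det\cal{H}=\pi^*\det h$ almost everywhere, $\pi^*\det h$ is singular semi-positive on $\tx$. Hence $\iO{\pi^*\det E,\pi^*\det h}\geq0$ in the sense of currents, and Corollary \ref{Corollary: iO_h>0 iff iO_pi*h>0} with $\varepsilon=0$ gives $\iO{\det E,\det h}\geq0$ on $X$.

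\emph{Step 2: the strictly positive case.} Assume in addition that $h$ is Griffiths positive. Proposition \ref{Proposition: Grif>0 then εω-Grif>0} makes $h$ $\varepsilon\omega$-Griffiths positive. By Remark \ref{Remark: condition of a.e. Grif in Thm of Grif posi}, the converse direction of Theorem \ref{Theorem: h Gri iff pi*h Gri} still holds when a.e.\ Griffiths semi-positivity of $\pi^*h$ replaces normality, and Step 1 supplies exactly that. So $\pi^*h$ is $\pi^*(\varepsilon\omega)$-Griffiths positive on $\tx$.

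The hardest point is showing that, on the manifold $\tx$, $\pi^*(\varepsilon\omega)$-Griffiths positivity of a Griffiths semi-positive metric $\cal{H}$ gives $\iO{\det\pi^*E,\det\cal{H}}\geq r\,\pi^*(\varepsilon\omega)$ in the sense of currents. The plan is as follows. Locally write $\pi^*(\varepsilon\omega)\leq\idd\phi$ with $\phi$ smooth, up to a small error controlled by continuity. Then $\cal{H}^*e^{\phi}$ is Griffiths semi-negative, because $\log|u|^2_{\cal{H}^*}+\phi$ is plurisubharmonic for every local section $u$, which follows from the current inequality via the standard manifold argument of \cite{Rau15}. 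Applying Step 1 to $\cal{H}e^{-\phi}$ shows $\log\det\cal{H}^*+r\phi$ is plurisubharmonic. Patching the error term is the delicate part, and I would first try taking $\phi$ locally as $\varepsilon(x_0)$ times a potential of $\pi^*\omega$ with a margin.

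With this inequality in hand, the first assertion of Corollary \ref{Corollary: iO_h>0 iff iO_pi*h>0} pushes it down to $\iO{\det E,\det h}\geq r\varepsilon\omega$ on $X$. Renaming $r\varepsilon$ as $\varepsilon$ gives the statement.

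\emph{Step 3: the normal case.} Let $X$ be normal. For the semi-positive statement, Step 1 gives $\idd\log\det h^*\geq0$ locally as currents. Since $\det h$ is the metric induced on the line bundle $\det E$, a.e.\ Griffiths semi-positivity of $\det h$ is by definition equivalent to its singular semi-positivity, and it is enough to make its local weight agree almost everywhere with a plurisubharmonic function. Theorem \ref{Theorem: idd>0 and a.e. psh}(a) gives such a function. Note that $\log\det h^*\in L^1_{loc}$: it is locally bounded above because $h$ is uniformly positive definite by Corollary \ref{Corollary: a.e. Grif semi-posi then uniformly positive definite}, and integrable since it equals a plurisubharmonic function almost everywhere on $\tx$ and, for the $(n,n)$ measure, integrability on $X$ matches integrability on $\tx$.

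For the positive statement, normality and Proposition \ref{Proposition: normal and Gri>0 then a.e. Grif>0} give a.e.\ Griffiths semi-positivity. Step 2 then gives $\idd(\log\det h^*-\phi)\geq0$ for a local smooth strictly plurisubharmonic $\phi$, obtained as $\varepsilon$ times an ambient $|z|^2$. Applying Theorem \ref{Theorem: idd>0 and a.e. psh}(a) to $\log\det h^*-\phi$ produces a weight equal almost everywhere to a strictly plurisubharmonic function, so $\det h$ is singular positive.
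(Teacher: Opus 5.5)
Your Steps 1 and 3 match the paper's proof. Step 1 is the paper's first paragraph, with the cited Raufi result reproved through the decreasing regularisations. Step 3 is Theorem~\ref{Theorem: idd>0 and a.e. psh}(a), and you rightly add Proposition~\ref{Proposition: normal and Gri>0 then a.e. Grif>0} for the positive case.

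\textbf{Gap in Step 2.} The key inequality of Step 2, as written, yields nothing. You choose $\phi$ with $\pi^*(\varepsilon\omega)\le\idd\phi$, so $\phi$ is plurisubharmonic. Then $\log|u|^2_{\mathcal{H}^*}+\phi$ is plurisubharmonic for trivial reasons, and the curvature hypothesis is never used. Step 1 applied to $\mathcal{H}e^{-\phi}$ then only gives $\iO{\det\pi^*E,\det\mathcal{H}}\geq -r\,\idd\phi$.

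The twist must go the other way: take a minorant, $\idd\phi\le\pi^*(\varepsilon\omega)$. Then $\idd|u|^2_{\mathcal{H}^*}\geq|u|^2_{\mathcal{H}^*}\idd\phi$ makes $\log|u|^2_{\mathcal{H}^*}-\phi$ plurisubharmonic; for $\phi=\delta|z|^2$ this is the lemma the paper quotes from Wat24a, Prop.~5.5. Equivalently, $\mathcal{H}e^{\phi}$ is Griffiths semi-positive. Step 1 then gives that $\log\det\mathcal{H}^*-r\phi$ is plurisubharmonic, i.e.\ $\iO{\det\pi^*E,\det\mathcal{H}}\geq r\,\idd\phi$. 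This is useful only if $\idd\phi$ is also bounded below by a fixed multiple of $\pi^*(\varepsilon\omega)$.

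\textbf{The unresolved patching.} That two-sided requirement is exactly what your ``patching'' leaves open, and your suggested choice does not work. The form $\omega$ is only Hermitian, so $\pi^*\omega$ has no local potential. Along $\exc$ it degenerates, so no $\delta\idd|z|^2$ in coordinates of $\tx$ lies below it there.

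\textbf{How the paper avoids both points.}
\begin{itemize}
\item Start from $2\tau\omega$-Griffiths positivity of $\pi^*h$. You only need \emph{some} $\varepsilon$, so give up a factor $2$ instead of tracking errors by continuity.
\item Near each point of $\tx\setminus\exc$, pick coordinates and $\delta>0$ with $\pi^*(2\tau\omega)\geq\delta\idd|z|^2\geq\pi^*(\tau\omega)$, and use $\phi=\delta|z|^2$. This gives $\iO{\det\pi^*E,\det\pi^*h}\geq r\pi^*(\tau\omega)$ on $\tx\setminus\exc$ only.
\item Extend across $\exc$ as in the proof of Theorem~\ref{Theorem: h Gri iff pi*h Gri}. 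Set $T:=\iO{\det\pi^*E,\det\pi^*h}-r\pi^*(\tau\omega)$; it has measure coefficients and is $\geq0$ off $\exc$. Also $\mathbf{1}_{\exc}T=\mathbf{1}_{\exc}\,\iO{\det\pi^*E,\det\pi^*h}\geq0$: Step 1 already makes $\det\pi^*h$ singular semi-positive on all of $\tx$, and the smooth form $\pi^*(\tau\omega)$ puts no mass on $\exc$.
\item With $\varepsilon:=r\tau$, Corollary~\ref{Corollary: iO_h>0 iff iO_pi*h>0} then finishes as you planned.
\end{itemize}
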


\begin{proof}
    First, we prove that $\iO{\det E,\det h}\geq0$ on $X$ in the sense of currents. By Corollary \ref{Corollary: a.e. Grif semi-posi of h and pi*h}, $\pi^*h$ is also a.e.\! Griffiths semi-positive on $\tx$. 
    Remark \ref{Remark: Grif>0 then a.e. Grif semi-posi & Remark in Wat24a} and \cite[Proposition 1.3]{Rau15} imply that $\det\pi^*h$ on $\det\pi^*E$ is singular semi-positive on $\tx$. 
    Hence, the result follows from Corollary \ref{Corollary: iO_h>0 iff iO_pi*h>0}. 

    We show that $\iO{\det E,\det h}\geq\varepsilon\omega$ on $X$ in the sense of currents. 
    By Proposition \ref{Proposition: Grif>0 then εω-Grif>0}, Theorem \ref{Theorem: h Gri iff pi*h Gri} and Remark \ref{Remark: condition of a.e. Grif in Thm of Grif posi}, there exists a smooth positive function $\tau:X\longrightarrow\bb{R}_{>0}$ such that $\pi^*h$ is $\pi^*(2\tau\omega)$-Griffiths positive on $\tx$. 
    For every $x\in\tx\setminus\exc$, there exist a Stein open neighborhood $\tu \subset \tx\setminus\exc$ of $x$ and a constant $\delta > 0$ such that $\pi^*(2\tau\omega)\geq\delta\idd|z|^2\geq\pi^*(\tau\omega)$ on $\tu$, and $\idd|u|^2_{\pi^*h^*}\geq\delta|u|^2_{\pi^*h^*}\idd|z|^2$ for any section $u\in H^0(\tu,\pi^*E^*)$, where $(z_1,\ldots,z_n)$ is a local coordinate of $\tu$.
    It follows from \cite[Proposition 5.5 (d)]{Wat24a} that $\log|u|^2_{\pi^*h^*}-\delta|z|^2$ coincides a.e. on $\tu$ with a plurisubharmonic function defined on $\tu$, and hence $\pi^*he^{\delta|z|^2}$ is a.e.\! Griffiths semi-positive on $\tu$. 
    Remark \ref{Remark: Grif>0 then a.e. Grif semi-posi & Remark in Wat24a} and \cite[Proposition 1.3]{Rau15} imply that $(\det \pi^*h)e^{r\delta|z|^2}$ is singular semi-positive on $\tu$, and hence we obtain $\iO{\det \pi^*E,\det \pi^*h}\geq r\delta\idd|z|^2\geq r\pi^*(\tau\omega)$ on $\tu$ in the sense of currents; in particular, this also holds on $\tx\setminus\exc$. 
    As above, by Corollary \ref{Corollary: a.e. Grif semi-posi of h and pi*h}, Remark \ref{Remark: Grif>0 then a.e. Grif semi-posi & Remark in Wat24a} and \cite[Proposition 1.3]{Rau15}, $\det \pi^*h$ is singular semi-positive, i.e., a.e.\! Griffiths semi-positive, on $\tx$.  
    Hence, arguing as in the proof of Theorem \ref{Theorem: h Gri iff pi*h Gri} and Remark \ref{Remark: condition of a.e. Grif in Thm of Grif posi}, we obtain $\iO{\det \pi^*E,\det \pi^*h}\geq r\pi^*(\tau\omega)$ on $\tx$ in the sense of currents. Finally, by choosing $\varepsilon = r\tau$, Corollary \ref{Corollary: iO_h>0 iff iO_pi*h>0} yields $\iO{\det E,\det h}\geq\varepsilon\omega$ on $X$ in the sense of currents. 

    The final assertion follows from Theorem \ref{Theorem: idd>0 and a.e. psh}.
\end{proof}

\begin{corollary}\label{Corollary: h>Grif and a.e Grif then iO_det pi*h>0 as currents} 
    If $h$ is Griffiths positive and a.e.\! Griffiths semi-positive on $X$, then for any Hermitian metric $\omega$ on $X$, there exists a smooth positive function $\varepsilon:X\longrightarrow\bb{R}_{>0}$ such that $\iO{\det \pi^*E,\det \pi^*h}\geq\pi^*(\varepsilon\omega)$ on $\tx$ in the sense of currents. 
\end{corollary}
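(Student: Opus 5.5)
The plan is to read the corollary off the proof of Theorem \ref{Theorem: h>Grif then iO_det h>0 as currents}, which already establishes the desired inequality on $\tx$ as an intermediate step before pushing down to $X$.

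\emph{Step 1: Griffiths positivity upstairs.} By Proposition \ref{Proposition: Grif>0 then εω-Grif>0}, Griffiths positivity of $h$ gives a smooth positive function $\tau$ such that $h$ is $2\tau\omega$-Griffiths positive on $X$. By Corollary \ref{Corollary: a.e. Grif semi-posi of h and pi*h}, the a.e.\! Griffiths semi-positivity of $h$ makes $\pi^*h$ a.e.\! Griffiths semi-positive on $\tx$. Remark \ref{Remark: condition of a.e. Grif in Thm of Grif posi} then lets us apply the converse direction of Theorem \ref{Theorem: h Gri iff pi*h Gri} without normality. Hence $\pi^*h$ is $\pi^*(2\tau\omega)$-Griffiths positive on $\tx$.

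\emph{Step 2: the estimate off $\exc$.} On $\tx\setminus\exc$, $\pi$ is biholomorphic and $\pi^*\omega$ is a genuine Hermitian metric. On small Stein coordinate patches $\tu$ we can therefore choose $\delta>0$ with $\pi^*(2\tau\omega)\geq\delta\idd|z|^2\geq\pi^*(\tau\omega)$. As in the proof of Theorem \ref{Theorem: h>Grif then iO_det h>0 as currents}, \cite[Proposition 5.5 (d)]{Wat24a} shows that $\pi^*he^{\delta|z|^2}$ is a.e.\! Griffiths semi-positive on $\tu$. Remark \ref{Remark: Grif>0 then a.e. Grif semi-posi & Remark in Wat24a} together with \cite[Proposition 1.3]{Rau15} then gives
\[
\iO{\det\pi^*E,\det\pi^*h}\geq r\delta\,\idd|z|^2\geq r\pi^*(\tau\omega)
\]
on $\tu$. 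This yields the inequality on all of $\tx\setminus\exc$.

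\emph{Step 3: extension across $\exc$.} By the same tools, $\det\pi^*h$ is singular semi-positive on $\tx$. Its local weight is therefore psh, so $\iO{\det\pi^*E,\det\pi^*h}$ is a closed positive current on $\tx$. We then rerun the cutoff argument of Theorem \ref{Theorem: h Gri iff pi*h Gri}. Suppose a strongly positive test form $\phi$ gives a defect $C>0$. Split $\phi=(\phi-\tl\phi_\tau)+\tl\phi_\tau$ with $\tl\phi_\tau$ supported in a $\tau$-neighbourhood of $\exc$. Positivity of the current bounds the contribution of $\tl\phi_\tau$ from below by $0$. Meanwhile $\lara{\pi^*(r\tau\omega)}{\tl\phi_\tau}\to0$, since $\pi^*\omega$ is smooth and $\exc$ has measure zero. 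This contradicts $C>0$. Setting $\varepsilon=r\tau$ gives the claim.

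The main obstacle is Step 3, because $\pi^*\omega$ degenerates along $\exc$ and we cannot take local coordinates with a uniform $\delta$ there. That is precisely why the argument must pass through the positivity of the current and the cutoff on $\exc$, rather than work pointwise. In this line-bundle setting the cutoff is simpler than in Theorem \ref{Theorem: h Gri iff pi*h Gri}, since no factor $|\tl s|^2$ has to be bounded.
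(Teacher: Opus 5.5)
Your proposal is correct and is essentially the paper's own argument: the corollary is read off from the proof of Theorem \ref{Theorem: h>Grif then iO_det h>0 as currents}. That proof obtains $\pi^*(2\tau\omega)$-Griffiths positivity of $\pi^*h$, then the local bound $\iO{\det\pi^*E,\det\pi^*h}\geq r\delta\idd|z|^2\geq r\pi^*(\tau\omega)$ on $\tx\setminus\exc$ via \cite[Proposition 5.5 (d)]{Wat24a} and \cite[Proposition 1.3]{Rau15}, and finally extends this across $\exc$ by the cutoff argument of Theorem \ref{Theorem: h Gri iff pi*h Gri} using singular semi-positivity of $\det\pi^*h$, with $\varepsilon=r\tau$. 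Your explicit appeal to Corollary \ref{Corollary: a.e. Grif semi-posi of h and pi*h}, to justify invoking Remark \ref{Remark: condition of a.e. Grif in Thm of Grif posi}, is a useful clarification; the only thing to fix is the double use of $\tau$ for both the function and the neighbourhood radius.
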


Several properties of Nakano positivity and its relationship with Griffiths positivity are given below.

\begin{theorem}\label{Theorem: h Nak iff pi*h Nak}
    Let $\theta$ be a continuous $(1,1)$-form on $X$. 
    It follows that $h$ is $\theta$-Nakano positive on $X$ in the sense of $L^2$-estimates if and only if the pulled-back $\pi^*h$ on $\pi^*E$ is $\pi^*\theta$-Nakano positive on $\tx$ in the sense of $L^2$-estimates.
\end{theorem}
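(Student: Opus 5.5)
The strategy is to transport the $L^2$-estimate data back and forth along the biholomorphism $\pi|_{\tx\setminus\exc}:\tx\setminus\exc\to\reg$. Both $\exc$ and $\xs$ are negligible for all integrals involved. Lemma \ref{Lemma: dbar-extension lemma} will be used to recover $\dbar$-equations across $\exc$.

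Two preliminary facts are needed. First, the key identity is that for $(n,q)$-forms $g$ the integrand $\langle B^{-1}_{\theta,\psi,\omega}g,g\rangle_{h,\omega}\,dV_\omega$ is intrinsic to the positive form $\theta+\idd\psi$, in the sense that it does not depend on the auxiliary Kähler metric $\omega$. For $(n,q)$-forms one has $\langle B^{-1}_{\theta,\psi,\omega}g,g\rangle dV_\omega\le |g|^2_{\gamma}dV_{\gamma}$ with $\gamma=\theta+\idd\psi$, with equality when $q=1$. This is the same mechanism as Lemma \ref{Lemma: inequality of (n,0)-forms}. Likewise $|u|^2_{h,\omega}dV_\omega$ is independent of $\omega$ for $(n,0)$-forms, and is monotone in $\omega$ for $(n,q-1)$-forms. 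Second, uniform positive definiteness passes between $h$ and $\pi^*h$. If $|\bullet|_{h^*}$ is bounded near $x$, then $\pi^*$ of the bound holds on $\pi^{-1}(U)$. Conversely, properness of $\pi$ gives finitely many $\tl U$ covering $\pi^{-1}(x)$, which yields boundedness of $h^*$ on a neighborhood of $x$.

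\emph{($\Rightarrow$)} Let $\tl S\subset\tx$ be a Stein coordinate with $\omega_{\tl S}$, $\psi$, $q$, $g$ as in Definition \ref{Definition: singular Nakano positive} for $\pi^*\theta$. Shrinking, assume $\tl S\Subset\pi^{-1}(S)$, where $S\subset X$ is Stein and $E|_S$ is trivial. The difficulty is that $\psi$ and $\omega_{\tl S}$ live on $\tl S$, not on $X$. I would use that $\tl S\setminus \exc\cong \pi(\tl S)\cap\reg$. The condition in the definition is local on Stein coordinates of $X$, so I plan to reduce to data pulled back from $X$ by choosing $\tl S=\pi^{-1}(S)$ with $S$ Stein. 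Arbitrary small Stein $\tl S$ are handled by solving first on $\pi^{-1}(S)$, which is weakly pseudoconvex, using a psh exhaustion together with Lemma \ref{Lemma: key lemma} to compensate the degeneration of $\pi^*\omega$. This step is the main obstacle: the data $(\tl S,\psi)$ on $\tx$ need not descend to $X$.

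The approach I would try first is different. Extend $g$ by zero, or view $g|_{\tl S\setminus\exc}$ as a form on the open subset $\pi(\tl S\setminus\exc)\subset\reg$. Then apply the $L^2$-hypothesis on Stein coordinates of $X$ contained in $\reg$, where the data are arbitrary since $\pi$ is biholomorphic there. Next, exhaust $\tl S\setminus\exc$, which is Stein minus a divisor and hence Stein. Solve on it with estimates uniform in the exhaustion, pass to a weak limit, and use Lemma \ref{Lemma: dbar-extension lemma} to extend $\dbar u=g$ across $\exc$. Here $u$ is $L^2$ and $h$ is uniformly positive definite, so $u$ is $L^2_{loc}$ with respect to a smooth metric, which is exactly what the extension lemma needs.

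\emph{($\Leftarrow$)} Given data on a Stein coordinate $S\subset X$, pull back to $\pi^{-1}(S)$. By the intrinsic identity above, the weighted $L^2$ norms of $g$ and of candidate solutions with respect to $\pi^*\omega_S$ agree with those for a genuine Kähler metric $\tl\omega_\varepsilon=\pi^*\omega_S+\varepsilon\gamma$ on $\tx$ in the limit $\varepsilon\to0$, by monotone convergence. Solve on Stein pieces of $\pi^{-1}(S)\setminus\exc$, which correspond to Stein open subsets of $S_{reg}$ after removing a hypersurface. Glue the local solutions using the same exhaustion and weak-limit argument. Finally, push $u$ forward to $S_{reg}$ via $(\pi^{-1})^*$; the equation $\dbar u=g$ holds on $S_{reg}$ by biholomorphicity.

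The crux in both directions is therefore a single lemma. Nakano positivity in the sense of $L^2$-estimates on a manifold is inherited by complements of hypersurfaces in Stein coordinates, and it can be recovered from them by weak limits plus Lemma \ref{Lemma: dbar-extension lemma}. I expect the uniformity of constants in this exhaustion to be the delicate point.
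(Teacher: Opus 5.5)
Once the detours are removed, your argument is the paper's. You transport all data through the biholomorphism $\mu=\pi^{-1}|_{\reg}:\reg\to\tx\setminus\exc$, and you recover the $\dbar$-equation across the exceptional (or removed) set by Lemma \ref{Lemma: dbar-extension lemma}.

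\textbf{Direction ($\Rightarrow$).} Your second approach is exactly the paper's proof. Since $\ts\setminus\exc$ is Stein, $\pi(\ts\setminus\exc)\subset\reg$ is itself an admissible Stein coordinate of $X$: $E$ is trivial on it, $\mu^*\omega_{\ts}$ is K\"ahler, $\mu^*\psi$ is smooth and $\theta+\idd\mu^*\psi>0$. So the hypothesis applies to it directly. Because $\exc$ is a null set and $\mu$ carries every integrand isometrically, the integrals on both sides are equal. No exhaustion or weak limit is needed, and the ``uniformity of constants'' you expect to be delicate never arises.

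Your first attempt should be discarded. The definition has to be verified for every Stein coordinate $\ts$ with its own $\omega_{\ts}$ and $\psi$. So you may neither shrink $\ts$ nor reduce to sets $\pi^{-1}(S)$, which are not Stein as soon as they contain positive-dimensional fibres.

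\textbf{The ``intrinsic identity'' is false for $q\ge 2$.} For $q=1$ one does have $\lara{B^{-1}_{\theta,\psi,\omega}g}{g}_{h,\omega}\,dV_\omega=|g|^2_{h,\gamma}\,dV_\gamma$ with $\gamma=\theta+\idd\psi$. For an $(n,q)$-form with $q\ge 2$, two scalings break your claims:
\begin{itemize}
\item Replacing $\omega$ by $s\omega$ multiplies the left-hand side by $s^{1-q}$. So it depends on $\omega$ and is only non-increasing in $\omega$ \cite[Chapter VIII, Lemma 6.3]{Dem-book}.
\item Replacing $\gamma$ by $t\gamma$ multiplies the left-hand side by $t^{-1}$ but multiplies $|g|^2_{h,\gamma}dV_\gamma$ by $t^{-q}$. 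So your stated inequality fails for large $t$.
\end{itemize}
You only invoke this identity to pass from $\pi^*\omega_S$ to the perturbed metrics $\tl{\omega}_\varepsilon$ in ($\Leftarrow$). That step is unnecessary: on any open set avoiding $\exc$, $\pi^*\omega_S$ is already a K\"ahler metric and can be fed to the hypothesis unchanged.

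\textbf{Direction ($\Leftarrow$).} Here removing a hypersurface is not a detour but a necessity, and you are more careful than the paper. The paper applies the hypothesis on $\pi^{-1}(S)\setminus\exc\cong\sreg$ on the grounds that $\sreg$ is Stein. That fails in general: near an isolated singular point of a normal surface, holomorphic functions on $S\setminus\{0\}$ extend across $0$, so $S\setminus\{0\}$ is not holomorphically convex.

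A clean version of your step is as follows. Choose $f\in\cal{O}(S)$ vanishing on $S_{sing}$ and not identically zero on any irreducible component, and set $H=\{f=0\}$. Then $\pi^{-1}(S\setminus H)\subset\tx\setminus\exc$ is a single Stein coordinate of $\tx$. On it $\pi^*\omega_S$ is K\"ahler and $\pi^*\theta+\idd\pi^*\psi>0$. One application of the hypothesis gives the solution with the required bound, and no gluing is involved.

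After pushing forward, $\dbar u=g$ holds only on $S\setminus H$. It is Lemma \ref{Lemma: dbar-extension lemma} on the manifold $\sreg$, not biholomorphicity, that gives it on all of $\sreg$.

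\textbf{Uniform positive definiteness.} You are right that it is part of the definition and must be transferred in both directions. Your properness argument supplies this, and the paper's proof does not address it. It is also precisely what makes the extension lemma applicable in both directions.
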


\begin{proof}
    First, assume that $h$ is $\theta$-Nakano positive on $X$. 
    Consider any data consisting of 
    \begin{itemize}
        \item a Stein coordinate $\tl{S}\subset\tx$ admitting a trivialization $\pi^*E|_{\tl{S}}\cong\tl{S}\times\bb{C}^r$,
        \item a \kah metric $\omega_{\ts}$ on $\ts$, 
        \item a smooth function $\tl{\psi}$ on $\ts$ such that $\pi^*\theta+\idd\tl{\psi}>0$ on $\ts$,
        \item a $\dbar$-closed $\tl{g}\in L^{2,loc}_{n,q}(\ts,\pi^*E,\pi^*he^{-\tl{\psi}})$ satisfying 
        \begin{align*}
            \int_{\ts}\lara{B^{-1}_{\pi^*\theta,\tl{\psi},\omega_{\ts}}\tl{g}}{\tl{g}}_{\pi^*h,\omega_{\ts}}e^{-\tl{\psi}}dV_{\omega_{\ts}}<+\infty.
        \end{align*}
    \end{itemize}
    Set $\sreg := \pi(\ts\setminus\exc)$, and denote the restriction map by $\mu := \pi^{-1}|_{\reg}:\reg\overset{\simeq}{\longrightarrow}\tx\setminus\exc$. 
    Here, there exists a hyperplane $H$ such that $\ts \setminus H$ is also Stein, and since $\exc$ is simple normal crossing, $\ts\setminus(H\cup\exc)$ is also Stein (see \cite[Theorem 3.12]{Die96}).
    Therefore, since $\mu$ is biholomorphic, the set $\sreg\setminus\pi(H)$ is also Stein, and we obtain 
    \begin{align*}
        \int_{\ts}\lara{B^{-1}_{\pi^*\theta,\tl{\psi},\omega_{\ts}}\tl{g}}{\tl{g}}_{\pi^*h,\omega_{\ts}}e^{-\tl{\psi}}dV_{\omega_{\ts}}&=\int_{\ts\setminus(H\cup\exc)}\lara{B^{-1}_{\pi^*\theta,\tl{\psi},\omega_{\ts}}\tl{g}}{\tl{g}}_{\pi^*h,\omega_{\ts}}e^{-\tl{\psi}}dV_{\omega_{\ts}}\\
        &=\int_{\sreg\setminus\pi(H)}\lara{B^{-1}_{\theta,\mu^*\tl{\psi},\mu^*\omega_{\ts}}\mu^*\tl{g}}{\mu^*\tl{g}}_{h,\mu^*\omega_{\ts}}e^{-\mu^*\tl{\psi}}dV_{\mu^*\omega_{\ts}}.
    \end{align*}
    By the biholomorphicity of $\mu$, $E|_{\sreg} \cong \sreg\times\bb{C}^r$, and $\mu^*\omega_{\ts}$ is also \kah on $\sreg$. 
    Applying the $\theta$-Nakano positivity of $h$ on the Stein space $\sreg\setminus\pi(H)$, there exists $u \in L^2_{n,q-1}(\sreg\setminus\pi(H),E;\mu^*\omega_{\ts},he^{-\mu^*\tl{\psi}})$ such that $\dbar u=\mu^*g$ on $\sreg\setminus\pi(H)$ and 
    \begin{align*}
        \int_{\ts\setminus(H\cup\exc)}|\pi^*u|^2_{\pi^*h,\omega_{\ts}}e^{-\tl{\psi}}dV_{\omega_{\ts}}&=\int_{\sreg\setminus\pi(H)}|u|^2_{h,\mu^*\omega_{\ts}}e^{-\mu^*\tl{\psi}}dV_{\mu^*\omega_{\ts}}\\
        &\leq\int_{\sreg\setminus\pi(H)}\lara{B^{-1}_{\theta,\mu^*\tl{\psi},\mu^*\omega_{\ts}}\mu^*\tl{g}}{\mu^*\tl{g}}_{h,\mu^*\omega_{\ts}}e^{-\mu^*\tl{\psi}}dV_{\mu^*\omega_{\ts}}\\
        &=\int_{\ts}\lara{B^{-1}_{\pi^*\theta,\tl{\psi},\omega_{\ts}}\tl{g}}{\tl{g}}_{\pi^*h,\omega_{\ts}}e^{-\tl{\psi}}dV_{\omega_{\ts}}.
    \end{align*}
    Since $\dbar\pi^*u=\tl{g}$ on $\ts\setminus(H\cup\exc)$, by the $\dbar$-extension Lemma \ref{Lemma: dbar-extension lemma}, there is $\tl{u}\in L^2_{n,q-1}(\ts,\pi^*E;\omega_{\ts},\pi^*he^{-\tl{\psi}})$ such that $\tl{u}|_{\ts\setminus(H\cup\exc)}=\pi^*u$, $\dbar\tl{u}=\tl{g}$ on $\ts$ and 
    \begin{align*}
        \int_{\ts}|\tl{u}|^2_{\pi^*h,\omega_{\ts}}e^{-\tl{\psi}}dV_{\omega_{\ts}}\leq\int_{\ts}\lara{B^{-1}_{\pi^*\theta,\tl{\psi},\omega_{\ts}}\tl{g}}{\tl{g}}_{\pi^*h,\omega_{\ts}}e^{-\tl{\psi}}dV_{\omega_{\ts}},
    \end{align*}
    which shows the $\pi^*\theta$-Nakano positivity of $\pi^*h$.

    Second, the converse is shown, which can be proved similarly. 
    Assume that $\pi^*h$ is $\pi^*\theta$-Nakano positive on $\tx$. 
    Consider any data consisting of 
    \begin{itemize}
        \item a Stein space coordinate $S\subset X$ admitting a trivialization $E|_{S}\cong S\times\bb{C}^r$,
        \item a \kah metric $\omega_S$ on $S$, 
        \item a smooth function $\psi$ on $S$ such that $\theta+\idd\psi>0$ on $S$,
        \item a $\dbar$-closed $g\in L^{2,loc}_{n,q}(S,E,he^{-\psi})$ satisfying 
        \begin{align*}
            \int_S\lara{B^{-1}_{\theta,\psi,\omega_S}g}{g}_{h,\omega_S}e^{-\psi}dV_{\omega_S}<+\infty.
        \end{align*}
    \end{itemize}
    Since $\sreg$ is Stein, $\ts\setminus\exc$ is also Stein, and we obtain 
    \begin{align*}
        \int_S\lara{B^{-1}_{\theta,\psi,\omega_S}g}{g}_{h,\omega_S}e^{-\psi}dV_{\omega_S}=\int_{\ts\setminus\exc}\lara{B^{-1}_{\pi^*\theta,\pi^*\psi,\pi^*\omega_S}\pi^*g}{\pi^*g}_{\pi^*h,\pi^*\omega_S}e^{-\pi^*\psi}dV_{\pi^*\omega_S}
    \end{align*}
    where $\ts:=\pi^{-1}(S)$ and $\pi^*\omega_S$ is also \kah on $\ts\setminus\exc$. 
    Applying the $\pi^*\theta$-Nakano positivity of $\pi^*h$ on the Stein space $\ts\setminus\exc$, there exists $\tl{u} \in L^2_{n,q-1}(\ts\setminus\exc,\pi^*E;\pi^*\omega_S,\pi^*he^{-\pi^*\psi})$ such that $\dbar\tl{u}=\pi^*g$ on $\ts\setminus\exc$ and 
    \begin{align*}
        \int_S|\mu^*\tl{u}|^2_{h,\omega_S}e^{-\psi}dV_{\omega_S}&=\int_{\ts\setminus\exc}|\tl{u}|^2_{\pi^*h,\pi^*\omega_S}e^{-\pi^*\psi}dV_{\pi^*\omega_S}\\
        &\leq\int_{\ts\setminus\exc}\lara{B^{-1}_{\pi^*\theta,\pi^*\psi,\pi^*\omega_S}\pi^*g}{\pi^*g}_{\pi^*h,\pi^*\omega_S}e^{-\pi^*\psi}dV_{\pi^*\omega_S}\\
        &=\int_S\lara{B^{-1}_{\theta,\psi,\omega_S}g}{g}_{h,\omega_S}e^{-\psi}dV_{\omega_S}.
    \end{align*}
    Therefore, setting $u:=\mu^*\tl{u}$, we have $u\in L^2_{n,q-1}(S,E;\omega_S,he^{-\psi})$, $\dbar u=g$ on $\sreg$, and 
    \begin{align*}
        \int_S|u|^2_{h,\omega_S}e^{-\psi}dV_{\omega_S}\leq\int_S\lara{B^{-1}_{\theta,\psi,\omega_S}g}{g}_{h,\omega_S}e^{-\psi}dV_{\omega_S},
    \end{align*}
    which shows the $\theta$-Nakano positivity of $h$.
\end{proof}

\begin{theorem}\label{Theorem: h>Grif then h det h>Nak}
    If $h$ is Griffiths positive and uniformly positive definite (resp. a.e.\! Griffiths semi-positive) on $X$, then the singular Hermitian metric $h\otimes\det h$ on $E\otimes\det E$ is Nakano positive (resp. Nakano semi-positive) on $X$.  
\end{theorem}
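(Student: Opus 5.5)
The plan is to reduce to the manifold case on $\tx$, as the introduction indicates, using Theorem \ref{Theorem: h Nak iff pi*h Nak}. By that theorem, applied to $E\otimes\det E$ with the pulled-back metric $\pi^*(h\otimes\det h)=\pi^*h\otimes\det\pi^*h$, it suffices to show two things. In the Griffiths positive case, $\pi^*h\otimes\det\pi^*h$ should be $\pi^*\theta$-Nakano positive on $\tx$ in the sense of $L^2$-estimates for some continuous $\theta>0$ on $X$. In the semi-positive case, it should be Nakano semi-positive ($\theta=0$). Uniform positive definiteness of $h\otimes\det h$ on $X$ will be checked separately, since it is part of Definition \ref{Definition: singular Nakano positive}.

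\emph{Semi-positive case.} By Corollary \ref{Corollary: a.e. Grif semi-posi of h and pi*h} and Remark \ref{Remark: Grif>0 then a.e. Grif semi-posi & Remark in Wat24a}, $\pi^*h$ agrees a.e. with a Griffiths semi-positive metric $\cal{H}$ on the manifold $\tx$. Since the $L^2$-estimate conditions only see $h$ almost everywhere, we may replace $\pi^*h$ by $\cal{H}$. Then \cite{Ina22} (in the form used in \cite{Wat25a}) gives that $\cal{H}\otimes\det\cal{H}$ is Nakano semi-positive on $\tx$ in the sense of $L^2$-estimates, and Theorem \ref{Theorem: h Nak iff pi*h Nak} transfers this to $X$. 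Uniform positive definiteness follows from Corollary \ref{Corollary: a.e. Grif semi-posi then uniformly positive definite} applied to $h$, together with the fact that $\det h$ has a quasi-psh local weight: its dual $\det h^*$ is locally bounded above because $h^*$ is.

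\emph{Positive case.} Here the hypothesis gives uniform positive definiteness of $h$ directly, and hence of $h\otimes\det h$, since $h^*$ bounded above forces $\det h^*$ bounded above. By Proposition \ref{Proposition: Grif>0 then εω-Grif>0}, $h$ is $\varepsilon\omega$-Griffiths positive. We want to lift this to $\pi^*h$ being $\pi^*(\varepsilon\omega)$-Griffiths positive. Theorem \ref{Theorem: h Gri iff pi*h Gri}, together with Remark \ref{Remark: condition of a.e. Grif in Thm of Grif posi}, would do this provided $\pi^*h$ is a.e. Griffiths semi-positive. I would obtain that from the local boundedness of $h^*$: $|u|^2_{h^*}$ is then bounded above, so on $\tx$ the functions $|\pi^*u|^2_{\pi^*h^*}$ are bounded near $\exc$ and psh off $\exc$ a.e., hence extend as psh functions.

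On the manifold $\tx$, the Griffiths positivity is against the degenerate form $\pi^*(\varepsilon\omega)$. Locally on Stein coordinate pieces this gives $\pi^*h\,e^{\delta\,\pi^*\varphi}$ a.e. Griffiths semi-positive, where $\varphi=\iota_U^*|z|^2$, as in the proof of Theorem \ref{Theorem: h>Grif then iO_det h>0 as currents}. The twist $h\otimes\det h$ then picks up the weight $e^{(r+1)\delta\pi^*\varphi}$. Applying \cite{Ina22} to the semi-positive metric $\pi^*h\,e^{\delta\pi^*\varphi}\otimes\det(\cdot)$ yields Nakano semi-positivity of that twisted metric in the $L^2$ sense. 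This is the same as $(r+1)\delta\,\idd\pi^*\varphi$-Nakano positivity of $\pi^*h\otimes\det\pi^*h$: one absorbs the weight into the function $\psi$ of Definition \ref{Definition: singular Nakano positive}, since $\theta+\idd\psi>0$ is exactly the condition for $\psi-(r+1)\delta\pi^*\varphi$. Finally one patches the local constants into a continuous $\theta=c\,\pi^*\omega$. This is legitimate because the definition is local over Stein coordinates. Theorem \ref{Theorem: h Nak iff pi*h Nak} then concludes.

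\emph{Main obstacle.} The hardest step is the manifold-level claim that an a.e. Griffiths semi-positive metric which is Griffiths positive only against the degenerate form $\pi^*\omega$ produces the $L^2$ estimate with $B_{\pi^*\theta,\psi,\omega_S}$ for $\pi^*\theta$ merely semi-positive along $\exc$. The condition $\pi^*\theta+\idd\psi>0$ carries the strictness in this case. I expect it to follow from Inayama's theorem after regularizing $\cal{H}^*$ by convolution as in Theorem \ref{Theorem: Grif semi-positivity of h and pi^*h}, applying Hörmander's estimate with the twisted weight, and passing to the limit via weak compactness and monotone convergence.
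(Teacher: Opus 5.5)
Your semi-positive case is the paper's argument: pass to $\tx$, replace $\pi^*h$ by an a.e.-equal Griffiths semi-positive $\cal{H}$, apply \cite{Ina22}, and descend with Theorem~\ref{Theorem: h Nak iff pi*h Nak}. Your check of uniform positive definiteness is a harmless addition.

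The positive case has a genuine gap at the step you treat as bookkeeping, namely ``patching the local constants into a continuous $\theta=c\,\pi^*\omega$.'' Your weight-absorption identity $B_{c\,\idd\phi,\psi,\omega_S}=B_{0,\psi+c\phi,\omega_S}$ is correct. However, it only yields $(r+1)\delta_U\,\idd\pi^*\varphi_U$-Nakano positivity on each $\pi^{-1}(U)$ separately. Definition~\ref{Definition: singular Nakano positive} requires an $L^2$-solution with the optimal bound on every Stein set on which the bundle is trivial. The paper uses it on large sets such as $\ts\setminus(H\cup\exc)$, $\sreg\setminus\pi(H)$ and $S_j$. Local $\dbar$-solutions on the pieces $\pi^{-1}(U)$ do not glue to a solution on a Stein set meeting several pieces, let alone one with the optimal estimate, so the property is not local in the way you assert. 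The global alternative is a $\theta$-version of Inayama's theorem with the degenerate form $\pi^*(\varepsilon\omega)$ on all of $\tx$. That is exactly the ``main obstacle'' you leave as an expectation, so the positive case is not closed by either route.

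The missing idea is to avoid doing the Nakano step across $\exc$ at all. On $\tx\setminus\exc\cong\reg$ the form $\pi^*(\varepsilon\omega)$ is an honest positive continuous form. There $\pi^*h$ agrees a.e.\ with a metric $\cal{H}$ that is Griffiths semi-positive and $\pi^*(\varepsilon\omega)$-Griffiths positive. Hence \cite[Theorem 3.6]{Ina22} gives $(r+1)\pi^*(\varepsilon\omega)$-Nakano positivity of $\pi^*h\otimes\det\pi^*h$ on $\tx\setminus\exc$, with one global form and no patching.

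This positivity is then extended to $\tx$ by the codimension-one argument from the proof of Theorem~\ref{Theorem: h Nak iff pi*h Nak}. For a Stein $\ts\subset\tx$, the set $\ts\setminus(H\cup\exc)$ is Stein, so one solves there. The equation $\dbar\tl{u}=\tl{g}$ then extends across $H\cup\exc$ by Lemma~\ref{Lemma: dbar-extension lemma}. This is where uniform positive definiteness is really used: it guarantees that $\tl{u}$ is $L^2_{loc}$ for a smooth metric.

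With this route, two of your steps become unnecessary: proving a.e.\ Griffiths semi-positivity of $\pi^*h$ across $\exc$, and lifting Griffiths positivity through Theorem~\ref{Theorem: h Gri iff pi*h Gri}. Both steps are themselves fine, since on the manifold $\tx$ psh functions bounded above extend across $\exc$. Likewise the local twist by $e^{\delta\pi^*\varphi}$ is no longer needed.
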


\begin{proof}
    Assuming that $h$ is a.e.\! Griffiths semi-positive on $X$, Corollary \ref{Corollary: a.e. Grif semi-posi of h and pi*h} implies that $\pi^*h$ is a.e.\! Griffiths semi-positive on $\tx$. 
    Hence there exists a Griffiths semi-positive metric $\cal{H}$ on $\pi^*E$ that coincides with $\pi^*h$ a.e. on $\tx$ (see Remark \ref{Remark: Grif>0 then a.e. Grif semi-posi & Remark in Wat24a}, \cite[Remark 5.12]{Wat24a}). 
    By \cite[Theorem 1.3]{Ina22}, $\cal{H}\otimes\det\cal{H}$ is Nakano semi-positive on $\tx$. Since $\cal{H}$ coincides a.e. with $\pi^*h$, the metric $\pi^*h\otimes\det\pi^*h$ is also Nakano semi-positive on $\tx$. 
    Therefore, Theorem \ref{Theorem: h Nak iff pi*h Nak} implies that $h\otimes\det h$ is Nakano semi-positive on $X$.

    Assume that $h$ is Griffiths positive on $X$. By Proposition \ref{Proposition: Grif>0 then εω-Grif>0}, $h$ is $2\varepsilon\omega$-Griffiths positive on $X$ for some smooth positive function $\varepsilon:X\longrightarrow\bb{R}_{>0}$ and Hermitian metric $\omega$ on $X$. 
    Since $\pi|_{\tx\setminus\exc}:\tx\setminus\exc\overset{\simeq}{\longrightarrow}\reg$ is biholomorphic, $\pi^*h$ is $\pi^*(2\varepsilon\omega)$-Griffiths positive on $\tx\setminus\exc$. 
    Arguing as in the proof of Theorem \ref{Theorem: h>Grif then iO_det h>0 as currents} and using Remark \ref{Remark: Grif>0 then a.e. Grif semi-posi & Remark in Wat24a}, there exists a singular Hermitian metric $\cal{H}$ on $\pi^*E$ that is Griffiths semi-positive and $\pi^*(\varepsilon\omega)$-Griffiths positive on $\tx\setminus\exc$, and coincides with $\pi^*h$ a.e. on $\tx\setminus\exc$.
    Then \cite[Theorem 3.6]{Ina22} implies that $\cal{H}\otimes\det\cal{H}$ is $(r+1)\pi^*(\varepsilon\omega)$-Nakano positive on $\tx\setminus\exc$, and therefore $\pi^*h\otimes\det\pi^*h$ is also $(r+1)\pi^*(\varepsilon\omega)$-Nakano positive on $\tx\setminus\exc$.
    Arguing as in the proof of Theorem \ref{Theorem: h Nak iff pi*h Nak}, the $(r+1)\pi^*(\varepsilon\omega)$-Nakano positivity of $\pi^*h\otimes\det\pi^*h$ extends from $\tx\setminus\exc$ to $\tx$.  
    Hence, Theorem \ref{Theorem: h Nak iff pi*h Nak} shows that $h\otimes\det h$ is $(r+1)\varepsilon\omega$-Nakano positive on $X$.
\end{proof}

From Corollary \ref{Corollary: a.e. Grif semi-posi then uniformly positive definite}, the conclusion of Theorem \ref{Theorem: h>Grif then h det h>Nak} remains valid if the uniform positive definiteness of $h$ is replaced by the a.e.\! Griffiths semi-positivity.

In general, near the singular locus of $X$, Griffiths positivity of $h$ carries only information in the sense of currents, and it is unclear whether a negative integral current supported in $\exc$ may appear in the curvature of $\pi^*h$. 
In contrast, the uniform positive definiteness of $h$ prevents the appearance of such negative integration currents and makes it possible to apply the $\dbar$-extension Lemma \ref{Lemma: dbar-extension lemma}. 
As a consequence, Nakano positivity is not affected by codimension-one phenomena and can be extended across $\exc$.
Of course, as 
in the proof of Theorem \ref{Theorem: h Gri iff pi*h Gri}, the presence of negative integral currents can be excluded under the assumption of a.e.\! Griffiths semi-positivity.

This is more clear in the case of line bundles. Let $U\subset X$ be an open subset with $U\cap\xs\ne\emptyset$ and let $\varphi\in L^1_{loc}(U)$ satisfy $\idd\varphi\geq0$ on $U$ in the sense of currents. 
It is unclear whether $\idd\pi^*\varphi$ is well-defined in the sense of currents and whether it is positive on $\pi^{-1}(U)$, i.e. whether $\idd\pi^*\varphi\geq0$ holds on $\pi^{-1}(U)$ (see Remark \ref{Remark: unclear of idd pi*phi>0 current}), or equivalently whether $\pi^*\varphi$ coincides a.e. on $\pi^{-1}(U)$ with a plurisubharmonic function (see Proposition \ref{Proposition: idd>0 iff a.e. psh}), since $\pi^*\varphi$ need not be locally integrable in general.
The pull-back is stable in the plurisubharmonic case, and this is where Theorem \ref{Theorem: idd>0 and a.e. psh} plays a key role.

\begin{theorem}\label{Theorem: h>Gri0 & h_L>0 then h*h_L>Gri0}
    Let $\omega$ and $\gamma$ be Hermitian metrics on $X$ and $\theta$ be a continuous $(1,1)$-form on $X$. 
    We assume that $h_L$ is singular positive on $X$, then there exists a smooth positive function $\varepsilon:X\longrightarrow\bb{R}_{>0}$ such that $\iO{L,h}\geq2\varepsilon\omega$ on $X$ in the sense of currents. 
    For a singular Hermitian metric $h\otimes h_L$ on $E\otimes L$, we obtain the following.
    \begin{itemize}
        \item [$(a)$] for a smooth positive function $\tau$ on $X$, if $h$ is $\tau\gamma$-Griffiths positive and a.e.\! Griffiths semi-positive on $X$, then $h\otimes h_L$ is $(\tau\gamma+\varepsilon\omega)$-Griffiths positive on $X$.
        \item [$(b)$] if $h$ is $\theta$-Nakano positive on $X$, then $h\otimes h_L$ is $(\theta+\varepsilon\omega)$-Nakano positive on $X$. 
    \end{itemize}
\end{theorem}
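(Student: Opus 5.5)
The plan has three parts: obtain the function $\varepsilon$, then prove $(a)$ by reducing to $\tx$, then prove $(b)$ by absorbing the weight of $h_L$ into the weight function $\psi$ of Definition \ref{Definition: singular Nakano positive}.

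\textbf{Existence of $\varepsilon$.} Singular positivity of $h_L$ means that every local weight $\varphi$ coincides a.e.\ with a strictly plurisubharmonic function $\tl{\varphi}$. After shrinking, $\tl{\varphi}-\delta\iota_U^*|z|^2$ is still plurisubharmonic on a neighbourhood $U$ of each point. By the first part of Theorem \ref{Theorem: idd>0 and a.e. psh}, this gives $\iO{L,h_L}=\idd\varphi\geq\delta\,\idd\iota_U^*|z|^2$ on $U$ in the sense of currents. On a relatively compact $U$ we have $\idd\iota_U^*|z|^2\geq c_U\omega$. Gluing these local constants with a partition of unity, exactly as in Proposition \ref{Proposition: Grif>0 then εω-Grif>0} and \cite[Proposition 3.1]{Wat26b}, produces a smooth positive $\varepsilon$ with $\iO{L,h_L}\geq2\varepsilon\omega$.

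\textbf{Part $(a)$.} I would work on $\tx$ and then descend.
\begin{itemize}
\item By Corollary \ref{Corollary: singular semi-positivity of h_L and pi*h_L}, $\pi^*h_L$ is singular semi-positive on $\tx$.
\item By the second half of Corollary \ref{Corollary: iO_h>0 iff iO_pi*h>0}, we get $\iO{\pi^*L,\pi^*h_L}\geq\pi^*(2\varepsilon\omega)$ on $\tx$.
\item By Theorem \ref{Theorem: h Gri iff pi*h Gri} together with Remark \ref{Remark: condition of a.e. Grif in Thm of Grif posi} (using Corollary \ref{Corollary: a.e. Grif semi-posi of h and pi*h}), $\pi^*h$ is $\pi^*(\tau\gamma)$-Griffiths positive and a.e.\ Griffiths semi-positive on $\tx$.
\end{itemize}
On the manifold $\tx$, I replace $\pi^*h$ by the Griffiths semi-positive representative $\cal{H}$ of Remark \ref{Remark: Grif>0 then a.e. Grif semi-posi & Remark in Wat24a}. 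Locally, for a section $u$ of $\pi^*E^*\otimes\pi^*L^*$,
\[
|u|^2_{\cal{H}^*\otimes \pi^*h_L^*}=|u|^2_{\cal{H}^*}e^{2\varphi}.
\]
Writing $\cal{H}^*e^{2\varphi}$ and using the manifold results (\cite[Proposition 5.5]{Wat24a}, the characterization by $\log|u|^2-\delta|z|^2$ being a.e.\ psh), the curvature inequalities add. This gives that $\pi^*(h\otimes h_L)$ is $\pi^*(\tau\gamma+\varepsilon\omega)$-Griffiths positive on $\tx$; the loss from $2\varepsilon$ to $\varepsilon$ leaves room for the local constant comparison. The first half of Theorem \ref{Theorem: h Gri iff pi*h Gri} then descends the statement to $X$.

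\textbf{Part $(b)$.} I would use the definition directly. Uniform positive definiteness of $h\otimes h_L$ follows from that of $h$ together with local upper boundedness of the psh weight $\varphi$. Now take Nakano data $(S,\omega_S,\psi,q,g)$ for $\theta+\varepsilon\omega$ and $E\otimes L$, with $L|_S$ trivial and weight $\varphi$.

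The idea is to treat $he^{-2\varphi-\psi}$ as $he^{-\psi'}$ with $\psi'=\psi+2\varphi$. Then $\theta+\idd\psi'\geq\theta+\idd\psi+2\varepsilon\omega>\theta+\varepsilon\omega+\idd\psi>0$. Monotonicity of $B^{-1}$ in the form gives $\lara{B^{-1}_{\theta,\psi',\omega_S}g}{g}\leq\lara{B^{-1}_{\theta+\varepsilon\omega,\psi,\omega_S}g}{g}$, so the hypothesis on $g$ transfers and the $L^2$-solution of $h$ yields the required $u$ for $h\otimes h_L$.

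\textbf{Main obstacle.} The difficulty is that $\psi'$ is not smooth, while Definition \ref{Definition: singular Nakano positive} only allows smooth $\psi$. I would first pass to $\tx$ via Theorem \ref{Theorem: h Nak iff pi*h Nak}, where $\pi^*\varphi$ is quasi-psh. There I would regularize it by Demailly approximation by smooth functions $\varphi_k\downarrow\pi^*\varphi$ with a small loss in positivity, absorbed by $\varepsilon\omega$ on relatively compact pieces of the Stein $\ts$. I would solve with weight $\pi^*\psi+2\varphi_k$ and bounded norms, take a weak limit in $L^2$, and use monotone convergence in $k$. A Stein exhaustion of $\ts$ handles the lack of uniformity. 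The final step descends via Theorem \ref{Theorem: h Nak iff pi*h Nak}.
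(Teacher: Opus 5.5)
\textbf{There is a gap, and it is the same in both parts: the step that combines or absorbs positivity fails along $\exc$, where $\pi^*\gamma$ and $\pi^*\omega$ degenerate.}

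\textbf{Part $(a)$.} Your inputs on $\tx$ are correct. On $\tx\setminus\exc$ your computation is exactly the paper's: there $\pi^*\gamma$ and $\pi^*\omega$ can be sandwiched between constant multiples of $\idd|z|^2$, so the characterization via $\log|u|^2-\delta|z|^2$ being a.e.\ psh adds the bounds. But you assert $\pi^*(\tau\gamma+\varepsilon\omega)$-Griffiths positivity on all of $\tx$, and at points of $\exc$ this mechanism gives nothing. No $\delta>0$ with $\pi^*(\tau\gamma)\geq\delta\idd|z|^2$ exists near such a point. The slack from $2\varepsilon$ to $\varepsilon$ does not help either, because $\pi^*(\varepsilon\omega)$ degenerates there too. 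This matters: the descent via the first half of Theorem \ref{Theorem: h Gri iff pi*h Gri} pairs the current with $\pi^*\phi$, whose support meets $\exc$.

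The paper proves the bound only on $\tx\setminus\exc$ and then extends it across $\exc$ by the cut-off argument in the proof of Theorem \ref{Theorem: h Gri iff pi*h Gri} (Remark \ref{Remark: condition of a.e. Grif in Thm of Grif posi}). That argument needs $\pi^*(h\otimes h_L)$ to be a.e.\ Griffiths semi-positive on $\tx$, which holds here because $\cal{H}$ is Griffiths semi-positive and $\pi^*\varphi$ is psh. Then $\idd|u|^2$ carries no negative mass on $\exc$, and $|u|^2\pi^*(\tau\gamma+\varepsilon\omega)$ has $L^1_{loc}$ coefficients, so it carries no mass there at all.

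A direct alternative on $\tx$ also works. Convolve $\cal{H}^*$ and $\pi^*\varphi$, add the curvatures of the smooth metrics with an $O(1/\nu)\idd|z|^2$ error, and pass to the decreasing limit. An error that vanishes in the limit is harmless even where the forms degenerate.

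\textbf{Part $(b)$.} The same defect breaks your absorption step. On a relatively compact piece of $\ts$ meeting $\exc$, a Demailly or convolution regularization $\varphi_k$ of $\pi^*\varphi$ keeps its curvature bound only up to a loss $\epsilon_k\omega_{\tx}$, for a Hermitian metric $\omega_{\tx}$ on $\tx$. Near $\exc$ this loss is not dominated by $\pi^*(\varepsilon\omega)$, however small $\epsilon_k$ is. So the comparison $B_{\pi^*\theta,\tl{\psi}+2\varphi_k,\omega_{\ts}}\geq B_{\pi^*(\theta+\varepsilon\omega),\tl{\psi},\omega_{\ts}}$ that you need is unavailable there. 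A Stein exhaustion of $\ts$ does not help, since its pieces eventually meet $\exc$.

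Passing to $\tx$ is what creates the problem, and the paper never does so. It proceeds as follows:
\begin{itemize}
\item It removes a hypersurface $H$ so that $L$ is trivial on the Stein manifold $\sreg\setminus H$.
\item It exhausts $\sreg\setminus H$ by Stein opens $\cal{S}_\nu\Subset\sreg\setminus H$, on which $\omega$ is genuinely positive definite. Hence $\varphi_\nu=\varphi\ast\rho_\nu$ still satisfies $\idd\varphi_\nu\geq\varepsilon\omega$, thanks to the factor $2$.
\item It solves with the smooth weight $\psi+\varphi_\nu$, using the $\theta$-Nakano positivity of $h$ and $e^{-\varphi_\nu}\leq e^{-\varphi}$.
\item It takes a weak limit and extends $\dbar u=g$ across $H$ by Lemma \ref{Lemma: dbar-extension lemma}.
\end{itemize}

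You can repair your version in two ways. One is to exhaust $\ts\setminus\exc$ instead of $\ts$, which is then essentially the paper's argument. The other is to absorb the loss multiplicatively into the strictly positive form $\pi^*(\theta+\varepsilon\omega)+\idd\tl{\psi}\geq c_K\omega_{\tx}$ on each compact piece $K$. This costs a factor $(1-\epsilon_k/c_K)^{-1}$, which tends to $1$.

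One point in your favour: you check uniform positive definiteness of $h\otimes h_L$, which Definition \ref{Definition: singular Nakano positive} requires. The paper's proof of $(b)$ leaves this implicit.
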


\begin{proof}
    $(a)$ For simplicity, write $\tl{\gamma}:=\pi^*(\tau\gamma)$ and $\tl{\omega}:=\pi^*(\varepsilon\omega)$.
    By the proof of Theorem \ref{Theorem: h Gri iff pi*h Gri} and Remark \ref{Remark: condition of a.e. Grif in Thm of Grif posi}, it suffices to show that $\pi^*h\otimes\pi^*h_L$ is $(\tl{\gamma}+\tl{\omega})$-Griffiths positive on $\tx\setminus\exc$, where $\pi^*h$ is $\tl{\gamma}$-Griffiths positive on $\tx$. 
    Here, for any $x\in\tx\setminus\exc$, there exists an open neighborhood $\tl{V}\subset\tx\setminus\exc$ of $x$ such that $\pi^*L$ is trivializable on $\tl{V}$, we can write $\pi^*h_L=e^{-\pi^*\varphi}$ on $\tl{V}$ for some weight $\varphi$ of $h_L$ satisfying $\idd\varphi\geq\varepsilon\omega$ on $\pi(\tl{V})$, and $\idd|u|^2_{\pi^*h^*}\geq|u|^2_{\pi^*h^*}\tl{\gamma}$ holds on $\tl{V}$ in the sense of currents for any section $u\in H^0(\tl{V},\pi^*E^*)$.
    Furthermore, there exist a small Stein open neighborhood $\tu \Subset \tl{V}$ of $x$ and a small number $0 < \vartheta \ll 1$ such that we have $\tl{\gamma}\geq\delta_\tau\idd|z|^2\geq(1-\vartheta)\tl{\gamma}$, $\tl{\omega}/2\geq\vartheta\tl{\gamma}$ and $2\tl{\omega}\geq\delta_\varepsilon\idd|z|^2\geq3\tl{\omega}/2$ on $\tu$, for some coordinate system $(z_1,\ldots,z_N)$ on $\tu$ and constants $\delta_\tau,\delta_\varepsilon>0$ depending on it.
    Indeed, we first take a neighborhood satisfying the first two conditions, and then consider its intersection with a neighborhood satisfying the last condition.
    Thus, $\idd|u|^2_{\pi^*h^*}\geq\delta_\tau|u|^2_{\pi^*h^*}\idd|z|^2$ for any section $u\in H^0(\tu,\pi^*E^*)=H^0(\tu,\pi^*E^*\otimes\pi^*L^*)$. By the proof of \cite[Proposition 5.5 (d)]{Wat24a}, it follows that $\idd\log|u|^2_{\pi^*h^*}\geq\delta_\tau\idd|z|^2$ on $\tu$ in the sense of currents.
    Hence, 
    \begin{align*}
        \idd\log|u|^2_{\pi^*h^*\otimes\pi^*h_L}&=\idd\log|u|^2_{\pi^*h^*}+\idd\pi^*\varphi\geq\delta_\tau\idd|z|^2+2\tl{\omega}\\
        &\geq(\delta_\tau+\delta_\varepsilon)\idd|z|^2\geq(1-\vartheta)\tl{\gamma}+\frac{3}{2}\tl{\omega}\geq\tl{\gamma}+\tl{\omega}
    \end{align*}
    holds on $\tu$ in the sense of currents. From \cite[Proposition 5.5]{Wat24a}, it follows that 
    \begin{align*}
        \idd|u|^2_{\pi^*h^*\otimes\pi^*h_L}\geq(\delta_\tau+\delta_\varepsilon)|u|^2_{\pi^*h^*\otimes\pi^*h_L}\idd|z|^2\geq|u|^2_{\pi^*h^*\otimes\pi^*h_L}(\tl{\gamma}+\tl{\omega})
    \end{align*}
    holds on $\tu$ in the sense of currents, which represents the $(\tl{\gamma}+\tl{\omega})$-Griffiths positivity of $\pi^*h\otimes\pi^*h_L$ on $\tl{X}\setminus\exc$.


    $(b)$ To show the $(\theta+\varepsilon\omega)$-Nakano positivity of $h\otimes h_L$, consider any data consisting of 
    \begin{itemize}
        \item a Stein space coordinate $S\subset X$ admitting a trivialization $E\otimes L|_S\cong S\times\bb{C}^r$,
        \item a \kah metric $\omega_S$ on $S$,
        \item a smooth function $\psi$ on $S$ such that $\theta+\varepsilon\omega+\idd\psi>0$ on $S$,
        \item a $\dbar$-closed $g\in L^{2,loc}_{n,q}(S,E\otimes L,h\otimes h_Le^{-\psi})$ satisfying 
        \begin{align*}
            \int_S\lara{B^{-1}_{\theta+\varepsilon\omega,\psi,\omega_S}g}{g}_{h\otimes h_L,\omega_S}e^{-\psi}dV_{\omega_S}<+\infty.
        \end{align*}
    \end{itemize}
    Here, $\sreg$ is also Stein, and there exists a hypersurface $H$ such that $L$ is trivialized on $\sreg\setminus H$, which is also Stein, and we can write $h_L = e^{-\varphi}$ on $\sreg\setminus H$.
    For the Stein submanifold $\sreg\setminus H$, by using an approximate identity $\{\rho_\nu\}_{\nu\in\bb{N}}$, 
    there exist an increasing sequence of Stein open subsets $\{\cal{S}_\nu\}_{\nu\in\bb{N}}$ exhausting $\sreg\setminus H$ and smooth strictly plurisubharmonic functions $\varphi_\nu := \varphi \ast \rho_\nu$ on $\cal{S}_\nu$ such that $\varphi_\nu$ converges decreasingly to $\varphi$ a.e. and satisfies $\idd\varphi_\nu\geq\varepsilon\omega$ on $\cal{S}_\nu$. 
    Since the smooth function $\varphi_\nu+\psi$ on $\cal{S}_\nu$ satisfies $\theta+\idd(\varphi_\nu+\psi)\geq\theta+\varepsilon\omega+\idd\psi>0$ and 
    $B_{\theta,\varphi_\nu+\psi,\omega_S}\geq B_{\theta+\varepsilon\omega,\psi,\omega_S}$, by applying the $\theta$-Nakano positivity of $h$ on the Stein subset $\cal{S}_\nu$, 
    there exists $u_\nu\in L^2_{n,q-1}(\cal{S}_\nu,E;\omega_S,he^{-\varphi_\nu-\psi})$ such that $\dbar u_\nu=g$ on $\cal{S}_\nu$ and 
    \begin{align*}
        \int_{\cal{S}_\nu}|u_\nu|^2_{h,\omega_S}e^{-\varphi_\nu-\psi}dV_{\omega_S}&\leq\int_{\cal{S}_\nu}\lara{B_{\theta,\varphi_\nu+\psi,\omega_S}^{-1}g}{g}_{h,\omega_S}e^{-\varphi_\nu-\psi}dV_{\omega_S}\\
        &\leq\int_{\cal{S}_\nu}\lara{B_{\theta+\varepsilon\omega,\psi,\omega_S}^{-1}g}{g}_{h,\omega_S}e^{-\varphi_\nu-\psi}dV_{\omega_S}\\
        &\leq\int_{\cal{S}_\nu}\lara{B_{\theta+\varepsilon\omega,\psi,\omega_S}^{-1}g}{g}_{h,\omega_S}e^{-\varphi-\psi}dV_{\omega_S}\\
        &\leq\int_S\lara{B^{-1}_{\theta+\varepsilon\omega,\psi,\omega_S}g}{g}_{h\otimes h_L,\omega_S}e^{-\psi}dV_{\omega_S}.
    \end{align*}
    Since the sequence of smooth functions $\{e^{-\varphi_\nu}\}_{\nu\in\bb{N}}$ is increasing and converges to $h = e^{-\varphi}$ a.e., \cite[Lemma 3.18]{Wat25a} implies the existence of a solution $u\in L^2_{n,q-1}(\sreg\setminus H,E;\omega_S,he^{-\varphi-\psi})=L^2_{n,q}(S,E\otimes L;\omega_S,h\otimes h_Le^{-\psi})$ as the weak limit of a convergent subsequence, satisfying the $\dbar$-equation $\dbar u=g$ on $\sreg\setminus H$ and the $L^2$-estimates 
    \begin{align*}
        \int_S|u|^2_{h\otimes h_L,\omega_S}e^{-\psi}dV_{\omega_S}=\int_{\sreg\setminus H}|u|^2_{h,\omega_S}e^{-\varphi-\psi}dV_{\omega_S}\leq\int_S\lara{B^{-1}_{\theta+\varepsilon\omega,\psi,\omega_S}g}{g}_{h\otimes h_L,\omega_S}e^{-\psi}dV_{\omega_S}.
    \end{align*}
    By the $\dbar$-extension Lemma \ref{Lemma: dbar-extension lemma}, we have $\dbar u=g$ on $\sreg$, which shows the $(\theta+\varepsilon\omega)$-Nakano positivity of $h\otimes h_L$.
\end{proof}

The following result is proved in the same way as above.

\begin{corollary}\label{Corollary: h>Gri0 & h_L>0 then h*h_L>Gri0}
    Assume that $h_L$ is singular positive on $X$, and that $\omega$ and $\varepsilon:X\longrightarrow\bb{R}_{>0}$ as in Theorem \ref{Theorem: h>Gri0 & h_L>0 then h*h_L>Gri0}, then the following result holds.
    \begin{itemize}
        \item [$(a)$] if $h$ is a.e.\! Griffiths semi-positive, then $h\otimes h_L$ is $\varepsilon\omega$-Griffiths positive on $X$.
        \item [$(b)$] if $h$ is Nakano semi-positive on $X$, then $h\otimes h_L$ is $\varepsilon\omega$-Nakano positive on $X$. 
    \end{itemize}
    Furthermore, the same conclusion remains valid with $\varepsilon\equiv0$ when $h$ is singular semi-positive instead of singular positive. 
    In particular, part $(a)$ implies that $h\otimes h_L$ is a.e.\! Griffiths semi-positive on $X$. 
\end{corollary}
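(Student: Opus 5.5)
The plan is to mirror the proof of Theorem \ref{Theorem: h>Gri0 & h_L>0 then h*h_L>Gri0} with $\tau\gamma$ replaced by $0$ and $\theta$ replaced by $0$. Throughout, $\varepsilon$ is the function from Theorem \ref{Theorem: h>Gri0 & h_L>0 then h*h_L>Gri0}, so that locally $h_L=e^{-\varphi}$ with $\idd\varphi\geq2\varepsilon\omega$ in the sense of currents. In the semi-positive case we take $\varepsilon\equiv0$, and then $\varphi$ is only plurisubharmonic almost everywhere.

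For $(a)$, we reduce to $\tx$. By Corollary \ref{Corollary: a.e. Grif semi-posi of h and pi*h} and Corollary \ref{Corollary: singular semi-positivity of h_L and pi*h_L}, both $\pi^*h$ and $\pi^*h_L$ are a.e.\ Griffiths semi-positive on $\tx$. Since $h$ is only a.e.\ Griffiths semi-positive, we first replace $\pi^*h$ by the Griffiths semi-positive metric $\cal{H}$ of Remark \ref{Remark: Grif>0 then a.e. Grif semi-posi & Remark in Wat24a}, which agrees with $\pi^*h$ a.e. On a small Stein set $\tu\subset\tx\setminus\exc$ with coordinates $z$ and $2\tl\omega\geq\delta_\varepsilon\idd|z|^2\geq\frac32\tl\omega$, we have $\idd\log|u|^2_{\cal{H}^*}\geq0$ for every $u\in H^0(\tu,\pi^*E^*)$. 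Adding $\idd\pi^*\varphi\geq 2\tl\omega$ then gives
\[
\idd\log|u|^2_{\pi^*h^*\otimes\pi^*h_L^*}\geq\delta_\varepsilon\idd|z|^2 .
\]
By \cite[Proposition 5.5]{Wat24a}, this yields $\idd|u|^2\geq|u|^2\tl\omega$ on $\tu$. Thus $\pi^*(h\otimes h_L)$ is $\pi^*(\varepsilon\omega)$-Griffiths positive on $\tx\setminus\exc$.

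Next we extend this across $\exc$. The product of a.e.\ Griffiths semi-positive metrics is a.e.\ Griffiths semi-positive on the manifold $\tx$, because the sum of plurisubharmonic functions $\log|u|^2_{\cal{H}^*}+\pi^*\varphi$ is plurisubharmonic, applied locally on $\tx$. The cut-off argument in the proof of Theorem \ref{Theorem: h Gri iff pi*h Gri}, together with Remark \ref{Remark: condition of a.e. Grif in Thm of Grif posi}, then extends the inequality to all of $\tx$. The first half of Theorem \ref{Theorem: h Gri iff pi*h Gri} pushes it down to $X$. Carrying this out with $\varepsilon\equiv0$ gives the final assertion: $h\otimes h_L$ is a.e.\ Griffiths semi-positive on $\tx$, and hence on $X$ by the converse part of Corollary \ref{Corollary: a.e. Grif semi-posi of h and pi*h}, which in the stated form requires local irreducibility. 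Alternatively, one can argue directly that $\log|s|^2_{h^*\otimes h_L^*}$ agrees a.e.\ with a sum of plurisubharmonic functions on $X$; in the definitions on $X$ the weights are plurisubharmonic a.e., so this sum is plurisubharmonic a.e.

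For $(b)$, we copy the proof of Theorem \ref{Theorem: h>Gri0 & h_L>0 then h*h_L>Gri0}$(b)$ with $\theta\geq0$. Take data $(S,\omega_S,\psi,g)$ with $\varepsilon\omega+\idd\psi>0$. Remove a hypersurface $H$ so that $L$ is trivial on the Stein set $\sreg\setminus H$, and exhaust it by Stein sets $\cal{S}_\nu$ carrying $\varphi_\nu=\varphi*\rho_\nu$ with $\idd\varphi_\nu\geq\varepsilon\omega$, decreasing to $\varphi$. Since $\theta+\idd(\varphi_\nu+\psi)\geq\varepsilon\omega+\idd\psi>0$ and $B_{\theta,\varphi_\nu+\psi,\omega_S}\geq B_{\varepsilon\omega,\psi,\omega_S}$, the Nakano semi-positivity of $h$ produces solutions $u_\nu$ with uniform estimates. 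The weak limit from \cite[Lemma 3.18]{Wat25a}, followed by Lemma \ref{Lemma: dbar-extension lemma}, gives the required solution on $\sreg$.

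The main obstacle is checking uniform positive definiteness of $h\otimes h_L$, which Definition \ref{Definition: singular Nakano positive} requires. This follows because $h$ is uniformly positive definite and $e^{-\varphi}$ is locally bounded below, since $\varphi$ is quasi-plurisubharmonic and hence locally bounded above. In $(a)$ the delicate point is the product step, where only a.e.\ equalities are available, so one must work with the representative $\cal{H}$ rather than $\pi^*h$ itself.
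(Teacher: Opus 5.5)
Your proposal is correct and is essentially the paper's own argument. The paper proves this corollary by rerunning the proof of the preceding theorem, with the $\tau\gamma$-Griffiths positivity of $h$ replaced by mere a.e.\ Griffiths semi-positivity in $(a)$ and with $\theta\geq0$ in $(b)$, which is what you do. The points you make explicit are details the paper leaves implicit: the a.e.\ Griffiths semi-positivity of $\pi^*h\otimes\pi^*h_L$ on $\tx$ needed for the cut-off step across $\exc$, the uniform positive definiteness of $h\otimes h_L$ required by Definition~\ref{Definition: singular Nakano positive}, and the direct a.e.\ argument for the final assertion, which avoids the local irreducibility needed in the converse of Corollary~\ref{Corollary: a.e. Grif semi-posi of h and pi*h}. (In $(b)$, take $H\supseteq S_{sing}$ so that $S\setminus H$ is Stein, since $\sreg$ itself need not be.)
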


Finally, the following theorem is obtained using the Negativity lemma and the strong openness property; it is used to remove the \kah assumption in vanishing theorems on (relatively) compact spaces and to establish vanishing for higher direct image sheaves.

\begin{theorem}\label{Theorem: h>Nak then pi*he-psi>Nak & E(pi*h)=E(pi*he-psi)}
    Let $V$ be a relatively compact open subset of $X$, and set $\tl{V}:=\pi^{-1}(V)$.
    If $h$ is Nakano positive (resp. Griffiths positive and a.e.\! Griffiths semi-positive) on an open neighborhood of $\overline{V}$, then there exist a quasi-plurisubharmonic function $\psi:\tl{V}\longrightarrow[-\infty,+\infty)$, which is smooth on $\tl{V}\setminus\exc$,
    and a small number $\varepsilon_V>0$ such that the singular Hermitian metric $\pi^*he^{-\varepsilon\psi}$ on $\pi^*E$ is also Nakano positive (resp. Griffiths positive) on $\tl{V}$ for any $0<\varepsilon<\varepsilon_V$. 
    
    Furthermore, by additionally assuming that $h$ is a.e.\! Griffiths semi-positive, there exists a small number $\varepsilon_0$ with $0<\varepsilon_0<\varepsilon_V$ such that we have $\scr{E}(\pi^*h)=\scr{E}(\pi^*he^{-\varepsilon\psi})\, ($resp. $\scr{E}(\pi^*h\otimes\det \pi^*h)=\scr{E}(\pi^*h\otimes\det \pi^*he^{-(1+r)\varepsilon\psi}))$ on $\tl{V}$ for any $0<\varepsilon\leq\varepsilon_0$.
\end{theorem}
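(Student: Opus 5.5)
The plan is to take $\psi$ from Lemma \ref{Lemma: key lemma}, applied to the relatively compact set $V$, and to combine it with the pull-back results of Section 4.

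\textbf{Step 1 (Nakano case).} Since $h$ is Nakano positive on a neighborhood $W$ of $\overline{V}$, there is a continuous positive $(1,1)$-form $\theta$ on $W$ such that $h$ is $\theta$-Nakano positive. By shrinking and using compactness of $\overline{V}$, I will assume $\theta\geq c\,\omega$ on $V$ for a Hermitian metric $\omega$ and a constant $c>0$. Theorem \ref{Theorem: h Nak iff pi*h Nak} then shows that $\pi^*h$ is $\pi^*\theta$-Nakano positive, hence $c\,\pi^*\omega$-Nakano positive, on $\tl V$.

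\textbf{Step 2 (adding the weight).} Lemma \ref{Lemma: key lemma} gives $\varepsilon_V$ with $c\pi^*\omega+\varepsilon\idd\psi\geq\gamma_\varepsilon$ for $0<\varepsilon<\varepsilon_V$, after rescaling $\varepsilon_V$ by $c$. Write $\psi=\psi_{\rm sm}+\psi_{\rm psh}$ locally. Now take a Stein coordinate $\ts$, a Kähler metric, and a weight $\tl\phi$ with $\gamma_\varepsilon+\idd\tl\phi>0$, together with $\tl g$. I will solve $\dbar$ for $\pi^*h e^{-\varepsilon\psi}$ by a regularization argument modelled on Theorem \ref{Theorem: h>Gri0 & h_L>0 then h*h_L>Gri0}(b), with these steps:
\begin{itemize}
\item remove $\exc$ together with a hypersurface, so that the set stays Stein;
\item regularize $\psi_{\rm psh}$ by convolution on an exhaustion $\cal S_\nu$;
\item absorb the smooth term $\varepsilon\psi_{\rm sm}+\varepsilon(\psi_{\rm psh}\ast\rho_\nu)$ into the weight, using $c\pi^*\omega+\varepsilon\idd(\psi_{\rm sm}+\psi_{\rm psh}\ast\rho_\nu)\geq\gamma_\varepsilon$ (up to a small error) and the monotonicity $B_{c\pi^*\omega,\,\cdot}\geq B_{\gamma_\varepsilon,\,\cdot}$;
\item pass to a weak limit via \cite[Lemma 3.18]{Wat25a};
\item extend across $\exc\cup H$ by Lemma \ref{Lemma: dbar-extension lemma}.
\end{itemize}
This yields $\gamma_\varepsilon$-Nakano positivity of $\pi^*he^{-\varepsilon\psi}$.

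\textbf{Step 3 (Griffiths case).} Griffiths positivity together with a.e. Griffiths semi-positivity gives, via Proposition \ref{Proposition: Grif>0 then εω-Grif>0}, Theorem \ref{Theorem: h Gri iff pi*h Gri} and Remark \ref{Remark: condition of a.e. Grif in Thm of Grif posi}, that $\pi^*h$ is $\pi^*(c\omega)$-Griffiths positive on $\tl V$. I will then argue as in Theorem \ref{Theorem: h>Gri0 & h_L>0 then h*h_L>Gri0}(a) on $\tl V\setminus\exc$, where $\psi$ is smooth and $\idd\log|u|^2_{\pi^*h^*e^{\varepsilon\psi}}\geq\pi^*(c\omega)+\varepsilon\idd\psi\geq\gamma_\varepsilon$. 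The extension across $\exc$ is the delicate point, because the psh part of $\psi$ can carry divisorial mass along $\exc$. That mass is positive, however: $\idd(\varepsilon\psi_{\rm psh})\geq0$ is added to the curvature, and a.e. Griffiths semi-positivity of $\pi^*he^{-\varepsilon\psi}$ follows from Corollary \ref{Corollary: a.e. Grif semi-posi of h and pi*h} combined with Corollary \ref{Corollary: h>Gri0 & h_L>0 then h*h_L>Gri0}. So the test-form cutoff argument of Theorem \ref{Theorem: h Gri iff pi*h Gri} applies with $\gamma_\varepsilon$ smooth.

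\textbf{Step 4 (equality of $L^2$-subsheaves).} This is the main obstacle. The inclusion $\scr E(\pi^*he^{-\varepsilon\psi})\subset\scr E(\pi^*h)$ holds locally, up to the bounded factor $e^{\varepsilon\psi_{\rm sm}}$, since $\psi_{\rm psh}$ is bounded above. For the reverse inclusion I will use Corollary \ref{Corollary: strong openness property}: $\pi^*h$, or more precisely its a.e. representative $\cal H$, is Griffiths semi-positive and Nakano semi-positive, so $\scr E(\pi^*h)=\bigcup_{\beta>0}\scr E(\pi^*he^{-\beta\psi_{\rm psh}})$. In the Griffiths case, $\pi^*h\otimes\det\pi^*h$ is Nakano semi-positive by Theorem \ref{Theorem: h>Grif then h det h>Nak}, and the factor $(1+r)$ arises because $\det(he^{-\varepsilon\psi})=\det h\,e^{-r\varepsilon\psi}$. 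The union is increasing in $\beta$ because $\psi_{\rm psh}$ is locally bounded above. Coherence of these sheaves \cite{Ina22}, together with the Noetherian property on a neighborhood of the compact set $\pi^{-1}(\overline V)\cap\exc$, then gives a uniform $\beta=\varepsilon_0$ on $\tl V$. Away from $\exc$, $\psi$ is smooth and the equality is trivial. The key point I must check is that the increasing chain stabilizes uniformly on a relatively compact set; I would handle this by covering $\pi^{-1}(\overline V)$ with finitely many Stein charts and applying strong Noetherianity in each.
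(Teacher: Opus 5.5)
Your proposal is essentially the paper's proof. The paper takes $\psi$ from Lemma~\ref{Lemma: key lemma}, pulls back the positivity via Theorem~\ref{Theorem: h Nak iff pi*h Nak} (resp.\ Theorem~\ref{Theorem: h Gri iff pi*h Gri}), solves on the Stein manifold $\tl{S}\setminus\exc$ using the monotonicity of $B$, extends by Lemma~\ref{Lemma: dbar-extension lemma} (resp.\ the cutoff argument), and uses strong openness, coherence and strong Noetherianity for the $L^2$-subsheaves.

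There are three small points to fix. First, the convolution of $\psi_{\rom{psh}}$ and the weak-limit step in Step~2 are superfluous: $\psi$ is already smooth on $\tl{S}\setminus\exc$, so the paper feeds the smooth weight $\varepsilon\psi+\varPsi$ directly into the $\pi^*\theta$-Nakano positivity, and no ``small error'' arises. Second, apply Lemma~\ref{Lemma: key lemma} on some $W\Supset\overline{V}$, as the paper does, so that $\psi$ and the sheaves $\scr{E}(\pi^*he^{-\varepsilon\psi})$ are defined near the compact set $\pi^{-1}(\overline{V})$ where you run the Noetherian argument. Third, in Step~3 derive the a.e.\ semi-positivity of $\pi^*he^{-\varepsilon\psi}$ from the $\pi^*(c\omega)$-Griffiths positivity of $\pi^*h$ on all of $\tl{V}$ together with $c\,\pi^*\omega+\varepsilon\idd\psi\geq\gamma_\varepsilon$. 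Corollary~\ref{Corollary: h>Gri0 & h_L>0 then h*h_L>Gri0} does not apply there, because it would require $\varepsilon\psi$ to be plurisubharmonic, and it is only quasi-plurisubharmonic.
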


\begin{proof}
    Take a relatively compact open neighborhood $W$ of $V$ with $V\Subset W$ such that $h$ is Nakano positive (resp. Griffiths positive and a.e.\! Griffiths semi-positive) on $W$. 
      
    First, assume that $h$ is Nakano positive on $W$.
    It is shown that there exists a sufficiently small $\varepsilon_V > 0$ such that $\pi^*he^{-\varepsilon\psi}$ is Nakano positive on $\tl{W}:=\pi^{-1}(W)$ for every $0 < \varepsilon < \varepsilon_V$. 
    By Theorem \ref{Theorem: h Nak iff pi*h Nak}, there exists a positive continuous $(1,1)$-form $\theta > 0$ on $W$ such that $\pi^*h$ is $\pi^*\theta$-Nakano positive on $\tl{W}$.  
    By applying Lemma \ref{Lemma: key lemma} to the relatively compact subset $W$, there exist a quasi-plurisubharmonic function $\psi : \tl{W}\longrightarrow[-\infty,+\infty)$ which is smooth on $\tl{W}\setminus\exc$ and a number $\varepsilon_V>0$ such that $\pi^*\theta+\varepsilon\idd\psi$ is strictly positive on $\tl{W}$ in the sense of currents for any $0<\varepsilon<\varepsilon_V$; 
    that is, there exists a Hermitian metric $\tl{\gamma}_\varepsilon > 0$ on $\tl{W}$ such that $\pi^*\theta+\varepsilon\idd\psi\geq\tl{\gamma}_\varepsilon$ on $\tl{W}$ in the sense of currents. 
    To show the $\tl{\gamma}_\varepsilon$-Nakano positivity of $\pi^*he^{-\varepsilon\psi}$ on $\tl{W}$, consider any data consisting of 
    \begin{itemize}
        \item a Stein coordinate $\tl{S}\subset\tl{W}$ admitting a trivialization $\pi^*E|_{\tl{S}}\cong\tl{S}\times\bb{C}^r$,
        \item a \kah metric $\omega_{\tl{S}}$ on $\tl{S}$,
        \item a smooth function $\varPsi$ on $\tl{S}$ such that $\tl{\gamma}_\varepsilon+\idd\varPsi>0$ on $\tl{S}$,
        \item a $\dbar$-closed $g\in L^{2,loc}_{n,q}(\tl{S},\pi^*E,\pi^*he^{-\varPsi})$ satisfying 
        \begin{align*}
            \int_{\tl{S}}\lara{B^{-1}_{\tl{\gamma}_\varepsilon,\varPsi,\omega_{\tl{S}}}g}{g}_{\pi^*h,\omega_{\tl{S}}}e^{-\varPsi}dV_{\omega_{\tl{S}}}<+\infty.
        \end{align*}
    \end{itemize}
    Since $\pi^*\theta+\idd(\varepsilon\psi+\varPsi)\geq\idd\varPsi+\tl{\gamma}_\varepsilon>0$ on $\tl{S}\setminus\exc$, we have $B_{\pi^*\theta,\varepsilon\psi+\varPsi,\omega_{\tl{S}}}\geq B_{\tl{\gamma}_\varepsilon,\varPsi,\omega_{\tl{S}}}$ on $\tl{S}\setminus\exc$, and 
    by applying the $\pi^*\theta$-Nakano positivity of $\pi^*h$ on the Stein space $\tl{S}\setminus\exc$, there exists $u \in L^2_{n,q-1}(\tl{S}\setminus\exc,\pi^*E;\omega_{\tl{S}},\pi^*he^{-\varepsilon\psi-\varPsi})$ such that $\dbar u=g$ on $\tl{S}\setminus \exc$ and 
    \begin{align*}
        \int_{\tl{S}}|u|^2_{\pi^*he^{-\varepsilon\psi},\omega_{\tl{S}}}e^{-\varPsi}dV_{\omega_{\tl{S}}}&=\int_{\tl{S}\setminus\exc}|u|^2_{\pi^*h,\omega_{\tl{S}}}e^{-\varepsilon\psi-\varPsi}dV_{\omega_{\tl{S}}}\\
        &\leq\int_{\tl{S}\setminus\exc}\lara{B_{\pi^*\theta,\varepsilon\psi+\varPsi,\omega_{\tl{S}}}^{-1}g}{g}_{\pi^*h,\omega_{\tl{S}}}e^{-\varepsilon\psi-\varPsi}dV_{\omega_{\tl{S}}}\\
        &\leq\int_{\tl{S}\setminus\exc}\lara{B_{\tl{\gamma}_\varepsilon,\varPsi,\omega_{\tl{S}}}^{-1}g}{g}_{\pi^*h,\omega_{\tl{S}}}e^{-\varepsilon\psi-\varPsi}dV_{\omega_{\tl{S}}}\\
        &=\int_{\tl{S}}\lara{B^{-1}_{\tl{\gamma}_\varepsilon,\varPsi,\omega_{\tl{S}}}g}{g}_{\pi^*he^{-\varepsilon\psi},\omega_{\tl{S}}}e^{-\varPsi}dV_{\omega_{\tl{S}}}.
    \end{align*}
    Hence, we have $u \in L^2_{n,q-1}(\tl{S},\pi^*E;\omega_{\tl{S}},\pi^*he^{-\varepsilon\psi-\varPsi})$ and $\dbar u=g$ on $\tl{S}$ by the $\dbar$-extension Lemma \ref{Lemma: dbar-extension lemma}, which shows $\tl{\gamma}_\varepsilon$-Nakano positivity of $\pi^*he^{-\varepsilon\psi}$ on $\tl{W}$.

    Here, for any $0\leq\varepsilon<\varepsilon_V$, the Nakano (semi)-positivity of $\pi^*he^{-\varepsilon\psi}$ implies that the $L^2$-subsheaf $\scr{E}(\pi^*he^{-\varepsilon\psi})$ is coherent.
    By the strong openness property (= Theorem \ref{Theorem: strong openness property} and Corollary \ref{Corollary: strong openness property}), we obtain $\scr{E}(\pi^*h)=\bigcup_{0<\varepsilon<\varepsilon_V}\scr{E}(\pi^*he^{-\varepsilon\psi})$ on $\tl{W}$.
    Therefore, by the relative compact-ness of $\tl{V}$ and the strong Noetherian property of coherent sheaves (see \cite[Chapter\,II, (3.22)]{Dem-book}), there exists $0<\varepsilon_0<\varepsilon_V$ such that $\bigcup_{0<\varepsilon<\varepsilon_V}\scr{E}(\pi^*he^{-\varepsilon\psi})=\scr{E}(\pi^*he^{-\varepsilon_0\psi})$ on $\tl{V}$.

    Second, assume that $h$ is Griffiths positive and a.e.\! Griffiths semi-positive on $W$.
    It follows from Proposition \ref{Proposition: Grif>0 then εω-Grif>0}, Corollary \ref{Corollary: a.e. Grif semi-posi of h and pi*h}, Theorem \ref{Theorem: h Gri iff pi*h Gri} and Remark \ref{Remark: condition of a.e. Grif in Thm of Grif posi} that there exist a smooth positive function $\tau:W\longrightarrow\bb{R}_{>0}$ and a Hermitian metric $\omega$ on $X$ such that $\pi^*h$ is $\pi^*(2\tau\omega)$-Griffiths positive and a.e.\! Griffiths semi-positive on $\tl{W}$. 
    Applying Lemma \ref{Lemma: key lemma} to the relatively compact subset $W$, there exist a quasi-plurisubharmonic function $\psi : \tl{W}\longrightarrow[-\infty,+\infty)$ which is smooth on $\tl{W}\setminus\exc$, and a number $\varepsilon_V>0$ such that the following holds: for any $0<\varepsilon<\varepsilon_V$, there exists a Hermitian metric $\tl{\gamma}_\varepsilon>0$ on $\tl{W}$ such that $\pi^*(\tau\omega)+\varepsilon\idd\psi\geq2\tl{\gamma}_\varepsilon$ on $\tl{W}$ in the sense of currents.
    By the proof of Theorem \ref{Theorem: h Gri iff pi*h Gri} and Remark \ref{Remark: condition of a.e. Grif in Thm of Grif posi}, it suffices to show that $\pi^*he^{-\varepsilon\psi}$ is $\tl{\gamma}_\varepsilon$-Griffiths positive on $\tl{W}\setminus\exc$, and this positivity extends to $\tl{W}$. 
    For any point $x\in \tl{W}\setminus\exc$, there exist a number $\delta>0$ and an open neighborhood $\cal{U}\subset\tl{W}\setminus\exc$ of $x$ such that $2\pi^*(\tau\omega)\geq\delta\idd|z|^2\geq\pi^*(\tau\omega)$ on $\cal{U}$. Thus, we have $\idd|u|^2_{\pi^*h^*}\geq\delta|u|^2_{\pi^*h^*}\idd|z|^2$ on $\cal{U}$ in the sense of currents, for any $u\in H^0(\cal{U},\pi^*E^*)$.
    By the proof of \cite[Proposition 5.5]{Wat24a}, the function $\log|u|^2_{\pi^*h^*}-\delta|z|^2$ coincides a.e. on $\cal{U}$ with a plurisubharmonic function defined on $\cal{U}$; that is, $\idd\log|u|^2_{\pi^*h^*}\geq\delta\idd|z|^2$ on $\cal{U}$ in the sense of currents, then 
    \begin{align*}
        \idd\log|u|^2_{\pi^*h^*e^{\varepsilon\psi}}\geq\delta\idd\log|u|^2_{\pi^*h^*}+\varepsilon\idd\psi\geq\idd(\varepsilon\psi+\delta|z|^2)\geq2\tl{\gamma}_\varepsilon
    \end{align*}
    holds on $\cal{U}$ in the sense of currents. 
    Furthermore, there exist $\tl{\delta}>0$ and an open neighborhood $\cal{V}\subset\cal{U}$ of $x$ such that $2\tl{\gamma}_\varepsilon\geq\tl{\delta}\idd|z|^2\geq\tl{\gamma}_\varepsilon$ on $\cal{V}$. 
    Hence, \cite[Proposition 5.5]{Wat24a} implies that $\idd|u|^2_{\pi^*h^*e^{\varepsilon\psi}}\geq\tl{\delta}|u|^2_{\pi^*h^*e^{\varepsilon\psi}}\idd|z|^2\geq|u|^2_{\pi^*h^*e^{\varepsilon\psi}}\tl{\gamma}_\varepsilon$ on $\cal{V}$ in the sense of currents, which yields the $\tl{\gamma}_\varepsilon$-Griffiths positivity of $\pi^*he^{-\varepsilon\psi}$ on $\tl{W}\setminus\exc$. 

    Here, by Theorem \ref{Theorem: h>Grif then h det h>Nak}, $\pi^*h\otimes\det\pi^*h e^{-(r+1)\varepsilon\psi}$ is also Nakano (semi)-positive for any $0\leq\varepsilon<\varepsilon_V$, and hence the same argument applies to the $L^2$-sheaf $\scr{E}(\pi^*h\otimes\det\pi^*h)$.
\end{proof}

The following immediately follows from Proposition \ref{Proposition: smooth Grif/Nak>0 on X_reg then also >0 on X}, and finally the proof of Proposition \ref{Proposition: smooth Grif/Nak>0 on X_reg then also >0 on X} is given.

\begin{remark}\label{Remark: smooth case relative positivity and O(pi*E)=E(pi*h)}
    Theorem \ref{Theorem: h>Nak then pi*he-psi>Nak & E(pi*h)=E(pi*he-psi)} also holds if $h$ is a smooth Hermitian metric on $E$ and is Nakano $($resp. Griffiths$)$ positive on $V$; 
    in the case, $\cal{O}_{\tx}(\pi^*\!E)=\scr{E}(\pi^*\!h)=\scr{E}(\pi^*\!h\,e^{-\varepsilon_0\psi})$ $($resp. $\cal{O}_{\tx}(\pi^*E\otimes\det \pi^*E)=\scr{E}(\pi^*h\otimes\det \pi^*h)=\scr{E}(\pi^*h\otimes\det \pi^*\!h\,e^{-(1+r)\varepsilon_0\psi}))$ on $\tl{V}$.
\end{remark}

\begin{proof}[Proof of Proposition \ref{Proposition: smooth Grif/Nak>0 on X_reg then also >0 on X}]
    $(i)$ Assume that $h$ is Griffiths semi-positive on $\reg$, then $\pi^{*}h$ is also Griffiths semi-positive on $\tx\setminus\exc$. 
    Using Lemma \ref{Lemma: two psh} together with the boundedness arising from the smoothness of $h$ and the unique extension property of plurisubharmonic functions, it follows, as in the proof of Theorem \ref{Theorem: Grif semi-positivity of h and pi^*h}, that $\pi^{*}h$ is Griffiths semi-positive on $\tx$. 
    In the proof of Theorem \ref{Theorem: Grif semi-positivity of h and pi^*h}, replacing Theorem \ref{Theorem: idd>0 and a.e. psh} with \cite[Corollary 1.12]{Dem85} yields the Griffiths semi-positivity of $h$ without assuming that $X$ is locally irreducible.

    $(ii)$ Assume that $h$ is Griffiths positive on $\reg$ and uniformly Griffiths positive in a neighborhood of $\xs$. 
    By Proposition \ref{Proposition: Grif>0 then εω-Grif>0}, there exist a Hermitian metric $\omega$ on $X$ and a smooth positive function $\varepsilon:X\longrightarrow\bb{R}_{>0}$ such that $\iO{E,h}\geq_{Grif}\varepsilon\omega\oid{E}$ on $\reg$ and $\iO{\pi^*E,\pi^*h}\geq_{Grif}\pi^*(\varepsilon\omega)\oid{\pi^*E}\geq_{Grif}0$ on $\tx\setminus\exc$.
    In particular, $\pi^{*}h$ is Griffiths semi-positive on $\tx$ by $(i)$. Hence, $\pi^{*}h$ is $\varepsilon\omega$-Griffiths positive on $\tx$ by the proof of Theorem \ref{Theorem: h Gri iff pi*h Gri} and Remark \ref{Remark: condition of a.e. Grif in Thm of Grif posi}, and $h$ is the Griffiths positive on $X$ by Theorem \ref{Theorem: h Gri iff pi*h Gri}.

    $(iii)$ If $h$ is Nakano semi-positive on $\reg$, then $\pi^{*}h$ is also Nakano semi-positive on $\tx\setminus\exc$, and arguing as in the proof of Theorem \ref{Theorem: h Nak iff pi*h Nak}, the semi-positivity of $\pi^{*}h$ extends to $\tx$, which yields the Nakano semi-positivity of $h$ on $X$.

    $(iv)$ Assume that $h$ is Nakano positive on $\reg$ and uniformly Nakano positive in a neighborhood of $\xs$. 
    Arguing as in Proposition \ref{Proposition: Grif>0 then εω-Grif>0}, there exists a positive smooth $(1,1)$-form $\theta$ on $X$ such that $h$ is $\theta$-Nakano positive on $\reg$. 
    Arguing as in the proof of Theorem \ref{Theorem: h Nak iff pi*h Nak}, the $\pi^{*}\theta$-Nakano positivity of $\pi^{*}h$ on $\tx\setminus\exc$ extends to $\tx$, which yields the $\theta$-Nakano positivity of $h$ on $X$.
\end{proof}

\section{Global $L^2$-estimates on the regular locus $\reg$}

In this section, we provide a proof of Theorem \ref{Theorem: global L2-estimates of Nakano}.
Here, a function $\varPsi:X\longrightarrow[-\infty,+\infty)$ on a complex space $X$ is \textit{exhaustion} if all sublevel sets $X_c:=\{x\in X\mid\varPsi(x)<c\}$, $\forall\,c\in\bb{R}$, are relatively compact. 
A complex space is said to be \textit{weakly} \textit{pseudoconvex} if there exists a smooth exhaustion plurisubharmonic function.
To obtain global $L^2$-estimates on weakly pseudoconvex complex spaces, the following theorem providing a certain Stein exhaustion sequence plays a crucial role.

\begin{theorem}[{= Theorem \ref{Theorem: exhaustion a.e. Stein covering}}]\label{Theorem: exhaustion a.e. Stein covering in subsection}
    Let $X$ be a weakly pseudoconvex complex space. Assume that there exist a holomorphic line bundle $L\longrightarrow X$ and a singular Hermitian metric $h$ on $L$ such that $\iO{\pi^*L,\pi^*h}\geq\pi^*(\varepsilon\omega)$ holds on $\tx$ in the sense of currents for some smooth positive function $\varepsilon:X\longrightarrow\bb{R}_{>0}$ and some Hermitian metric $\omega$ on $X$, where $\pi:\tx\longrightarrow X$ is canonical desingularization of $X$. 
    Then there exist an increasing sequence of real numbers $\{c_j\}_{j\in\bb{N}}$ with $c_1>\inf_X\varPsi$ and analytic subsets $A_j$ of each $X_{c_j}$ such that each open subset $(X_{c_j})_{reg}\setminus A_j=(X_{c_j}\setminus A_j)\cap\reg$ is Stein manifold. 

    In particular, the assumption holds whenever $X$ admits a singular positive line bundle.
\end{theorem}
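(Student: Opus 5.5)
The plan is to reduce the problem to a Kähler-type statement on the resolution and then to push the Stein property back down to $X$. Let $\varPsi$ be the smooth plurisubharmonic exhaustion of $X$. Choose real numbers $c_j\uparrow+\infty$ with $c_1>\inf_X\varPsi$. Each sublevel set $X_{c_j}$ is relatively compact. Following \cite[Theorems 3.2 and 3.5]{Wat24b}, it suffices to produce, on each $\tl{X}_{c_j}:=\pi^{-1}(X_{c_j})$, the following data: a holomorphic line bundle whose sections give an embedding of the complement of an analytic set, together with a smooth strictly plurisubharmonic exhaustion of that complement.

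\textbf{Step 1: a smooth positive representative on the resolution.} On a slightly larger relatively compact set $W\Supset\overline{X_{c_j}}$, apply a refined Demailly regularization to the current $\iO{\pi^*L,\pi^*h}\geq\pi^*(\varepsilon\omega)$. This gives metrics $\pi^*h e^{-\phi_\nu}$ with analytic singularities along analytic sets $Z_\nu\subset\tl W$, whose curvature is at least $\pi^*(\varepsilon\omega)-\delta_\nu\gamma$. Here $\gamma$ is a Hermitian metric on $\tl X$. The degeneracy of $\pi^*\omega$ along $\exc$ is then compensated using Lemma \ref{Lemma: key lemma}: adding $\varepsilon'\psi$, where $\psi$ is smooth off $\exc$ and has analytic singularities along $\exc$, yields a metric on $\pi^*L$ over $\tl W$. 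This metric is smooth on $\tl W\setminus(Z_\nu\cup\exc)$, and its curvature is at least a Hermitian metric $\gamma_{\varepsilon'}$ in the sense of currents. Its poles lie in an analytic set $Z:=Z_\nu\cup\exc\cup(\text{poles of }\psi)$.

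\textbf{Step 2: embedding \`a la Takayama.} Apply $L^2$-estimates on the weakly pseudoconvex manifold $\tl W$, equipped with the complete Kähler metric off $Z$ (Takayama's argument \cite{Tak98}). For $m\gg0$, sections of $K_{\tl W}\otimes\pi^*L^{m}$, restricted to $\tl{X}_{c_j}$, then separate points and tangent directions off an analytic set $\tl A_j\supset Z$. This gives a holomorphic embedding of $\tl X_{c_j}\setminus\tl A_j$ into some $\bb{C}^N$ (possibly after a projective map whose hyperplane section is included in $\tl A_j$). This embedding is injective, so $\tl X_{c_j}\setminus \tl A_j$ is biholomorphic to a locally closed submanifold.

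\textbf{Step 3: Steinness and descent.} Let $A_j:=\pi(\tl A_j\cap\tl X_{c_j})\cup (\xs\cap X_{c_j})$. This is analytic by properness of $\pi$ (Remmert). Since $\pi$ is biholomorphic off $\exc$, we have $(X_{c_j})_{reg}\setminus A_j\cong\tl X_{c_j}\setminus\tl A_j$. The next task is to construct a smooth strictly plurisubharmonic exhaustion of this set. Take
\[
\chi:=-\log(c_j-\varPsi\circ\pi)+\sum_k -\log|\sigma_k|^2_{\text{loc}}+|F|^2,
\]
where $F$ is the embedding of Step 2 and the $\sigma_k$ are the sections cutting out $\tl A_j$, normalized by the singular weight from Step 1. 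Each term is plurisubharmonic: the first because $\varPsi$ is plurisubharmonic, the second by the curvature lower bound from Step 1, and $|F|^2$ is strictly plurisubharmonic because $F$ is an embedding. Moreover, $\chi$ tends to $+\infty$ at the boundary of $X_{c_j}$ and along $\tl A_j$. By Grauert's solution of the Levi problem, $\tl X_{c_j}\setminus\tl A_j$ is Stein.

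The final assertion follows from Corollary \ref{Corollary: iO_h>0 iff iO_pi*h>0} together with Corollary \ref{Corollary: singular semi-positivity of h_L and pi*h_L}. If $h$ is singular positive, then $\pi^*h$ is singular semi-positive and $\iO{L,h}\geq\varepsilon\omega$ holds in the sense of currents. The second part of Corollary \ref{Corollary: iO_h>0 iff iO_pi*h>0} then gives $\iO{\pi^*L,\pi^*h}\geq\pi^*(\varepsilon\omega)$.

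The main obstacle is Step 2. Because $\pi^*\omega$ degenerates along $\exc$, one must verify that the Takayama-type $L^2$ argument still yields genuine point separation and local coordinates uniformly on the relatively compact set. I would first try to control this by the key lemma's $\gamma_{\varepsilon'}$ and by choosing $m$ after fixing $\varepsilon'$. The Noetherian property on $\overline{X_{c_j}}$ then ensures that finitely many sections suffice.
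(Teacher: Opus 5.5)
Your Step 1 and your derivation of the final assertion are fine; they match the paper, up to the order in which Lemma~\ref{Lemma: key lemma} and the regularization are applied. The gap is in Steps 2--3.

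\textbf{Step 2 is not an available result.} Takayama's embedding theorem \cite{Tak98} requires a \emph{smooth} positive line bundle. You only have a K\"ahler current with poles along $Z$, and, as you note yourself, separation of points and tangents degenerates near $Z$. This is left unproved.

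\textbf{Step 3 fails as stated.} For $\tl X_{c_j}\setminus\tl A_j$ to be Stein, $\tl A_j\cap\tl X_{c_j}$ must be of pure codimension one. Near a point where it has codimension $\geq 2$, every plurisubharmonic function on the complement extends plurisubharmonically across it, so no exhaustion can tend to $+\infty$ there. Your $\tl A_j$ is only an analytic set containing $Z\supseteq Z_\nu$, and $Z_\nu$ typically has components of higher codimension. Even if you take $\tl A_j$ to be the union of the divisors of the $\sigma_k$, three things go wrong near $Z$:
\begin{itemize}
\item The term $-\log|\sigma_k|^2_{e^{-\phi}}=-\log|\sigma_k|^2+\phi$ involves $\phi\to-\infty$ on $Z$. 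Nothing forces it to tend to $+\infty$ near $Z$, or even to be bounded below there.
\item Normalizing with a smooth metric instead destroys plurisubharmonicity.
\item $|F|^2$ need not blow up near $Z$, since all the sections defining $F$ may vanish there.
\end{itemize}
So the exhaustion property fails precisely at $Z$, which is where the whole difficulty lies.

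\textbf{The missing idea is the paper's way of getting rid of $Z$.}
\begin{itemize}
\item Take a refined Demailly approximation $\cal{H}$ of $\pi^*h\,e^{-2\delta\psi}$ with algebraic singularities, and blow up its singular locus $Z$. This gives $\mu:\widehat X_c\to\tx_c$ with $Z$ replaced by a simple normal crossing divisor $E_Z$.
\item The Negativity Lemma \cite[Lemma 2.2]{Wat25b} then produces a genuine smooth positive line bundle on $\widehat X_c$ \cite[Theorem 3.5]{Wat24b}.
\item Takayama's theorem now applies verbatim and embeds $\widehat X_c$ into $\bb{P}^{2n+1}$, so $\widehat X_c\setminus\widehat H$ is Stein for a general hyperplane $H$.
\item One then removes only hypersurfaces, namely $E_Z$ and $\mu^{-1}(\exc)$, which preserves Steinness \cite[Theorem 3.12]{Die96}.
\item Finally, push down by the biholomorphism $\pi\circ\mu$ off $E_Z\cup\mu^{-1}(\exc)$, with $A=\pi(Z)\cup(\pi\circ\mu)(\widehat H)$.
\end{itemize}
The blow-up is exactly what makes the removed set a divisor and the metric smooth and positive, so it cures both of your problems.

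Alternatively, within your own framework it would suffice to find one nonzero section $\sigma$ of a high power of $\pi^*L$. It should vanish along $\exc$, and along $Z$ to high enough order that $|\sigma|^2e^{-m\phi}\to0$ there. Then $-\log(c_j-\varPsi\circ\pi)-\log|\sigma|^2_{e^{-m\phi}}$ is already a smooth strictly plurisubharmonic exhaustion of $\tl X_{c_j}\setminus\{\sigma=0\}$, and no embedding is needed. However, producing such a $\sigma$ is again an $L^2$ argument you would have to supply.
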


\begin{proof}
    For any constant $c>\inf_X\varPsi$, it suffices to show that there exists an analytic subset $A\subset X_c$ such that $(X_c)_{reg}\setminus A$ is a Stein manifold. 
    Here, the sublevel set $\tx_c:=\pi^{-1}(X_c)=\{x\in \tx\mid\pi^*\varPsi(x)<c\}$ is also weakly pseudoconvex. 
    Let $\tau>0$ be a sufficiently small constant. Applying Lemma \ref{Lemma: key lemma} to $\tx_{c+\tau}$, there exists a quasi-plurisubharmonic function $\psi:\tx_{c+\tau}\longrightarrow[-\infty,+\infty)$ which is smooth on $\tx_{c+\tau}\setminus\exc$ and whose smooth part $\psi_{\mathrm{sm}}$ has a Levi form compensating for the loss of positivity of $\pi^*(\varepsilon\omega)$ along $\exc$.
    For $\delta>0$, define a new singular Hermitian metric on $\pi^*L|_{\tx_{c+\tau}}$ by $\tl{h}:=\pi^*h\cdot e^{-2\delta\psi}$.
    Since $\tx_{c+\tau}$ is relatively compact, we may choose $\delta>0$ sufficiently small such that $\tl{h}$ is singular positive on $\tx_{c+\tau}$. Equivalently,
    \begin{align*}
        \iO{\pi^*L,\tl{h}}=\iO{\pi^*L,\pi^*h}+2\delta\idd\psi\geq\pi^*(\varepsilon\omega)+2\delta\idd\psi\geq\gamma_{\delta}
    \end{align*}
    \vspace{-5mm}

    \noindent
    holds on $\tx_{c+\tau}$ in the sense of currents for some Hermitian metric $\gamma_\delta>0$ on $\tx$.

    For the singular positive Hermitian metric $\tl{h}$ defined on $\pi^*L|_{\tx_{c+\tau}}$, we consider one approximation $\cal{H}:=\tl{h}_{\nu_0}$ obtained from a refined Demailly approximation $\{\tl{h}_\nu\}_{\nu\in\mathbb N}$ preserving the multiplier ideal sheaves and producing algebraic singularities (see \cite[Theorem 3.2]{Wat24b}). 
    By blowing up the analytic singular locus $Z$ of $\cal{H}$, there exists a proper modification $\mu:\widehat{X}_c\longrightarrow\tx_c$, given by a composition of finitely many blow-ups with smooth center, such that the $\mu$-exceptional set $E_Z:=\mu^{-1}(Z)$ is simple normal crossing and $\mu|_{\widehat{X}_c\setminus E_Z}:\widehat{X}_c\setminus E_Z\overset{\simeq}{\longrightarrow}\tx_c\setminus Z$ is biholomorphic. 
    Furthermore, since the approximation has algebraic singularities, it becomes possible to offset the singularities of the suitably pulled-back metric $\pi^*\cal{H}$. Subsequently, applying the Negativity Lemma (see \cite[Lemma 2.2]{Wat25b}) on the relatively compact subset $\widehat{X}_c$, one obtains a positive line bundle $\cal{L}\longrightarrow \widehat{X}_c$ constructed from $(\pi^*L,\cal{H})$ (see \cite[Theorem 3.5]{Wat24b}).

    By Takayama's embedding theorem (see \cite[Theorem 1.2]{Tak98}), there exists a holomorphically embedding $\varPhi_c:\widehat{X}_c\longrightarrow\bb{P}^{2n+1}$. 
    We take a general hyperplane $H$ of $\bb{P}^{2n+1}$. Then $\bb{P}^{2n+1}\setminus H$ is affine thus Stein, and since $H$ is general, it intersects $\varPhi_c(\widehat{X}_c)$.
    Let $\widehat{H}:=\varPhi_c^{-1}(H)$, then the open subset $\widehat{X}_c\setminus\widehat{H}$ is Stein manifold. 
    Since $E_Z$ is the support of a simple normal crossing divisor, an open subset $(\widehat{X}_c\setminus\widehat{H})\setminus E_Z=\widehat{X}_c\setminus(\widehat{H}\cup E_Z)$ is also Stein.
    In fact, the complement of a hypersurface in a Stein manifold is also Stein (see \cite[Theorem 3.12]{Die96}). 
    By the biholomorphicity of $\mu|_{\widehat{X}_c\setminus E_Z}$, the inverse image $\widehat{E}:=\mu^{-1}(\exc)=(\pi\circ\mu)^{-1}(\xs)$ is also simple normal crossing on $\widehat{X}_c\setminus E_Z$, and hence the open subset $(\widehat{X}_c\setminus(\widehat{H}\cup E_Z))\setminus\widehat{E}=\widehat{X}_c\setminus(\widehat{H}\cup E_Z\cup\widehat{E})$ is Stein as well.

    Since $\pi$ and $\mu$ are proper holomorphic maps, the images $Z_X:=\pi(Z)=(\pi\circ\mu)(E_Z)$ and $H_X:=(\pi\circ\mu)(\widehat{H})$ are analytic subsets of $X$ by Remmert's proper mapping theorem (see \cite[Chapter II, (8.8)]{Dem-book}). 
    Finally, due to the biholomorphicity of $\pi\circ\mu|_{\widehat{X}_c\setminus(E_Z\cup\widehat{E})}:\widehat{X}_c\setminus(E_Z\cup\widehat{E})\overset{\simeq}{\longrightarrow} (X_c)_{reg}\setminus Z_X$, the open subset $(X_c)_{reg}\setminus(H_X\cup Z_X)=\pi\circ\mu|_{\widehat{X}_c\setminus(E_Z\cup\widehat{E})}(\widehat{X}_c\setminus(\widehat{H}\cup E_Z\cup\widehat{E}))$ is also Stein manifold, and by setting $A_j:=H_X\cup Z_X$, the proof is complete. 
    Note that the assumption of this theorem is satisfied if weakly pseudoconvex complex space $X$ admits a singular positive line bundle, by Corollary \ref{Corollary: iO_h>0 iff iO_pi*h>0}.
\end{proof}

\begin{theorem}[{= Theorem\,\ref{Theorem: global L2-estimates of Nakano}}]\label{Theorem: global L2-estimates of Nakano in section 5}
    Let $X$ be a weakly pseudoconvex \kah complex space of pure dimension $n$ with a \kah metric $\omega$ and $E\longrightarrow X$ be a holomorphic vector bundle with a singular Hermitian metric $h$. 
    Assume that $X$ admits a singular positive line bundle. 
    If $h$ is $\theta$-Nakano positive on $X$ in the sense of $L^2$-estimates for a continuous $(1,1)$-form $\theta$ on $X$,
    then for any $q>0$, for any smooth quasi-plurisubharmonic function $\psi$ on $X$ such that $\theta+\idd\psi>0$ in $X$ and for any $f\in L^{2,loc}_{n,q}(X,E,he^{-\psi})$ satisfying $\dbar f=0$ on $\reg$ and $\int_X\lara{B^{-1}_{\theta,\psi,\omega}f}{f}_{h,\omega}\,e^{-\psi}\dvo<+\infty$, 
    there exists $u\in L^2_{n,q-1}(X,E;\omega,he^{-\psi})$ satisfying $\dbar u=f$ on $\reg$ and the following global $L^2$-estimate
    \begin{align*}
        \int_X|u|^2_{h,\omega}e^{-\psi}\,\dvo\leq\int_X\lara{B^{-1}_{\theta,\psi,\omega}f}{f}_{h,\omega}\,e^{-\psi}\dvo,
    \end{align*}
    where $B_{\theta,\psi,\omega}=[(\theta+\idd\psi)\oid{E},\Lambda_\omega]$.
    
    Furthermore, if $X$ is not necessarily \kah and $\omega$ is taken simply as a Hermitian metric, then the same global $L^2$-estimate as above holds only in the case $q=1$.
\end{theorem}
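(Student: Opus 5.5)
The plan is to exhaust $X$ by the sublevel sets $X_{c_j}$ of the smooth plurisubharmonic exhaustion $\varPsi$. By Theorem \ref{Theorem: exhaustion a.e. Stein covering in subsection}, which applies because $X$ carries a singular positive line bundle, we may choose analytic subsets $A_j\subset X_{c_j}$ such that $S_j:=(X_{c_j})_{reg}\setminus A_j$ is a Stein manifold. On each $S_j$ we solve the $\dbar$-equation with the required estimate, and then pass to a weak limit as $j\to\infty$.

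\emph{Local solution step.} The definition of $\theta$-Nakano positivity (Definition \ref{Definition: singular Nakano positive}) is phrased for Stein space coordinates with a trivialization of $E$. The set $S_j$ is Stein, but $E$ need not be trivial on it. The first task is therefore to upgrade the defining property to arbitrary Stein open subsets of $\reg$; alternatively, we enlarge $A_j$ by a hypersurface off which $E$ becomes trivial. On a relatively compact Stein set, $E$ is generated by finitely many sections, so $E|_{S_j}$ is trivial off a hypersurface. The complement of a hypersurface in a Stein manifold is Stein by \cite[Theorem 3.12]{Die96}, so this enlargement keeps $S_j$ Stein. We apply the property with the \kah metric $\omega|_{S_j}$ and the weight $\psi|_{S_j}$; the hypothesis $\theta+\idd\psi>0$ holds by assumption. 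This produces $u_j$ on $S_j$ with $\dbar u_j=f$ and
\[
\int_{S_j}|u_j|^2_{h,\omega}e^{-\psi}\dvo\leq\int_{X}\lara{B^{-1}_{\theta,\psi,\omega}f}{f}_{h,\omega}e^{-\psi}\dvo=:C<+\infty.
\]
The integral on the right restricted to $S_j$ is finite because it is bounded by the global one. Since $S_j$ differs from $(X_{c_j})_{reg}$ only by a closed analytic subset, and $h$ is uniformly positive definite, $u_j$ is $L^2_{loc}$ with respect to a smooth metric. Lemma \ref{Lemma: dbar-extension lemma} then extends $\dbar u_j=f$ across $A_j\cap\reg$, so the equation holds on all of $(X_{c_j})_{reg}$.

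\emph{Limit step.} Extend each $u_j$ by zero to $\reg$. The family $\{u_j\}$ is bounded by $C$ in $L^2_{n,q-1}(\reg,E;\omega,he^{-\psi})$, so a subsequence converges weakly to some $u$, and weak lower semicontinuity of the norm gives $\int_X|u|^2_{h,\omega}e^{-\psi}\dvo\leq C$. Testing against compactly supported smooth forms on $\reg$ shows $\dbar u=f$ in the sense of distributions on each $(X_{c_k})_{reg}$, and hence on all of $\reg$. Local square integrability near $\xs$ is automatic from the global bound, so $u\in L^2_{n,q-1}(X,E;\omega,he^{-\psi})$.

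For the non-\kah case with $q=1$, the same scheme applies. The definition uses a \kah metric $\omega_S$ on $S$, so we apply it on $S_j$ with some auxiliary \kah metric $\omega_j$; one exists, for instance $\idd$ of a strictly plurisubharmonic exhaustion of $S_j$, or the restriction of a metric from the projective embedding. For $(n,1)$-forms, $\lara{B^{-1}_{\theta,\psi,\gamma}f}{f}_\gamma dV_\gamma$ is monotone in $\gamma$: it decreases as $\gamma$ grows. Together with the identity for $(n,0)$-forms in Lemma \ref{Lemma: inequality of (n,0)-forms}, this lets us compare the estimate for $\omega_j+\varepsilon^{-1}\omega$, for instance, with the one for $\omega$, and then let $\varepsilon\to0$. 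For $q\geq2$ this monotonicity fails, which explains the restriction to $q=1$.

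The main obstacle I expect is the local solution step: producing Stein sets $S_j$ on which the defining $L^2$ property can legitimately be invoked, namely with a trivialization of $E$ and a genuinely \kah metric, while keeping the exceptional sets analytic so that $\dbar$-extension still applies.
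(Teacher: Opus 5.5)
Your argument for the K\"ahler case is essentially the paper's proof. You take the Stein sets $S_j$ from Theorem~\ref{Theorem: exhaustion a.e. Stein covering in subsection}, remove a hypersurface so that $E$ becomes trivial, and invoke the defining property with $\omega|_{S_j}$. You then extend $\dbar u_j=f$ across the removed analytic sets using uniform positive definiteness, and pass to a weak limit. In that limit step it is again the local boundedness of $h^*$ that makes compactly supported test forms continuous functionals on the weighted $L^2$ space; the paper packages this into its cited weak-limit lemma.

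The gap is in the non-K\"ahler case $q=1$. Definition~\ref{Definition: singular Nakano positive} can only be invoked with a K\"ahler metric on $S_j$. Your metric $\omega_j+\varepsilon^{-1}\omega$ is not K\"ahler once $d\omega\neq0$, so it yields no solution $u_j$. For $q=1$ both sides of the estimate are invariant under rescaling the metric, so your family is really $\omega+\varepsilon\omega_j$ with $\varepsilon\to0$. That is Demailly's approximation device from the K\"ahler case, which is exactly what is unavailable here.

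What you need is a single K\"ahler metric $\widehat{\omega}_j\geq\omega$ on $S_j$, and it exists because $S_j$ is Stein. Take a smooth strictly plurisubharmonic exhaustion $\rho_j$ of $S_j$ and a convex increasing $\chi$ with $\chi'$ growing fast enough, and set $\widehat{\omega}_j:=\idd(\chi\circ\rho_j)$. Then $\widehat{\omega}_j\geq\chi'(\rho_j)\idd\rho_j\geq\omega$, because $\idd\rho_j\geq c_t\,\omega$ with $c_t>0$ on each compact set $\{\rho_j\leq t\}$. (The paper takes such a metric and even makes it complete.) Demailly's Lemma VIII.6.3 then bounds $\int_{S_j}\lara{B^{-1}_{\theta,\psi,\widehat{\omega}_j}f}{f}_{h,\widehat{\omega}_j}e^{-\psi}dV_{\widehat{\omega}_j}$ by the corresponding $\omega$-integral. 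Since $u_j$ is an $(n,0)$-form, Lemma~\ref{Lemma: inequality of (n,0)-forms} makes its $\widehat{\omega}_j$-norm and $\omega$-norm equal, so no limit in $\varepsilon$ is needed.

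Your explanation of why the restriction is to $q=1$ is also misplaced. The monotonicity of $\lara{B^{-1}_{\theta,\psi,\gamma}f}{f}_{h,\gamma}dV_\gamma$ in $\gamma$ holds for every $q\geq1$, because $(\theta+\idd\psi)\otimes\mathrm{id}_E$ is Nakano positive. What fails for $q\geq2$ is the solution side. There $u_j$ is an $(n,q-1)$-form with $q-1\geq1$, and Lemma~\ref{Lemma: inequality of (n,0)-forms} only gives $|u_j|^2_{h,\widehat{\omega}_j}dV_{\widehat{\omega}_j}\leq|u_j|^2_{h,\omega}dV_\omega$, which is the wrong direction. Recovering an $\omega$-bound would require K\"ahler metrics decreasing to $\omega$, and these exist only when $\omega$ is K\"ahler.
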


\begin{proof}
    By Theorem \ref{Theorem: exhaustion a.e. Stein covering}, there exist an increasing sequence of real numbers $\{c_j\}_{j\in\bb{N}}$ with $c_1>\inf_X\varPsi$ and analytic subsets $A_j$ of each $X_{c_j}$ such that each open subset $(X_{c_j})_{reg}\setminus A_j$ is Stein manifold. 
    Due to Stein-ness, $E$ becomes trivial on $S_j := ((X_{c_j})_{reg} \setminus A_j) \setminus H_j$, using a certain hypersurface $H_j$ of $(X_{c_j})_{reg} \setminus A_j$ (see \cite[Lemma 3.14]{Wat25a}). 
    Since the complement of a hypersurface in a Stein manifold is also Stein, each $S_j$ is also Stein (see \cite[Theorem 3.12]{Die96}). 
    Applying the $\theta$-Nakano positivity of $h$, there exists $u_j\in L^2_{n,q-1}(S_j,E;\omega,he^{-\psi})$ such that $\dbar u_j=f$ on $S_j$ and 
    \begin{align*}
        \int_{S_j}|u_j|^2_{h,\omega}e^{-\psi}dV_\omega\leq\int_{S_j}\lara{B^{-1}_{\theta,\psi,\omega}f}{f}_{h,\omega}\,e^{-\psi}\dvo\leq\int_X\lara{B^{-1}_{\theta,\psi,\omega}f}{f}_{h,\omega}\,e^{-\psi}\dvo.
    \end{align*}
    It follows from the $\dbar$-extension Lemma \ref{Lemma: dbar-extension lemma} that $\dbar u_j=f$ on $(X_{c_j})_{reg}$ and $u_j\in L^2_{n,q-1}(X_{c_j},E;\omega,he^{-\psi})$. 
    Hence, by applying \cite[Lemma 3.18]{Wat25a} for the index $j$, we obtain the desired solution $u\in L^2_{n,q-1}(X,E;\omega,he^{-\psi})$ as the weak limite of a convergent subsequence, satisfying the $\dbar$-equation $\dbar u=f$ on $\reg$ and the global $L^2$-estimate 
    \begin{align*}
        \int_X|u|^2_{h,\omega}e^{-\psi}\,\dvo\leq\int_X\lara{B^{-1}_{\theta,\psi,\omega}f}{f}_{h,\omega}\,e^{-\psi}\dvo.
    \end{align*}

    Finally, we prove that the same statement holds for $q = 1$ when $X$ is not necessarily \kah and $\omega$ is simply a Hermitian metric. 
    By the relative compactness and Stein property of $S_j$, there exists a complete \kah metric $\widehat{\omega}_j$ on $S_j$ such that $\omega<\widehat{\omega}_j$ on $S_j$. 
    Therefore, using \cite[Chapter VIII, Lemma 6.3]{Dem-book}, we have 
    \begin{align*}
        \int_{S_j}\lara{B^{-1}_{\theta,\psi,\widehat{\omega}_j}f}{f}_{h,\widehat{\omega}_j}\,e^{-\psi}dV_{\widehat{\omega}_j}\leq\int_{S_j}\lara{B^{-1}_{\theta,\psi,\omega}f}{f}_{h,\omega}\,e^{-\psi}\dvo.
    \end{align*}
    By applying the $\theta$-Nakano positivity of $h$ and Lemma \ref{Lemma: inequality of (n,0)-forms}, we obtain a solution 
    \begin{align*}
        u_j\in L^2_{n,q-1}(S_j,E;\widehat{\omega}_j,he^{-\psi})=L^2_{n,q-1}(S_j,E;\omega,he^{-\psi})
    \end{align*}
    of the $\dbar$-equation $\dbar u_j=f$ on $S_j$, satisfying the $L^2$-estimate 
    \begin{align*}
        \int_{S_j}|u_j|^2_{h,\omega}e^{-\psi}dV_\omega=\int_{S_j}|u_j|^2_{h,\widehat{\omega}_j}e^{-\psi}dV_{\widehat{\omega}_j}&\leq\int_{S_j}\lara{B^{-1}_{\theta,\psi,\widehat{\omega}_j}f}{f}_{h,\widehat{\omega}_j}\,e^{-\psi}dV_{\widehat{\omega}_j}\\
        &\leq\int_X\lara{B^{-1}_{\theta,\psi,\omega}f}{f}_{h,\omega}\,e^{-\psi}\dvo.
    \end{align*}
    Therefore, the proof is completed by repeating \cite[Lemma 3.18]{Wat25a} in the same way as in the above argument.
\end{proof}

\begin{remark}\label{Remark: condition of L2-estimates for Grif>0 and singular positive}
    By Corollary \ref{Corollary: h>Grif and a.e Grif then iO_det pi*h>0 as currents}, the assumption in global $L^2$-existence Theorem \ref{Theorem: global L2-estimates of Nakano in section 5} that $X$ carries a singular positive line bundle can be replaced by the assumption that $h$ is Griffiths positive and a.e.\! Griffiths semi-positive on $X$.
\end{remark}

Applying Theorem \ref{Theorem: h>Grif then h det h>Nak} and Remark \ref{Remark: condition of L2-estimates for Grif>0 and singular positive} to Theorem \ref{Theorem: global L2-estimates of Nakano in section 5}, we obtain the following global $L^2$-estimate for Griffiths positivity.

\begin{theorem}\label{Theorem: global L2-estimates of Griffiths}
    Let $X$ be a weakly pseudoconvex \kah complex space of pure dimension $n$ with a \kah metric $\omega$ and $E\longrightarrow X$ be a holomorphic vector bundle with a singular Hermitian metric $h$. 
    If $h$ is Griffiths positive and a.e.\! Griffiths semi-positive on $X$, then there exists a smooth positive $(1,1)$-form $\theta$ on $X$, and the following holds:
    for any $q>0$, for any smooth plurisubharmonic function $\psi$ on $X$ and for any $f\in L^{2,loc}_{n,q}(X,E\otimes\det E,h\otimes\det he^{-\psi})$ satisfying $\dbar f=0$ on $\reg$ and $\int_X\lara{B^{-1}_{\theta,\psi,\omega}f}{f}_{h\otimes\det h,\omega}\,e^{-\psi}\dvo<+\infty$, 
    there exists $u\in L^2_{n,q-1}(X,E\otimes\det E;\omega,h\otimes\det he^{-\psi})$ satisfying $\dbar u=f$ on $\reg$ and the following global $L^2$-estimate
    \begin{align*}
        \int_X|u|^2_{h\otimes\det h,\omega}e^{-\psi}\,\dvo\leq\int_X\lara{B^{-1}_{\theta,\psi,\omega}f}{f}_{h,\omega}\,e^{-\psi}\dvo,
    \end{align*}
    where $B_{\theta,\psi,\omega}=[(\theta+\idd\psi)\oid{E\otimes\det E},\Lambda_\omega]$.
    
    Furthermore, if $X$ is not necessarily \kah and $\omega$ is taken simply as a Hermitian metric, then the same global $L^2$-estimate as above holds only in the case $q=1$.
\end{theorem}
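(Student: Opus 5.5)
The plan is to reduce the statement to Theorem \ref{Theorem: global L2-estimates of Nakano in section 5}, applied to the bundle $E\otimes\det E$ with the metric $h\otimes\det h$. The hypothesis that $X$ carries a singular positive line bundle is replaced as in Remark \ref{Remark: condition of L2-estimates for Grif>0 and singular positive}.

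\emph{Step 1: Nakano positivity with an explicit form.} By Proposition \ref{Proposition: Grif>0 then εω-Grif>0}, there is a smooth positive function $\varepsilon$ such that $h$ is $2\varepsilon\omega$-Griffiths positive. By Corollary \ref{Corollary: a.e. Grif semi-posi then uniformly positive definite}, $h$ is uniformly positive definite. The proof of Theorem \ref{Theorem: h>Grif then h det h>Nak} then shows that $h\otimes\det h$ is $(r+1)\varepsilon\omega$-Nakano positive on $X$ in the sense of $L^2$-estimates. We set $\theta:=(r+1)\varepsilon\omega$, which is a smooth positive $(1,1)$-form. For any smooth plurisubharmonic $\psi$ we have $\theta+\idd\psi\geq\theta>0$, so the condition on $\psi$ required in Theorem \ref{Theorem: global L2-estimates of Nakano in section 5} holds automatically.

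\emph{Step 2: the Stein exhaustion.} The only use of the singular positive line bundle in Theorem \ref{Theorem: global L2-estimates of Nakano in section 5} is to supply the hypothesis of Theorem \ref{Theorem: exhaustion a.e. Stein covering in subsection}. By Corollary \ref{Corollary: h>Grif and a.e Grif then iO_det pi*h>0 as currents}, the line bundle $L:=\det E$ with metric $\det h$ satisfies
\[
\iO{\det\pi^*E,\det\pi^*h}\geq\pi^*(\varepsilon'\omega)
\]
on $\tx$ in the sense of currents, for some smooth positive $\varepsilon'$. This is exactly the hypothesis needed. Hence we obtain sublevel sets $X_{c_j}$ and analytic subsets $A_j$ such that each $(X_{c_j})_{reg}\setminus A_j$ is Stein.

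\emph{Step 3: solving and passing to the limit.} We rerun the proof of Theorem \ref{Theorem: global L2-estimates of Nakano in section 5} verbatim. On each Stein set $S_j$ (after removing a hypersurface so that $E\otimes\det E$ is trivial), the $\theta$-Nakano positivity yields a solution $u_j$ with the bound. Lemma \ref{Lemma: dbar-extension lemma} extends $\dbar u_j=f$ to $(X_{c_j})_{reg}$. A weak limit, via \cite[Lemma 3.18]{Wat25a}, gives the global solution $u$. In the non-K\"ahler case with $q=1$, we first replace $\omega$ by complete K\"ahler metrics $\widehat{\omega}_j\geq\omega$ on $S_j$, using \cite[Chapter VIII, Lemma 6.3]{Dem-book} and Lemma \ref{Lemma: inequality of (n,0)-forms}, exactly as in that proof.

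\emph{Main obstacle.} The delicate point is Step 1: one must make sure that the Nakano positivity of $h\otimes\det h$ comes with the concrete form $\theta$, and not merely as some positive form. This means tracking the constant $(r+1)\varepsilon$ through the extension across $\exc$ in Theorem \ref{Theorem: h>Grif then h det h>Nak}. The a.e.\ Griffiths semi-positivity is what excludes negative currents supported on $\exc$ there. The right-hand side in the statement should be read with the norm taken with respect to $h\otimes\det h$.
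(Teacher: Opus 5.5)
Your proposal is correct and follows the paper's route. The paper obtains this theorem by combining Theorem~\ref{Theorem: h>Grif then h det h>Nak}, which gives $(r+1)\varepsilon\omega$-Nakano positivity of $h\otimes\det h$, with Remark~\ref{Remark: condition of L2-estimates for Grif>0 and singular positive}, which replaces the singular positive line bundle by $(\det E,\det h)$ via Corollary~\ref{Corollary: h>Grif and a.e Grif then iO_det pi*h>0 as currents}, and then applies Theorem~\ref{Theorem: global L2-estimates of Nakano in section 5}; this is exactly your Steps 1--3, and your reading of the right-hand-side norm as taken with respect to $h\otimes\det h$ is the intended one.
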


\section{The weak $\overline{\partial}$-operator and its $L^2$-Dolbeault complex}\label{Section: dbar-operator and L2-Dolbeault complexes}

In this section, we provide proofs of Theorems \ref{Theorem: fine Dolbeault resolution} and \ref{Theorem: L2-Dolbeault isomorphism}.
Let $X$ be a complex space of pure dimension $n$ with a Hermitian metric $\omega$, $E\longrightarrow X$ be a holomorphic vector bundle with a singular Hermitian metric $h$ and $U\subset X$ be an open subset. 
Recall that 
\begin{align*}
    L^{2,loc}_{p,q}(U,E,h):=\Bigl\{u\in L^{2,loc}_{p,q}(\ureg,E,h)\,\Big|\, u|_{K_{reg}}\in L^2_{p,q}(K_{reg},E,h),\forall\,K\Subset U\Bigr\}.
\end{align*}  
It is easy to check that the presheaves given as $\scr{L}^{2,loc}_{p,q}(E,h)(U):=L^{2,loc}_{p,q}(U,E,h)$ are already sheaves $\scr{L}^{2,loc}_{p,q}(E,h)\longrightarrow X$. 
On $L^{2,loc}_{p,q}(U,E,h)$, we denote by 
\begin{align*}
    \dbar(U):L^{2,loc}_{p,q}(U,E,h)\longrightarrow L^{2,loc}_{p,q+1}(U,E,h)
\end{align*}
the $\dbar$-operator in the sense of distributions on $\ureg=U\setminus\xs$ which is closed and densely defined. 
This $\dbar$-operator is often denoted by $\dbar_w$ or $\dbar_{max}$.
Since $\dbar$ is a local operator, 
we can define the presheaves of germs of forms in the domain of $\dbar$, 
\begin{align*}
    \scr{L}^{p,q}_{E,h}:=\scr{L}^{2,loc}_{p,q}(E,h)\cap \dbar^{-1}\scr{L}^{2,loc}_{p,q+1}(E,h),
\end{align*}
given by $\scr{L}^{p,q}_{E,h}(U)=\scr{L}^{2,loc}_{p,q}(E,h)(U)\cap\rom{Dom}\,\dbar(U)$. 
This sheaf $\scr{L}^{p,q}_{E,h}$ is well known when $X$ is smooth (see \cite{Dem12,Wat25a}), and admits various types of $L^2$-Dolbeault resolutions (see \cite{Rup14,SZ23a,SZ23b,SZ24,Wat25a,Wat26a,Wat26b}). 
It is easy to see that the sheaves $\scr{L}^{p,q}_{E,h}$ admit partitions of unity, then we obtain sequences of fine sheaves 
\begin{align*}
    \scr{L}^{p,0}_{E,h}\overset{\dbar}{\longrightarrow}\scr{L}^{p,1}_{E,h}\overset{\dbar}{\longrightarrow}\scr{L}^{p,2}_{E,h}\overset{\dbar}{\longrightarrow}\cdots.
\end{align*}

Let $\pi:\tx\longrightarrow X$ be a canonical desingularization. 
Here, the smooth metric $\pi^*\omega$ is positive on $\tx\setminus \exc$ and semi-positive on $\tx$.
Let $\gamma$ be a Hermitian metric on $\tx$, then $\pi^*\omega\lesssim\gamma$ and $\pi^*\omega\sim\gamma$ on compact subsets of $\tx\setminus \exc$.
Since $\gamma$ is positive and $\pi^*\omega$ is semi-positive, there exists a smooth function $g\in \cal{C}^\infty(\tx,\bb{R}_{\geq0})$ such that $dV_{\pi^*\omega}=g\,dV_\gamma$, where $g>0$ on $\tx\setminus \exc$. 
It follows immediately that $g|u|^2_{\pi^*\omega}=|u|^2_\gamma$ for any $(n,0)$-form $u$, and that $|u|^2_\gamma\lesssim_{\tl{U}} g|u|^2_{\pi^*\omega}$ on any open subset $\tl{U}\Subset\tx$, for any $(n,q)$-form $u$, hence  
\begin{align*}
    \int_{\tl{U}}|u|^2_\gamma dV_\gamma \lesssim_{\tl{U}} \int_{\tl{U}}g|u|^2_{\pi^*\omega}g^{-1}dV_{\pi^*\omega}=\int_{\tl{U}}|u|^2_{\pi^*\omega}dV_{\pi^*\omega},
\end{align*}
here $1\leq q\leq n$.
Therefore, we obtain 
\begin{align*}
    L^2_{n,q}(\tl{U},\pi^*\omega)\subset L^2_{n,q}(\tl{U},\gamma) \quad \text{and} \quad L^2_{n,q}(\tl{U},\pi^*E;\pi^*\omega,\pi^*h)\subset L^2_{n,q}(\tl{U},\pi^*E;\gamma,\pi^*h),
\end{align*}
for any open subset $\tl{U}\Subset \tx$. In particular, these $L^2$-spaces coincide when $q=0$ by Lemma \ref{Lemma: inequality of (n,0)-forms}.
The pullback of forms under $\pi$ gives the isometry 
\begin{align*}
    \pi^*\!:\!L^2_{p,q}(\ureg,E;\omega,h)\longrightarrow L^2_{p,q}(\pi^{-1}(U)\!\setminus\!\exc,\pi^*E;\pi^*\omega,\pi^*h)\!=\!L^2_{p,q}(\pi^{-1}(U),\pi^*E;\pi^*\omega,\pi^*h).
\end{align*}
Hence, together with the above inclusions of $L^2$-spaces, if $U\Subset X$ is a relatively compact open subet, then the following map induced by $\pi^*$ is continuous: 
\begin{align*}
    \pi^*:L^2_{n,q}(\ureg,E;\omega,h)\longrightarrow L^2_{n,q}(\pi^{-1}(U),\pi^*E;\gamma,\pi^*h).  \tag{$\ast$}
\end{align*}

\begin{proposition}\label{Proposition: Ker dbar}
    If a singular Hermitian metric $h$ on $E$ is uniformly positive definite on $X$, then we have the isomorphism $\ogr(E,h)\cong\rom{Ker}\,\dbar_X\subset\scr{L}^{n,0}_{E,h}$.
\end{proposition}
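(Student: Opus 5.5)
The plan is to show both inclusions directly at the level of germs on a small open set $U$ around an arbitrary point $x\in X$. Here $\mathrm{Ker}\,\dbar_X$ means the sheaf of germs $u\in\scr{L}^{n,0}_{E,h}(U)$ with $\dbar u=0$ on $\ureg$ in the sense of distributions. The inclusion $\ogr(E,h)\subset\mathrm{Ker}\,\dbar_X$ is immediate. A section $s\in\Gamma(U,\ogr(E,h))$ is a holomorphic $E$-valued $n$-form on $\ureg$, so $\dbar s=0$ there. Moreover $\int_{K_{reg}}|s|^2_{h,\omega}dV_\omega<+\infty$ for every $K\Subset U$, hence $s\in L^{2,loc}_{n,0}(U,E,h)$. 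Since $\dbar s=0\in L^{2,loc}_{n,1}$, we get $s\in\scr{L}^{n,0}_{E,h}(U)$. This direction does not need any hypothesis on $h$.

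For the converse, take $u\in\scr{L}^{n,0}_{E,h}(U)$ with $\dbar u=0$ on $\ureg$, and shrink $U$ so that it is relatively compact and $E|_U$ is trivial.
\begin{itemize}
\item The first step is interior: on the complex manifold $\ureg$, an $(n,0)$-form with $L^1_{loc}$ (distributional) coefficients satisfying $\dbar u=0$ is holomorphic by ellipticity of $\dbar$ on functions (Weyl's lemma). So we must check that the coefficients of $u$ in a frame of $E$ are locally integrable on $\ureg$.
\item This is where uniform positive definiteness (Definition \ref{Definition: uniformly positive definite}) enters. Near each point we have $|\bullet|^2_h\geq C_U|\bullet|^2_{h_0}$ a.e. with $h_0$ smooth. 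Hence $\int_{K_{reg}}|u|^2_{h_0,\omega}dV_\omega\leq C_U^{-1}\int_{K_{reg}}|u|^2_{h,\omega}dV_\omega<+\infty$.
\item Therefore $u$ has $L^2_{loc}$, in particular $L^1_{loc}$, coefficients on $\ureg$ with respect to a smooth metric. This makes $\dbar u$ a genuine distribution in the usual sense, so $u\in\Gamma(\ureg,\omega_{\reg}\otimes\cal{O}_X(E))$.
\item Finally, the finiteness $\int_{U'_{reg}}|u|^2_{h,\omega}dV_\omega<+\infty$ on relatively compact $U'\Subset U$ is exactly the definition of $u\in\Gamma(U',\ogr(E,h))$ (Definition \ref{Definition: GR canonical L2-subsheaf}).
\end{itemize}
So the identity map gives the isomorphism of sheaves, compatible with restrictions.

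The main obstacle is the regularity step. Without a lower bound on $h$, the condition $|u|_h\in L^2_{loc}$ does not control the coefficients of $u$, which may fail to be locally integrable, and the notion $\dbar u=0$ would then not force holomorphy. This is precisely the role of uniform positive definiteness noted after Definition \ref{Definition: singular Nakano positive}.

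One should also check that local integrability up to $\xs$ is not needed for this identification. Only membership in $\Gamma(U\cap\reg,\cdot)$ together with the $L^2$-bound over $\ureg$ is required, so no extension across $\xs$ is necessary. If desired, Proposition \ref{Proposition: functoriality property} together with Lemma \ref{Lemma: dbar-extension lemma} on $\pi^{-1}(U)$ gives the equivalent description $\pi_*(K_{\tx}\otimes\scr{E}(\pi^*h))$, but this is not required here.
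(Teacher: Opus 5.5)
Your proof is correct, but it takes a different route from the paper.

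The paper argues through the desingularization. Using $(\ast)$ and the uniform positive definiteness of $h$, it shows that $\pi^*u$ has $L^2_{loc}$ coefficients on $\pi^{-1}(U)$. Lemma \ref{Lemma: dbar-extension lemma} then extends $\dbar\pi^*u=\pi^*f$ across $\exc$, so $\pi^*$ becomes a morphism of complexes $(\scr{L}^{n,\ast}_{E,h},\dbar)\to(\pi_*\scr{L}^{n,\ast}_{\pi^*E,\pi^*h},\pi_*\dbar)$. The kernel is then identified by quoting $\rom{Ker}\,\dbar_{\tx}\cong K_{\tx}\otimes\scr{E}(\pi^*h)$ from \cite[Theorem 5.3]{Wat25a}, together with Proposition \ref{Proposition: functoriality property}.

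You instead observe that $\ogr(E,h)$ is defined entirely by holomorphic sections over $U\cap\reg$ with an $L^2$ bound up to $\xs$. Hence nothing needs to be extended across the singular locus. In your argument the hypothesis is used only to ensure that $u$ has $L^2_{loc}$ coefficients with respect to a smooth metric on $\ureg$. That makes $\dbar u=0$ an honest distributional equation, and Weyl's lemma gives holomorphy.

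Your argument is shorter and self-contained: it needs no resolution, no extension lemma, and no appeal to \cite{Wat25a} or to functoriality. It also isolates exactly what uniform positive definiteness is needed for in degree $0$.

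What the paper's route buys is the commutative diagram itself. It shows that $\pi^*$ is a morphism of $L^2$-Dolbeault complexes in all degrees and that the left vertical arrow is an isomorphism. That structure is reused in the proofs of Theorems \ref{Theorem: L2-Dolbeault isomorphism} and \ref{Theorem: fine Dolbeault resolution}. Your proof establishes the stated identification but does not provide that comparison with $\tx$.
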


\begin{proof}  
    It follows from $(\ast)$ that $\pi^*$ defines a morphism of $L^2$-Dolbeault complexes 
    \begin{align*}
        \pi^*:(\scr{L}^{n,\ast}_{E,h},\dbar)\longrightarrow (\pi_*(\scr{L}^{n,\ast}_{\pi^*E,\pi^*h}),\pi_*\dbar), 
    \end{align*} 
    as follows.
    Let $U\subset X$ be an open subset, and let $u\in\scr{L}^{n,q}_{E,h}(U)$ and $f\in\scr{L}^{n,q+1}_{E,h}(U)$ be chosen such that $\dbar u=f$.
    By the morphism $(\ast)$, we have $\pi^*u\in\scr{L}^{2,loc}_{n,q}(\pi^{-1}(U),\pi^*E,\pi^*h)$ and $\pi^*f\in\scr{L}^{2,loc}_{n,q+1}(\pi^{-1}(U),\pi^*E,\pi^*h)$ with $\dbar\pi^*u=\pi^*f$ on $\pi^{-1}(U)\setminus\exc$. 
    From the uniform positive definiteness of $h$, the $\dbar$-extension Lemma \ref{Lemma: dbar-extension lemma} can be applied, and $\dbar\pi^*u=\pi^*f$ on $\pi^{-1}(U)$ follows. 
    Hence, we obtain $\pi^*u\in\scr{L}^{n,q}_{\pi^*E,\pi^*h}(\pi^{-1}(U))$ and $\pi^*f\in$ $\scr{L}^{n,q+1}_{\pi^*E,\pi^*h}(\pi^{-1}(U))$, and the above map $\pi^*$ is in fact a morphism of $L^2$-Dolbeault complexes. 
    Including $\rom{Ker}\,\dbar_X:=\rom{Ker}\,(\dbar_X:\scr{L}^{n,0}_{E,h}\longrightarrow\scr{L}^{n,1}_{E,h})\subset\scr{L}^{n,0}_{E,h}$ and 
    $\rom{Ker}\,\dbar_{\tx}:=\rom{Ker}\,(\dbar_{\tx}:\scr{L}^{n,0}_{\pi^*E,\pi^*h}\longrightarrow\scr{L}^{n,1}_{\pi^*E,\pi^*h})\subset\scr{L}^{n,0}_{\pi^*E,\pi^*h}$, we obtain the follows commutative diagram:
\[
\small
\begin{CD}
0 @>>> \rom{Ker}\,\dbar_X
  @>>> \scr{L}^{n,0}_{E,h}
  @>{\dbar_X}>> \scr{L}^{n,1}_{E,h}
  @>{\dbar_X}>> \scr{L}^{n,2}_{E,h}
  @>{\dbar_X}>>  \\
@. @VV{\pi^*}V
   @VV{\pi^*}V
   @VV{\pi^*}V
   @VV{\pi^*}V \\
0 @>>> \pi_*(\rom{Ker}\,\dbar_{\tx})
  @>>> \pi_*(\scr{L}^{n,0}_{\pi^*E,\pi^*h})
  @>{\pi_*\dbar_{\tx}}>>
     \pi_*(\scr{L}^{n,1}_{\pi^*E,\pi^*h})
  @>{\pi_*\dbar_{\tx}}>>
     \pi_*(\scr{L}^{n,2}_{\pi^*E,\pi^*h})
  @>{\pi_*\dbar_{\tx}}>> 
\end{CD}
\]

    Here, it is already known from \cite[Theorem 5.3]{Wat25a} that $\rom{Ker}\,\dbar_{\tx}\cong K_{\tx}\otimes\scr{E}(\pi^*h)$.
    Together with the functoriality property given in Proposition \ref{Proposition: functoriality property}, this implies $\pi_*(\rom{Ker}\,\dbar_{\tx})$ $=\pi_*(K_{\tx}\otimes\scr{E}(\pi^*h))=\ogr(E,h)$; in particular, the desired isomorphism
    \begin{align*}
        \rom{Ker}\,\dbar_X\cong\pi_*(\rom{Ker}\,\dbar_{\tx})=\ogr(E,h)
    \end{align*}
    is obtained.
    In fact, since the isomorphism $\scr{L}^{2,loc}_{n,0}(E,h)\cong\pi_*(\scr{L}^{2,loc}_{n,0}(\pi^*E,\pi^*h))$ holds and the $\dbar$-equation extends across $\exc$ by the $\dbar$-extension Lemma \ref{Lemma: dbar-extension lemma}, the left vertical arrow in this commutative diagram is an isomorphism.
\end{proof}

\begin{remark-theorem}\label{Remark-Theorem: uniformly positive definite and L2-Dolbeault resolusions}
    The uniformly positive definite condition required for the application of the $\dbar$-extension Lemma \ref{Lemma: dbar-extension lemma} is essential in Proposition \ref{Proposition: Ker dbar}. 
    Indeed, if the $\dbar$-extension Lemma \ref{Lemma: dbar-extension lemma} were applicable without assuming the uniformly positive definite condition of $h$, then a contradiction would arise by using the $L^2$-Dolbeault resolution for logarithmic sheaves; see \cite[Theorem 3.2 or Proposition 3.3]{Wat26a}.
\end{remark-theorem}

\begin{proof}
    Let $X$ be a compact \kah manifold, $D=\sum^J_{j=1}D_j$ be a simple normal crossing divisor in $X$ and $\omega_P$ be a smooth \kah metric on $X\setminus D$ which is of Poincar\'{e} type along $D$. 
    Let $\sigma_j$ be the defining section of $D_j$ and $|\bullet|_{D_j}$ be a fixed smooth Hermitian metric on $\cal{O}_X(D_j)$. Let $E\longrightarrow X$ be a holomorphic vector bundle with a singular Hermitian metric $h$.
    We introduce a new singular Hermitian metric $h_{\log D}$ on $E$ by 
    \begin{align*}
        h_{\log D}:=h\prod_{j=1}^{J}|\sigma_j|^2_{D_j}(\log|\sigma_j|^2_{D_j})^2,
    \end{align*}
    which is not uniformly positive definite.
    If the $\dbar$-extension Lemma \ref{Lemma: dbar-extension lemma} is applicable even to this non-uniformly positive definite metric $h_{\log D}$, then, by the same argument as in the proof of Proposition \ref{Proposition: Ker dbar}, 
    we obtain 
    \begin{align*}
        \rom{Ker}\,(\dbar:\scr{L}^{n,0}_{E,h_{\log D}}\longrightarrow \scr{L}^{n,1}_{E,h_{\log D}})\cong K_X\otimes\scr{E}(h_{\log D})\subseteq K_X\otimes E.
    \end{align*}

    Define the sheaf $\scr{L}^{p,q}_{E,h_{\log D},\omega_P}$ over $X$ as follows. For any open subset $U\subset X$, the space of sections $\scr{L}^{p,q}_{E,h_{\log D},\omega_P}(U)$ consists of $E$-valued $(p,q)$-forms $u$ with measurable coefficients such that $|u|^2_{h_{\log D},\omega_P}dV_{\omega_P}$ and $|\dbar u|^2_{h_{\log D},\omega_P}dV_{\omega_P}$ are integrable on $U\setminus D$ (see \cite[Subsection 2.3]{Wat26a}).
    Here, it follows from \cite[Proposition 3.3]{Wat26a} that 
    \begin{align*}
        \rom{Ker}\,(\dbar:\scr{L}^{p,0}_{E,h_{\log D},\omega_P}\longrightarrow \scr{L}^{p,1}_{E,h_{\log D},\omega_P})\cong \Omega^p_X(\log D)\otimes\scr{E}(h). 
    \end{align*}
    In particular, when $p=n$, we have $\rom{Ker}\,\dbar\cong \Omega^n_X(\log D)\otimes\scr{E}(h)=K_X\otimes\cal{O}_X(D)\otimes\scr{E}(h)$.

    For any Hermitian metric $\gamma$ on $X$, it follows from Lemma \ref{Lemma: inequality of (n,0)-forms} that $|\bullet|^2_{h_{\log D},\omega_P}dV_{\omega_P}=|\bullet|^2_{h_{\log D},\gamma}dV_{\gamma}$ holds for $(n,0)$-forms. 
    Consequently, $\scr{L}^{n,0}_{E,h_{\log D},\omega_P}=\scr{L}^{n,0}_{E,h_{\log D}}$, and we have 
    \begin{align*}
        K_X\otimes\cal{O}_X(D)\otimes\scr{E}(h)\cong K_X\otimes\scr{E}(h_{\log D})\subseteq K_X\otimes E.
    \end{align*}
    Therefore, if $\nu(-\log\det h,x)<2$ for all $x\in X$, then $\scr{E}(h)=\cal{O}_X(E)$ (see \cite[Lemma 6.4]{Wat25a}), and hence $\cal{O}_X(D)\subseteq\cal{O}_X$, which is a contradiction. 
    This phenomenon arises from the fact that, if the $\dbar$-extension Lemma \ref{Lemma: dbar-extension lemma} were applicable to $h_{\log D}$, then sections that should have logarithmic poles would become holomorphic.
\end{proof}

By Proposition \ref{Proposition: Ker dbar}, if $h$ is uniformly positive definite, then $\ogr(E,h)$ coincides with the well-known multiplier $S$-sheaf $S_X(E,h)$ (see \cite[Definition 4.1]{SZ24}). 
The following results are obtained from Theorems \ref{Theorem: h Nak iff pi*h Nak} and \ref{Theorem: h>Grif then h det h>Nak} together with the coherence of $\scr{E}(\pi^*h)$ on $\tx$ (see \cite[Proposition 4.4]{Ina22}) and Grauert's direct image theorem.

\begin{theorem}\label{Theorem: coherence of ogr(E,h) if Nakano}
    If a singular Hermitian metric $h$ on $E$ is locally Nakano bounded from below on $X$ in the sense of $L^2$-estimates, then $\ogr(E,h)$ is coherent on $X$. 
\end{theorem}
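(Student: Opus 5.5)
Coherence is local on $X$, so we fix a point $x\in X$ and work on a Stein neighborhood $U$ of $x$ supplied by the hypothesis. There we have a smooth strictly plurisubharmonic function $\psi_U$ and a constant $C_U>0$ such that $h':=he^{-C_U\psi_U}$ is Nakano semi-positive on $U$ in the sense of $L^2$-estimates. Shrinking $U$ to a relatively compact Stein open subset, $\psi_U$ is bounded on $U$, so $e^{-C_U\psi_U}$ is bounded above and below by positive constants. Two consequences follow directly from Definition~\ref{Definition: GR canonical L2-subsheaf} and the definition of $\scr{E}$: first, $\ogr(E,h)|_U=\ogr(E,h')|_U$; second, $\scr{E}(\pi^*h)=\scr{E}(\pi^*h')$ on $\pi^{-1}(U)$. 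It therefore suffices to prove coherence of $\ogr(E,h')$ on $U$, where $h'$ is Nakano semi-positive.

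Next we pass to the resolution. By Theorem~\ref{Theorem: h Nak iff pi*h Nak} with $\theta=0$, the metric $\pi^*h'$ on $\pi^*E$ is Nakano semi-positive on $\pi^{-1}(U)$ in the sense of $L^2$-estimates. Since $\pi^{-1}(U)$ is a complex manifold, \cite[Proposition 4.4]{Ina22} gives that $\scr{E}(\pi^*h')$ is a coherent subsheaf of $\cal{O}_{\tx}(\pi^*E)$. Hence $K_{\tx}\otimes\scr{E}(\pi^*h')$ is coherent on $\pi^{-1}(U)$. By Proposition~\ref{Proposition: functoriality property},
\begin{align*}
\ogr(E,h')|_U=\pi_*\bigl(K_{\tx}\otimes\scr{E}(\pi^*h')\bigr)|_U.
\end{align*}
Since $\pi$ is proper, Grauert's direct image theorem shows that the right-hand side is coherent on $U$. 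As the sets $U$ cover $X$, $\ogr(E,h)$ is coherent on $X$.

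The main point to check is that Ina's coherence theorem applies to our notion of Nakano semi-positivity. Ina's notion is defined through $L^2$-estimates on Stein coordinate charts, and our definition for $\pi^*h'$ on a manifold is the same except that it also includes uniform positive definiteness. This extra condition is harmless: it holds for $h$ by assumption, because Definition~\ref{Definition: singular Nakano positive} requires it, and it passes to $h'$ and to $\pi^*h'$ since $e^{-C_U\psi_U}$ is bounded and pullbacks of smooth metrics stay smooth. If Ina's hypotheses also ask for upper semi-continuity of $h^*$, one would replace $\pi^*h'$ by an a.e.-equal representative. The sheaf $\scr{E}$ depends only on the a.e. class of the metric, so this replacement does not change $\scr{E}(\pi^*h')$. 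The same replacement also does not affect the $L^2$-estimate condition, because both sides of that estimate are integrals.
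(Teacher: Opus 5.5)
Your proposal is correct and follows essentially the same route as the paper: you transfer the local Nakano semi-positivity of $he^{-C_U\psi_U}$ to the resolution via Theorem~\ref{Theorem: h Nak iff pi*h Nak}, invoke the coherence of $\scr{E}(\pi^*h)$ from \cite[Proposition 4.4]{Ina22}, and conclude with Proposition~\ref{Proposition: functoriality property} and Grauert's direct image theorem, with your explicit localization spelling out what the paper leaves implicit. Like the paper, you are really relying on the fact that Ina's coherence argument needs only the optimal $L^2$-estimates plus a local lower bound on the metric (here supplied by uniform positive definiteness, which does pass to $\pi^*h'$). So your aside about replacing $\pi^*h'$ by an a.e.-equal representative with upper semi-continuous dual is unnecessary, and such a representative is not automatic for a merely measurable metric anyway.
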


\begin{corollary}\label{Corollary: coherence of ogr(E,h) if a.e. Griffiths}
    If a singular Hermitian metric $h$ on $E$ is locally Griffiths bounded from below on $X$, then $\ogr(E\otimes\det E,h\otimes\det h)$ is coherent on $X$. 
\end{corollary}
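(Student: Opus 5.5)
The statement to prove is Corollary \ref{Corollary: coherence of ogr(E,h) if a.e. Griffiths}. The plan is to reduce it to Theorem \ref{Theorem: coherence of ogr(E,h) if Nakano}, applied to the bundle $E\otimes\det E$ with the metric $h\otimes\det h$. Coherence is a local property. So fix $x\in X$ and take the data supplied by the hypothesis: a Stein neighborhood $U$ of $x$, a smooth strictly plurisubharmonic $\psi_U$ on $U$, and a constant $C_U>0$ such that $h_U:=he^{-C_U\psi_U}$ is a.e.\ Griffiths semi-positive on $U$. It then suffices to show that $\ogr(E\otimes\det E,h\otimes\det h)|_U$ is coherent.

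First, apply Theorem \ref{Theorem: h>Grif then h det h>Nak} (the a.e.\ Griffiths semi-positive case) on $U$ to $h_U$. This gives that $h_U\otimes\det h_U$ is Nakano semi-positive on $U$. Since $E$ has rank $r$, we have
\begin{align*}
    h_U\otimes\det h_U=(h\otimes\det h)e^{-(r+1)C_U\psi_U}.
\end{align*}
Hence $h\otimes\det h$ is locally Nakano bounded from below, with the same $\psi_U$ and constant $(r+1)C_U$.

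The one point requiring care is the uniform positive definiteness built into Definition \ref{Definition: singular Nakano positive}. Corollary \ref{Corollary: a.e. Grif semi-posi then uniformly positive definite} gives that $h_U$, and therefore $h$, is uniformly positive definite. For $\det h$, note that $\det h$ is a singular semi-positive line bundle metric. This follows from Theorem \ref{Theorem: h>Grif then iO_det h>0 as currents}, or on $\tx$ from Corollary \ref{Corollary: a.e. Grif semi-posi of h and pi*h}. A plurisubharmonic weight is locally bounded above, so $\det h$ is locally bounded below by a smooth metric. Thus $h\otimes\det h$ is uniformly positive definite as well. This is presumably already implicit in the proof of Theorem \ref{Theorem: h>Grif then h det h>Nak}.

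Finally, Theorem \ref{Theorem: coherence of ogr(E,h) if Nakano}, applied to $(E\otimes\det E,h\otimes\det h)$, gives coherence of $\ogr(E\otimes\det E,h\otimes\det h)$. If one prefers to spell out what that theorem uses, the chain is as follows. By Proposition \ref{Proposition: functoriality property},
\begin{align*}
    \ogr(E\otimes\det E,h\otimes\det h)=\pi_*\bigl(K_{\tx}\otimes\scr{E}(\pi^*h\otimes\det\pi^*h)\bigr).
\end{align*}
The sheaf $\scr{E}(\pi^*h\otimes\det\pi^*h)$ is coherent on $\tx$ by \cite[Proposition 4.4]{Ina22}. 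This applies because $\pi^*h$ is a.e.\ Griffiths semi-positive after twisting by $e^{-C_U\pi^*\psi_U}$ (Corollary \ref{Corollary: a.e. Grif semi-posi of h and pi*h}), and it coincides a.e.\ with a Griffiths semi-positive metric (Remark \ref{Remark: Grif>0 then a.e. Grif semi-posi & Remark in Wat24a}). Twisting by the smooth factor $e^{-(r+1)C_U\pi^*\psi_U}$ does not change the $L^2$-subsheaf. Grauert's direct image theorem for the proper map $\pi$ then gives coherence.

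No step is a genuine obstacle. The only points needing attention are the bookkeeping of the exponent $r+1$ and checking that uniform positive definiteness passes to $h\otimes\det h$.
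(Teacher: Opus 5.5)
Your proposal is correct and follows the paper's route. Applying Theorem~\ref{Theorem: h>Grif then h det h>Nak} to $he^{-C_U\psi_U}$ shows that $h\otimes\det h$ is locally Nakano bounded from below with constant $(r+1)C_U$, and coherence then follows from Proposition~\ref{Proposition: functoriality property}, coherence of the $L^2$-subsheaf on $\tx$ from \cite{Ina22}, and Grauert's direct image theorem. The paper passes through Theorem~\ref{Theorem: h Nak iff pi*h Nak} to use Inayama's Nakano criterion, whereas your spelled-out chain applies his Griffiths criterion directly on $\tx$; both work. Your separate check of uniform positive definiteness is redundant: it is already part of the conclusion ``Nakano semi-positive'' of that theorem, and in any case it follows at once from $h\geq C h_0\Rightarrow\det h\geq C^r\det h_0$. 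That simple argument also avoids your appeal to Theorem~\ref{Theorem: h>Grif then iO_det h>0 as currents}, which on non-normal $X$ only gives a current inequality; your alternative via $\tx$ does work.
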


\begin{remark}
    In \cite[Proposition 2.9 and Remark 2.12]{SZ23b}, it is shown that $S_X(E,h)$ is coherent when the smooth Hermitian metric $h$ on $E|_{\reg}$ is Nakano semi-positive on $\reg$ and $(E,h)$ on $\reg$ is tame $($see \cite[Definition 2.8]{SZ23b}$)$; 
    furthermore, it is stated that the coherence remains unclear if the tameness assumption is removed. 
    More precisely, however, coherence and the isomorphism $S_X(E,h)\cong\ogr(E,h)$ follows from the weaker condition that the metric is uniformly positive definite on $X$.
\end{remark}

\begin{proof}
    Assume that the singular Hermitian metric $h$ on $E$ is smooth and Nakano semi-positive on $\reg$, and uniformly positive definite on $X$. Then $\pi^*h$ is Nakano semi-positive on $\tx\setminus\exc$. 
    Arguing as in the proof of Theorem \ref{Theorem: h Nak iff pi*h Nak} and applying the $\dbar$-extension Lemma \ref{Lemma: dbar-extension lemma} using the uniform positive definiteness of $\pi^*h$, Nakano semi-positivity of $\pi^*h$ extends from $\tx\setminus\exc$ to $\tx$. 
    It then follows from \cite[Proposition 4.4]{Ina22} that $\scr{E}(\pi^*h)$ is coherent on $\tx$. Note that \cite{Ina22} uses only optimal $L^2$-estimates and does not rely on Griffiths semi-positivity. 
    Therefore, the assertion follows from Proposition \ref{Proposition: functoriality property} and Grauert's direct image theorem.
\end{proof}

In this sense, Theorem \ref{Theorem: coherence of ogr(E,h) if Nakano} and Corollary \ref{Corollary: coherence of ogr(E,h) if a.e. Griffiths} generalize the considerations in \cite{SZ23b}.

\begin{proof}[Proof of Theorem \ref{Theorem: L2-Dolbeault isomorphism}]
    By Theorem \ref{Theorem: h Nak iff pi*h Nak}, $\pi^*h$ is also locally Nakano bounded from below on $\tx$. Therefore, for every point $x$ of $\tx$, we can choose an open neighborhood $\tu\subset\tx$ of $x$ and a smooth weight $\varphi$ on $\tu$ such that $\pi^*he^{-\varphi}$ is Nakano semi-positive on $\tu$. 
    In particular, $\scr{L}^{p,q}_{\pi^*E,\pi^*h}(\tu)=\scr{L}^{p,q}_{\pi^*E,\pi^*he^{-\varphi}}(\tu)$. It follows from \cite[Theorem 5.3 (b)]{Wat25a} that the $L^2$-Dolbeault complex $(\scr{L}^{n,\ast}_{\pi^*E,\pi^*h},\dbar_{\tx})$ on $\tx$ is a fine resolusion of $K_{\tx}\otimes\scr{E}(\pi^*h)$; that is, the sequence of sheaves
    \begin{align*}
        0\longrightarrow K_{\tx}\otimes\scr{E}(\pi^*h)\longrightarrow\scr{L}^{n,\ast}_{\pi^*E,\pi^*h}
    \end{align*}
    is exact. Hence, by using the Leray spectral sequence, the following higher direct image vanishing Theorem \ref{Theorem: higher direct image vanishing in section 6} implies that the lower line of the commutative diagram is a fine resolution of $\pi_*(K_{\tx}\otimes\scr{E}(\pi^*h))=\ogr(E,h)$. 
\end{proof}

We prove the following Takegoshi-type higher direct image vanishing theorem for singular Hermitian metrics, corresponding to \cite[Theorem I and Remark 2]{Tak85} in the case of a smooth Hermitian metric.
Here, following Takegoshi, the proof is carried out by applying the Nakano-Nadel vanishing theorem locally on weakly pseudoconvex manifolds. 
To preserve positivity while keeping the $L^2$-subsheaf unchanged, Theorem \ref{Theorem: h>Nak then pi*he-psi>Nak & E(pi*h)=E(pi*he-psi)} plays a key role, which is obtained from the Negativity Lemma (see \cite[Lemma 2.2]{Wat25b}) and the strong openness property (see \cite{GZ15,LXYZ24}).

\begin{theorem}[{= Theorem \ref{Theorem: higher direct image vanishing}}]\label{Theorem: higher direct image vanishing in section 6}
    Let $X$ be a complex space and $\pi:\tx\longrightarrow X$ be a canonical desingularization. 
    If $h$ is locally Griffiths-Nakano bounded from below on $X$, then we have the higher direct image vanishing 
    \begin{align*}
        R^q\pi_*(K_{\tx}\otimes\scr{E}(\pi^*h))=0 
    \end{align*}
    for any $q>0$.
\end{theorem}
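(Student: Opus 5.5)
The question is local on $X$, so I will reduce it to a cohomology vanishing on a suitable neighbourhood. Since $R^q\pi_*\mathcal{F}$ is the sheaf associated to $U\mapsto H^q(\pi^{-1}(U),\mathcal{F})$, it suffices to show the following. Every point $x\in X$ has a fundamental system of Stein neighbourhoods $V$ with
\begin{align*}
    H^q(\pi^{-1}(V),K_{\tx}\otimes\scr{E}(\pi^*h))=0 \quad\text{for all } q>0.
\end{align*}
Away from $\xs$ there is nothing to prove, since $\pi$ is biholomorphic there. So fix $x\in\xs$.

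By hypothesis we have a Stein neighbourhood $U$ of $x$ and a smooth strictly plurisubharmonic $\psi_U$ such that $h_1:=he^{-C_U\psi_U}$ is Nakano semi-positive and a.e.\! Griffiths semi-positive on $U$. Replacing $C_U$ by $C_U+1$, the metric $h_1$ becomes Nakano positive on $U$, by Corollary \ref{Corollary: h>Gri0 & h_L>0 then h*h_L>Gri0}$(b)$ applied with the trivial line bundle and the weight $\psi_U$. It remains a.e.\! Griffiths semi-positive by part $(a)$ of the same corollary. Since $\psi_U$ is smooth, $\scr{E}(\pi^*h)=\scr{E}(\pi^*h_1)$ on $\pi^{-1}(U)$. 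Now choose a Stein neighbourhood $V\Subset U$ of $x$, for instance a sublevel set of a strictly plurisubharmonic exhaustion. Then $\tl{V}=\pi^{-1}(V)$ is weakly pseudoconvex, with the smooth plurisubharmonic exhaustion $\pi^*\Phi$, where $\Phi$ is an exhaustion of $V$.

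Next I apply Theorem \ref{Theorem: h>Nak then pi*he-psi>Nak & E(pi*h)=E(pi*he-psi)} to $h_1$ on $V$. It yields a quasi-plurisubharmonic $\psi$ on $\tl{V}$ and a number $\varepsilon_0>0$ with two properties:
\begin{itemize}
    \item $\pi^*h_1e^{-\varepsilon_0\psi}$ is Nakano positive on $\tl{V}$, and in fact $\tl{\gamma}_{\varepsilon_0}$-Nakano positive for a genuine Hermitian metric $\tl{\gamma}_{\varepsilon_0}$ on $\tx$;
    \item $\scr{E}(\pi^*h_1)=\scr{E}(\pi^*h_1e^{-\varepsilon_0\psi})$ on $\tl{V}$.
\end{itemize}
The second identity is exactly where the a.e.\! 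Griffiths semi-positivity is used, through the strong openness property. We therefore get
\begin{align*}
    H^q(\tl{V},K_{\tx}\otimes\scr{E}(\pi^*h))=H^q(\tl{V},K_{\tx}\otimes\scr{E}(\pi^*h_1e^{-\varepsilon_0\psi})).
\end{align*}
The right-hand side vanishes for $q>0$ by the Nakano--Nadel vanishing theorem on weakly pseudoconvex manifolds \cite{Wat25a}. That theorem is stated for a metric that is Nakano positive in the sense of $L^2$-estimates, and Nakano positivity of a metric on the smooth manifold $\tl{V}$ is exactly such a notion. Its proof uses the fine resolution $(\scr{L}^{n,\ast},\dbar)$ of $K_{\tl V}\otimes\scr{E}$ from \cite[Theorem 5.3]{Wat25a} together with global $L^2$-estimates obtained by exhausting $\tl V$.

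The main obstacle I expect is the Kähler hypothesis in the vanishing theorem, since $\tl V$ need not be Kähler. I would handle it in one of two ways:
\begin{itemize}
    \item Observe that $\tl V$ is relatively compact in a manifold carrying a positive line bundle constructed via Lemma \ref{Lemma: key lemma} (compare Theorem \ref{Theorem: exhaustion a.e. Stein covering in subsection}), hence is Kähler on sublevel sets. Indeed, the form $\pi^*\omega_V+\varepsilon\idd\psi$, with $\omega_V$ Kähler on the Stein space $V$, can be regularised to a Kähler metric after adding $\idd$ of a convex function of $\pi^*\Phi$.
    \item Alternatively, replace $\tl V$ by the Stein exhaustion of Theorem \ref{Theorem: exhaustion a.e. Stein covering in subsection}, whose pieces are Stein and hence Kähler, and extend the resulting $\dbar$-solutions across the analytic sets by Lemma \ref{Lemma: dbar-extension lemma}, using uniform positive definiteness from Corollary \ref{Corollary: a.e. Grif semi-posi then uniformly positive definite}.
\end{itemize}
I also need to check that the neighbourhoods $V$ of this form are cofinal. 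This is immediate, since the construction works for every sufficiently small Stein neighbourhood $V\Subset U$.
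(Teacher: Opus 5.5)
Your proposal is correct and is essentially the paper's own proof: you localize to $H^q(\pi^{-1}(V),K_{\widetilde{X}}\otimes\mathscr{E}(\pi^*h))$, twist $h$ by a smooth strictly plurisubharmonic weight to get Nakano positivity plus a.e.\ Griffiths semi-positivity, use the Negativity-Lemma/strong-openness theorem to replace $\pi^*h$ by a Nakano positive metric with the same $L^2$-subsheaf on $\pi^{-1}(V)$, and conclude by Nakano--Nadel vanishing on the weakly pseudoconvex manifold $\pi^{-1}(V)$, whose K\"ahler metric the paper also obtains from the Negativity Lemma, as in your first option. The only cosmetic difference is that the paper twists by $C\iota^*|z|^2$ with $C$ large; this also covers the case where the Nakano and a.e.\ Griffiths lower bounds are given with different weights $\psi_U$, which you merged into a single hypothesis.
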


\begin{proof}
    It suffices to show that $H^q(\pi^{-1}(U),K_{\tx}\otimes\scr{E}(\pi^*h))=0$ for any $q>0$ and any sufficiently small Stein open neighborhood $U\subset X$. 
    We may take the Stein open neighborhood $U$ to admit a trivialization $\tau:E|_U\overset{\simeq}{\longrightarrow}U\times\bb{C}^r$ and a holomorphic embedding $\iota:U\hookrightarrow\bb{C}^N$. 
    Let $(z_1,\ldots,z_N)$ be the standard coordinates on $\bb{C}^N$. 
    By the assumption of $h$ and Corollary \ref{Corollary: h>Gri0 & h_L>0 then h*h_L>Gri0}, the metric $he^{-C\iota^*|z|^2}$ is Nakano and Griffiths positive and a.e.\! Griffiths semi-positive on a neighborhood of $\overline{U}$ for some large constant $C>0$. 
    Therefore, by applying Theorem \ref{Theorem: h>Nak then pi*he-psi>Nak & E(pi*h)=E(pi*he-psi)}, there exist a quasi-plurisubharmonic function $\psi:\pi^{-1}(U)\longrightarrow[-\infty,+\infty)$ which is smooth on $\pi^{-1}(U) \setminus \exc$,
    and a small number $\varepsilon_0 > 0$ such that $\pi^*he^{-C\pi^*\iota^*|z|^2-\varepsilon_0\psi}$ is Nakano positive on $\pi^{-1}(U)$ and satisfies 
    \begin{align*}
        \scr{E}(\pi^*h)=\scr{E}(\pi^*he^{-C\pi^*\iota^*|z|^2})=\scr{E}(\pi^*he^{-C\pi^*\iota^*|z|^2-\varepsilon_0\psi})
    \end{align*}
    on $\pi^{-1}(U)$, where $\pi^*\iota^*|z|^2$ is smooth on $\pi^{-1}(U)$.
    
    By the Negativity lemma (see \cite[Lemma 2.2]{Wat25b}), there exists a positive line bundle on $\pi^{-1}(U)$, induced by the exceptional divisor together with the smooth metric $e^{-\pi^*\iota^*|z|^2}$ on a trivial line bundle $\pi^{-1}(U)\times\bb{C}$, whose curvature provides a \kah metric on $\pi^{-1}(U)$. 
    Here, $\pi^{-1}(U)$ is weakly pseudoconvex. Hence, by the Nakano-Nadel vanishing theorem (see \cite[Theorem 1.1]{Wat25c}, \cite[Corollary 6.2]{Wat25a}) on weakly pseudoconvex manifolds, the proof is complete. 
    
    Indeed, the trivial line bundle $\pi^{-1}(U)\times\bb{C}$ admits the singular positive Hermitian metric $e^{-C\pi^*\iota^*|z|^2-\varepsilon_0\psi}$, which is smooth and simply positive on the Stein subset $\pi^{-1}(U)\setminus\exc$. 
    Arguing as in the proof of \cite[Corollary 6.2 and Theorem 1.2]{Wat25a}, by extending the $\dbar$-equation obtained from the $L^2$-estimate to $\pi^{-1}(U)\setminus\exc$, we obtain the cohomology vanishing $H^q(\pi^{-1}(U),K_{\tx}\otimes\scr{E}(\pi^*he^{-C\pi^*\iota^*|z|^2-\varepsilon_0\psi}))=0$ for any $q>0$.
\end{proof}

\begin{remark}\label{Remark: Thms condition of pi add blow ups}
    In Theorems \ref{Theorem: L2-Dolbeault isomorphism} and \ref{Theorem: higher direct image vanishing}, it suffices for $\pi:\tx\longrightarrow X$ to be a composition of a resolution of singularities and finitely many local blow-ups, and the statements remain valid even if additional blow-ups of certain analytic subsets are composed.
    In particular, in the case where $X$ is smooth, Theorems \ref{Theorem: L2-Dolbeault isomorphism} and \ref{Theorem: higher direct image vanishing} also hold when $\pi:\tx\longrightarrow X$ is taken as the blow-up of some analytic subset.
\end{remark}

Finally, we show that the upper line of the commutative diagram above is also exact; this is precisely the proof of Theorem \ref{Theorem: fine Dolbeault resolution}.
Let $(M,\omega)$ be a Hermitian complex manifold, $F\longrightarrow M$ be a holomorphic vector bundle equipped with a singular Hermitian metric $h_F$. 
Let $H^{p,q}_{L^2}(M,F;\omega,h_F)_{max}$ (resp. $H^{p,q}_{L^2}(M,F;\omega,h_F)_{min}$) be the $L^2$-Dolbeault cohomology on $M$ with respect to the maximal closed extension $\dbar_{max}$ (resp. the minimal closed extension $\dbar_{min}$).  
Here, $\dbar_{max}$ is the $\dbar$-operator in the sense of distributions on $M$, i.e., $\dbar$ or $\dbar_w$, and $\dbar_{min}$ is the the minimal closed Hilbert space extension of the $\dbar$-operator on smooth forms with compact support, i.e., $\dbar_s$.

\begin{proof}[Proof of Theorem \ref{Theorem: fine Dolbeault resolution}]
    By Proposition \ref{Proposition: Ker dbar}, the $L^2$-Dolbeault complex $(\scr{L}^{n,\ast}_{E,h},\dbar)$ is exact in degree $q=0$, and it is also known to be exact in degrees $q\geq1$ when restricted to the regular locus $\reg$ (see \cite[Theorem\,5.3]{Wat25a}). 
    Hence, it suffices to show that it is exact in degrees $q\geq1$ along the singular locus $\xs$. 
    
    For any fixed singular point $x_0\in\xs$, take a relatively compact Stein open neighborhood $U$ of $x_0$ which admits a holomorphic closed embedding $\iota_U:U\hookrightarrow\bb{C}^N$. 
    Here $(z_1,\ldots,z_N)$ denote the coordinate on $\bb{C}^N$, and taking $\psi:=\iota^*_U|z|^2=\iota^*_U\sum^N_{j=1}|z_j|^2$ as a smooth strictly plurisubharmonic function on $U$, the space $U$ carries a \kah metric $\omega:=\idd\psi=\iota_U^*\idd|z|^2$ and a smooth positive metric $e^{-\psi}$ on $\cal{O}_X|_U$. 
    By the assumption of $h$, there exists a constant $C>0$ such that $he^{-C\psi}$ is Nakano semi-positive on $U$. 

    Since $\psi$ is smooth on $U$, it follows that $L^{2,loc}_{p,q}(U,E,h)=L^{2,loc}_{p,q}(U,E,he^{-(C+1)\psi})$; in particular, $\scr{L}^{p,q}_{E,h}(U)=\scr{L}^{p,q}_{E,he^{-(C+1)\psi}}(U)$. 
    By $L^2$-existence Theorem \ref{Theorem: global L2-estimates of Nakano in section 5}, for any $\dbar$-closed $f\in L^2_{n,q}(U,E;\omega,he^{-(C+1)\psi})=L^2_{n,q}(U,E;\omega,h)$, there exists a solution $u\in L^2_{n,q-1}(U,E;\omega,$ $he^{-(C+1)\psi})=L^2_{n,q-1}(U,E;\omega,h)$ satisfying $\dbar u=f$ on $U_{reg}$ and the $L^2$-estimate 
    \begin{align*}
        ||u||^2_{he^{-(C+1)\psi}\!,\,\omega}\leq\int_U\lara{B^{-1}_{0,\psi,\omega}f}{f}_{he^{-C\psi}\!,\,\omega}\,e^{-\psi}dV_\omega=\frac{1}{q}||f||^2_{he^{-(C+1)\psi}\!,\,\omega}<+\infty,
    \end{align*}
    where $B_{0,\psi,\omega}=[\idd\psi\oid{E},\Lambda_\omega]=[\omega\oid{E},\Lambda_\omega]=q$ on $\bigwedge^{n,q}T^*_{\ureg}\otimes E$.
    Therefore, we obtain the $L^2$-cohomology vanishing 
    \begin{align*}
        H^{n,q}_{L^2}(\ureg,E;\omega,h)_{max}=0
    \end{align*}
    for any $q\geq1$. 
    This implies exactness in degrees $q\geq1$, and the proof is complete.
\end{proof}

\begin{remark}\label{Remark: Generalization of L2-Dol resolusion}
    On a complex analytic space, the $L^2$-Dolbeault resolution for holomorphic vector bundles is given in \cite[Theorem 2.11]{SZ23b} under the assumption that the vector bundle has a smooth Nakano semi-positive metric on $\reg$. 
    In \cite[Theorem 5.1]{SZ24}, an $L^2$-Dolbeault resolution is obtained for the multiplier $S$-sheaf $S(IC_{X}(\bb{V}),\varphi)\otimes E$ in the setting where an $\bb{R}$-polarized variation of Hodge structure $\mathbb{V}=(\mathcal{V},\nabla,\mathcal{F}^{\bullet},h_{\mathbb{V}})$ exists on a Zariski open subset $X^{o}\subset\reg$, 
    provided that the singular Hermitian metric $h$ on $E$ can only be decomposed as $h=h_{0}e^{-\varphi}$, where $h_{0}$ is a smooth metric and $\varphi$ is a quasi-plurisubharmonic function, i.e., its singularity is of line-bundle type. 
    More recently, an analogous result of line-bundle type has been obtained without assuming the existence of an $\bb{R}$-polarized variation of Hodge structure $($see \cite[Theorem 1.1]{Wat26b}$)$.
\end{remark}

Hence, Theorem \ref{Theorem: fine Dolbeault resolution} provides $L^2$-Dolbeault resolutions for more general singular Hermitian metrics without assuming the existence of an $\bb{R}$-polarized variation of Hodge structure.
Here, if $X$ is smooth, $X\setminus X^o$ is a (possibily empty) normal crossing, and $\bb{V}$ has unipotent local monodromies, then an isomorphism $S(IC_{X}(\bb{V}),\varphi)\cong S(IC_{X}(\bb{V}))\otimes\scr{I}(\varphi)$ is obtained (see \cite[Corollary 4.8]{SZ24}), where $S(IC_{X}(\bb{V}))$ denotes Saito's $S$-sheaf.

Furthermore, by applying Theorems \ref{Theorem: h>Grif then h det h>Nak} and \ref{Theorem: h>Grif then iO_det h>0 as currents}, we obtain the following from Theorems \ref{Theorem: fine Dolbeault resolution}, \ref{Theorem: L2-Dolbeault isomorphism} and \ref{Theorem: higher direct image vanishing}.

\begin{theorem}\label{Theorem: fine Dolbeault resolution, isomorphism and higher direct image vanishing of Griffiths}
    Let $X$ be a complex space of pure dimension $n$. 
    If $h$ is locally Griffiths bounded from below on $X$, then the following holds: 
    \begin{itemize}
        \item 
        the $L^2$-Dolbeault complex 
        \begin{align*}
            0\longrightarrow \ogr(E\otimes\det E,h\otimes\det h)\hookrightarrow\scr{L}^{n,0}_{E\otimes\det E,h\otimes\det h}\overset{\dbar}{\longrightarrow}\scr{L}^{n,1}_{E\otimes\det E,h\otimes\det h}\overset{\dbar}{\longrightarrow}\cdots 
        \end{align*}
        is exact; that is, the complex $(\scr{L}^{n,\ast}_{E\otimes\det E,h\otimes\det h},\dbar)$ is $L^2$-Dolbeault fine resolution of $\ogr(E\otimes\det E,h\otimes\det h)$.
        Thus, for any $q>0$, we have the following 
        \begin{align*}
            H^q(X,\ogr(E\otimes\det E,h\otimes\det h))\cong H^q(\Gamma(X,\scr{L}^{n,\ast}_{E\otimes\det E,h\otimes\det h})).
        \end{align*}
        \item 
        the $L^2$-Dolbeault complex $(\pi_*\scr{L}^{n,*}_{\pi^*E\otimes\det\pi^*E,\pi^*h\otimes\det\pi^*h},\pi_*\dbar)$ 
        is a fine resolution of 
        \begin{align*}
            \ogr(E\otimes\det E,h\otimes\det h)\cong\pi_*(\omega_{\tx}\otimes\scr{E}(\pi^*h\otimes\det\pi^*h)).
        \end{align*}
        Thus, for any $q\geq0$, we have the following $L^2$-Dolbeault isomorphism 
        \begin{align*}
            H^q(X,\ogr(E\otimes\det E,h\otimes\det h))\cong H^q(\tx,K_{\tx}\otimes\scr{E}(\pi^*h\otimes\det\pi^*h)). 
        \end{align*}
        \item 
        for any $q>0$, we have the higher direct image vanishing 
        \begin{align*}
            R^q\pi_*(K_{\tx}\otimes\scr{E}(\pi^*h\otimes\det\pi^*h))=0. 
        \end{align*}
    \end{itemize}
\end{theorem}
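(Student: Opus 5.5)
The plan is to reduce all three assertions to the Nakano case, already treated in Theorems \ref{Theorem: fine Dolbeault resolution}, \ref{Theorem: L2-Dolbeault isomorphism} and \ref{Theorem: higher direct image vanishing}. These will be applied to the bundle $E\otimes\det E$ with the metric $h\otimes\det h$. The only thing that must be checked is the hypothesis used there. We need that $h\otimes\det h$ is locally Nakano bounded from below on $X$. For the direct image vanishing, which was proved via Theorem \ref{Theorem: h>Nak then pi*he-psi>Nak & E(pi*h)=E(pi*he-psi)}, we also need the Griffiths-side input used in that argument.

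\textbf{Step 1: local Nakano lower bound.} Fix $x\in X$. By the assumption, there are a Stein neighborhood $U$, a smooth strictly plurisubharmonic $\psi_U$ and a constant $C_U$ such that $he^{-C_U\psi_U}$ is a.e.\! Griffiths semi-positive on $U$. Theorem \ref{Theorem: h>Grif then h det h>Nak}, in its semi-positive version, then gives that $he^{-C_U\psi_U}\otimes\det(he^{-C_U\psi_U})$ is Nakano semi-positive on $U$. This metric equals $(h\otimes\det h)e^{-(r+1)C_U\psi_U}$. Hence $h\otimes\det h$ is locally Nakano bounded from below, with constant $(r+1)C_U$. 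Uniform positive definiteness of $h\otimes\det h$ follows from Corollary \ref{Corollary: a.e. Grif semi-posi then uniformly positive definite} applied to $he^{-C_U\psi_U}$. The determinant factor is also harmless, since $\det h^*$ is locally bounded above whenever $h^*$ is. This is exactly what Proposition \ref{Proposition: Ker dbar} needs.

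\textbf{Step 2: first bullet.} With Step 1 in hand, the proof of Theorem \ref{Theorem: fine Dolbeault resolution} applies verbatim to $(E\otimes\det E,h\otimes\det h)$. Degree $0$ exactness comes from Proposition \ref{Proposition: Ker dbar}. Exactness in degrees $q\ge1$ near $\xs$ comes from the local $L^2$-existence of Theorem \ref{Theorem: global L2-estimates of Nakano in section 5} on a Stein embedded neighborhood, with weight $(C+1)\psi$. Fineness of the sheaves $\scr{L}^{n,q}$ then yields the stated isomorphism of cohomology.

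\textbf{Step 3: third and second bullets.} For the higher direct image vanishing, I follow the proof of Theorem \ref{Theorem: higher direct image vanishing in section 6}. On a small Stein $U$, Corollary \ref{Corollary: h>Gri0 & h_L>0 then h*h_L>Gri0}(a) shows that $he^{-C\iota^*|z|^2}$ is Griffiths positive and a.e.\! Griffiths semi-positive near $\overline U$. I then use the Griffiths (``resp.'') branch of Theorem \ref{Theorem: h>Nak then pi*he-psi>Nak & E(pi*h)=E(pi*he-psi)}. It supplies a quasi-plurisubharmonic $\psi$ and $\varepsilon_0$ with
\[
\scr{E}(\pi^*h\otimes\det\pi^*h)=\scr{E}\bigl(\pi^*h\otimes\det\pi^*h\,e^{-(r+1)(C\pi^*\iota^*|z|^2+\varepsilon_0\psi)}\bigr).
\]
The right-hand metric is Nakano positive by Theorem \ref{Theorem: h>Grif then h det h>Nak}; the smooth weight does not change the sheaf. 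The Nakano--Nadel vanishing on the weakly pseudoconvex manifold $\pi^{-1}(U)$ then gives $H^q(\pi^{-1}(U),K_{\tx}\otimes\scr{E}(\pi^*h\otimes\det\pi^*h))=0$, and hence $R^q\pi_*=0$. The second bullet follows as in the proof of Theorem \ref{Theorem: L2-Dolbeault isomorphism}, through three inputs:
\begin{itemize}
\item $\pi^*h\otimes\det\pi^*h$ is locally Nakano bounded from below on $\tx$, by Step 1 together with Theorem \ref{Theorem: h Nak iff pi*h Nak};
\item the fine resolution on $\tx$ given by \cite[Theorem 5.3]{Wat25a};
\item the Leray spectral sequence combined with the vanishing just proved, together with Proposition \ref{Proposition: functoriality property}.
\end{itemize}

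The main obstacle is Step 3. There one must check that the strong openness identification in Theorem \ref{Theorem: h>Nak then pi*he-psi>Nak & E(pi*h)=E(pi*he-psi)} holds for $h\otimes\det h$ with the twisted weight $(r+1)\varepsilon\psi$. This requires the extra a.e.\! Griffiths semi-positivity of the twisted metric, which is exactly what Corollary \ref{Corollary: h>Gri0 & h_L>0 then h*h_L>Gri0} provides.
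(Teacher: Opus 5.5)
Your proposal is correct and follows the paper's route. The paper likewise obtains all three bullets by using Theorem~\ref{Theorem: h>Grif then h det h>Nak} (together with Theorem~\ref{Theorem: h>Grif then iO_det h>0 as currents}) to check that $h\otimes\det h$ satisfies the hypotheses of Theorems~\ref{Theorem: fine Dolbeault resolution}, \ref{Theorem: L2-Dolbeault isomorphism} and \ref{Theorem: higher direct image vanishing}. For the third bullet, you rerun the proof through the Griffiths (``resp.'') branch of Theorem~\ref{Theorem: h>Nak then pi*he-psi>Nak & E(pi*h)=E(pi*he-psi)} instead of first showing that $h\otimes\det h$ is itself locally Griffiths bounded from below on $X$; this is a more explicit version of the same reduction, and it avoids needing singular semi-positivity of $\det h$ on a possibly non-normal $X$.
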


\section{Nakano-Nadel vanishing on weakly pseudoconvex complex spaces}

In this section, we provide Nakano-Nadel type vanishing theorems on weakly pseudoconvex complex spaces, taking care to account for the existence of a \kah metric.

\subsection{Compact case}

Note that in general $L^2_{p,q}(X,E;\omega,h)\subsetneq L^{2,loc}_{p,q}(X,E,h)$, and these spaces coincide when $X$ is compact.
Hence, the following immediately follows from Theorem \ref{Theorem: L2-Dolbeault isomorphism}.

\begin{theorem}\label{Theorem: isom of cohomology in compact spaces}
    Let $X$ be a compact complex space of pure dimension $n$, $\pi:\tx\longrightarrow X$ be any resolusion of singularities. 
    If $h$ is locally Griffiths-Nakano bounded from below on $X$, then the pullback of forms under $\pi$ induces a natural isomorphism 
    \begin{align*}
        \pi^*:H^{n,q}_{L^2}(\reg,E;\omega,h)_{max}\overset{\cong}{\longrightarrow} H^q(\tx,K_{\tx}\otimes\scr{E}(\pi^*h)).
    \end{align*}
    for any Hermitian metric $\omega$ on $X$ and any $q\geq0$, where we have 
    \begin{align*}
        H^{n,q}_{L^2}(\reg,E;\omega,h)_{max}=H^q(X,\ogr(E,h)). 
    \end{align*}
\end{theorem}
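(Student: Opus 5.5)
The plan is to combine the two $L^2$-Dolbeault results already established: the fine resolution of Theorem \ref{Theorem: fine Dolbeault resolution} on $X$, and the fine resolution by push-forwards from $\tx$ in Theorem \ref{Theorem: L2-Dolbeault isomorphism}. These are linked by the morphism of complexes $\pi^*$ built in the proof of Proposition \ref{Proposition: Ker dbar}. A locally Griffiths-Nakano bounded from below metric is in particular a.e.\ Griffiths semi-positive up to a smooth twist. Hence it is uniformly positive definite by Corollary \ref{Corollary: a.e. Grif semi-posi then uniformly positive definite}, and the $\dbar$-extension Lemma \ref{Lemma: dbar-extension lemma} applies, so $\pi^*$ is indeed a morphism of complexes. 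By Remark \ref{Remark: Thms condition of pi add blow ups}, together with the independence of the right-hand side from the choice of resolution (any two are dominated by a third, and $\ogr(E,h)$ is intrinsic by Proposition \ref{Proposition: functoriality property}), I may work with the canonical desingularization.

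The first step is the identification $H^{n,q}_{L^2}(\reg,E;\omega,h)_{max}=H^q(X,\ogr(E,h))$. Since $X$ is compact, $L^2_{n,q}(X,E;\omega,h)=L^{2,loc}_{n,q}(X,E,h)$, so $\Gamma(X,\scr{L}^{n,q}_{E,h})$ is exactly the domain of $\dbar_{max}$ in $L^2_{n,q}(\reg,E;\omega,h)$. The cohomology of $\Gamma(X,\scr{L}^{n,\ast}_{E,h})$ is therefore the maximal $L^2$-cohomology. For $q>0$, Theorem \ref{Theorem: fine Dolbeault resolution} then gives the claimed equality. For $q=0$, Proposition \ref{Proposition: Ker dbar} gives $\Gamma(X,\ogr(E,h))=\mathrm{Ker}\,\dbar$. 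Independence of $\omega$ follows from compactness, since any two Hermitian metrics are comparable.

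The second step is to show that $\pi^*$ induces the isomorphism. The commutative diagram from Proposition \ref{Proposition: Ker dbar} has two exact rows. The upper row is exact by Theorem \ref{Theorem: fine Dolbeault resolution}. The lower row is a fine resolution of $\pi_*(K_{\tx}\otimes\scr{E}(\pi^*h))$ by Theorem \ref{Theorem: L2-Dolbeault isomorphism}, which rests on the vanishing of Theorem \ref{Theorem: higher direct image vanishing in section 6} and the Leray spectral sequence. The left vertical arrow is the isomorphism $\ogr(E,h)\cong\pi_*(K_{\tx}\otimes\scr{E}(\pi^*h))$. A morphism of fine (hence acyclic) resolutions lifting an isomorphism of the resolved sheaves induces an isomorphism on global cohomology. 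Thus $\pi^*:H^q(\Gamma(X,\scr{L}^{n,\ast}_{E,h}))\to H^q(\Gamma(\tx,\scr{L}^{n,\ast}_{\pi^*E,\pi^*h}))$ is an isomorphism. Finally, the target is $H^q(\tx,K_{\tx}\otimes\scr{E}(\pi^*h))$ by \cite[Theorem 5.3]{Wat25a}; the hypotheses there hold locally on $\tx$ after a smooth twist, by Theorem \ref{Theorem: h Nak iff pi*h Nak}.

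The main obstacle is bookkeeping of naturality rather than analysis. I must check that the abstract isomorphism of derived-functor cohomologies, obtained from the two resolutions, is really the map induced by pullback of forms. This holds because $\pi^*$ is a morphism of resolutions compatible with the augmentations, so the standard uniqueness of comparison maps applies.
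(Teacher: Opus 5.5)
Your proposal is correct and follows the paper's route. The paper notes that $L^2_{n,q}(X,E;\omega,h)=L^{2,loc}_{n,q}(X,E,h)$ on compact $X$ and deduces the statement at once from Theorem \ref{Theorem: L2-Dolbeault isomorphism}, with Theorem \ref{Theorem: fine Dolbeault resolution} identifying the maximal $L^2$-cohomology with $H^q(X,\ogr(E,h))$; you additionally spell out that the isomorphism is the one induced by the morphism of fine resolutions $\pi^*$ from Proposition \ref{Proposition: Ker dbar}, which the paper leaves implicit.
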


Here, for a Hermitian manifold $(M,\omega)$, it is known that the equality $\dbar_{max}=\dbar_{min}$ holds, i.e., the density of $\scr{D}^{\ast,\ast}_M$ in $\rom{Dom}\,\dbar_{max}$ with respect to the graph norm, if $\omega$ is complete (see \cite{AV65}, \cite[Chapter VIII]{Dem-book}), 
and that, if the complex space $X$ is compact, then its regular locus $\reg$ admits a complete Hermitian metric (see \cite{Dem82}). 

\begin{remark}
    The Hermitian metric $\omega$ in Theorem \ref{Theorem: isom of cohomology in compact spaces} is required to be defined on $X$. 
    Even though $\reg$ admits a complete Hermitian metric, it is not clear whether we can choose an appropriate $\omega$ on $X$ such that the left-hand side of the isomorphism induced by $\pi^*$ coincides with $H^{n,q}_{L^2}(\reg,E;\omega,h)_{min}$.
\end{remark}

Under the assumption of (relative) compactness, we prove that the vanishing of higher cohomology can be obtained without the existence of a \kah metric.

\begin{theorem}\label{Theorem: Nakano-Nadel vanishing on compact cpx sp without Kahler}
    Let $X$ be a compact complex space of pure dimension. 
    If $h$ is Nakano positive on $X$ in the sense of $L^2$-estimates, and is Griffiths positive and a.e.\! Griffiths semi-positive on $X$, then $X$ is Moishezon and we have the following vanishing 
    \begin{align*}
        H^q(X,\ogr(E,h))=0 
    \end{align*}
    for any $q>0$, without assuming the existence of a \kah metric.  
\end{theorem}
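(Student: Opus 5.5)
The plan is to move everything to the resolution $\tx$ and to prove the vanishing there. By Theorem \ref{Theorem: L2-Dolbeault isomorphism} (which needs only local Griffiths-Nakano boundedness from below), we have $H^q(X,\ogr(E,h))\cong H^q(\tx,K_{\tx}\otimes\scr{E}(\pi^*h))$. The Nakano positivity, Griffiths positivity and a.e.\! Griffiths semi-positivity of $h$ on the compact space $X$ give this boundedness at once, since $h$ itself satisfies the required semi-positivity with $C_U$ arbitrary. So it suffices to show $H^q(\tx,K_{\tx}\otimes\scr{E}(\pi^*h))=0$ for $q>0$ on the compact manifold $\tx$.

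\textbf{Step 1: Moishezon property.} By Theorem \ref{Theorem: h>Grif then iO_det h>0 as currents} and Corollary \ref{Corollary: h>Grif and a.e Grif then iO_det pi*h>0 as currents}, we have $\iO{\det\pi^*E,\det\pi^*h}\geq\pi^*(\varepsilon\omega)$ on $\tx$ in the sense of currents. Apply Lemma \ref{Lemma: key lemma} with $V=X$, which is allowed because $X$ is compact. Then $\det\pi^*h\,e^{-\delta\psi}$ is a singular metric on $\det\pi^*E$ whose curvature dominates a Hermitian metric $\gamma_\delta$, so $\det\pi^*E$ is big on the compact manifold $\tx$. By Ji--Shiffman/Bonavero, $\tx$ is then Moishezon, and hence so is $X$. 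Alternatively, after a Demailly approximation with algebraic singularities and a blow-up, as in the proof of Theorem \ref{Theorem: exhaustion a.e. Stein covering in subsection}, one obtains a positive line bundle on a modification, and therefore a projective, hence K\"ahler, modification $\mu:\widehat{X}\to\tx$.

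\textbf{Step 2: twisting to restore strict positivity.} Apply Theorem \ref{Theorem: h>Nak then pi*he-psi>Nak & E(pi*h)=E(pi*he-psi)} with $V=X$. It gives a quasi-plurisubharmonic $\psi$, smooth off $\exc$, and some $\varepsilon_0$ such that $\pi^*he^{-\varepsilon_0\psi}$ is Nakano positive on $\tx$ (in the sense of $L^2$-estimates, with respect to a genuine Hermitian metric $\tl{\gamma}_{\varepsilon_0}$) and $\scr{E}(\pi^*h)=\scr{E}(\pi^*he^{-\varepsilon_0\psi})$. It then suffices to show $H^q(\tx,K_{\tx}\otimes\scr{E}(\pi^*he^{-\varepsilon_0\psi}))=0$.

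\textbf{Step 3: vanishing on a K\"ahler model, the main obstacle.} The difficulty is that $\tx$ need not be K\"ahler, while the global $L^2$-existence argument for $q\geq2$ (Theorem \ref{Theorem: global L2-estimates of Nakano in section 5} and its manifold version) uses a K\"ahler metric. I would first pass to the projective modification $\mu:\widehat{X}\to\tx$ from Step 1, which is a composition of blow-ups with smooth centres. By Remark \ref{Remark: Thms condition of pi add blow ups}, the higher direct image vanishing (Theorem \ref{Theorem: higher direct image vanishing in section 6}) still holds for $\pi\circ\mu$. Hence $H^q(\tx,K_{\tx}\otimes\scr{E}(\pi^*h))\cong H^q(\widehat{X},K_{\widehat{X}}\otimes\scr{E}(\mu^*\pi^*h))$, and by Theorem \ref{Theorem: h Nak iff pi*h Nak} (applied to $\mu$) the pulled-back metric is Nakano positive with respect to $\mu^*\tl{\gamma}_{\varepsilon_0}$, which degenerates along the exceptional set of $\mu$.

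Repeating Step 2 on $\widehat{X}$ with a new key-lemma function, and using strong openness (Corollary \ref{Corollary: strong openness property}) plus Noetherianity on the compact $\widehat{X}$, produces a metric that is Nakano positive with respect to a Hermitian metric on the projective manifold $\widehat{X}$ and has the same $L^2$-subsheaf. On $\widehat{X}$, which is compact K\"ahler, the Nakano--Nadel vanishing theorem for singular metrics (\cite[Corollary 6.2]{Wat25a}, \cite{Wat25c}) gives vanishing for all $q>0$, and this transfers back to $X$.

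The delicate point is to check that the Griffiths hypotheses survive the pullback by $\mu$, so that strong openness applies. This follows from Theorem \ref{Theorem: Grif semi-positivity of h and pi^*h}, Corollary \ref{Corollary: a.e. Grif semi-posi of h and pi*h} and Remark \ref{Remark: condition of a.e. Grif in Thm of Grif posi}, since $\tx$ is smooth and hence normal.
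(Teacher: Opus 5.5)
Your proposal is correct and is essentially the paper's proof: twist by the key-lemma function and use strong openness so that $\pi^*he^{-\varepsilon_0\psi}$ is Nakano positive on $\tx$ with the same $L^2$-subsheaf; get Moishezon from the singular positivity of the twisted determinant; pass through a Demailly approximation and blow-up $\mu$ to a projective $\widehat{X}$ and twist once more there; apply Nakano--Nadel vanishing; and transfer back by the higher direct image vanishing for $\mu$ together with Theorem \ref{Theorem: L2-Dolbeault isomorphism}. The one point to tidy is that the Leray step for $\mu$ must use the twisted metric, i.e.\ $H^q(\tx,K_{\tx}\otimes\scr{E}(\pi^*he^{-\varepsilon_0\psi}))\cong H^q(\widehat{X},K_{\widehat{X}}\otimes\scr{E}(\mu^*\pi^*he^{-\varepsilon_0\mu^*\psi}))$, as in the paper, because $\scr{E}(\mu^*\pi^*h)$ and $\scr{E}(\mu^*\pi^*he^{-\varepsilon_0\mu^*\psi})$ can differ on $\widehat{X}$, even though $\mu_*(K_{\widehat{X}}\otimes\,\cdot\,)$ of both equals $K_{\tx}\otimes\scr{E}(\pi^*h)$.
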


\begin{proof}
    Let $\pi:\tx\longrightarrow X$ be a resolusion of singularities. 
    By Theorem \ref{Theorem: h>Nak then pi*he-psi>Nak & E(pi*h)=E(pi*he-psi)}, there exist a quasi-plurisubharmonic function $\psi:\tx\longrightarrow[-\infty,+\infty)$, which is smooth on $\tx\setminus\exc$, and a small number $\varepsilon_X>0$ such that the singular Hermitian metric $\pi^*he^{-\varepsilon\psi}$ on $\pi^*E$ is Nakano and Griffiths positive on $\tx$, and satisfies $\scr{E}(\pi^*h)=\scr{E}(\pi^*he^{-\varepsilon\psi})$ on $\tx$, for any $0<\varepsilon<\varepsilon_X$. 
    Theorem \ref{Theorem: h>Grif then iO_det h>0 as currents} and Proposition \ref{Proposition: idd>0 iff a.e. psh} imply that the determinant singular metric $\det\pi^*h\cdot e^{-r\varepsilon\psi}$ on $\det\pi^*E$ is singular positive on $\tx$. Hence, $\tx$ is Moishezon, and consequently $X$ is also Moishezon (see \cite{Moi66}).

    Fix $\varepsilon\in(0,\varepsilon_X)$, and set $\cal{H}:=\det\pi^*h\cdot e^{-r\varepsilon\psi}$ for simplicity. 
    Consider the approximation $\{\cal{H}_\nu\}_{\nu\in\bb{N}}$ of this singular positive metric $\cal{H}$ obtained by the refined Demailly approximation (see \cite[Theorem 3.2]{Wat24b}). 
    Let $\mu:\widehat{X}\longrightarrow\tx$ be the blow-ups along the analytic subset given by the singular locus of one such metric $\cal{H}_\nu$. 
    Then $\widehat{X}$ admits a positive line bundle and is therefore projective; in particular, $\widehat{X}$ carries a \kah metric (see \cite[Theorem 3.5]{Wat24b}). 
    Proceeding as in the proof of Theorem \ref{Theorem: h>Nak then pi*he-psi>Nak & E(pi*h)=E(pi*he-psi)}, applying Lemma \ref{Lemma: key lemma} to the blow-ups $\mu:\widehat{X}\longrightarrow\tx$ yields a quasi-plurisubharmonic function $\varPsi:\widehat{X}\longrightarrow[-\infty,+\infty)$, which is smooth on $\widehat{X}\setminus\mu$-$\exc$, and a small number $\delta_{\tx}$ such that 
    the singular Hermitian metric $\mu^*\pi^*he^{-\varepsilon\mu^*\psi-\delta\varPsi}$ on $\mu^*\pi^*E$ is Nakano and Griffiths positive on $\widehat{X}$ and satisfies $\scr{E}(\mu^*\pi^*he^{-\varepsilon\mu^*\psi})=\scr{E}(\mu^*\pi^*he^{-\varepsilon\mu^*\psi-\delta\varPsi})$ on $\widehat{X}$, for any $0<\delta<\delta_{\tx}$.

    Hence, the Nakano-Nadel vanishing theorem on projective manifolds (see \cite[Theorem 1.5]{Ina22}, \cite[Corollary 6.2]{Wat25a}) yields $H^q(\widehat{X},K_{\widehat{X}}\otimes\scr{E}(\mu^*\pi^*he^{-\varepsilon\mu^*\psi-\delta\varPsi}))=0$ for any $q>0$.
    Combining this with Theorem \ref{Theorem: L2-Dolbeault isomorphism} and Remark \ref{Remark: Thms condition of pi add blow ups}, the desired vanishing 
    \begin{align*}
        0\!=\!H^q(\widehat{X},K_{\widehat{X}}\otimes\scr{E}(\mu^*\pi^*he^{-\varepsilon\mu^*\psi}))\cong &\, H^q(\tx,\mu_*(K_{\widehat{X}}\otimes\scr{E}(\mu^*\pi^*he^{-\varepsilon\mu^*\psi})))\\ 
        =&\, H^q(\tx,K_{\tx}\otimes\scr{E}(\pi^*he^{-\varepsilon\psi}))\!=\!H^q(\tx,K_{\tx}\otimes\scr{E}(\pi^*h))\\
        \cong&\, H^q(X,\ogr(E,h))
    \end{align*}
    follows from the $L^2$-subsheaf conditions established above.
\end{proof}

Similarly to this proof, the following is obtained by using \cite[Corollary 6.2]{Wat25a}.

\begin{theorem}\label{Theorem: Nakano-Nadel vanishing on relatively cpt w.p.c. cpx sp}
    Let $(X,\varPsi)$ be a weakly pseudoconvex complex space of pure dimension and $c>\inf_X\varPsi$ be arbitrary. 
    If $h$ is Nakano positive on $X_{c+\tau}$ in the sense of $L^2$-estimates, and is Griffiths positive and a.e.\! Griffiths semi-positive on $X_{c+\tau}$ for some $\tau>0$, then we have the following vanishing 
    \begin{align*}
        H^q(X_c,\ogr(E,h))=0 
    \end{align*}
    for any $q>0$, without assuming the existence of a \kah metric.  
\end{theorem}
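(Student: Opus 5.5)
The plan is to mimic the proof of Theorem \ref{Theorem: Nakano-Nadel vanishing on compact cpx sp without Kahler}, with the compact $X$ replaced by the relatively compact sublevel set $X_c$ and the projective-manifold vanishing replaced by the Nakano--Nadel vanishing on weakly pseudoconvex manifolds \cite[Corollary 6.2]{Wat25a}. Let $\pi:\tx\longrightarrow X$ be the canonical desingularization and put $\tx_c:=\pi^{-1}(X_c)$. This set is weakly pseudoconvex, with exhaustion function built from $\pi^*\varPsi$ (for instance $-\log(c-\pi^*\varPsi)$), and it is relatively compact in $\tx_{c+\tau}$.

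\textbf{Step 1.} Apply Theorem \ref{Theorem: h>Nak then pi*he-psi>Nak & E(pi*h)=E(pi*he-psi)} with $V:=X_c$. The hypothesis holds on the neighborhood $X_{c+\tau}$ of $\overline{X_c}$. This gives a quasi-plurisubharmonic $\psi$ on $\tx_c$, smooth off $\exc$, and some $\varepsilon\in(0,\varepsilon_0]$ such that $\pi^*he^{-\varepsilon\psi}$ is Nakano and Griffiths positive on $\tx_c$ and $\scr{E}(\pi^*h)=\scr{E}(\pi^*he^{-\varepsilon\psi})$ there. By Theorem \ref{Theorem: h>Grif then iO_det h>0 as currents} and Proposition \ref{Proposition: idd>0 iff a.e. psh}, $\det\pi^*h\cdot e^{-r\varepsilon\psi}$ is a singular positive metric on $\det\pi^*E|_{\tx_c}$.

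\textbf{Step 2.} Produce a K\"ahler structure, as in Theorem \ref{Theorem: exhaustion a.e. Stein covering in subsection}. Take a refined Demailly approximation $\cal{H}_\nu$ of $\cal{H}:=\det\pi^*h\cdot e^{-r\varepsilon\psi}$ with algebraic singularities \cite[Theorem 3.2]{Wat24b}. Blow up its singular locus by finitely many blow-ups; this is possible since everything sits over a relatively compact set. The result is $\mu:\widehat{X}_c\longrightarrow\tx_c$ carrying a positive line bundle \cite[Theorem 3.5]{Wat24b}, hence a K\"ahler metric, and $\widehat{X}_c$ is still weakly pseudoconvex. Then apply Lemma \ref{Lemma: key lemma} to $\mu$, again over a relatively compact set, together with the strong openness argument from the proof of Theorem \ref{Theorem: h>Nak then pi*he-psi>Nak & E(pi*h)=E(pi*he-psi)}. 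This yields $\varPsi'$ and a small $\delta>0$ such that $\mu^*\pi^*he^{-\varepsilon\mu^*\psi-\delta\varPsi'}$ is Nakano positive on $\widehat{X}_c$ with the same $L^2$-subsheaf as $\mu^*\pi^*he^{-\varepsilon\mu^*\psi}$.

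\textbf{Step 3.} Invoke the Nakano--Nadel vanishing \cite[Corollary 6.2]{Wat25a} on the weakly pseudoconvex K\"ahler manifold $\widehat{X}_c$. This gives
\[
H^q\bigl(\widehat{X}_c,K_{\widehat{X}_c}\otimes\scr{E}(\mu^*\pi^*he^{-\varepsilon\mu^*\psi})\bigr)=0
\]
for $q>0$. Then descend along the chain of isomorphisms from the compact proof:
\begin{itemize}
\item higher direct image vanishing for $\mu$ (Theorem \ref{Theorem: higher direct image vanishing} with Remark \ref{Remark: Thms condition of pi add blow ups}) and the Leray spectral sequence;
\item the functoriality $\mu_*(K_{\widehat{X}_c}\otimes\scr{E}(\mu^*\cdot))=K_{\tx_c}\otimes\scr{E}(\cdot)$ (Proposition \ref{Proposition: functoriality property});
\item the equality $\scr{E}(\pi^*he^{-\varepsilon\psi})=\scr{E}(\pi^*h)$ from Step 1;
\item Theorem \ref{Theorem: L2-Dolbeault isomorphism} applied on $X_c$.
\end{itemize}
Together these give $H^q(X_c,\ogr(E,h))=0$. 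Theorem \ref{Theorem: L2-Dolbeault isomorphism} requires local Griffiths--Nakano boundedness from below, which follows from positivity on $X_{c+\tau}$ (take $C_U=0$, using that Griffiths positivity plus a.e.\ semi-positivity and Nakano positivity imply the semi-positive versions).

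\textbf{Main obstacle.} The hardest point is Step 2. The higher direct image vanishing for $\mu$ needs local Griffiths--Nakano boundedness of the metric $\pi^*he^{-\varepsilon\psi}$, which is singular along $\exc$. I would handle this by noting that this metric is itself Nakano and Griffiths positive on $\tx_c$, so the theorem applies with $X$ replaced by the manifold $\tx_c$. I would also check that $\widehat{X}_c$ stays weakly pseudoconvex, using the pulled-back exhaustion function, so that \cite[Corollary 6.2]{Wat25a} applies.
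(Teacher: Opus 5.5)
Your proposal is correct and follows the paper's route. The paper proves this statement only by saying it goes ``similarly'' to Theorem~\ref{Theorem: Nakano-Nadel vanishing on compact cpx sp without Kahler}, with \cite[Corollary 6.2]{Wat25a} replacing the projective vanishing, and your steps carry that out over the sublevel set: Theorem~\ref{Theorem: h>Nak then pi*he-psi>Nak & E(pi*h)=E(pi*he-psi)}, refined Demailly approximation and blow-ups to get a K\"ahler weakly pseudoconvex model, Lemma~\ref{Lemma: key lemma} with strong openness, and descent via Theorem~\ref{Theorem: L2-Dolbeault isomorphism} and Remark~\ref{Remark: Thms condition of pi add blow ups}. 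One detail to fix: as in the proof of Theorem~\ref{Theorem: exhaustion a.e. Stein covering in subsection}, apply Theorem~\ref{Theorem: h>Nak then pi*he-psi>Nak & E(pi*h)=E(pi*he-psi)} and the approximation on a slightly larger $\tx_{c+\tau'}$ with $0<\tau'<\tau$ and then restrict to $\tx_c$, so that the finitely many blow-ups and the Negativity Lemma really take place over a relatively compact set.
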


Applying Theorem \ref{Theorem: h>Grif then h det h>Nak}, we obtain an analogous result for Griffiths positivity.

\begin{theorem}
    Let $X$ be a compact complex space of pure dimension. 
    If $h$ is Griffiths positive and a.e.\! Griffiths semi-positive on $X$, then $X$ is Moishezon and for any $q>0$, we have the following vanishing 
    \begin{align*}
        H^q(X,\ogr(E\otimes\det E,h\otimes\det h))=0. 
    \end{align*}
\end{theorem}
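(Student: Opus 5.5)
The argument reduces the statement to Theorem \ref{Theorem: Nakano-Nadel vanishing on compact cpx sp without Kahler}, applied to the bundle $E\otimes\det E$ with the metric $h\otimes\det h$. The Moishezon property is proved exactly as there. Let $\pi:\tx\longrightarrow X$ be a resolution of singularities. By Corollary \ref{Corollary: h>Grif and a.e Grif then iO_det pi*h>0 as currents},
\[
\iO{\det\pi^*E,\det\pi^*h}\geq\pi^*(\varepsilon\omega)
\]
holds on $\tx$ in the sense of currents. Since $X$ is compact, Lemma \ref{Lemma: key lemma} gives a quasi-plurisubharmonic $\psi$ with $\pi^*(\varepsilon\omega)+\delta\idd\psi\geq\gamma_\delta>0$ for small $\delta>0$. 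Then $\det\pi^*h\cdot e^{-\delta\psi}$ is singular positive on the compact manifold $\tx$, by Proposition \ref{Proposition: idd>0 iff a.e. psh}. So $\tx$ is Moishezon, and therefore so is $X$ by \cite{Moi66}.

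For the vanishing, I would check that $h\otimes\det h$ satisfies the hypotheses of Theorem \ref{Theorem: Nakano-Nadel vanishing on compact cpx sp without Kahler}.

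\textbf{Nakano positivity.} By Corollary \ref{Corollary: a.e. Grif semi-posi then uniformly positive definite}, $h$ is uniformly positive definite. Theorem \ref{Theorem: h>Grif then h det h>Nak} then shows that $h\otimes\det h$ is Nakano positive in the sense of $L^2$-estimates.

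\textbf{Griffiths positivity and a.e.\ Griffiths semi-positivity.} Work locally on $\reg$, or after pulling back to $\tx$. Write $h\otimes\det h=h\otimes h_L$, where $h_L:=\det h$ is a line-bundle metric with $\iO{\det E,\det h}\geq\varepsilon\omega$ in the sense of currents (Theorem \ref{Theorem: h>Grif then iO_det h>0 as currents}).
\begin{itemize}
\item Theorem \ref{Theorem: h>Gri0 & h_L>0 then h*h_L>Gri0} $(a)$ and Corollary \ref{Corollary: h>Gri0 & h_L>0 then h*h_L>Gri0} $(a)$ would give Griffiths positivity of $h\otimes h_L$ and a.e.\ Griffiths semi-positivity.
\item These results are stated for singular positive $h_L$. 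Their proofs, however, only use the curvature inequality on $\tx\setminus\exc$ together with extension across $\exc$ via Remark \ref{Remark: condition of a.e. Grif in Thm of Grif posi}. Those inputs are available here by Corollary \ref{Corollary: h>Grif and a.e Grif then iO_det pi*h>0 as currents}, since $\det\pi^*h$ is singular semi-positive on $\tx$.
\item Alternatively, one can argue directly on $\tx$. Take the Griffiths semi-positive representative $\cal{H}$ of $\pi^*h$ from Remark \ref{Remark: Grif>0 then a.e. Grif semi-posi & Remark in Wat24a}. Then $\cal{H}\otimes\det\cal{H}$ is Griffiths semi-positive on the manifold $\tx$ (\cite{Rau15}), and Corollary \ref{Corollary: a.e. Grif semi-posi of h and pi*h} descends this to the a.e.\ statement on $X$.
\end{itemize}

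Granting these hypotheses, the proof of Theorem \ref{Theorem: Nakano-Nadel vanishing on compact cpx sp without Kahler} runs verbatim with $E\otimes\det E$ in place of $E$:
\begin{itemize}
\item Use the Griffiths-case form of Theorem \ref{Theorem: h>Nak then pi*he-psi>Nak & E(pi*h)=E(pi*he-psi)}, namely $\scr{E}(\pi^*h\otimes\det\pi^*h)=\scr{E}(\pi^*h\otimes\det\pi^*h\,e^{-(1+r)\varepsilon\psi})$ with a Nakano positive twisted metric.
\item Pass to the projective blow-up $\widehat{X}$ via the refined Demailly approximation of $\det\pi^*h\,e^{-r\varepsilon\psi}$, and apply the Nakano--Nadel vanishing theorem there.
\item Push the vanishing down using Theorem \ref{Theorem: fine Dolbeault resolution, isomorphism and higher direct image vanishing of Griffiths} (the $L^2$-Dolbeault isomorphism and higher direct image vanishing for $E\otimes\det E$) together with Remark \ref{Remark: Thms condition of pi add blow ups}.
\end{itemize}

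The main obstacle is the second step: verifying that $h\otimes\det h$ is Griffiths positive and a.e.\ Griffiths semi-positive on the singular space $X$, without assuming normality. If this is delicate, I would avoid it. In the argument above, Griffiths-type hypotheses are only used upstairs on $\tx$, namely for the strong openness property and for the positivity of the determinant. Both hold there, because $\pi^*h$ has a Griffiths semi-positive representative and $\det\pi^*h\,e^{-r\varepsilon\psi}$ is singular positive. So I would run the chain of isomorphisms directly with these upstairs inputs rather than through the stated hypotheses on $X$.
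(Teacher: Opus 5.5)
Your proposal is correct in the form you finally commit to, and that route matches the paper's. The paper derives this statement from the proof of Theorem~\ref{Theorem: Nakano-Nadel vanishing on compact cpx sp without Kahler} via Theorem~\ref{Theorem: h>Grif then h det h>Nak}: Nakano positivity of $h\otimes\det h$ (using Corollary~\ref{Corollary: a.e. Grif semi-posi then uniformly positive definite}), the Griffiths case of Theorem~\ref{Theorem: h>Nak then pi*he-psi>Nak & E(pi*h)=E(pi*he-psi)} on the resolution, the projective blow-up, and descent through Theorem~\ref{Theorem: fine Dolbeault resolution, isomorphism and higher direct image vanishing of Griffiths} and Remark~\ref{Remark: Thms condition of pi add blow ups}.

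You are right to abandon checking the hypotheses of that theorem for $h\otimes\det h$ on $X$ itself. A.e.\ Griffiths semi-positivity of $h\otimes\det h$ on $X$ is not available on a general (non-locally-irreducible) $X$. The converse in Corollary~\ref{Corollary: a.e. Grif semi-posi of h and pi*h} needs local irreducibility, so your third bullet fails. Singular semi-positivity of $\det h$ on $X$ is only given for normal $X$ in Theorem~\ref{Theorem: h>Grif then iO_det h>0 as currents}, which is what the first two bullets would require.
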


Of course, an analogue of Theorem \ref{Theorem: Nakano-Nadel vanishing on relatively cpt w.p.c. cpx sp} for Griffiths positivity can also be obtained for relatively compact sublevel sets $X_c$ of weakly pseudoconvex complex spaces.

\subsection{Non-compact case}

In this subsection, a global Nakano-Nadel type vanishing theorem is established on non-compact weakly pseudoconvex complex spaces.

\begin{theorem}[{= Theorem \ref{Theorem: Nakano-Nadel vanishing on w.p.c. cpx sp}}]\label{Theorem: Nakano-Nadel vanishing on w.p.c. cpx sp in subsection}
    Let $X$ be a weakly pseudoconvex \kah complex space of pure dimension. 
    Assume that $X$ admits a singular positive line bundle. 
    If $h$ is Nakano positive on $X$ in the sense of $L^2$-estimates, 
    then we have the following vanishing 
    \begin{align*}
        H^q(X,\ogr(E,h))=0 
    \end{align*}
    for any $q>0$. 
    Furthermore, if $X$ is not necessarily K\"{a}hler, then we have only the first cohomology vanishing $H^1(X,\ogr(E,h))=0$.
\end{theorem}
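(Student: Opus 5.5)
The plan is to combine the $L^2$-Dolbeault isomorphism of Theorem \ref{Theorem: fine Dolbeault resolution} with the global $L^2$-existence Theorem \ref{Theorem: global L2-estimates of Nakano in section 5}.

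\emph{Reduction to $\dbar$-closed forms.} Since $h$ is Nakano positive, it is $\theta$-Nakano positive for some continuous $\theta>0$. Restricting the defining $L^2$-estimates to small Stein coordinates $U$ and absorbing a smooth weight shows that $h$ is locally Nakano bounded from below on $X$. Theorem \ref{Theorem: fine Dolbeault resolution} therefore applies and gives
\[
H^q(X,\ogr(E,h))\cong H^q(\Gamma(X,\scr{L}^{n,\ast}_{E,h})).
\]
So it suffices to take a class represented by $f\in\Gamma(X,\scr{L}^{n,q}_{E,h})$ with $\dbar f=0$ on $\reg$, and to produce $u\in\Gamma(X,\scr{L}^{n,q-1}_{E,h})$ with $\dbar u=f$.

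\emph{Choice of weight.} The form $f$ is only in $L^{2,loc}$, so we choose a weight that makes the right-hand side of the estimate finite. Let $\varPsi$ be the smooth plurisubharmonic exhaustion. Take $\psi:=\chi\circ\varPsi$ with $\chi$ convex, increasing and growing fast enough that:
\begin{itemize}
  \item $\theta+\idd\psi\geq\theta>0$, which holds because $\idd(\chi\circ\varPsi)\geq0$;
  \item $\int_X\lara{B^{-1}_{\theta,\psi,\omega}f}{f}_{h,\omega}e^{-\psi}\dvo<+\infty$.
\end{itemize}
For the second condition, note that $B_{\theta,\psi,\omega}\geq B_{\theta,0,\omega}$. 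On each compact shell $\{k\leq\varPsi<k+1\}$, the form $B_{\theta,0,\omega}$ is bounded below by a positive constant, since $\theta>0$ is continuous and $q\geq1$. It is therefore enough to pick $\chi(k)$ larger than $\log\bigl(2^k\int_{\{\varPsi<k+1\}}\lara{B^{-1}_{\theta,0,\omega}f}{f}\,dV_\omega\bigr)$. These integrals are finite because $f\in L^{2,loc}_{n,q}(X,E,h)$ and the shells are relatively compact.

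\emph{Solving and concluding.} Theorem \ref{Theorem: global L2-estimates of Nakano in section 5} applies, using the Kähler metric $\omega$ and the assumed singular positive line bundle. It gives $u\in L^2_{n,q-1}(X,E;\omega,he^{-\psi})$ with $\dbar u=f$ on $\reg$. Because $\psi$ is smooth, hence locally bounded, $u$ lies in $L^{2,loc}_{n,q-1}(X,E,h)$. Its $\dbar$ equals $f$, which is also locally $L^2$, so $u\in\Gamma(X,\scr{L}^{n,q-1}_{E,h})$ and the class of $f$ vanishes. In the non-Kähler case, the last part of Theorem \ref{Theorem: global L2-estimates of Nakano in section 5} gives the same estimate only for $q=1$, with $\omega$ merely Hermitian. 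The argument above goes through unchanged and yields $H^1(X,\ogr(E,h))=0$.

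\emph{Main obstacle.} The delicate point is the first step: checking that Nakano positivity in the sense of $L^2$-estimates implies local Nakano boundedness from below, so that Theorem \ref{Theorem: fine Dolbeault resolution} is available. Along the way one uses that uniform positive definiteness is built into Definition \ref{Definition: singular Nakano positive}. I would try to obtain this directly from the definition, by taking $S=U$ and absorbing the difference $\theta-C\idd\iota_U^*|z|^2$ into the smooth weight. The growth choice for $\chi$ is routine by comparison.
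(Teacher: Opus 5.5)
Your proposal is correct and follows the paper's proof essentially step for step. The ingredients are the same: the $L^2$-Dolbeault isomorphism of Theorem~\ref{Theorem: fine Dolbeault resolution}; a weight $\chi\circ\varPsi$ with $\chi$ convex increasing and growing fast enough that $\int_X\lara{B^{-1}_{\theta,\chi\circ\varPsi,\omega}f}{f}_{h,\omega}e^{-\chi\circ\varPsi}\dvo<+\infty$; the global $L^2$-existence Theorem~\ref{Theorem: global L2-estimates of Nakano in section 5}, in its $q=1$ version for the non-K\"ahler case; and smoothness of the weight to get $u\in\Gamma(X,\scr{L}^{n,q-1}_{E,h})$.

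You also spell out two points the paper leaves implicit, and both are sound. The first is the shell-by-shell choice of $\chi$ via $B_{\theta,\psi,\omega}\geq B_{\theta,0,\omega}$. The second is local Nakano boundedness from below: shifting the weight in Definition~\ref{Definition: singular Nakano positive} shows that $he^{-C\psi_U}$ is $(\theta+C\idd\psi_U)$-Nakano positive.
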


\begin{proof}
    From Theorem \ref{Theorem: fine Dolbeault resolution}, we obtain the $L^2$-Dolbeault isomorphism 
    \begin{align*}
        H^q(X,\ogr(E,h))\cong H^q(\Gamma(X,\scr{L}^{n,\ast}_{E,h}))
    \end{align*}
    and we prove that the right-hand side vanishes. Let $\omega$ be a \kah metric on $X$. 
    By the assumption, $h$ is $\theta$-Nakano positive for some positive continuous $(1,1)$-form $\theta$ on $X$.
    We arbitrarily take a convex increasing function $\chi\in\cal{C}^{\infty}(\bb{R},\bb{R})$. 
    For any $\dbar$-closed $(n,q)$-form $f\in \Gamma(X,\scr{L}^{n,q}_{L,h})$, the integral
    \begin{align*}
        \int_X\lara{B^{-1}_{\theta,\chi\circ\varPsi,\omega}f}{f}_{h,\omega}\,e^{-\chi\circ\varPsi}\dvo 
    \end{align*}
    become convergent if $\chi$ grows fast enough. By the global $L^2$-existence Theorem \ref{Theorem: global L2-estimates of Nakano}, 
    there exists $u\in L^2_{n,q-1}(X,E;\omega,he^{-\chi\circ\varPsi})$ satisfying $\dbar u=f$ on $\reg$ and 
    \begin{align*}
        \int_X|u|^2_{h,\omega}e^{-\chi\circ\varPsi}\dvo\leq\int_X\lara{B^{-1}_{\theta,\chi\circ\varPsi,\omega}f}{f}_{h,\omega}\,e^{-\chi\circ\varPsi}\dvo<+\infty. 
    \end{align*}
    By the smoothness of $\chi\circ\varPsi$, it follows that $u\in\Gamma(X,\scr{L}^{n,q-1}_{E,h})$, and hence the desired cohomology vanishing $H^q(X,\ogr(E,h))=0$ is obtained.
    
    The first cohomology vanishing can similarly be obtained using Theorem \ref{Theorem: global L2-estimates of Nakano}.
\end{proof}

By Remark \ref{Remark: condition of L2-estimates for Grif>0 and singular positive}, the following two conditions can be replaced by each other in Theorems \ref{Theorem: Nakano-Nadel vanishing on compact cpx sp without Kahler}, \ref{Theorem: Nakano-Nadel vanishing on relatively cpt w.p.c. cpx sp}, and \ref{Theorem: Nakano-Nadel vanishing on w.p.c. cpx sp in subsection}:
\begin{itemize}
\item $X$ admits a singular positive line bundle;
\item $h$ is Griffiths positive and a.e.\! Griffiths semi-positive on $X$.
\end{itemize}
Furthermore, Proposition \ref{Proposition: normal and Gri>0 then a.e. Grif>0} shows that Theorems \ref{Theorem: Nakano-Nadel vanishing on w.p.c. cpx sp in subsection} and \ref{Theorem: Griffiths vanishing on on w.p.c. cpx sp} remain valid 
if the assumption of a.e.\! Griffiths semi-positivity is replaced by the condition that either $X$ is normal, or $X$ is locally irreducible and $h^*$ is locally bounded above near $\xs$.

\begin{remark}
    Even if we attempt to reduce the problem on $X$ to the known Nakano-Nadel vanishing theorem on a weakly pseudoconvex manifold $\tx$ via Theorem \ref{Theorem: L2-Dolbeault isomorphism}, several difficulties arise. 
    More precisely, it is unclear whether $\pi^*E$ admits a singular Hermitian metric on $\tx$ that preserves positivity and the $L^2$-subsheaf on the whole of $\tx$, and it is also unclear whether the required global $L^2$-estimates can be obtained on $\tx$ with respect to a single fixed singular Hermitian metric on $\pi^*E$.
\end{remark}

Indeed, when $h$ is $\theta$-Nakano positive for some positive continuous $(1,1)$-form $\theta$ on $X$, $\pi^*\theta$ degenerates along $\exc$. 
Thus, for a given $f\in L^{2,loc}_{n,q}(\tx,\pi^*E,\pi^*h)$, even after choosing an appropriate convex increasing function $\chi\in\cal{C}^{\infty}(\bb{R},\bb{R})$, 
it is not clear whether the integral $\displaystyle\int_{\tx}\lara{B^{-1}_{\pi^*\theta,\chi\circ\pi^*\varPsi,\omega_{\tx}}f}{f}_{h,\omega_{\tx}}\,e^{-\chi\circ\pi^*\varPsi}dV_{\omega_{\tx}} $ is finite, where $\omega_{\tx}$ is a \kah metric on $\tx$. 
Consequently, the existence of global solutions to the $\dbar$-equation is unclear. Of course, if $X$ is (relatively) compact, the positivity can be recovered by Theorem \ref{Theorem: h>Nak then pi*he-psi>Nak & E(pi*h)=E(pi*he-psi)}.

Applying Proposition \ref{Proposition: smooth Grif/Nak>0 on X_reg then also >0 on X} to Theorem \ref{Theorem: Nakano-Nadel vanishing on w.p.c. cpx sp in subsection}, we obtain the following.

\begin{corollary}
    Let $X$ be a weakly pseudoconvex \kah complex space of pure dimension. 
    Assume that $h$ is smooth Hermitian metric on $E$, then the following holds:
    \begin{itemize}
        \item if $h$ is Nakano positive on $X$, then for any $q>0$, we have the vanishing 
        \begin{align*}
            H^q(X,\ogr\otimes\cal{O}_X(E))=0.
        \end{align*}
        \item if $h$ is Griffiths positive on $X$, then for any $q>0$, we have the vanishing 
        \begin{align*}
            H^q(X,\ogr\otimes\cal{O}_X(E\otimes\det E))=0.
        \end{align*}
    \end{itemize}
    Furthermore, if $X$ is not necessarily K\"{a}hler, then we have only the first cohomology vanishing $H^1(X,\ogr\otimes\cal{O}_X(E))=0$ and $H^1(X,\ogr\otimes\cal{O}_X(E\otimes\det E))=0$, respectively.
\end{corollary}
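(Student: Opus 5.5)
The plan is to reduce the two bullets to the Nakano vanishing of Theorem \ref{Theorem: Nakano-Nadel vanishing on w.p.c. cpx sp in subsection} (and its $q=1$ non-K\"ahler variant) via Proposition \ref{Proposition: smooth Grif/Nak>0 on X_reg then also >0 on X}, and then to identify the sheaf $\ogr(E,h)$ with $\ogr\otimes\cal{O}_X(E)$ when $h$ is smooth.

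\textbf{Identification of the sheaf.} For a smooth Hermitian metric $h$ on $E$ over $X$, every $U\Subset X$ gives constants with $C^{-1}|\bullet|_{h_0}\leq|\bullet|_h\leq C|\bullet|_{h_0}$ on $U$ in a local frame. Hence $s\in\Gamma(U\cap\reg,\omega_{\reg}\otimes\cal{O}_X(E))$ has finite $L^2$ norm iff each component lies in $\ogr$. This gives $\ogr(E,h)=\ogr\otimes\cal{O}_X(E)$, and likewise $\ogr(E\otimes\det E,h\otimes\det h)=\ogr\otimes\cal{O}_X(E\otimes\det E)$. Smoothness also gives uniform positive definiteness, as Definition \ref{Definition: singular Nakano positive} requires.

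\textbf{Nakano case.} If $h$ is Nakano positive on $X$ in the sense adopted for smooth metrics after Proposition \ref{Proposition: smooth Grif/Nak>0 on X_reg then also >0 on X} (Nakano positive on $\reg$ and uniformly so near $\xs$), that proposition makes $h$ Nakano positive in the sense of $L^2$-estimates. The remaining hypothesis of Theorem \ref{Theorem: Nakano-Nadel vanishing on w.p.c. cpx sp in subsection} is a singular positive line bundle. By the remark following that theorem, this may be replaced by ``$h$ Griffiths positive and a.e.\ Griffiths semi-positive.'' A smooth Nakano positive $h$ is Griffiths positive on $\reg$, uniformly near $\xs$, so Proposition \ref{Proposition: smooth Grif/Nak>0 on X_reg then also >0 on X} makes it Griffiths positive on $X$. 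Smoothness further gives $|u|^2_{h^*}$, hence $\log|u|_{h^*}$, plurisubharmonic on $\reg$ and locally bounded; the proof of part $(i)$ of that proposition, via \cite[Corollary 1.12]{Dem85}, extends this across $\xs$, giving Griffiths semi-positivity and in particular a.e.\ Griffiths semi-positivity. Theorem \ref{Theorem: Nakano-Nadel vanishing on w.p.c. cpx sp in subsection} then yields the vanishing for all $q>0$ in the K\"ahler case, and for $q=1$ without the K\"ahler assumption.

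\textbf{Griffiths case.} Proposition \ref{Proposition: smooth Grif/Nak>0 on X_reg then also >0 on X} and the same semi-positivity argument show that $h$ is Griffiths positive and Griffiths semi-positive. We then use either of two routes. The first applies Theorem \ref{Theorem: Griffiths vanishing on on w.p.c. cpx sp} directly. The second uses Theorem \ref{Theorem: h>Grif then h det h>Nak}: since $h$ is uniformly positive definite, $h\otimes\det h$ is Nakano positive in the $L^2$ sense, and Theorem \ref{Theorem: Nakano-Nadel vanishing on w.p.c. cpx sp in subsection} applies with the remark above. Here the Griffiths positivity of $h$ supplies the needed positive line bundle, namely $\det E$, through Remark \ref{Remark: condition of L2-estimates for Grif>0 and singular positive}. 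Combined with the sheaf identification, either route gives both vanishing statements.

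\textbf{Main obstacle.} The delicate point is verifying that a smooth Griffiths positive metric is a.e.\ Griffiths semi-positive near $\xs$ without normality of $X$. This is needed to replace the singular positive line bundle hypothesis. I would handle it with the Demailly extension \cite[Corollary 1.12]{Dem85} for continuous functions, which is psh on $\reg$ and bounded, as in part $(i)$ of the proof of Proposition \ref{Proposition: smooth Grif/Nak>0 on X_reg then also >0 on X}. This extension requires no local irreducibility.
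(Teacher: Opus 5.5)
Your proposal is correct and is essentially the paper's argument, which consists only of applying Proposition \ref{Proposition: smooth Grif/Nak>0 on X_reg then also >0 on X} to Theorem \ref{Theorem: Nakano-Nadel vanishing on w.p.c. cpx sp in subsection} (and to its Griffiths counterpart, Theorem \ref{Theorem: Griffiths vanishing on on w.p.c. cpx sp}). You also spell out two points the paper leaves implicit: the identification $\ogr(E,h)=\ogr\otimes\cal{O}_X(E)$ for smooth $h$, and the removal of the singular-positive-line-bundle hypothesis via Remark \ref{Remark: condition of L2-estimates for Grif>0 and singular positive}, using the Griffiths semi-positivity across $\xs$ supplied by \cite[Corollary 1.12]{Dem85}.
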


Here again, uniform Griffiths/Nakano positivity near $\xs$ is required, since, as in the previous case, degeneration of positivity along $\xs$ makes it unclear whether the integral of $\lara{B^{-1}_{\theta,\chi\circ\varPsi,\omega}f}{f}_{h,\omega}\,e^{-\chi\circ\varPsi}$ over $X$ remains finite.
The smooth $(1,1)$-from $\theta$ on $X$ is positive on $\reg$, but degenerates along $\xs$.


\vspace*{5mm}
\noindent
{\bf Acknowledgement.} 
The author is supported by Grant-in-Aid for Early-Career Scientists $\sharp$26K16989 from the Japan Society for the Promotion of Science (JSPS).



\end{document}